
\documentclass[12pt,reqno,twoside]{amsart}%

\usepackage{amsmath,amsthm,amscd,amsthm,upref,indentfirst}
\usepackage{amsfonts,mathrsfs}
\usepackage{amssymb,amsbsy,bm}
\usepackage{graphicx}
\usepackage{times}
\usepackage[foot]{amsaddr}
\usepackage{soul}
\usepackage{slashbox}

\usepackage[all]{xy}
\xyoption{tips}
\SelectTips {xy} {12}
\usepackage[UglyObsolete,small,nohug,heads=LaTeX]{diagrams}
\diagramstyle[labelstyle=\scriptstyle]
\usepackage[usenames]{color}

\theoremstyle{plain}
\newtheorem{theorem}{Theorem}
\newtheorem{assertion}[theorem]{Assertion}
\newtheorem{lemma}[theorem]{Lemma}
\newtheorem{proposition}[theorem]{Proposition}

\theoremstyle{definition}
\newtheorem{definition}[theorem]{Definition}
\newtheorem{condition}[theorem]{Construction}
\newtheorem{conjecture}[theorem]{Conjecture}
\newtheorem{corollary}[theorem]{Corollary}

\theoremstyle{remark}
\newtheorem{remark}[theorem]{Remark}

\newtheorem{example}[theorem]{Example}
\numberwithin{equation}{section}
\numberwithin{theorem}{section}

\usepackage[scaled=0.86]{helvet}
\renewcommand{\mathfrak}[1]{{\textbf{\upshape #1}}}
\renewcommand{\mathbf}{\bm}
\renewcommand{\mathrm}[1]{\scalebox{1.15}{\textsf{\upshape #1}}}
\renewcommand{\emph}[1]{\textrm{{\upshape #1}}}
\usepackage[scr=euler,scrscaled=1.2,bb=txof,bbscaled=1.16,cal=boondox,calscaled=1.1]{mathalfa}
\renewcommand{\mathit}[1]{\mathscr #1}
\renewcommand{\mathtt}[1]{\scalebox{.9}{\bf \texttt{ \upshape #1}}}
\usepackage{exscale}

\tolerance=9000 \hbadness=9000
\setlength{\textwidth}{16.8cm}
\setlength{\textheight}{23cm}
\setlength{\oddsidemargin}{-0.2cm}
\setlength{\evensidemargin}{-0.2cm}
\setlength{\topmargin}{-0.7cm}

\numberwithin{equation}{section}
\numberwithin{theorem}{section}

\usepackage[normalem]{ulem}
\usepackage[square,comma]{natbib}

\usepackage{mparhack}

\usepackage{fancyhdr}
\headsep = 25pt
\author{\scshape S\lowercase{teven} D\lowercase{uplij}}

\address{\noindent \itshape Address\textrm{:} \tiny \scshape
Universit\"at M\"unster,
Einsteinstrasse 62,
D-48149 M\"unster,
Deutschland}
\email{\small \sf douplii@uni-muenster.de; http://ivv5hpp.uni-muenster.de/u/douplii}
\title[Polyadic Hopf algebras and quantum groups
]{\textbf{Polyadic Hopf algebras and quantum groups}}

\subjclass[2010]{16T05, 16T25, 17A42, 20N15, 20F29, 20G05, 20G42, 57T05}
\date{November 6, 2018}

\renewcommand{\refname}{\textsc{References}}

\let\origsection\section
\renewcommand{\section}[1]{\sectionmark{#1}\origsection{#1}}
\let\origsubsection\subsection
\renewcommand{\subsection}[1]{\subsectionmark{#1}\origsubsection{#1}}

\makeatletter
\renewenvironment{thebibliography}[1]{%
  \@xp\origsection\@xp*\@xp{\refname}%
  \normalfont\footnotesize\labelsep .9em\relax
  \renewcommand\theenumiv{\arabic{enumiv}}\let\p@enumiv\@empty
  \vspace*{-5pt}
  \list{\@biblabel{\theenumiv}}{\settowidth\labelwidth{\@biblabel{#1}}%
    \leftmargin\labelwidth \advance\leftmargin\labelsep
    \usecounter{enumiv}}%
  \sloppy \clubpenalty\@M \widowpenalty\clubpenalty
  \sfcode`\.=\@m
}{%
  \def\@noitemerr{\@latex@warning{Empty `thebibliography' environment}}%
  \endlist
}
\makeatother

\begin{document}

\mbox{}

\begin{abstract}
\noindent
This article continues the study of concrete algebra-like structures in our polyadic approach, where the arities of all operations are initially taken as arbitrary, but the relations between them, the arity shapes, are to be found from some natural conditions  (``arity freedom principle''). In this way, generalized associative algebras, coassociative coalgebras, bialgebras and Hopf algebras are defined and investigated. They have many unusual features in comparison with the binary case. For instance, both the algebra and its underlying field can be zeroless and nonunital, the existence of the unit and counit is not obligatory, and the dimension of the algebra is not arbitrary, but ``quantized''. The polyadic convolution product and bialgebra can be defined, and when the algebra and coalgebra have unequal arities, the polyadic version of the antipode, the querantipode, has different properties. As a possible application to quantum group theory, we introduce the polyadic version of braidings, almost co-commutativity, quasitriangularity and the equations for the $R$-matrix (which can be treated as a polyadic analog of the Yang-Baxter equation). Finally, we propose another concept of deformation which is governed not by the twist map, but by the medial map, where    only the latter is unique in the polyadic case. We present the corresponding braidings, almost co-mediality and $M$-matrix, for which the compatibility equations are found.

\end{abstract}

\maketitle
\thispagestyle{empty}

\begin{small}
\tableofcontents
\end{small}%

\pagestyle{fancy}
\addtolength{\footskip}{15pt}

\renewcommand{\sectionmark}[1]{%
\markboth{\textmd{
\  \thesection.} { \scshape #1}}{}}
\renewcommand{\subsectionmark}[1]{%
\markright{
\mbox{\;}\\[5pt]
\textmd{#1}}{}}
\fancyhead{}
\fancyhead[EL,OR]{\leftmark}
\fancyhead[ER,OL]{\rightmark}
\fancyfoot[C]{\scshape - \thepage \ -}
\renewcommand\headrulewidth{0.5pt}
\fancypagestyle {plain1}{ %
\fancyhf{}
\renewcommand {\headrulewidth }{0pt}
\renewcommand {\footrulewidth }{0pt}
}

\fancypagestyle{plain}{ %
\fancyhf{}
\fancyhead[C]{\scshape S\lowercase{teven} D\lowercase{uplij} \hskip 0.7cm \MakeUppercase{Polyadic Hopf algebras and quantum groups}}
\fancyfoot[C]{\scshape - \thepage \ -}
\renewcommand {\headrulewidth }{0pt}
\renewcommand {\footrulewidth }{0pt}
}

\fancypagestyle{fancyref}{ %
\fancyhf{} 
\fancyhead[C]{\scshape R\lowercase{eferences} }
\fancyfoot[C]{\scshape - \thepage \ -}
\renewcommand {\headrulewidth }{0.5pt}
\renewcommand {\footrulewidth }{0pt}
}

\fancypagestyle{emptyf}{
\fancyhead{}
\fancyfoot[C]{\thepage}
\renewcommand{\headrulewidth}{0pt}
}

\mbox{}

\bigskip

\thispagestyle{plain}

\mbox{}

\section{\textsc{Introduction}}

Since Hopf algebras were introduced in connection with algebraic topology
\cite{swe,abe}, their role has increased significantly (see, e.g., \cite{radford}),
with numerous applications in diverse areas, especially in relation to quantum groups
\cite{dri0,shn/ste,cha/pre,kassel,majid}. There have been many generalizations of
Hopf algebras (for a brief review, see, e.g., \cite{kar2008}).

From another perspective, the concepts of polyadic vector space, polyadic
algebras and polyadic tensor product over general polyadic fields were
introduced in \cite{dup2017}. They differ from the standard definitions
of $n$-ary algebras \cite{azc/izq,mil/vin,goz/rau} in considering an
arbitrary arity shape for all operations, and not the algebra multiplication
alone. This means that the arities of addition in the algebra, the multiplication
and addition in the underlying field can all be different from binary and the
number of places in the multiaction (polyadic module) can be more than one
\cite{dup2018a}. The connection between arities is determined by their arity
shapes \cite{dup2017} (``arity freedom principle'').
Note that our approach is somewhat different from the
operad approach (see, e.g., \cite{mar/shn/sta,lod/val}).

Here we propose a similar and consequent polyadic generalization of Hopf algebras.
First, we define polyadic coalgebras and study their homomorphisms and tensor
products. In the construction of the polyadic convolution product and
bialgebras we propose considering different arities for the algebra and coalgebra,
which is a crucial difference from the binary case. Instead of the antipode,
we introduce its polyadic version, the querantipode, by analogy with the
querelement in $n$-ary groups \cite{dor3}. We then consider polyadic analogs
of braidings, almost co-commutativity and the $R$-matrix, together with the
quasitriangularity equations. This description is not unique, as with the
polyadic analog of the twist map, while the medial map is unique for all
arities. Therefore, a new (unique) concept of deformation is proposed: almost
co-mediality with the corresponding $M$-matrix. The medial analogs of braidings
and quasitriangularity are introduced, and the equations for $M$-matrix are obtained.

\section{\label{sec-polf}\textsc{Polyadic fields and vector spaces}}

Let $\Bbbk=\Bbbk^{\left(  m_{k},n_{k}\right)  }=\left\langle K\mid\nu
_{k}^{\left(  m_{k}\right)  },\mu_{k}^{\left(  n_{k}\right)  }\right\rangle $
be a polyadic or $\left(  m_{k},n_{k}\right)  $-ary field with $n_{k}$-ary
multiplication $\mu_{k}^{\left(  n_{k}\right)  }:K^{n_{k}}\rightarrow K$ and
$m_{k}$-ary addition $\nu_{k}^{\left(  m_{k}\right)  }:K^{m_{k}}\rightarrow K$
which are (polyadically) associative and distributive, such that $\left\langle
K\mid\mu^{\left(  n_{k}\right)  }\right\rangle $ and $\left\langle K\mid
\nu^{\left(  m_{k}\right)  }\right\rangle $ are both commutative polyadic
groups \cite{cro2,lee/but}. This means that $\mu_{k}^{\left(  n_{k}\right)
}=\mu_{k}^{\left(  n_{k}\right)  }\circ\tau_{n_{k}}$ and $\nu_{k}^{\left(
m_{k}\right)  }=\nu_{k}^{\left(  m_{k}\right)  }\circ\tau_{m_{k}}$, where
$\tau_{n_{k}}\in\mathrm{S}_{n_{k}}$, $\tau_{m_{k}}\in\mathrm{S}_{m_{k}}$, and
$\mathrm{S}_{n_{k}},\mathrm{S}_{m_{k}}$ are the symmetry permutation groups. A
polyadic field $\Bbbk^{\left(  m_{k},n_{k}\right)  }$ is \textit{derived}, if
$\mu_{k}^{\left(  n_{k}\right)  }$ and $\nu_{k}^{\left(  m_{k}\right)  }$ are
iterations of the corresponding binary operations: ordinary multiplication and
addition. The polyadic fields considered in \cite{lee/but} were derived. The
simplest example of a nonderived $\left(  2,3\right)  $-ary field is
$\Bbbk^{\left(  2,3\right)  }=\left\{  i\mathbb{R}\right\}  $, and of a
nonderived $\left(  3,3\right)  $-ary field is $\Bbbk^{\left(  3,3\right)
}=\left\{  ip/q\right\}  $, where $p,q\in\mathbb{Z}^{odd}$ ($i^{2}=-1$, and
the operations are in $\mathbb{C}$). Polyadic analogs of prime Galois fields
including nonderived ones were presented in \cite{dup2017a}.

Recall that a \textit{polyadic zero} $z$ in any $\left\langle X\mid
\nu^{\left(  m\right)  }\right\rangle $ (with $\nu^{\left(  m\right)  }$ being
an \textsf{addition-like} operation) is defined (if it exists) by %
\begin{equation}
\nu^{\left(  m\right)  }\left[  \mathbf{\hat{x}},z\right]  =z,\ \ \ \forall
\mathbf{\hat{x}}\in X^{m-1}, \label{z}%
\end{equation}
where $z$ can be on any place, and $\mathbf{x}$ is any \textit{polyad} of
length $m-1$ (as a sequence of elements) in $X$. A \textit{polyadic unit} in any
$\left\langle X\mid\mu^{\left(  n\right)  }\right\rangle $ (with $\mu$ being a
\textsf{multiplication-like} operation) is an $e\in X$ (if it exists) such that %
\begin{equation}
\mu^{\left(  n\right)  }\left[  e^{n-1},x\right]  =x,\ \ \ \forall x\in X,
\label{e}%
\end{equation}
where $x$ can be on any place, and the repeated entries in a polyad are denoted by a
power $\overset{n}{\overbrace{x,\ldots,x}}\equiv x^{n}$. It follows from
(\ref{e}), that for $n\geq3$ the polyad $\mathbf{e}$ can play the role of a
unit, and is called a neutral sequence \cite{usa}%
\begin{equation}
\mu^{\left(  n\right)  }\left[  \mathbf{\hat{e}},x\right]  =x,\ \ \ \forall
x\in X,\ \ \mathbf{\hat{e}}\in X^{n-1}. \label{me}%
\end{equation}
This is a crucial difference from the binary case, as the neutral
sequence $\mathbf{\hat{e}}$ can (possibly) be nonunique.

The nonderived polyadic fields obey unusual properties: they can have several
(polyadic) units or no units at all (\textit{nonunital}, as in $\Bbbk^{\left(
2,3\right)  }$ and $\Bbbk^{\left(  3,3\right)  }$ above), no (polyadic) zeros
(\textit{zeroless}, as $\Bbbk^{\left(  3,3\right)  }$ above), or they can consist
of units only (for some examples, see \cite{dup/wer,dup2017a}). This may lead, in general, to new features of the algebraic
structures using the polyadic fields as the underlying fields (e.g. scalars
for vector spaces, etc.) \cite{dup2017}.

Moreover, polyadic invertibility is not connected with units, but is
governed by the special element, analogous to an inverse, the so called
\textit{querelement} $\bar{x}$, which for any $\left\langle X\mid\mu^{\left(
n\right)  }\right\rangle $ is defined by \cite{dor3}%
\begin{equation}
\mu^{\left(  n\right)  }\left[  x^{n-1},\bar{x}\right]  =x,\ \ \ \forall x\in
X, \label{q}%
\end{equation}
where $\bar{x}$ can be on any place (instead of the binary inverse
\textquotedblleft$xx^{-1}=e$\textquotedblright). An element $x\in X$ for
which (\ref{q}) has a solution under $\bar{x}$ is called \textit{querable} or
\textquotedblleft polyadically invertible\textquotedblright. If all elements
in $X$ are querable, and the operation $\mu^{\left(  n\right)  }$ is
polyadically associative, then $\left\langle X\mid\mu^{\left(  n\right)
}\right\rangle $ is a $n$-ary group. \textit{Polyadic associativity} in
$\left\langle X\mid\mu^{\left(  n\right)  }\right\rangle $ can be defined as a
kind of invariance relationship \cite{dup2018a}%
\begin{equation}
\mu^{\left(  n\right)  }\left[  \mathbf{\hat{x}},\mu^{\left(  n\right)
}\left[  \mathbf{\hat{y}}\right]  ,\mathbf{\hat{z}}\right]  =invariant,
\label{a}%
\end{equation}
where $\mathbf{\hat{x}},\mathbf{\hat{y}},\mathbf{\hat{z}}$ are polyads of the
needed size in $X$, and $\mu^{\left(  n\right)  }\left[  \mathbf{\hat{y}%
}\right]  $ can be on any place, and we therefore will not use additional
brackets. Using polyadic associativity (\ref{a}) we introduce $\ell
$-\textit{iterated multiplication} by%
\begin{equation}
\left(  \mu^{\left(  n\right)  }\right)  ^{\circ\ell}\left[  \mathbf{\hat{x}%
}\right]  =\overset{\ell}{\overbrace{\mu^{\left(  n\right)  }[\mu^{\left(
n\right)  }[\ldots\mu^{\left(  n\right)  }[}}\mathbf{\hat{x}]]]}%
,\ \ \ \ \mathbf{\hat{x}}\in X^{\ell\left(  n-1\right)  +1}, \label{mx}%
\end{equation}
where $\ell$ is \textquotedblleft number of multiplications\textquotedblright.
Therefore, the \textit{admissible} length of any $n$-ary word is \textsf{not
arbitrary}, as in the binary $n=2$ case, but fixed (\textquotedblleft
quantized\textquotedblright) to $\ell\left(  n-1\right)  +1$.

\begin{example}
\label{ex-tern}Consider the nonunital zeroless polyadic field $\Bbbk^{\left(
3,3\right)  }=\left\{  ip/q\right\}  $, $i^{2}=-1$, $p,q\in\mathbb{Z}^{odd}$
(from the example above). Both the ternary addition $\nu^{\left(  3\right)
}\left[  x,y,t\right]  =x+y+t$ and the ternary multiplication $\mu^{\left(
3\right)  }\left[  x,y,t\right]  =xyt$ are nonderived, ternary associative and
distributive. For each $x=ip/q$ ($p,q\in\mathbb{Z}^{odd}$) the
\textit{additive} \textit{querelement} (denoted by a wave, a ternary analog of an
inverse element with respect to addition) is $\tilde{x}=-ip/p^{\prime}$, and
the \textit{multiplicative querelement} is $\bar{x}=-iq/p$ (see (\ref{q})).
Therefore, both $\left\langle \left\{  ip/q\right\}  \mid\mu^{\left(
3\right)  }\right\rangle $ and $\left\langle \left\{  ip/q\right\}  \mid
\nu^{\left(  3\right)  }\right\rangle $ are ternary groups (as it should be
for a $\left(  3,3\right)  $-field), but they contain \textsf{no neutral
elements} (unit or zero).
\end{example}

The polyadic analogs of vector spaces and tensor products were introduced in
\cite{dup2017}. Briefly, consider a set $V$ of \textquotedblleft polyadic
vectors\textquotedblright\ with the addition-like $m_{v}$-ary operation
$\nu_{V}^{\left(  m_{v}\right)  }$, such that $\left\langle V\mid\nu
_{v}^{\left(  m_{v}\right)  }\right\rangle $ is a \textsf{commutative} $m_{v}%
$-ary group. The key differences from the binary case are: 1) The zero vector
$z_{v}$ does not necessarily exist (see the above example for $\Bbbk^{\left(
3,3\right)  }$ field); 2) The role of a negative vector is played by the
additive querelement $\tilde{v}$ in $\left\langle V\mid\nu_{V}^{\left(
m_{v}\right)  }\right\rangle $ (which \textsf{does not} imply the existence of
$z_{v}$). A polyadic analog of the binary multiplication by a scalar ($\lambda
v$) is the \textit{multiaction} $\rho^{\left(  r_{\rho}\right)  }$ introduced
in \cite{dup2018a}%
\begin{equation}
\rho_{V}^{\left(  r_{v}\right)  }:K^{r_{v}}\times V\rightarrow V. \label{r}%
\end{equation}
If the unit $e_{k}$ exists in $\Bbbk^{\left(  m_{k},n_{k}\right)  }$, then
the multiaction can be normalized (analog of \textquotedblleft$1v=v$%
\textquotedblright) by%
\begin{equation}
\rho_{V}^{\left(  r_{v}\right)  }\left(  e_{k}^{\times r_{v}}\mid v\right)
=v,\ \ \ v\in V. \label{re}%
\end{equation}

Under the composition $\circ_{n_{\rho}}$ (given by the \textit{arity changing
formula} \cite{dup2018a}), the set of multiactions form a $n_{\rho}$-ary
semigroup $\mathrm{S}_{\rho}^{\left(  n_{\rho}\right)  }=\left\langle \left\{
\rho_{V}^{\left(  r_{v}\right)  }\right\}  \mid\circ_{n_{\rho}}\right\rangle
$. Its arity is less or equal than $n_{k}$ and depends on one integer
parameter (the number of intact elements in the composition), which is less than
$\left(  r_{v}-1\right)  $ (for details see \cite{dup2017}).

A \textit{polyadic} \textit{vector space} over the polyadic field
$\Bbbk^{\left(  m_{k},n_{k}\right)  }$ is%
\begin{equation}
\mathrm{V}=\mathrm{V}^{\left(  m_{v};m_{k},n_{k};r_{\rho}\right)
}=\left\langle V,K\mid\nu_{V}^{\left(  m_{v}\right)  };\nu_{k}^{\left(
m_{k}\right)  },\mu_{k}^{\left(  n_{k}\right)  };\rho_{V}^{\left(
r_{v}\right)  }\right\rangle , \label{v}%
\end{equation}
where $\left\langle V\mid\nu_{V}^{\left(  m_{v}\right)  }\right\rangle $ is a
commutative $m_{v}$-ary group, $\left\langle K\mid\mu_{k}^{\left(
n_{k}\right)  },\nu_{k}^{\left(  m_{k}\right)  }\right\rangle $ is a polyadic
field, $\left\langle \left\{  \rho_{V}^{\left(  r_{v}\right)  }\right\}
\mid\circ_{n_{\rho}}\right\rangle $ is a $n_{\rho}$-ary semigroup, the
multiaction $\rho^{\left(  r_{\rho}\right)  }$ is distributive with respect to
the polyadic additions $\nu_{V}^{\left(  m_{v}\right)  }$, $\nu_{k}^{\left(
m_{k}\right)  }$ and compatible with $\mu_{k}^{\left(  n_{k}\right)  }$ (see
(2.15), (2.16), and (2.9) in \cite{dup2017}). If instead of the underlying
field, we consider a ring, then (\ref{v}) define a \textit{polyadic module}
together with (\ref{r}). The \textit{dimension} $d_{v}$ of a polyadic vector
space is the number of elements in its polyadic basis, and we denote it
$\mathrm{V}_{d_{v}}=\mathrm{V}_{d_{v}}^{\left(  m_{v};m_{k},n_{k}%
;r_{v}\right)  }$. The polyadic direct sum and polyadic tensor product of
polyadic vector spaces were constructed in \cite{dup2017} (see (3.25) and
(3.39) there). They have an unusual peculiarity (which is not possible in
the binary case): the polyadic vector spaces of \textsf{different arities} can
be added and multiplied. The polyadic tensor product is \textquotedblleft%
$\Bbbk$-linear\textquotedblright\ in the usual sense, only instead of
\textquotedblleft multiplication by scalar\textquotedblright\ one uses the
multiaction $\rho_{V}^{\left(  r_{v}\right)  }$ (see \cite{dup2017} for
details). Because of associativity, we will use the \textsf{binary-like}
notation for polyadic tensor products (implying $\otimes=\otimes_{\Bbbk}$) and
powers of them (for instance, $\overset{n}{\overbrace{x\otimes x\otimes
\ldots\otimes x}}=x^{\otimes n}$) to be clearer in computations and as customary
in diagrams.

\section{\textsc{Polyadic associative algebras}}

Here we introduce operations on elements of a polyadic vector space, which
leads to the notion of a polyadic algebra.

\subsection{\textquotedblleft Elementwise\textquotedblright%
\ description\label{sec-el}}

Here we formulate the polyadic algebras in terms of sets and operations
written in a manifest form. The arities will be initially taken as
arbitrary, but then relations between them will follow from compatibility
conditions (as in \cite{dup2017}).

\begin{definition}
\label{def-alg-as}A \textit{polyadic (associative) algebra} (or $\Bbbk
$-algebra) is a tuple consisting of 2 sets and 5 operations%
\begin{equation}
\mathrm{A}=\mathrm{A}^{\left(  m_{a},n_{a};m_{k},n_{k};r_{a}\right)
}=\left\langle A,K\mid\nu_{A}^{\left(  m_{a}\right)  },\mu_{A}^{\left(
n_{a}\right)  };\nu_{k}^{\left(  m_{k}\right)  },\mu_{k}^{\left(
n_{k}\right)  };\rho_{A}^{\left(  r_{a}\right)  }\right\rangle , \label{al}%
\end{equation}
where:

\begin{enumerate}
\item $\Bbbk^{\left(  m_{k},n_{k}\right)  }=\left\langle K\mid\nu_{k}^{\left(
m_{k}\right)  },\mu_{k}^{\left(  n_{k}\right)  }\right\rangle $ is a polyadic
field with the $m_{k}$-ary \textit{field (scalar) addition} $\nu_{k}^{\left(
m_{k}\right)  }:K^{m_{k}}\rightarrow K$ and $n_{k}$-ary \textit{field (scalar)
multiplication} $\mu_{k}^{\left(  n_{k}\right)  }:K^{m_{k}}\rightarrow K$;

\item
\begin{equation}
\mathrm{A}_{vect}=\mathrm{A}^{\left(  m_{a};m_{k},n_{k};r_{a}\right)
}=\left\langle A,K\mid\nu_{A}^{\left(  m_{a}\right)  };\nu_{k}^{\left(
m_{k}\right)  },\mu_{k}^{\left(  n_{k}\right)  };\rho_{A}^{\left(
r_{a}\right)  }\right\rangle \label{av}%
\end{equation}
is a polyadic vector space with the $m_{a}$-ary \textit{vector addition}
$\nu_{A}^{\left(  m_{a}\right)  }:A^{n_{a}}\rightarrow A$ and the $r_{a}%
$-\textit{place} \textit{multiaction} $\rho_{A}^{\left(  r_{a}\right)
}:K^{r_{a}}\times A\rightarrow A$;

\item The map $\mu_{A}^{\left(  n_{a}\right)  }:A^{n_{a}}\rightarrow A$ is a
$\Bbbk$-linear map (\textquotedblleft\textit{vector multiplication}%
\textquotedblright) satisfying total associativity%
\begin{equation}
\mu_{A}^{\left(  n_{a}\right)  }\left[  \mathbf{\hat{a}},\mu_{A}^{\left(
n_{a}\right)  }\left[  \mathbf{\hat{b}}\right]  ,\mathbf{\hat{c}}\right]
=invariant, \label{mn}%
\end{equation}
where the second product $\mu_{A}^{\left(  n_{a}\right)  }$ can be on any
place in brackets and $\mathbf{\hat{a}},\mathbf{\hat{b}},\mathbf{\hat{c}}$ are polyads;

\item The multiacton $\rho_{A}^{\left(  r_{a}\right)  }$ is compatible with
vector and field operations $\left(  \nu_{A}^{\left(  m_{a}\right)  },\mu
_{A}^{\left(  n_{a}\right)  };\nu_{k}^{\left(  m_{k}\right)  },\mu
_{k}^{\left(  n_{k}\right)  }\right)  $.
\end{enumerate}
\end{definition}

\begin{definition}
We call the tuple $\left(  m_{a},n_{a};m_{k},n_{k};r_{a}\right)  $ an
\textit{arity shape} of the polyadic algebra $\mathrm{A}$.
\end{definition}

The compatibility of the multiaction $\rho_{A}^{\left(  r_{a}\right)  }$
(\textquotedblleft linearity\textquotedblright) consists of
\cite{dup2018a,dup2017}:

\textbf{1}) Distributivity with respect to the $m_{a}$-ary vector addition
$\nu_{A}^{\left(  m_{a}\right)  }$ (\textquotedblleft$\lambda\left(
a+b\right)  =\lambda a+\lambda b$\textquotedblright)%
\begin{align}
&  \rho_{A}^{\left(  r_{a}\right)  }\left\{  \lambda_{1},\ldots\ldots
,\lambda_{r_{a}}\mid\nu_{A}^{\left(  m_{a}\right)  }\left[  a_{1}%
,\ldots,a_{m_{a}}\right]  \right\} \nonumber\\
&  =\nu_{A}^{\left(  m_{a}\right)  }\left[  \rho_{A}^{\left(  r_{a}\right)
}\left\{  \lambda_{1},\ldots\ldots,\lambda_{r_{a}}\mid a_{1}\right\}
,\ldots,\rho_{A}^{\left(  r_{a}\right)  }\left\{  \lambda_{1},\ldots
\ldots,\lambda_{r_{a}}\mid a_{m_{a}}\right\}  \right]  . \label{c0}%
\end{align}

\textbf{2}) Compatibility with $n_{a}$-ary \textquotedblleft vector
multiplication\textquotedblright\ $\mu_{A}^{\left(  n_{a}\right)  }$
(\textquotedblleft$\left(  \lambda a\right)  \cdot\left(  \mu b\right)
=\left(  \lambda\mu\right)  \left(  a\cdot b\right)  $\textquotedblright)%
\begin{align}
&  \mu_{A}^{\left(  n_{a}\right)  }\left[  \rho_{A}^{\left(  r_{a}\right)
}\left\{  \lambda_{1},\ldots\ldots,\lambda_{r_{a}}\mid a_{1}\right\}
,\ldots,\rho_{A}^{\left(  r_{a}\right)  }\left\{  \lambda_{r_{a}\left(
n_{a}-1\right)  },\ldots\ldots,\lambda_{r_{a}n_{a}}\mid a_{n_{a}}\right\}
\right] \nonumber\\
&  =\rho_{A}^{\left(  r_{a}\right)  }\Bigg\{\overset{\ell}{\overbrace{\mu
_{k}^{\left(  n_{k}\right)  }\left[  \lambda_{1},\ldots\ldots,\lambda_{m_{k}%
}\right]  ,\ldots,\mu_{k}^{\left(  n_{k}\right)  }\left[  \lambda
_{m_{k}\left(  \ell-1\right)  },\ldots\ldots,\lambda_{m_{k}\ell}\right]  }%
},\Bigg.\nonumber\\
&  \Bigg.\lambda_{m_{k}\ell+1},\ldots\ldots,\lambda_{r_{a}n_{a}}\mid\mu
_{A}^{\left(  n_{a}\right)  }\left[  a_{1},\ldots,a_{n_{a}}\right]
\Bigg\},\label{c1}\\
&  \ell\left(  n_{k}-1\right)  =r_{a}\left(  n_{a}-1\right)  , \label{c1a}%
\end{align}
where $\ell$ is an integer, and $\ell\leq r_{a}\leq\ell\left(  n_{k}-1\right)
$, $2\leq n_{a}\leq n_{k}$.

\textbf{3}) Distributivity with respect to the $m_{k}$-ary field addition
$\nu_{k}^{\left(  m_{k}\right)  }$ (\textquotedblleft$\left(  \lambda
+\mu\right)  a=\lambda a+\mu a$\textquotedblright)%
\begin{align}
&  \rho^{\left(  r_{a}\right)  }\left\{  \overset{\ell^{\prime}}%
{\overbrace{\nu_{k}^{\left(  m_{k}\right)  }\left[  \lambda_{1},\ldots
\ldots,\lambda_{m_{k}}\right]  ,\ldots,\nu_{k}^{\left(  m_{k}\right)  }\left[
\lambda_{m_{k}\left(  \ell^{\prime}-1\right)  },\ldots\ldots,\lambda
_{m_{k}\ell^{\prime}}\right]  }},\lambda_{m_{k}\ell^{\prime}+1},\ldots
\ldots,\lambda_{r_{a}m_{a}}\mid a\right\} \nonumber\\
&  =\nu_{A}^{\left(  m_{a}\right)  }\left[  \rho^{\left(  r_{a}\right)
}\left\{  \lambda_{1},\ldots\ldots,\lambda_{r_{a}}\mid a\right\}  ,\ldots
,\rho^{\left(  r_{a}\right)  }\left\{  \lambda_{r_{a}\left(  m_{a}-1\right)
},\ldots\ldots,\lambda_{r_{a}m_{a}}\mid a\right\}  \right]  ,\label{c2}\\
&  \ell^{\prime}\left(  m_{k}-1\right)  =r_{a}\left(  m_{a}-1\right)  ,
\end{align}
where $\ell^{\prime}$ is an integer, and $\ell^{\prime}\leq r_{a}\leq
\ell^{\prime}\left(  m_{k}-1\right)  $, $2\leq m_{a}\leq m_{k}$.

\textbf{4}) Compatibility $n_{k}$-ary field multiplication $\mu_{k}^{\left(
n_{k}\right)  }$ (\textquotedblleft$\lambda\left(  \mu a\right)  =\left(
\lambda\mu\right)  a$\textquotedblright)%
\begin{align}
&  \rho_{A}^{\left(  r_{a}\right)  }\overset{n_{\rho}}{\overbrace{\left\{
\lambda_{1},\ldots\ldots,\lambda_{r_{a}}\mid\ldots\rho_{A}^{\left(
r_{a}\right)  }\left\{  \lambda_{r_{a}\left(  n_{\rho}-1\right)  }%
,\ldots\ldots,\lambda_{r_{a}n_{\rho}}\mid a\right\}  \ldots\right\}  }%
}\nonumber\\
&  =\rho_{A}^{\left(  r_{a}\right)  }\left\{  \overset{\ell^{\prime\prime}%
}{\overbrace{\mu_{k}^{\left(  n_{k}\right)  }\left[  \lambda_{1},\ldots
\ldots,\lambda_{n_{k}}\right]  ,\ldots,\mu_{k}^{\left(  n_{k}\right)  }\left[
\lambda_{n_{k}\left(  \ell^{\prime\prime}-1\right)  },\ldots\ldots
,\lambda_{n_{k}\ell^{\prime\prime}}\right]  }},\lambda_{n_{k}\ell
^{\prime\prime}+1},\ldots\ldots,\lambda_{r_{a}n_{\rho}}\mid a\right\}
,\label{c3}\\
&  \ell^{\prime\prime}\left(  n_{k}-1\right)  =r_{a}\left(  n_{\rho}-1\right)
, \label{c3a}%
\end{align}
where $\ell^{\prime\prime}$ is an integer, and $\ell^{\prime\prime}\leq
r_{a}\leq\ell^{\prime\prime}\left(  n_{k}-1\right)  $, $2\leq n_{\rho}\leq
n_{k}$.

\begin{remark}
In the binary case, we have $m_{a}=n_{a}=m_{k}=n_{k}=n_{\rho}=2$, $r_{a}%
=\ell=\ell^{\prime}=\ell^{\prime\prime}=1$. The $n$-ary algebras
\cite{azc/izq,mil/vin} have only one distinct arity $n_{a}=n$.
\end{remark}

\begin{definition}
\label{def-ar}We call the triple $\left(  \ell,\ell^{\prime},\ell
^{\prime\prime}\right)  $ a $\ell$-\textit{arity shape} of the polyadic
algebra $\mathrm{A}$.
\end{definition}

\begin{proposition}
In the limiting $\ell$-arity shapes the arity shape of {\upshape$\mathrm{A}$}
is determined by \textsf{three} integers$\left(  m,n,r\right)  $, such that:

\begin{enumerate}
\item For the maximal $\ell=\ell^{\prime}=\ell^{\prime\prime}=r_{a}$, the
arity shape of the algebra and underlying field coincide%
\begin{align}
m_{a}  &  =m_{k}=m,\\
n_{a}  &  =n_{k}=n_{\rho}=n,\\
r_{a}  &  =r.
\end{align}

\item For the minimal $\ell$-arities $\ell=\ell^{\prime}=\ell^{\prime\prime
}=1$ it should be $r_{a}|\left(  m_{k}-1\right)  $ and $r_{a}|\left(
n_{k}-1\right)  $, and%
\begin{align}
m_{a}  &  =1+\frac{m-1}{r},\\
n_{a}  &  =n_{\rho}=1+\frac{n-1}{r},\\
m_{k}  &  =m,\\
n_{k}  &  =n,\\
r_{a}  &  =r.
\end{align}

\end{enumerate}
\end{proposition}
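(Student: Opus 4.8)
The plan is to read off both limiting cases directly from the $\ell$-arity constraint equations \eqref{c1a}, the analogue for $\ell'$, and \eqref{c3a}, together with the range conditions $\ell\le r_a\le\ell(n_k-1)$, $\ell'\le r_a\le\ell'(m_k-1)$, $\ell''\le r_a\le\ell''(n_k-1)$. First I would record the three Diophantine relations in one place:
\begin{align}
\ell\left(  n_{k}-1\right)   &  =r_{a}\left(  n_{a}-1\right)  ,\\
\ell^{\prime}\left(  m_{k}-1\right)   &  =r_{a}\left(  m_{a}-1\right)  ,\\
\ell^{\prime\prime}\left(  n_{k}-1\right)   &  =r_{a}\left(  n_{\rho}-1\right)  .
\end{align}
Then I would treat the two extreme choices of $(\ell,\ell^{\prime},\ell^{\prime\prime})$ separately.

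For the maximal case I would set $\ell=\ell^{\prime}=\ell^{\prime\prime}=r_{a}$. Substituting $\ell=r_a$ into the first relation and cancelling $r_a\neq0$ gives $n_k-1=n_a-1$, i.e. $n_a=n_k$; the third relation with $\ell''=r_a$ gives $n_\rho-1=n_k-1$, i.e. $n_\rho=n_k$; the second with $\ell'=r_a$ gives $m_a=m_k$. Writing $m:=m_a=m_k$, $n:=n_a=n_k=n_\rho$, $r:=r_a$ yields the claimed coincidence of the arity shapes of $\mathrm{A}$ and $\Bbbk$. I should also check that $\ell=r_a$ is admissible, i.e. that it lies in the range $\ell\le r_a\le\ell(n_k-1)$: the left inequality is an equality, and the right one reads $r_a\le r_a(n_k-1)$, which holds since $n_k\ge2$; the same check works for $\ell',\ell''$ using $m_k,n_k\ge2$. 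These bounds are exactly what guarantees $2\le n_a\le n_k$ etc., so nothing new needs to be imposed.

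For the minimal case I would set $\ell=\ell^{\prime}=\ell^{\prime\prime}=1$. The first relation becomes $n_k-1=r_a(n_a-1)$, which forces $r_a\mid(n_k-1)$ and $n_a=1+\dfrac{n_k-1}{r_a}$; the third relation (also with $\ell''=1$) gives $n_\rho=1+\dfrac{n_k-1}{r_a}=n_a$; the second relation gives $r_a\mid(m_k-1)$ and $m_a=1+\dfrac{m_k-1}{r_a}$. Setting $m:=m_k$, $n:=n_k$, $r:=r_a$ reproduces the five displayed formulas of part (2), and the divisibility hypotheses $r_a\mid(m_k-1)$, $r_a\mid(n_k-1)$ are precisely the integrality conditions that make $m_a,n_a$ integers, so they are stated as conditions rather than derived. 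Finally I would note that the remaining admissibility bounds are automatic here: with $\ell=1$ the range $\ell\le r_a\le\ell(n_k-1)$ reads $1\le r_a\le n_k-1$, which is consistent with $n_a=1+(n_k-1)/r_a\ge2$, and similarly for the others.

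There is essentially no obstacle: the statement is a direct algebraic consequence of the constraint equations, and the only points requiring care are (i) cancelling $r_a$, which is legitimate since $r_a\ge\ell\ge1$, and (ii) confirming that the extreme values $\ell=r_a$ and $\ell=1$ actually satisfy the stated range restrictions so that the corresponding algebras exist — both checks reduce to $m_k,n_k\ge2$. I would close by remarking that "limiting" refers exactly to these two endpoints of the interval $1\le\ell\le r_a$ (and likewise for $\ell',\ell''$), between which intermediate arity shapes interpolate.
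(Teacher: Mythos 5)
Your proposal is correct and follows essentially the same route as the paper, whose proof simply states that the result follows directly from the compatibility conditions (\ref{c1})--(\ref{c3a}); you have merely spelled out the substitution of $\ell=\ell'=\ell''=r_a$ and $\ell=\ell'=\ell''=1$ into the three arity relations and verified the admissibility bounds, which is exactly the intended argument.
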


\begin{proof}
This follows directly from the compatibility conditions (\ref{c1})--(\ref{c3}).
\end{proof}

\begin{proposition}
If the multiaction $\rho_{A}^{\left(  r_{a}\right)  }$ is an ordinary action
$K\times A\rightarrow A$, then all $\ell$-arities are minimal $\ell
=\ell^{\prime}=\ell^{\prime\prime}=1$, and the arity shape of
{\upshape$\mathrm{A}$} is determined by \textsf{two} integers $\left(
m,n\right)  $, such that the arities of the algebra and underlying field are
equal, and the arity $n_{\rho}$ of the action semigroup $\mathrm{S}_{\rho}$ is
equal to the arity of multiplication in the underlying field%
\begin{align}
m_{a}  &  =m_{k}=m,\\
n_{a}  &  =n_{k}=n_{\rho}=n.
\end{align}

\end{proposition}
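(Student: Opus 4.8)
The plan is simply to specialize the general compatibility conditions (\ref{c0})--(\ref{c3a}) to the case of an ordinary (one-place) action, i.e.\ $r_{a}=1$, and to read off what this forces on the remaining arities. The only structural fact needed beyond the identities already displayed is that $\ell$, $\ell^{\prime}$, $\ell^{\prime\prime}$ are \emph{positive} integers: each counts a number of applications of a field operation inside the corresponding compatibility identity, hence is at least $1$ (this is also forced by the bounds $\ell\leq r_{a}\leq\ell\left(n_{k}-1\right)$ together with $n_{k}\geq2$, and similarly for $\ell^{\prime},\ell^{\prime\prime}$).

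First I would use the $\mu_{A}$-compatibility relation (\ref{c1}) with its arity constraint (\ref{c1a}): putting $r_{a}=1$ gives $\ell\left(n_{k}-1\right)=n_{a}-1$, while the accompanying inequality $\ell\leq r_{a}$ forces $\ell\leq1$, hence $\ell=1$ and therefore $n_{a}=n_{k}$. The same argument applied to the $\nu_{k}$-distributivity relation (\ref{c2}), whose constraint reads $\ell^{\prime}\left(m_{k}-1\right)=r_{a}\left(m_{a}-1\right)$, yields $\ell^{\prime}=1$ and $m_{a}=m_{k}$. Finally, the $\mu_{k}$-compatibility relation (\ref{c3}) with (\ref{c3a}) gives in exactly the same way $\ell^{\prime\prime}\left(n_{k}-1\right)=n_{\rho}-1$ and $\ell^{\prime\prime}\leq1$, so $\ell^{\prime\prime}=1$ and $n_{\rho}=n_{k}$. (The remaining condition (\ref{c0}) carries no arity constraint and is vacuous here; note also that the divisibility requirements $r_{a}\mid\left(m_{k}-1\right)$ and $r_{a}\mid\left(n_{k}-1\right)$ of the minimal case hold trivially for $r_{a}=1$, so we indeed land in the situation of part (2) of the preceding Proposition with $r=1$.)

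Setting $m:=m_{a}=m_{k}$ and $n:=n_{a}=n_{k}=n_{\rho}$ then produces precisely the claimed arity shape, and the argument is complete. I do not anticipate a genuine obstacle: the statement is essentially a bookkeeping corollary of the arity-shape equations already established, and the single point deserving a word of care is the positivity of $\ell,\ell^{\prime},\ell^{\prime\prime}$, which is what pins each of them down to exactly $1$ rather than merely bounding it from above.
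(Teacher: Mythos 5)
Your argument is correct and is essentially the derivation the paper intends: the proposition is left as an immediate consequence of the compatibility conditions (\ref{c1})--(\ref{c3a}), and setting $r_{a}=1$ there forces $\ell=\ell^{\prime}=\ell^{\prime\prime}=1$ via the bounds $\ell\leq r_{a}\leq\ell\left(n_{k}-1\right)$ (and their primed analogues), yielding $m_{a}=m_{k}$ and $n_{a}=n_{\rho}=n_{k}$ exactly as you state. Your explicit remark on why the $\ell$'s are positive integers is a sound and welcome clarification, not a deviation from the paper's route.
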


As it was shown in \cite{dup2017a}, there exist zeroless and nonunital
polyadic fields and rings. Therefore, the main difference with the binary
algebras is the possible \textsf{absence} of a zero and/or unit in the polyadic
field $\Bbbk^{\left(  m_{k},n_{k}\right)  }$ and/or in the polyadic ring%
\begin{equation}
\mathrm{A}_{ring}=\mathrm{A}^{\left(  m_{a},n_{a}\right)  }=\left\langle
A\mid\nu_{A}^{\left(  m_{a}\right)  },\mu_{A}^{\left(  n_{a}\right)
}\right\rangle ,
\end{equation}
and so the additional axioms are needed iff such elements exist. This was the
reason we have started from \textbf{Definition \ref{def-alg-as}}, where
no existence of zeroes and units in $\Bbbk^{\left(  m_{k},n_{k}\right)  }$ and
$\mathrm{A}_{ring}$ is implied.

If they exist, denote possible units and zeroes by $e_{k}\in\Bbbk^{\left(
m_{k},n_{k}\right)  }$, $z_{k}\in\Bbbk^{\left(  m_{k},n_{k}\right)  }$ and
$e_{A}\in\mathrm{A}^{\left(  m_{a},n_{a}\right)  }$, $z_{A}\in\mathrm{A}%
^{\left(  m_{a},n_{a}\right)  }$. In this way we have 4 choices for each
$\Bbbk^{\left(  m_{k},n_{k}\right)  }$ and $\mathrm{A}^{\left(  m_{a}%
,n_{a}\right)  }$, and these 16 possible kinds of polyadic algebras are presented in
 \textsc{Table \ref{tab1}}. The most exotic case is at the bottom right,
where both $\Bbbk^{\left(  m_{k},n_{k}\right)  }$ and $\mathrm{A}^{\left(
m_{a},n_{a}\right)  }$ are \textsf{zeroless nonunital}, which cannot exist in either
binary algebras or $n$-ary algebras \cite{azc/izq}.

\begin{table}[th]
\caption{Kinds of polyadic algebras depending on zeroes and units.}%
\label{tab1}
\begin{center}
{\tiny \resizebox{\textwidth}{!}{
\begin{tabular}
[c]{||l||l|l|l|l||}\hline\hline
\backslashbox{$\Bbbk^{\left(  m_{k},n_{k}\right)  }$}{$\mathrm{A}^{\left(  m_{a},n_{a}\right)  }$ }
& $
\begin{array}
[c]{c}z_{A}\\
e_{A}\end{array}
$ & $\begin{array}
[c]{c}z_{A}\\
\text{no }e_{A}\end{array}
$ & $\begin{array}
[c]{c}\text{no }z_{A}\\
e_{A}\end{array}
$ & $\begin{array}
[c]{c}\text{no }z_{A}\\
\text{no }e_{A}\end{array}
$\\\hline\hline
$\begin{array}
[c]{c}z_{k}\\
e_{k}\end{array}
$ & $\begin{array}
[c]{c}\text{unital $\mathrm{A}$}\\
\text{unital }\Bbbk
\end{array}
$ & $\begin{array}
[c]{c}\text{nonunital $\mathrm{A}$}\\
\text{unital }\Bbbk
\end{array}
$ & $\begin{array}
[c]{c}\text{unital zeroless $\mathrm{A}$}\\
\text{unital }\Bbbk
\end{array}
$ & $\begin{array}
[c]{c}\text{nonunital zeroless $\mathrm{A}$}\\
\text{unital }\Bbbk
\end{array}
$\\\hline
$\begin{array}
[c]{c}z_{k}\\
\text{no }e_{k}\end{array}
$ & $\begin{array}
[c]{c}\text{unital $\mathrm{A}$}\\
\text{nonunital }\Bbbk
\end{array}
$ & $\begin{array}
[c]{c}\text{nonunital $\mathrm{A}$}\\
\text{nonunital }\Bbbk
\end{array}
$ & $\begin{array}
[c]{c}\text{unital zeroless $\mathrm{A}$}\\
\text{nonunital }\Bbbk
\end{array}
$ & $\begin{array}
[c]{c}\text{nonunital zeroless $\mathrm{A}$}\\
\text{nonunital }\Bbbk
\end{array}
$\\\hline
$\begin{array}
[c]{c}\text{no }z_{k}\\
e_{k}\end{array}
$ & $\begin{array}
[c]{c}\text{unital $\mathrm{A}$}\\
\text{unital zeroless }\Bbbk
\end{array}
$ & $\begin{array}
[c]{c}\text{nonunital $\mathrm{A}$}\\
\text{unital zeroless }\Bbbk
\end{array}
$ & $\begin{array}
[c]{c}\text{unital zeroless $\mathrm{A}$}\\
\text{unital zeroless }\Bbbk
\end{array}
$ & $\begin{array}
[c]{c}\text{nonunital zeroless $\mathrm{A}$}\\
\text{unital zeroless }\Bbbk
\end{array}
$\\\hline
$\begin{array}
[c]{c}\text{no }z_{k}\\
\text{no }e_{k}\end{array}
$ & $\begin{array}
[c]{c}\text{unital $\mathrm{A}$}\\
\text{nonunital zeroless }\Bbbk
\end{array}
$ & $\begin{array}
[c]{c}\text{nonunital $\mathrm{A}$}\\
\text{nonunital zeroless }\Bbbk
\end{array}
$ & $\begin{array}
[c]{c}\text{unital zeroless $\mathrm{A}$}\\
\text{nonunital zeroless }\Bbbk
\end{array}
$ & $\begin{array}
[c]{c}\text{nonunital zeroless $\mathrm{A}$}\\
\text{nonunital zeroless }\Bbbk
\end{array}
$\\\hline\hline
\end{tabular}} }
\end{center}
\end{table}

The standard case is that in the upper left corner, when both $\Bbbk^{\left(
m_{k},n_{k}\right)  }$ and $\mathrm{A}^{\left(  m_{a},n_{a}\right)  }$ have
a zero and unit.

\begin{example}
\label{ex-a33}Consider the (\textquotedblleft$\Bbbk$-linear\textquotedblright)
associative polyadic algebra $\mathrm{A}^{\left(  3,3;3,3;2\right)  }$ over
the zeroless nonunital $\left(  3,3\right)  $-field $\Bbbk^{\left(
3,3\right)  }$ (from \textit{Example} \ref{ex-tern}). The elements of $A$ are
pairs $a=\left(  \lambda,\lambda^{\prime}\right)  \in\Bbbk^{\left(
3,3\right)  }\times\Bbbk^{\left(  3,3\right)  }$, and for them the ternary
addition and ternary multiplication are defined by%
\begin{align}
\mu_{A}^{\left(  3\right)  }\left[  \left(  \lambda_{1},\lambda_{1}^{\prime
}\right)  \left(  \lambda_{2},\lambda_{2}^{\prime}\right)  \left(  \lambda
_{3},\lambda_{3}^{\prime}\right)  \right]   &  =\left(  \lambda_{1}\lambda
_{2}^{\prime}\lambda_{3},\lambda_{1}^{\prime}\lambda_{2}\lambda_{3}^{\prime
}\right)  ,\label{ma1}\\
\nu_{A}^{\left(  3\right)  }\left[  \left(  \lambda_{1},\lambda_{1}^{\prime
}\right)  \left(  \lambda_{2},\lambda_{2}^{\prime}\right)  \left(  \lambda
_{3},\lambda_{3}^{\prime}\right)  \right]   &  =\left(  \lambda_{1}%
+\lambda_{2}+\lambda_{3},\lambda_{1}^{\prime}+\lambda_{2}^{\prime}+\lambda
_{3}^{\prime}\right)  ,\ \ \ \lambda_{i},\lambda_{i}^{\prime}\in\Bbbk^{\left(
3,3\right)  } \label{ma2}%
\end{align}
where operations on the r.h.s. are in $\mathbb{C}$. If we introduce an element
$0\notin\Bbbk^{\left(  3,3\right)  }$ with the property $0\cdot\lambda
=\lambda\cdot0=0$, then (\ref{ma1})--(\ref{ma2}) can be presented as the
ordinary multiplication and addition of three anti-diagonal $2\times2$ formal
matrices $\left(
\begin{array}
[c]{cc}%
0 & \lambda\\
\lambda^{\prime} & 0
\end{array}
\right)  $. There is no unit or zero in the ternary ring $\left\langle
A\mid\nu_{A}^{\left(  3\right)  },\mu_{A}^{\left(  3\right)  }\right\rangle $,
but both $\left\langle A\mid\mu_{A}^{\left(  3\right)  }\right\rangle $ and
$\left\langle A\mid\nu_{A}^{\left(  3\right)  }\right\rangle $ are ternary
groups, because each $a=\left(  ip/q,ip^{\prime}/q^{\prime}\right)  \in A$ has
the \textsf{unique} additive querelement $\tilde{a}=\left(  -ip/q,-ip^{\prime
}/q^{\prime}\right)  $ and the unique multiplicative querelement $\bar
{a}=\left(  -iq^{\prime}/p^{\prime},-iq/p\right)  $. The $2$-place action
(\textquotedblleft$2$-scalar product\textquotedblright) is defined by
$\rho^{\left(  2\right)  }\left(  \lambda_{1},\lambda_{2}\mid\left(
\lambda,\lambda^{\prime}\right)  \right)  =\left(  \lambda_{1}\lambda
_{2}\lambda,\lambda_{1}\lambda_{2}\lambda^{\prime}\right)  $. The arity shape
(see \textit{Definition} \ref{def-ar}) of this \textsf{zeroless nonunital}
polyadic algebra $\mathrm{A}^{\left(  3,3;3,3;2\right)  }$ is $\left(
2,2,2\right)  $, and the compatibilities (\ref{c0})--(\ref{c3a}) hold.
\end{example}

\subsection{Polyadic analog of the functions on group}

In the search for a polyadic version of the algebra of $\Bbbk$-valued functions
(which is isomorphic and dual to the corresponding group algebra) we can not
only have more complicated arity shapes than in the binary case, but also the
exotic possibility that the arities of the field and group are different as can
be possible for multiplace functions.

Let us consider a $n_{g}$-ary group $\mathrm{G}=\mathrm{G}^{\left(
n_{g}\right)  }=\left\langle G\mid\mu_{g}^{\left(  n_{g}\right)
}\right\rangle $, which does not necessarily contain the identity $e_{g}$, and where
each element is querable (see (\ref{q})). Now we introduce the set $A_{f}$ of
multiplace ($s$-place) functions $f_{i}\left(  g_{1},\ldots,g_{s}\right)  $
(of finite support) which take value in the polyadic field $\Bbbk^{\left(
m_{k},n_{k}\right)  }$ such that $f_{i}:G^{s}\rightarrow K$. To endow $A_{f}$ with
the structure of a polyadic associative algebra (\ref{al}), we should
consistently define the $m_{k}$-ary addition $\nu_{f}^{\left(  m_{k}\right)
}:A_{f}^{\left(  m_{k}\right)  }\rightarrow A_{f}$, $n_{k}$-ary multiplication
(\textquotedblleft convolution\textquotedblright) $\mu_{f}^{\left(
n_{k}\right)  }:A_{f}^{\left(  n_{k}\right)  }\rightarrow A_{f}$ and the
multiaction $\rho_{f}^{\left(  r_{f}\right)  }:K^{r_{f}}\times A_{f}%
\rightarrow A_{f}$ (\textquotedblleft scalar multiplication\textquotedblright%
). Thus we write for the algebra of $\Bbbk$-valued functions%
\begin{equation}
F_{\Bbbk}\left(  \mathrm{G}\right)  =\left\langle A_{f}\mid\nu_{f}^{\left(
m_{k}\right)  },\mu_{f}^{\left(  n_{k}\right)  };\nu_{k}^{\left(
m_{k}\right)  },\mu_{k}^{\left(  n_{k}\right)  };\rho_{f}^{\left(
r_{f}\right)  }\right\rangle . \label{fg}%
\end{equation}

The simplest operation here is the addition of the $\Bbbk$-valued functions which,
obviously, coincides with the field addition $\nu_{f}^{\left(  m_{k}\right)
}=\nu_{k}^{\left(  m_{k}\right)  }$.

\begin{condition}
Because all arguments of the multiacton $\rho_{f}^{\left(  r_{f}\right)  }$
are in the field, the only possibility for the r.h.s. is its multiplication
(similar to the regular representation)%
\begin{equation}
\rho_{f}^{\left(  r_{f}\right)  }\left(  \lambda_{1},\ldots,\lambda_{r_{f}%
}\mid f\right)  =\mu_{k}^{\left(  n_{k}\right)  }\left[  \lambda_{1}%
,\ldots,\lambda_{r_{f}},f\right]  ,\ \ \lambda_{i}\in K,\ \ f\in A_{f},
\label{rr}%
\end{equation}
and in addition we have the arity shape relation%
\begin{equation}
n_{k}=r_{f}+1, \label{nr}%
\end{equation}
which is satisfied \textquotedblleft automatically\textquotedblright\ in the
binary case.
\end{condition}

The polyadic analog of $\Bbbk$-valued function convolution (\textquotedblleft%
$\left(  f_{1}\ast f_{2}\right)  \left(  g\right)  =\Sigma_{h_{1}h_{2}=g}%
f_{1}\left(  h_{1}\right)  f_{2}\left(  h_{2}\right)  $\textquotedblright),
which is denoted by $\mu_{f}^{\left(  n_{k}\right)  }$ here, while the sum in
the field is $\nu_{k}^{\ell_{\nu}\left(  m_{k}-1\right)  +1}$, where
$\ell_{\nu}$ is the \textquotedblleft number of additions\textquotedblright,
can be constructed according to the arity rules from \cite{dup2018a,dup2017}.

\begin{definition}
The \textit{polyadic convolution} of $s$-place $\Bbbk$-valued functions is
defined as the admissible polyadic sum of $\ell_{\nu}\left(  m_{k}-1\right)
+1$ products%
\begin{align}
&  \mu_{f}^{\left(  n_{k}\right)  }\left[  f_{1}\left(  g_{1},\ldots
,g_{s}\right)  ,\ldots,f_{n_{k}}\left(  g_{1},\ldots,g_{s}\right)  \right]
=\nonumber\\
&  \underset{\left\{  \substack{ \mu_{g}^{\left(  n_{g}\right)  }\left[
h_{1},\ldots,h_{n_{g}}\right]  =g_{1},\\ \mu_{g}^{\left(  n_{g}\right)
}\left[  h_{n_{g}+1},\ldots,h_{2n_{g}}\right]  =g_{2},\\ \vdots\\ \mu
_{g}^{\left(  n_{g}\right)  }\left[  h_{\left(  s-\ell_{\operatorname*{id}%
}-1\right)  n_{g}},\ldots,h_{\left(  s-\ell_{\operatorname*{id}}\right)
n_{g}}\right]  =g_{s-\ell_{\operatorname*{id}}},\\ h_{\left(  s-\ell
_{\operatorname*{id}}+1\right)  n_{g}}=g_{s-\ell_{\operatorname*{id}}+1},\\
\vdots\\ h_{sn_{g}}=g_{s}}\right.  }{\left(  \nu_{k}^{\left(  m_{k}\right)
}\right)  ^{\circ\ell_{\nu}}}\left[  \mu_{k}^{\left(  n_{k}\right)  }\left[
f_{1}\left(  h_{1},\ldots,h_{s}\right)  ,\ldots,f_{n_{k}}\left(  h_{s\left(
n_{k}-1\right)  },\ldots,h_{sn_{k}}\right)  \right]  \right]  , \label{mf}%
\end{align}
where $\ell_{\operatorname*{id}}$ is the number of intact elements in the
determining equations (\textquotedblleft$h_{1}h_{2}=g$\textquotedblright)
under the field sum $\nu_{k}$. The arity shape is determined by%
\begin{equation}
sn_{k}=\left(  s-\ell_{\operatorname*{id}}\right)  n_{g}+\ell
_{\operatorname*{id}}, \label{sn}%
\end{equation}
which gives the connection between the field and the group arities.
\end{definition}

\begin{example}
If $n_{g}=3$, $n_{k}=2$, $m_{k}=3$, $s=2$, $\ell_{\operatorname*{id}}=1$, then
we obtain the arity changing polyadic convolution%
\begin{align}
&  \mu_{f}^{\left(  2\right)  }\left[  f_{1}\left(  g_{1},g_{2}\right)
,f_{2}\left(  g_{1},g_{2}\right)  \right]  =\nonumber\\
&  \underset{\left\{  \substack{ \mu_{g}^{\left(  3\right)  }\left[
h_{1},h_{2},h_{3}\right]  =g_{1},\\ \mu_{g}^{\left(  4\right)  } =g_{2}%
}\right.  }{\left(  \nu_{k}^{\left(  3\right)  }\right)  ^{\circ\ell_{\nu}}%
}\left[  \mu_{k}^{\left(  2\right)  }\left[  f_{1}\left(  h_{1},h_{2}\right)
,f_{2}\left(  h_{3},h_{4}\right)  \right]  \right]  ,
\end{align}
where the $\ell_{\nu}$ ternary additions are taken on the support. Now the
multiaction (\ref{rr}) is one-place%
\begin{equation}
\rho_{f}^{\left(  1\right)  }\left(  \lambda\mid f\right)  =\mu_{k}^{\left(
2\right)  }\left[  \lambda,f\right]  ,\ \ \lambda\in K,\ \ f\in A_{f},
\end{equation}
as it follows from (\ref{nr}).
\end{example}

\begin{remark}
The general polyadic convolution (\ref{mf}) is inspired by the main
heteromorphism equation (5.14) and the arity changing formula (5.15) of
\cite{dup2018a}. The graphical dependence of the field arity $n_{k}$ on the
number of places $s$ is similar to that on \textsc{Figure 1}, and the
\textquotedblleft quantization\textquotedblright\ rules (following from the
solutions of (\ref{sn}) in integers) are in \textsc{Table 1} there.
\end{remark}

\begin{proposition}
The multiplication {\upshape(\ref{mf})} is associative.
\end{proposition}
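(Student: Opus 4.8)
The plan is to verify total polyadic associativity in the sense of (\ref{a}), i.e.\ to show that an $\ell$-iterated convolution $\left(\mu_f^{\left(n_k\right)}\right)^{\circ\ell}$ of $\ell\left(n_k-1\right)+1$ functions does not depend on where the inner copies of $\mu_f^{\left(n_k\right)}$ are placed. The key observation is that the convolution (\ref{mf}) has the structure ``pull back along the group multiplication, multiply in the field, then push forward (sum) along the group multiplication'': for fixed external arguments $\left(g_1,\ldots,g_s\right)$ one sums, over all decompositions of the non-intact $g_j$ into $n_g$ factors $h$ via $\mu_g^{\left(n_g\right)}$ (with $\ell_{\operatorname*{id}}$ of the arguments left untouched), the field product $\mu_k^{\left(n_k\right)}$ of the corresponding function values. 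Associativity of $\mu_f^{\left(n_k\right)}$ will therefore be a consequence of just two ingredients already at hand: the polyadic associativity of $\mu_g^{\left(n_g\right)}$ (since $\mathrm{G}$ is an $n_g$-ary group) together with the total associativity and distributivity of the field operations $\mu_k^{\left(n_k\right)}$ and $\nu_k^{\left(m_k\right)}$. Equivalently, one may note that (\ref{mf}) is dual to the ``$n_g$-ary comultiplication'' $g\mapsto\sum h_1\otimes\cdots\otimes h_{n_g}$ determined by the group, and associativity of $\mu_f^{\left(n_k\right)}$ is then dual to the coassociativity of that comultiplication, which is precisely polyadic associativity of $\mu_g^{\left(n_g\right)}$.

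Concretely, I would first insert (\ref{mf}) into itself to compute $\mu_f^{\left(n_k\right)}\left[f_1,\ldots,f_{i-1},\mu_f^{\left(n_k\right)}\left[f_i,\ldots,f_{i+n_k-1}\right],f_{i+n_k},\ldots\right]$ for an arbitrary position $i$ of the inner bracket. On the group side, the determining equations of the inner convolution express its ``output argument'' through one more application of $\mu_g^{\left(n_g\right)}$; substituting them into the outer determining equations yields the system in which each non-intact $g_j$ is written as a $2$-iterated group product $\left(\mu_g^{\left(n_g\right)}\right)^{\circ2}$ of $2\left(n_g-1\right)+1$ elements $h$, and, this being the crux, because $\mu_g^{\left(n_g\right)}$ is polyadically associative this iterated product --- hence the whole index set being summed over --- is independent of where the inner bracket sat. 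Iterating, the $\ell$-fold convolution sums over decompositions of each non-intact $g_j$ into $\ell\left(n_g-1\right)+1$ factors via $\left(\mu_g^{\left(n_g\right)}\right)^{\circ\ell}$; one checks by induction on $\ell$, using (\ref{sn}), that the number of intact arguments and the arity-shape relation are reproduced consistently, namely $s\left(\ell\left(n_k-1\right)+1\right)=\left(s-\ell_{\operatorname*{id}}\right)\left(\ell\left(n_g-1\right)+1\right)+\ell_{\operatorname*{id}}$.

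On the field side, I would use distributivity of $\mu_k^{\left(n_k\right)}$ over the $m_k$-ary addition $\nu_k^{\left(m_k\right)}$ to carry the outer field product inside the inner sum $\left(\nu_k^{\left(m_k\right)}\right)^{\circ\ell_\nu}$, turning the nested ``sum of a product of sums of products'' into a single admissible polyadic sum of iterated field products $\left(\mu_k^{\left(n_k\right)}\right)^{\circ\ell}$ of the values $f_t\left(h_{\ldots},\ldots\right)$; total associativity (\ref{mn}) of $\mu_k^{\left(n_k\right)}$ then renders each such iterated product independent of the bracketing inherited from the nesting of convolutions, while commutativity of $\nu_k^{\left(m_k\right)}$ lets the summation orders be matched. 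Combining the group-side and the field-side statements, both bracketings of (\ref{a}) reduce to one and the same expression, the ``$\ell$-fold convolution'', which proves the proposition.

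I expect the main obstacle to be purely combinatorial bookkeeping rather than conceptual: aligning the index ranges of the $h$'s coming from the inner and the outer convolutions, and confirming that $\ell_{\operatorname*{id}}$ and the admissible number $\ell_\nu$ of field additions behave additively under composition so that the counting identity (\ref{sn}) survives iteration. Once this indexing is organized (most cleanly in the dual comultiplication picture), the two genuine inputs --- polyadic associativity of $\mu_g^{\left(n_g\right)}$ for the summation set and total associativity and distributivity of the field for the summand --- do all the work, exactly as (co)associativity of a group comultiplication underlies associativity of the ordinary binary convolution algebra.
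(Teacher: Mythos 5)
Your argument is correct in substance, but it is not the route the paper takes: the paper's entire proof is a one-line appeal to the associativity quiver technique of the earlier reference \cite{dup2018a}, applied to the polyadic convolution, whereas you carry out a direct verification. Concretely, you substitute (\ref{mf}) into itself, observe that the determining equations of the nested convolutions combine into an $\ell$-iterated $n_{g}$-ary product of the non-intact arguments (so that, by polyadic associativity of $\mu_{g}^{\left(n_{g}\right)}$, the index set being summed over is independent of where the inner bracket sits), and then use total associativity of $\mu_{k}^{\left(n_{k}\right)}$ together with distributivity over, and commutativity of, $\nu_{k}^{\left(m_{k}\right)}$ to identify the summands; your dual-comultiplication remark is the same mechanism seen from the coalgebra side. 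This is exactly the content that the quiver technique packages: that technique is precisely a systematic bookkeeping device for which arrangements of arguments (how many are multiplied, how many are intact, how the blocks are threaded to the multiplace functions) preserve associativity under composition, and the step you flag as the ``main obstacle'' --- checking that the intact arguments and the admissible numbers $\ell_{\operatorname*{id}}$, $\ell_{\nu}$ thread consistently through both bracketings so that (\ref{sn}) survives iteration --- is the part it automates. So your proof buys self-containedness (it can be checked by hand, e.g.\ for $s=2$, $n_{g}=3$, $n_{k}=2$, $\ell_{\operatorname*{id}}=1$ both bracketings reduce to a sum over $2\left(n_{g}-1\right)+1$ factors with the intact argument passed to the last function), while the paper's citation buys brevity and reuse of machinery already established for heteromorphisms; to make your version fully rigorous you would still have to write out the block-alignment induction on $\ell$ that you currently only sketch.
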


\begin{proof}
This follows from the associativity quiver technique of \cite{dup2018a} applied
to the polyadic convolution.
\end{proof}

\begin{corollary}
The $\Bbbk$-valued multiplace functions $\left\{  f_{i}\right\}  $ form a
polyadic associative algebra $F_{\Bbbk}\left(  \mathrm{G}\right)  $.
\end{corollary}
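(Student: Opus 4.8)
The plan is to verify that the tuple $F_{\Bbbk}\left(\mathrm{G}\right)=\left\langle A_{f}\mid\nu_{f}^{\left(m_{k}\right)},\mu_{f}^{\left(n_{k}\right)};\nu_{k}^{\left(m_{k}\right)},\mu_{k}^{\left(n_{k}\right)};\rho_{f}^{\left(r_{f}\right)}\right\rangle$ satisfies all four clauses of \textbf{Definition \ref{def-alg-as}}, collecting the ingredients already assembled above. First I would check that $\mathrm{A}_{f,vect}=\left\langle A_{f},K\mid\nu_{f}^{\left(m_{k}\right)};\nu_{k}^{\left(m_{k}\right)},\mu_{k}^{\left(n_{k}\right)};\rho_{f}^{\left(r_{f}\right)}\right\rangle$ is a polyadic vector space in the sense of \eqref{v}. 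Since $\nu_{f}^{\left(m_{k}\right)}=\nu_{k}^{\left(m_{k}\right)}$ is the pointwise field addition, $\left\langle A_{f}\mid\nu_{f}^{\left(m_{k}\right)}\right\rangle$ is a commutative $m_{k}$-ary group: pointwise addition inherits polyadic associativity and commutativity from $\Bbbk^{\left(m_{k},n_{k}\right)}$, and the additive querelement of $f$ is the pointwise additive querelement $\widetilde{f}(g_{1},\ldots,g_{s})=\widetilde{f(g_{1},\ldots,g_{s})}$, which exists because $\left\langle K\mid\nu_{k}^{\left(m_{k}\right)}\right\rangle$ is a polyadic group (no zero function need exist, matching the zeroless case in \textsc{Table \ref{tab1}}). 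The multiaction $\rho_{f}^{\left(r_{f}\right)}$ of \eqref{rr} is built directly from $\mu_{k}^{\left(n_{k}\right)}$, so its distributivity over $\nu_{f}^{\left(m_{k}\right)}$ and $\nu_{k}^{\left(m_{k}\right)}$ and its compatibility with $\mu_{k}^{\left(n_{k}\right)}$ reduce to the (polyadic) distributivity and associativity axioms of the field $\Bbbk^{\left(m_{k},n_{k}\right)}$, with the arity-shape constraint \eqref{nr}, $n_{k}=r_{f}+1$, playing the role that $r_{a}\left(n_{a}-1\right)=\ell\left(n_{k}-1\right)$ etc. play in \eqref{c1a}, \eqref{c3a}; here these hold with $\ell=\ell'=\ell''=1$ and $m_{a}=m_{k}$, $n_{a}=n_{\rho}=n_{k}$.

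Next I would record that clause (3) — total associativity of the ``vector multiplication'' $\mu_{A}^{\left(n_{a}\right)}=\mu_{f}^{\left(n_{k}\right)}$, the polyadic convolution \eqref{mf} — is exactly the content of the preceding Proposition, proved via the associativity quiver technique of \cite{dup2018a}, so nothing new is required. The one genuine check is $\Bbbk$-linearity of $\mu_{f}^{\left(n_{k}\right)}$ together with clause (4), compatibility of $\rho_{f}^{\left(r_{f}\right)}$ with $\left(\nu_{f}^{\left(m_{k}\right)},\mu_{f}^{\left(n_{k}\right)};\nu_{k}^{\left(m_{k}\right)},\mu_{k}^{\left(n_{k}\right)}\right)$, in the form \eqref{c0}–\eqref{c3a}. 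Distributivity of $\mu_{f}^{\left(n_{k}\right)}$ over $\nu_{f}^{\left(m_{k}\right)}$ in each slot follows because on the right-hand side of \eqref{mf} the outer operation is an iterated field sum $\left(\nu_{k}^{\left(m_{k}\right)}\right)^{\circ\ell_{\nu}}$ and the inner one is $\mu_{k}^{\left(n_{k}\right)}$, and $\mu_{k}^{\left(n_{k}\right)}$ distributes over $\nu_{k}^{\left(m_{k}\right)}$ in the field; the index bookkeeping on the support set (the determining equations $\mu_{g}^{\left(n_{g}\right)}[\ldots]=g_{j}$ and the $\ell_{\operatorname*{id}}$ intact arguments) is unaffected by expanding a sum in one function slot. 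Pulling a scalar multiaction $\rho_{f}^{\left(r_{f}\right)}$ through one argument of $\mu_{f}^{\left(n_{k}\right)}$ and collecting it on the output amounts, after substituting \eqref{rr} and \eqref{mf}, to reassociating a string of $\mu_{k}^{\left(n_{k}\right)}$'s and moving scalar factors past the inner product — again just polyadic associativity and commutativity of the field multiplication, with the arity counting governed by \eqref{nr} and \eqref{sn}.

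Finally, conclude by assembling: clauses (1)–(4) of \textbf{Definition \ref{def-alg-as}} hold, so $F_{\Bbbk}\left(\mathrm{G}\right)$ is a polyadic associative algebra with arity shape $\left(m_{k},n_{k};m_{k},n_{k};r_{f}\right)$ subject to $n_{k}=r_{f}+1$ and to the convolution constraint $s n_{k}=\left(s-\ell_{\operatorname*{id}}\right)n_{g}+\ell_{\operatorname*{id}}$. I expect the main obstacle to be purely notational rather than conceptual: keeping the three independent counters $\ell_{\nu}$ (number of field additions), $\ell_{\operatorname*{id}}$ (intact group arguments), and $s$ (number of places) consistent when one expands a sum or extracts a scalar inside \eqref{mf}, and making sure the resulting admissible word lengths on both sides of each compatibility identity agree — i.e. that the ``quantization'' relations are preserved under these manipulations. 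Since $\nu_{f}=\nu_{k}$ and $\rho_{f}$, $\mu_{f}$ are manufactured from $\mu_{k}$ alone, every identity ultimately collapses to an axiom of $\Bbbk^{\left(m_{k},n_{k}\right)}$, so no existence of units or zeros in either $\Bbbk^{\left(m_{k},n_{k}\right)}$ or $A_{f}$ is needed, consistently with \textbf{Definition \ref{def-alg-as}} and \textsc{Table \ref{tab1}}.
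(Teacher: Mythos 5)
Your proposal is correct and follows essentially the same route as the paper: the paper gives no separate argument for this corollary, treating it as immediate from the construction of $\nu_{f}^{\left(  m_{k}\right)  }=\nu_{k}^{\left(  m_{k}\right)  }$, the multiaction (\ref{rr}) and the convolution (\ref{mf}), together with the preceding Proposition that (\ref{mf}) is associative --- exactly the pieces you assemble. One small correction to your arity bookkeeping: with $r_{f}=n_{k}-1$ and $n_{a}=n_{k}$, the relation (\ref{c1a}) forces $\ell=n_{k}-1$ (and similarly $\ell^{\prime}=n_{k}-1$), so the $\ell$-arity shape is not $\left(  1,1,1\right)  $ unless $n_{k}=2$; this slip is only in your parenthetical identification and does not affect the rest of the verification.
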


\subsection{\textquotedblleft Diagrammatic\textquotedblright\ description}

Here we formulate the polyadic algebra axioms in the more customary
\textquotedblleft diagrammatic\textquotedblright\ form using the polyadic
tensor products and mappings between them (denoted by bold corresponding
letters). Informally, the $\Bbbk$-linearity is already \textquotedblleft
automatically encoded\textquotedblright\ by the polyadic tensor algebra over
$\Bbbk$, and therefore the axioms already contain the algebra multiplication (but not
the scalar multiplication).

Let us denote the $\Bbbk$-\textit{linear algebra multiplication map} by
$\mathbf{\mu}^{\left(  n\right)  }$ ($\mu^{\left(  n\right)  }\equiv\mu
_{A}^{\left(  n_{a}\right)  }$ from (\ref{al})) defined as%
\begin{equation}
\mathbf{\mu}^{\left(  n\right)  }\circ\left(  a_{1}\otimes\ldots\otimes
a_{n}\right)  =\mu^{\left(  n\right)  }\left[  a_{1},\ldots,a_{n}\right]
,\ \ \ a_{1},\ldots,a_{n}\in A. \label{ma}%
\end{equation}

\begin{definition}
[\textsl{Algebra associativity axiom}]\label{def-alg-as1}A \textit{polyadic
(associative }$n$\textit{-ary) algebra} (or $\Bbbk$-algebra) is a vector space
$\mathrm{A}_{vect}$ over the polyadic field $\Bbbk$ (\ref{av}) with the
$\Bbbk$-linear algebra multiplication map%
\begin{equation}
\mathrm{A}^{\left(  n\right)  }=\left\langle \mathrm{A}_{vect}\mid\mathbf{\mu
}^{\left(  n\right)  }\right\rangle ,\ \ \ \ \ \ \mathbf{\mu}^{\left(
n\right)  }:A^{\otimes n}\rightarrow A, \label{aav}%
\end{equation}
which is totally associative%
\begin{align}
\mathbf{\mu}^{\left(  n\right)  }\circ\left(  \operatorname*{id}%
\nolimits_{A}^{\otimes\left(  n-1-i\right)  }\otimes\mathbf{\mu}^{\left(
n\right)  }\otimes\operatorname*{id}\nolimits_{A}^{\otimes i}\right)   &
=\mathbf{\mu}^{\left(  n\right)  }\circ\left(  \operatorname*{id}%
\nolimits_{A}^{\otimes\left(  n-1-j\right)  }\otimes\mathbf{\mu}^{\left(
n\right)  }\otimes\operatorname*{id}\nolimits_{A}^{\otimes j}\right)
,\nonumber\\
\forall i,j  &  =0,\ldots n-1,\ i\neq j,\ \ \operatorname*{id}\nolimits_{A}%
:A\rightarrow A, \label{as}%
\end{align}
such that the diagram%
\begin{equation}
\begin{diagram} A^{\otimes \left(2n-1\right)} & \rTo^{ \operatorname*{id}\nolimits_{A}^{\otimes\left( n-1-i\right) }\otimes \;\mathbf{\mu}^{\left( n \right) }\otimes\;\operatorname*{id}\nolimits_{A}^{\otimes i} } &A^{\otimes n} \\ \dTo^{\operatorname*{id}\nolimits_{A}^{\otimes\left( n-1-j\right) }\otimes\;\mathbf{\mu}^{\left( n\right) }\otimes\;\operatorname*{id}\nolimits_{A}^{\otimes j}} & & \dTo_{\mathbf{\mu}^{\left( n\right) }} \\ A^{\otimes n} & \rTo^{ \mathbf{\mu}^{\left( n\right) }} &A \\ \end{diagram} \label{dia3}%
\end{equation}
commutes.
\end{definition}

\begin{definition}
A polyadic algebra $\mathrm{A}^{\left(  n\right)  }$ is called \textit{totally
commutative}, if%
\begin{equation}
\mathbf{\mu}^{\left(  n\right)  }=\mathbf{\mu}^{\left(  n\right)  }%
\circ\mathbf{\tau}_{n}, \label{mt}%
\end{equation}
where $\mathbf{\tau}_{n}\in\mathrm{S}_{n}$, and $\mathrm{S}_{n}$ is the
symmetry permutation group on $n$ elements.
\end{definition}

\begin{remark}
\label{rem-zu}Initially, there are no other axioms in the definition of a
polyadic algebra, because polyadic fields and vector spaces do not necessarily
contain zeroes and units (see \textsc{Table \ref{tab1}}).
\end{remark}

A special kind of polyadic algebra can appear, when the multiplication is
\textquotedblleft iterated\textquotedblright\ from lower arity ones, which is
one of 3 kinds of arity changing for polyadic systems \cite{dup2018a}.

\begin{definition}
A polyadic multiplication is called \textit{derived}, if the map $\mathbf{\mu
}_{der}^{\left(  n\right)  }$ is $\ell_{\mu}$-iterated from the maps
$\mathbf{\mu}_{0}^{\left(  n_{0}\right)  }$ of lower arity $n_{0}<n$%
\begin{equation}
\mathbf{\mu}_{der}^{\left(  n\right)  }=\overset{\ell_{\mu}}{\overbrace
{\mathbf{\mu}_{0}^{\left(  n_{0}\right)  }\circ\left(  \mathbf{\mu}%
_{0}^{\left(  n_{0}\right)  }\circ\ldots\left(  \mathbf{\mu}_{0}^{\left(
n_{0}\right)  }\otimes\operatorname*{id}\nolimits^{\otimes\left(
n_{0}-1\right)  }\right)  \otimes\ldots\otimes\operatorname*{id}%
\nolimits^{\otimes\left(  n_{0}-1\right)  }\right)  }}, \label{mder}%
\end{equation}
where%
\begin{equation}
n=\ell_{\mu}\left(  n_{0}-1\right)  +1,\ \ \ \ \ell_{\mu}\geq2,
\end{equation}
and $\ell_{\mu}$ is the \textquotedblleft number of
iterations\textquotedblright.
\end{definition}

\begin{example}
In the ternary case $n=3$ and $n_{0}=2$, we have $\mathbf{\mu}_{der}^{\left(
3\right)  }=\mathbf{\mu}_{0}^{\left(  2\right)  }\circ\left(  \mathbf{\mu}%
_{0}^{\left(  2\right)  }\otimes\operatorname*{id}\right)  $, which in the
\textquotedblleft elementwise\textquotedblright\ description is $\left[
a_{1},a_{2},a_{3}\right]  _{der}=a_{1}\cdot\left(  a_{2}\cdot a_{3}\right)  $,
where $\mu_{der}^{\left(  3\right)  }=\left[  \ ,\ ,\ \right]  _{der}$ and
$\mu_{0}^{\left(  2\right)  }=\left(  \cdot\right)  $.
\end{example}

Introduce a $\Bbbk$-linear \textit{multiaction map} $\mathbf{\rho}^{\left(
r\right)  }$ corresponding to the multiaction $\rho^{\left(  r\right)  }%
\equiv\rho_{A}^{\left(  r_{a}\right)  }$ (\ref{r}) (by analogy with
(\ref{ma})) as%
\begin{equation}
\mathbf{\rho}^{\left(  r\right)  }\circ\left(  \lambda_{1}\otimes\ldots
\otimes\lambda_{r}\otimes a\right)  =\rho^{\left(  r\right)  }\left(
\lambda_{1},\ldots,\lambda_{r}\mid a\right)  ,\ \ \ \lambda_{1},\ldots
,\lambda_{r}\in K,\ a\in A. \label{r1}%
\end{equation}

Let $\Bbbk$ and $\mathrm{A}^{\left(  n\right)  }$ both be unital, then we can
construct a $\Bbbk$-linear \textit{polyadic unit map} $\mathbf{\eta}$ by
\textquotedblleft polyadizing\textquotedblright\ \textquotedblleft$\mu
\circ\left(  \eta\otimes\operatorname*{id}\right)  =\operatorname*{id}%
$\textquotedblright\ and the scalar product \textquotedblleft$\lambda
a=\rho\left(  \lambda\mid a\right)  =\eta\left(  \lambda\right)
a$\textquotedblright\ with \textquotedblleft$\eta\left(  e_{k}\right)  =e_{a}%
$\textquotedblright, using the normalization (\ref{re}), and taking into
account the standard identification $\Bbbk^{\otimes r}\otimes\mathrm{A}%
\cong\mathrm{A}$ \cite{yokonuma}.

\begin{definition}
[\textsl{Algebra unit axiom}]The \textit{unital polyadic algebra}
$\mathrm{A}^{\left(  n\right)  }$ (\ref{aav}) contains in addition a $\Bbbk
$-linear \textit{polyadic (right) unit} map $\mathbf{\eta}^{\left(
r,n\right)  }:K^{\otimes r}\rightarrow A^{\otimes\left(  n-1\right)  }$
satisfying%
\begin{equation}
\mathbf{\mu}^{\left(  n\right)  }\circ\left(  \mathbf{\eta}^{\left(
r,n\right)  }\otimes\operatorname*{id}\nolimits_{A}\right)  =\mathbf{\rho
}^{\left(  r\right)  }, \label{mu}%
\end{equation}
such that the diagram%
\begin{equation}
\begin{diagram} K^{\otimes r}\otimes A &\rTo^{\; \mathbf{\eta}^{\left( r,n\right) }\otimes\operatorname*{id}\nolimits_{A}} & A^{\otimes n \;\;\; }\\ \dTo^{\mathbf{\rho}^{\left( r\right) }}&\;\;\ldTo(3,2)_{\mathbf{\mu}^{\left( n\right) }}&\\ A& & \end{diagram} \label{dia-n}%
\end{equation}
commutes.
\end{definition}

The normalization of the multiaction (\ref{re}) gives the corresponding
normalization of the map $\mathbf{\eta}^{\left(  r,n\right)  }$ (instead of
\textquotedblleft$\eta\left(  e_{k}\right)  =e_{a}$\textquotedblright)%
\begin{equation}
\mathbf{\eta}^{\left(  r,n\right)  }\circ\left(  \overset{r}{\overbrace
{e_{k}\otimes\ldots\otimes e_{k}}}\right)  =\overset{n-1}{\overbrace
{e_{a}\otimes\ldots\otimes e_{a}}}. \label{ne}%
\end{equation}

\begin{assertion}
\label{as-unit}In the \textquotedblleft elementwise\textquotedblright%
\ description (see \textbf{Subsection}\emph{ }\ref{sec-el}) the polyadic unit
$\eta^{\left(  r,n\right)  }$ of {\upshape$\mathrm{A}^{\left(  n\right)  }$}
is a $\left(  n-1\right)  $-valued function of $r$ arguments.
\end{assertion}

\begin{proposition}
The polyadic unit map $\mathbf{\eta}^{\left(  r,n\right)  }$ is (polyadically)
multiplicative in the following sense%
\begin{equation}
\overset{r}{\overbrace{\mathbf{\mu}^{\left(  n\right)  }\circ\ldots
\circ\mathbf{\mu}^{\left(  n\right)  }}}\circ\left(  \overset{r\left(
n-1\right)  +1}{\overbrace{\mathbf{\eta}^{\left(  r,n\right)  }\otimes
\ldots\otimes\mathbf{\eta}^{\left(  r,n\right)  }}}\right)  =\mathbf{\eta
}^{\left(  r,n\right)  }\circ\left(  \mu_{k}^{\left(  n_{k}\right)  }\right)
^{\circ\ell}.
\end{equation}

\end{proposition}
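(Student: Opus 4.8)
The plan is to prove the identity by unwinding both sides in the ``elementwise'' description, using the defining relation \eqref{mu} for the unit map together with the compatibility condition \eqref{c1} of the multiaction with the algebra multiplication, and finally the associativity \eqref{as} of $\mathbf{\mu}^{\left(  n\right)  }$. The point is that composing $r$ copies of $\mathbf{\mu}^{\left(  n\right)  }$ built out of $r\left(  n-1\right)  +1$ copies of $\mathbf{\eta}^{\left(  r,n\right)  }$ should produce, after using \eqref{mu} at each node, a nested multiaction whose scalar arguments are grouped by $\mu_k$, which is exactly what the right-hand side $\mathbf{\eta}^{\left(  r,n\right)  }\circ\left(  \mu_{k}^{\left(  n_{k}\right)  }\right)^{\circ\ell}$ encodes via one more application of \eqref{mu}.

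First I would expand the left-hand side acting on a tensor of $r\cdot r\left(  n-1\right)+ r$ scalars $\lambda_i$. Each factor $\mathbf{\eta}^{\left(  r,n\right)  }$ turns its block of $r$ scalars into $n-1$ elements of $A$; feeding these into the iterated $\mathbf{\mu}^{\left(  n\right)  }$ and repeatedly replacing $\mathbf{\mu}^{\left(  n\right)  }\circ\left(  \mathbf{\eta}^{\left(  r,n\right)  }\otimes\operatorname*{id}\nolimits_{A}\right)$ by $\mathbf{\rho}^{\left(  r\right)  }$ (that is \eqref{mu}, read right-to-left at each of the $r$ stages), I get an $n_\rho$-fold nested multiaction $\mathbf{\rho}^{\left(  r\right)  }\circ\ldots\circ\mathbf{\rho}^{\left(  r\right)  }$ applied to a single $e_a$ coming from the innermost $\mathbf{\eta}^{\left(  r,n\right)  }$ via the normalization \eqref{ne}. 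Then I invoke the compatibility condition \eqref{c3}–\eqref{c3a} (``$\lambda\left(\mu a\right)=\left(\lambda\mu\right)a$''), which collapses the nested multiaction into a single $\mathbf{\rho}^{\left(  r\right)  }$ whose scalar slots are filled with $\ell$-fold $\mu_k$-products $\mu_k^{\left(n_k\right)}$ of the original $\lambda_i$, i.e. exactly $\left(\mu_{k}^{\left(  n_{k}\right)  }\right)^{\circ\ell}$ of them. A final application of \eqref{mu} in the forward direction rewrites this single $\mathbf{\rho}^{\left(  r\right)  }$ as $\mathbf{\mu}^{\left(  n\right)  }\circ\left(  \mathbf{\eta}^{\left(  r,n\right)  }\otimes\operatorname*{id}\nolimits_{A}\right)$, and matching arities (using $n=\ell\left(n-1\right)+1$ on the multiplication side and \eqref{c3a} on the scalar side) shows this equals the right-hand side $\mathbf{\eta}^{\left(  r,n\right)  }\circ\left(  \mu_{k}^{\left(  n_{k}\right)  }\right)^{\circ\ell}$.

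Equivalently, and perhaps more cleanly, I would give a diagrammatic argument: stack $r$ copies of the triangle \eqref{dia-n} and glue them along the $\mathbf{\mu}^{\left(  n\right)  }$ edges using the associativity square \eqref{dia3}; the resulting large diagram has the iterated $\mathbf{\mu}^{\left(  n\right)  }$ on one route and iterated $\mathbf{\rho}^{\left(  r\right)  }$ on the other, and the latter closes up by the multiaction's own composition law (the $n_\rho$-ary semigroup structure $\mathrm{S}_\rho^{\left(n_\rho\right)}$ with the arity changing formula from \cite{dup2018a}), which is precisely the content of compatibility \eqref{c3}. Commutativity of the assembled diagram is then the claimed identity.

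The main obstacle is bookkeeping of the arities: one must check that the number of $\mathbf{\eta}^{\left(  r,n\right)  }$ copies, $r\left(n-1\right)+1$, together with the $r$ copies of $\mathbf{\mu}^{\left(  n\right)  }$, feeds consistently into a legal polyadic word — each $\mathbf{\eta}^{\left(  r,n\right)  }$ outputs $n-1$ algebra elements, so the total input to the iterated multiplication is $r\left(n-1\right)\cdot 1 + 1$ in the right pattern — and that the scalar count $r\cdot\left(r\left(n-1\right)+1\right)$ on the left matches $n_k\cdot\ell$ plus the leftover needed for a single $\mathbf{\eta}^{\left(r,n\right)}$ on the right, via \eqref{c3a}. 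Once the arity shape relations \eqref{c1a}, \eqref{c3a} and $n=\ell\left(n-1\right)+1$ are lined up, the algebraic steps are just iterated substitutions of \eqref{mu} and \eqref{c3}, so the substantive work is entirely in verifying that these quantized constraints are mutually compatible, which they are by the arity freedom principle underlying the construction.
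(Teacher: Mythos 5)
Your plan names the right ingredient in passing---compatibility (\ref{c1}) of the multiaction with the algebra multiplication---but the argument you actually execute relies on a different, and here insufficient, condition. The left-hand side is a product (under iterated $\mathbf{\mu}^{\left(  n\right)  }$) of the outputs of many \emph{distinct} copies of $\mathbf{\eta}^{\left(  r,n\right)  }$, one block of scalars per factor. The step that turns such a product into a single unit evaluated on $\mu_{k}$-products of the scalars is exactly (\ref{c1}) (``$\left(  \lambda a\right)  \cdot\left(  \mu b\right)  =\left(  \lambda\mu\right)  \left(  a\cdot b\right)  $''), together with the arity relation (\ref{c1a}), which fixes the scalar count $\ell\left(  n_{k}-1\right)  +1=r\left(  n-1\right)  +1$ appearing on the right-hand side; that is the entire content of the paper's proof. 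Your mechanism instead rewrites $\mathbf{\mu}^{\left(  n\right)  }\circ\left(  \mathbf{\eta}^{\left(  r,n\right)  }\otimes\operatorname{id}\nolimits_{A}\right)  $ via (\ref{mu}) at each stage and then collapses a nested chain $\mathbf{\rho}^{\left(  r\right)  }\circ\ldots\circ\mathbf{\rho}^{\left(  r\right)  }$ by (\ref{c3})--(\ref{c3a}). This does not match the shape of the left-hand side: each use of (\ref{mu}) absorbs only \emph{one} $\mathbf{\eta}^{\left(  r,n\right)  }$ (the one filling $n-1$ slots of a single $\mathbf{\mu}^{\left(  n\right)  }$), so at most $r$ of the $r\left(  n-1\right)  +1$ unit factors can be eliminated this way, and the outputs of the remaining units still have to be multiplied \emph{across different factors}---precisely the step governed by (\ref{c1}) and not by (\ref{c3}), which only concerns nested actions on a single element. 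Your bookkeeping also tacitly treats each $\mathbf{\eta}^{\left(  r,n\right)  }$ as single-valued, whereas it is $\left(  n-1\right)  $-valued (\textbf{Assertion \ref{as-unit}}), so the claimed input pattern for the iterated multiplication does not come out automatically.

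Two further points would break the computation as written. The ``single $e_{a}$ coming from the innermost $\mathbf{\eta}^{\left(  r,n\right)  }$ via the normalization (\ref{ne})'' is not available: (\ref{ne}) evaluates $\mathbf{\eta}^{\left(  r,n\right)  }$ only on $e_{k}^{\otimes r}$, while here its arguments are arbitrary scalars $\lambda_{i}$, and for a non-derived unit there is no factorization of $\eta$ through $\rho^{\left(  r\right)  }\left\{  \cdot\mid e_{a}\right\}  $ to fall back on (that is the derived case of \textbf{Definition \ref{def-nder}}, not an axiom). Moreover the arity relation you invoke for the final matching, $n=\ell\left(  n-1\right)  +1$, forces $\ell=1$ and is not the relevant constraint; the one that makes the statement consistent is (\ref{c1a}), i.e. $\ell\left(  n_{k}-1\right)  =r\left(  n-1\right)  $. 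The proof should therefore be reorganized around a single application of (\ref{c1}) (in iterated form, using associativity (\ref{as})) to the product of the unit components, after which the identity follows as in the paper.
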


\begin{proof}
This follows from the compatibility of the multiaction with the
\textquotedblleft vector multiplication\textquotedblright\ (\ref{c1}) and the
relation between corresponding arities (\ref{c1a}), such that the number of
arguments (\textquotedblleft scalars\textquotedblright\ $\lambda_{i}$) in
r.h.s. becomes $\ell\left(  n_{k}-1\right)  +1=r\left(  n-1\right)  +1$, where
$\ell$ is an integer.
\end{proof}

Introduce a \textquotedblleft derived\textquotedblright\ version of the
polyadic unit by analogy with the neutral sequence (\ref{me}).

\begin{definition}
\label{def-nder}The $\Bbbk$-linear \textit{derived polyadic unit
}(\textit{neutral unit sequence})\textit{ }of $n$-ary algebra $\mathrm{A}%
^{\left(  n\right)  }$ is the set $\mathbf{\hat{\eta}}^{\left(  r\right)
}=\left\{  \mathbf{\eta}_{i}^{\left(  r\right)  }\right\}  $ of $n-1$ maps
$\mathbf{\eta}_{i}^{\left(  r\right)  }:K^{\otimes r}\rightarrow A$,
$i=1,\ldots,n-1$, satisfying%
\begin{equation}
\mathbf{\mu}^{\left(  n\right)  }\circ\left(  \mathbf{\eta}_{1}^{\left(
r\right)  }\otimes\ldots\otimes\mathbf{\eta}_{n-1}^{\left(  r\right)  }%
\otimes\operatorname*{id}\nolimits_{A}\right)  =\mathbf{\rho}^{\left(
r\right)  }, \label{mn1}%
\end{equation}
where $\operatorname*{id}\nolimits_{A}$ can be on any place. If $\mathbf{\eta
}_{1}^{\left(  r\right)  }=\ldots=\mathbf{\eta}_{n-1}^{\left(  r\right)
}=\mathbf{\eta}_{0}^{\left(  r\right)  }$, we call $\mathbf{\eta}_{0}^{\left(
r\right)  }$ the \textit{strong derived polyadic unit}. Formally (comparing
(\ref{mu}) and (\ref{mn1})), we can write%
\begin{equation}
\mathbf{\eta}_{der}^{\left(  r,n\right)  }=\mathbf{\eta}_{1}^{\left(
r\right)  }\otimes\ldots\otimes\mathbf{\eta}_{n-1}^{\left(  r\right)  }.
\label{nder}%
\end{equation}

\end{definition}

The normalization of the maps $\mathbf{\eta}_{i}^{\left(  r\right)  }$ is given by%
\begin{equation}
\mathbf{\eta}_{i}^{\left(  r\right)  }\circ\left(  \overset{r}{\overbrace
{e_{k}\otimes\ldots\otimes e_{k}}}\right)  =e_{a},\ \ \ i=1,\ldots
,n-1,\ \ e_{a}\in A,\ \ e_{k}\in K,
\end{equation}
and in the \textquotedblleft elementwise\textquotedblright\ description
$\eta_{i}^{\left(  r\right)  }$ is a function of $r$ arguments, satisfying%
\begin{equation}
\eta_{i}^{\left(  r\right)  }\left(  \lambda_{1},\ldots,\lambda_{r}\right)
=\rho^{\left(  r\right)  }\left\{  \lambda_{1},\ldots,\lambda_{r}\mid
e_{a}\right\}  ,\ \ \ \lambda_{i}\in K,
\end{equation}
where $\rho^{\left(  r\right)  }$ is the multiaction (\ref{r}).

\begin{definition}
A polyadic associative algebra $\mathrm{A}_{der}^{\left(  n\right)
}=\left\langle \mathrm{A}_{vect}\mid\mathbf{\mu}_{der}^{\left(  n\right)
},\mathbf{\eta}_{der}^{\left(  r,n\right)  }\right\rangle $ is called
\textit{derived from }$\mathrm{A}_{0}^{\left(  n_{0}\right)  }=\left\langle
\mathrm{A}_{vect}\mid\mathbf{\mu}_{0}^{\left(  n_{0}\right)  },\mathbf{\eta
}_{0}^{\left(  r,n_{0}\right)  }\right\rangle $, if (\ref{mder}) holds and further%
\begin{equation}
\mathbf{\eta}_{der}^{\left(  r,n\right)  }=\overset{\ell_{\mu}}{\overbrace
{\mathbf{\eta}_{0}^{\left(  r,n_{0}\right)  }\otimes\ldots\otimes\mathbf{\eta
}_{0}^{\left(  r,n_{0}\right)  }}}%
\end{equation}
is true, where $\mathbf{\eta}_{0}^{\left(  r,n_{0}\right)  }%
=\overset{n_{0}-1}{\overbrace{\mathbf{\eta}_{0}^{\left(  r\right)  }%
\otimes\ldots\otimes\mathbf{\eta}_{0}^{\left(  r\right)  }}}$ (formally,
because $\operatorname*{id}\nolimits_{A}$ in (\ref{mn1}) can be on any place).
\end{definition}

The particular case $n=3$ and $r=1$ was considered in \cite{dup26,dup2018b}
(with examples).

Invertibility in a polyadic algebra is not connected with the unit or zero
(as in $n$-ary groups \cite{dor3}), but is determined by the querelement
(\ref{q}). Introduce the corresponding mappings for the subsets of the
\textit{additively querable elements} $A_{quer}^{\left(  add\right)
}\subseteq A$ and the \textit{multiplicatively querable elements}
$A_{quer}^{\left(  mult\right)  }\subseteq A$.

\begin{definition}
In the polyadic algebra $\mathrm{A}^{\left(  m,n\right)  }$, the
\textit{additive quermap} $\mathbf{q}_{add}:A_{quer}^{\left(  add\right)
}\rightarrow A_{quer}^{\left(  add\right)  }$ is defined by%
\begin{equation}
\mathbf{\nu}^{\left(  m\right)  }\circ\left(  \operatorname*{id}%
\nolimits_{A}^{\otimes\left(  m-1\right)  }\otimes\mathbf{q}_{add}\right)
\circ\mathbf{D}_{a}^{\left(  m\right)  }=\operatorname*{id}\nolimits_{A},
\label{mq}%
\end{equation}
and the \textit{multiplicative quermap} $\mathbf{q}_{mult}:A_{quer}^{\left(
mult\right)  }\rightarrow A_{quer}^{\left(  mult\right)  }$ is defined by%
\begin{equation}
\mathbf{\mu}^{\left(  n\right)  }\circ\left(  \operatorname*{id}%
\nolimits_{A}^{\otimes\left(  n-1\right)  }\otimes\mathbf{q}_{mult}\right)
\circ\mathbf{D}_{a}^{\left(  n\right)  }=\operatorname*{id}\nolimits_{A},
\label{mq1}%
\end{equation}
where $\mathbf{D}_{a}^{\left(  n\right)  }:A\rightarrow A^{\otimes n}$ is the
diagonal map given by $a\rightarrow\overset{n}{\overbrace{a\otimes
\ldots\otimes a}}$, while $\mathbf{q}_{add}$ and $\mathbf{q}_{mult}$ can be on
any place. They send an element to the additive querelement $a\overset
{\mathbf{q}_{add}}{\mapsto}\tilde{a}$, $a\in A_{quer}^{\left(  add\right)
}\subseteq A$ and multiplicative querelement $a\overset{\mathbf{q}_{mult}%
}{\mapsto}\bar{a}$, $a\in A_{quer}^{\left(  mult\right)  }\subseteq A$ (see
(\ref{q})), such that the diagrams%
\begin{equation}
\begin{diagram} A &\; \rTo^{\;\;\;\mathbf{D}_{a}^{\left( m\right)} \;\;\;}\;& A^{\otimes m }\\ \uTo^{\mathbf{\nu}^{\left( m\right) }}&\ldTo(3,2)_{\operatorname*{id}\nolimits_{A}^{\otimes\left( m-1\right) }\otimes\mathbf{q}_{add}}&\\ A^{\otimes m }\;\;& & \end{diagram}\ \ \ \ \ \ \ \begin{diagram} A &\; \rTo^{\;\;\;\mathbf{D}_{a}^{\left( n\right)} \;\;\;}\;& A^{\otimes n }\\ \uTo^{\mathbf{\mu}^{\left( n\right) }}&\ldTo(3,2)_{\operatorname*{id}\nolimits_{A}^{\otimes\left( n-1\right) }\otimes\mathbf{q}_{mult}}&\\ A^{\otimes n }\;\;& & \end{diagram} \label{dia5}%
\end{equation}
commute.
\end{definition}

\begin{example}
For the polyadic algebra $\mathrm{A}^{\left(  3,3;3,3;2\right)  }$ from
\textit{Example} \ref{ex-a33} all elements are additively and multiplicatively
querable, and so the sets of querable elements coincide $A_{quer}^{\left(
add\right)  }=A_{quer}^{\left(  mult\right)  }=A$. The additive quermap
$\mathbf{q}_{add}$ and multiplicative quermap $\mathbf{q}_{mult}$ act as
follows (the operations are in $\mathbb{C}$)%
\begin{align}
&  \left(  i\frac{p}{q},i\frac{p^{\prime}}{q^{\prime}}\right)  \overset
{\mathbf{q}_{add}}{\mapsto}\left(  -i\frac{p}{q},-i\frac{p^{\prime}}%
{q^{\prime}}\right)  ,\\
&  \left(  i\frac{p}{q},i\frac{p^{\prime}}{q^{\prime}}\right)  \overset
{\mathbf{q}_{mult}}{\mapsto}\left(  -i\frac{q^{\prime}}{p^{\prime}},-i\frac
{q}{p}\right)  ,\ \ \ \ i^{2}=-1,\ \ \ p,q\in\mathbb{Z}^{odd}.
\end{align}

\end{example}

\begin{example}
The polyadic field $\Bbbk^{\left(  m_{k},n_{k}\right)  }$ is a polyadic
algebra over itself. We identify $A=K$, $\mu_{A}^{\left(  n\right)  }=\mu
_{k}^{\left(  n_{k}\right)  }$, and the multiplication is defined by the
multiaction as%
\begin{equation}
\mathbf{\mu}^{\left(  n\right)  }\circ\left(  \lambda_{1}\otimes\ldots
\otimes\lambda_{r}\otimes\lambda\right)  =\rho^{\left(  r\right)  }\left(
\lambda_{1},\ldots,\lambda_{r}\mid\lambda\right)  .
\end{equation}
Therefore, we have the additional arity conditions%
\begin{equation}
n=r+1=n_{k},
\end{equation}
which are trivially satisfied in the binary case. Now the polyadic unit map
$\mathbf{\eta}^{\left(  r,n\right)  }$ (\ref{ne}) is the identity in each tensor component.
\end{example}

\subsection{\label{subsec-med}Medial map and polyadic permutations}

Recall that the \textit{binary} \textit{medial map} for the tensor product of
algebras (as vector spaces)
\begin{equation}
\mathbf{\tau}_{medial}:\left(  \mathrm{A}_{1}\otimes\mathrm{A}_{2}\right)
\otimes\left(  \mathrm{A}_{1}\otimes\mathrm{A}_{2}\right)  \rightarrow\left(
\mathrm{A}_{1}\otimes\mathrm{A}_{1}\right)  \otimes\left(  \mathrm{A}%
_{2}\otimes\mathrm{A}_{2}\right)  \label{tm}%
\end{equation}
is defined by (evaluation)%
\begin{equation}
\left(  a_{1}^{\left(  1\right)  }\otimes a_{1}^{\left(  2\right)  }\right)
\otimes\left(  a_{2}^{\left(  1\right)  }\otimes a_{2}^{\left(  2\right)
}\right)  \overset{\tau_{medial}}{\mapsto}\left(  a_{1}^{\left(  1\right)
}\otimes a_{2}^{\left(  1\right)  }\right)  \otimes\left(  a_{1}^{\left(
2\right)  }\otimes a_{2}^{\left(  2\right)  }\right)  . \label{sa}%
\end{equation}
It is obvious that%
\begin{equation}
\mathbf{\tau}_{medial}=\operatorname*{id}\nolimits_{A}\otimes\mathbf{\tau
}_{op}\otimes\operatorname*{id}\nolimits_{A}, \label{ti}%
\end{equation}
where $\mathbf{\tau}_{op}:\mathrm{A}_{1}\otimes\mathrm{A}_{2}\rightarrow
\mathrm{A}_{1}\otimes\mathrm{A}_{2}$ is the permutation of 2 elements
(twist/flip) of the tensor product, such that $a^{\left(  1\right)  }\otimes
a^{\left(  2\right)  }\overset{\tau_{op}}{\mapsto}a^{\left(  2\right)
}\otimes a^{\left(  1\right)  }$, $a^{\left(  1\right)  }\in\mathrm{A}_{1}$,
$a^{\left(  2\right)  }\in\mathrm{A}_{2}$, $\tau_{op}\in\mathrm{S}_{2}$.
This may be presented (\ref{sa}) in the matrix form%
\begin{equation}
\bigotimes\left(  \mathbf{a}\right)  _{2\times2}\overset{\tau_{medial}%
}{\mapsto}\bigotimes\left(  \mathbf{a}^{T}\right)  _{2\times2}%
,\ \ \ \ \bigotimes\left(  \mathbf{a}\right)  _{2\times2}=\bigotimes\left(
\begin{array}
[c]{cc}%
a_{1}^{\left(  1\right)  } & a_{1}^{\left(  2\right)  }\\
a_{2}^{\left(  1\right)  } & a_{2}^{\left(  2\right)  }%
\end{array}
\right)  , \label{an2}%
\end{equation}
where $T$ is the ordinary matrix transposition.

Let us apply (\ref{tm}) to arbitrary tensor products. By analogy, if we have a
tensor product of $mn$ elements (of \textsf{any} nature) grouped by $n$
elements (e.g. $m$ elements from $n$ different vector spaces), as in
(\ref{sa}), (\ref{an2}), we can write the tensor product in the $\left(
m\times n\right)  $-matrix form (cf. (3.18)--(3.19) in \cite{dup2018a})%
\begin{equation}
\bigotimes\left(  \mathbf{a}\right)  _{m\times n}=\bigotimes\left(
\begin{array}
[c]{cccc}%
a_{1}^{\left(  1\right)  } & a_{1}^{\left(  2\right)  } & \ldots &
a_{1}^{\left(  n\right)  }\\
a_{2}^{\left(  1\right)  } & a_{2}^{\left(  2\right)  } & \ldots &
a_{2}^{\left(  n\right)  }\\
\vdots & \vdots & \vdots & \vdots\\
a_{m}^{\left(  1\right)  } & a_{m}^{\left(  2\right)  } & \ldots &
a_{m}^{\left(  n\right)  }%
\end{array}
\right)  \label{an1}%
\end{equation}

\begin{definition}
The \textit{polyadic medial map} $\mathbf{\tau}_{medial}^{\left(  n,m\right)
}:\left(  A^{\otimes n}\right)  ^{\otimes m}\rightarrow\left(  A^{\otimes
m}\right)  ^{\otimes n}$ is defined as the transposition of the tensor product
matrix (\ref{an1}) by the evaluation (cf. the binary case (\ref{sa}))%
\begin{equation}
\bigotimes\left(  \mathbf{a}\right)  _{m\times n}\overset{\tau_{medial}%
^{\left(  n,m\right)  }}{\mapsto}\bigotimes\left(  \mathbf{a}^{T}\right)
_{n\times m}. \label{an}%
\end{equation}

\end{definition}

We can extend the mediality concept \cite{eva7,belousov} to polyadic algebras
using the medial map. If we have an algebra with $n$-ary multiplication
(\ref{ma}), then the mediality relation follows from (\ref{an1}) with $m=n$
and contains $\left(  n+1\right)  $ multiplications acting on $n^{2}$ elements.

\begin{definition}
A $\Bbbk$-linear polyadic algebra $\mathrm{A}^{\left(  n\right)  }$
(\ref{aav}) is called \textit{medial}, if its $n$-ary multiplication map
satisfies the relation%
\begin{equation}
\mathbf{\mu}^{\left(  n\right)  }\circ\left(  \left(  \mathbf{\mu}^{\left(
n\right)  }\right)  ^{\otimes n}\right)  =\mathbf{\mu}^{\left(  n\right)
}\circ\left(  \left(  \mathbf{\mu}^{\left(  n\right)  }\right)  ^{\otimes
n}\right)  \circ\mathbf{\tau}_{medial}^{\left(  n,n\right)  }, \label{mmm}%
\end{equation}
where $\mathbf{\tau}_{medial}^{\left(  n,n\right)  }$ is given by (\ref{an}),
or in the manifest elementwise form (evaluation)%
\begin{align}
&  \mu^{\left(  n\right)  }\left[  \mu^{\left(  n\right)  }\left[
a_{1}^{\left(  1\right)  },a_{1}^{\left(  2\right)  },\ldots,a_{1}^{\left(
n\right)  }\right]  ,\mu^{\left(  n\right)  }\left[  a_{2}^{\left(  1\right)
},a_{2}^{\left(  2\right)  },\ldots,a_{2}^{\left(  n\right)  }\right]
,\ldots,\mu^{\left(  n\right)  }\left[  a_{n}^{\left(  1\right)  }%
,a_{n}^{\left(  2\right)  },\ldots,a_{n}^{\left(  n\right)  }\right]  \right]
\nonumber\\
&  =\mu^{\left(  n\right)  }\left[  \mu^{\left(  n\right)  }\left[
a_{1}^{\left(  1\right)  },a_{2}^{\left(  1\right)  },\ldots,a_{n}^{\left(
1\right)  }\right]  ,\mu^{\left(  n\right)  }\left[  a_{1}^{\left(  2\right)
},a_{2}^{\left(  2\right)  },\ldots,a_{n}^{\left(  2\right)  }\right]
,\ldots,\mu^{\left(  n\right)  }\left[  a_{1}^{\left(  n\right)  }%
,a_{2}^{\left(  n\right)  },\ldots,a_{n}^{\left(  n\right)  }\right]  \right]
. \label{mma}%
\end{align}

\end{definition}

Let us \textquotedblleft polyadize\textquotedblright\ the binary twist map
$\mathbf{\tau}_{op}$ from (\ref{ti}), which can be suitable for operations
with polyadic tensor products. Informally, we can interpret (\ref{ti}), as
\textquotedblleft omitting the fixed points\textquotedblright\ of the binary
medial map $\mathbf{\tau}_{medial}$, and denote this procedure by
\textquotedblleft$\mathbf{\tau}_{op}=\mathbf{\tau}_{medial}\setminus
\operatorname*{id}$\textquotedblright.

\begin{definition}
A (\textit{medially allowed}) $\ell_{\tau}$-\textit{place polyadic twist map}
$\mathbf{\tau}_{op}^{\left(  \ell_{\tau}\right)  }$ is defined by%
\begin{equation}
\text{\textquotedblleft}\mathbf{\tau}_{op}^{\left(  \ell_{\tau}\right)
}=\mathbf{\tau}_{medial}^{\left(  n,m\right)  }\setminus\operatorname*{id}%
\text{\textquotedblright}, \label{top}%
\end{equation}
where $\ell_{\tau}=mn-k_{fixed}$, and $k_{fixed}$ is the number of fixed
points of the medial map $\mathbf{\tau}_{medial}^{\left(  n,m\right)  }$.
\end{definition}

\begin{assertion}
If $m\neq n$, then $\ell_{\tau}=mn-2$. If $m=n$, then the polyadic twist map
$\mathbf{\tau}_{op}^{\left(  \ell_{\tau}\right)  }$ is the reflection%
\begin{equation}
\mathbf{\tau}_{op}^{\left(  \ell_{\tau}\right)  }\circ\mathbf{\tau}%
_{op}^{\left(  \ell_{\tau}\right)  }=\operatorname*{id}\nolimits_{A}%
\end{equation}
and $\ell_{\tau}=n\left(  n-1\right)  $.
\end{assertion}

\begin{proof}
This follows from the matrix form (\ref{an1}) and (\ref{an}).
\end{proof}

Therefore the number of places $\ell_{\tau}$ is \textquotedblleft
quantized\textquotedblright\ and for lowest $m,n$ is presented in
\textsc{Table} \ref{tab-twist}.

\begin{table}[th]
\caption[Number of places in the polyadic twist map.]{Number of places
$\ell_{\tau}$ in the polyadic twist map $\mathbf{\tau}_{op}^{\left(
\ell_{\tau}\right)  }$.}%
\label{tab-twist}
\begin{center}%
\begin{tabular}
[c]{||c||c|c|c|c|c|c||}\hline\hline
\backslashbox{$m$}{$n$} & 2 & 3 & 4 & 5 & 6 & 7\\\hline\hline
2 & \textbf{2} & \textbf{4} & \textbf{6} & \textbf{8} & \textbf{10} &
\textbf{12}\\\hline
3 & \textbf{4} & \textbf{6} & \textbf{10} & \textbf{13} & \textbf{16} &
\textbf{19}\\\hline
4 & \textbf{6} & \textbf{10} & \textbf{12} & \textbf{18} & \textbf{22} &
\textbf{26}\\\hline
5 & \textbf{8} & \textbf{13} & \textbf{18} & \textbf{20} & \textbf{28} &
\textbf{33}\\\hline
6 & \textbf{10} & \textbf{16} & \textbf{22} & \textbf{28} & \textbf{30} &
\textbf{40}\\\hline
7 & \textbf{12} & \textbf{19} & \textbf{26} & \textbf{33} & \textbf{40} &
\textbf{42}\\\hline\hline
\end{tabular}
\end{center}
\end{table}

This generalizes the binary twist in a more unique way, which gives
polyadic commutativity.

\begin{remark}
The polyadic twist map $\mathbf{\tau}_{op}^{\left(  \ell_{\tau}\right)  }$ is
one element of the symmetry permutation group $\mathrm{S}_{\ell_{\tau}}$ which
is fixed by the medial map $\mathbf{\tau}_{medial}^{\left(  n,m\right)  }$ and
the special condition (\ref{top}), and it therefore respects polyadic tensor
product operations.
\end{remark}

\begin{example}
In the matrix representation we have%
\begin{equation}
\tau_{op}^{\left(  4\right)  }|_{n=3,m=2}=\left(
\begin{array}
[c]{cccc}%
0 & 0 & 1 & 0\\
1 & 0 & 0 & 0\\
0 & 0 & 0 & 1\\
0 & 1 & 0 & 0
\end{array}
\right)  ,\tau_{op}^{\left(  6\right)  }|_{n=3,m=3}=\left(
\begin{array}
[c]{cccccc}%
0 & 0 & 1 & 0 & 0 & 0\\
0 & 0 & 0 & 0 & 1 & 0\\
1 & 0 & 0 & 0 & 0 & 0\\
0 & 0 & 0 & 0 & 0 & 1\\
0 & 1 & 0 & 0 & 0 & 0\\
0 & 0 & 0 & 1 & 0 & 0
\end{array}
\right)  ,
\end{equation}%
\begin{equation}
\tau_{op}^{\left(  6\right)  }|_{n=4,m=2}=\left(
\begin{array}
[c]{cccccc}%
0 & 0 & 0 & 1 & 0 & 0\\
1 & 0 & 0 & 0 & 0 & 0\\
0 & 0 & 0 & 0 & 1 & 0\\
0 & 1 & 0 & 0 & 0 & 0\\
0 & 0 & 0 & 0 & 0 & 1\\
0 & 0 & 1 & 0 & 0 & 0
\end{array}
\right)  . \label{t6}%
\end{equation}

\end{example}

The introduction of the polyadic twist gives us the possibility to generalize
(in a way consistent with the medial map) the notion of the opposite algebra.

\begin{definition}
For a polyadic algebra $\mathrm{A}^{\left(  n\right)  }=\left\langle
A\mid\mathbf{\mu}^{\left(  n\right)  }\right\rangle $, an \textit{opposite
polyadic algebra}%
\begin{equation}
\mathrm{A}_{op}^{\left(  n\right)  }=\left\langle A\mid\mathbf{\mu}^{\left(
n\right)  }\circ\mathbf{\tau}_{op}^{\left(  n\right)  }\right\rangle
\label{aop}%
\end{equation}
exists if the number of places for the polyadic twist map (which coincides in
(\ref{aop}) with the arity of algebra multiplication $\ell_{\tau}=n$) is
\textsf{allowed} (see \textsc{Table} \ref{tab-twist}).
\end{definition}

\begin{definition}
A polyadic algebra $\mathrm{A}^{\left(  n\right)  }$ is called
\textit{medially commutative}, if%
\begin{equation}
\mathbf{\mu}_{op}^{\left(  n\right)  }\equiv\mathbf{\mu}^{\left(  n\right)
}\circ\mathbf{\tau}_{op}^{\left(  n\right)  }=\mathbf{\mu}^{\left(  n\right)
}, \label{mmt}%
\end{equation}
where $\mathbf{\tau}_{op}^{\left(  n\right)  }$ is the medially allowed
polyadic twist map.
\end{definition}

\subsection{Tensor product of polyadic algebras}

Let us consider a polyadic tensor product $\bigotimes_{i=1}^{n}\mathrm{A}%
_{i}^{\left(  n\right)  }$ of $n$ polyadic associative $n$-ary algebras
$\mathrm{A}_{i}^{\left(  n\right)  }=\left\langle A_{i}\mid\mathbf{\mu}%
_{i}^{\left(  n\right)  }\right\rangle $, $i=1,\ldots,n$, such that (see
(\ref{ma}))%
\begin{equation}
\mathbf{\mu}_{i}^{\left(  n\right)  }\circ\left(  a_{1}^{\left(  i\right)
}\otimes\ldots\otimes a_{n}^{\left(  i\right)  }\right)  =\mu_{A_{i}}^{\left(
n\right)  }\left[  a_{1}^{\left(  i\right)  },\ldots,a_{n}^{\left(  i\right)
}\right]  ,\ \ \ a_{1}^{\left(  i\right)  },\ldots,a_{n}^{\left(  i\right)
}\in A_{i},\ \ \mu_{A_{i}}^{\left(  n\right)  }:A_{i}^{\left(  n\right)
}\rightarrow A_{i}.
\end{equation}
To endow $\bigotimes_{i=1}^{n}\mathrm{A}_{i}^{\left(  n\right)  }$ with the
structure of an algebra, we will use the medial map $\mathbf{\tau}_{medial}%
^{\left(  n,m\right)  }$ (\ref{an}).

\begin{proposition}
\label{prop-tp}The tensor product of $n$ associative $n$-ary algebras
{\upshape$\mathrm{A}_{i}^{\left(  n\right)  }$} has the structure of the
polyadic algebra {\upshape$\mathrm{A}_{\otimes}^{\left(  n\right)
}=\left\langle \bigotimes_{i=1}^{n}\mathrm{A}_{i}^{\left(  n\right)  }%
\mid\mathbf{\mu}_{\otimes}\right\rangle $}, which is associative {\upshape(cf.
(\ref{as}))}%
\begin{align}
\mathbf{\mu}_{\otimes}\circ\left(  \operatorname*{id}\nolimits_{A_{\otimes}%
}^{\otimes\left(  n-1-i\right)  }\otimes\mathbf{\mu}_{\otimes}\otimes
\operatorname*{id}\nolimits_{A_{\otimes}}^{\otimes i}\right)   &
=\mathbf{\mu}_{\otimes}\circ\left(  \operatorname*{id}\nolimits_{A_{\otimes}%
}^{\otimes\left(  n-1-j\right)  }\otimes\mathbf{\mu}_{\otimes}\otimes
\operatorname*{id}\nolimits_{A_{\otimes}}^{\otimes j}\right)  ,\nonumber\\
\forall i,j  &  =0,\ldots n-1,\ i\neq j,\ \ \ \operatorname*{id}%
\nolimits_{A_{\otimes}}:A_{1}^{\otimes n}\otimes\ldots\otimes A_{n}^{\otimes
n}\rightarrow A, \label{as1}%
\end{align}
if%
\begin{equation}
\mathbf{\mu}_{\otimes}=\left(  \mathbf{\mu}_{1}^{\left(  n\right)  }%
\otimes\ldots\otimes\mathbf{\mu}_{n}^{\left(  n\right)  }\right)
\circ\mathbf{\tau}_{medial}^{\left(  n,n\right)  }. \label{mm}%
\end{equation}

\end{proposition}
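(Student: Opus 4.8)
The plan is to verify the total associativity~(\ref{as1}) of $\mathbf{\mu}_{\otimes}$ by reducing it, tensor factor by tensor factor, to the total associativity~(\ref{mn}) of each constituent algebra $\mathrm{A}_{j}^{\left(n\right)}$. The medial map~(\ref{an}) in~(\ref{mm}) serves only to route the $j$-th slot of every input into the $j$-th multiplication $\mathbf{\mu}_{j}^{\left(n\right)}$, so once this routing is made explicit the claim decouples into $n$ independent instances of the hypothesis. ($\mathbf{\mu}_{\otimes}$ is $\Bbbk$-linear as a composite of $\Bbbk$-linear maps, so only~(\ref{as1}) needs proof.)

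First I would record the elementwise action of $\mathbf{\mu}_{\otimes}$ from~(\ref{mm}). Writing the $k$-th copy of $A_{\otimes}=A_{1}\otimes\ldots\otimes A_{n}$ as $a_{k}=a_{k}^{\left(1\right)}\otimes\ldots\otimes a_{k}^{\left(n\right)}$ with $a_{k}^{\left(j\right)}\in A_{j}$, the medial map $\mathbf{\tau}_{medial}^{\left(n,n\right)}$ transposes the matrix $\bigotimes\left(\mathbf{a}\right)_{n\times n}$ of~(\ref{an1}) (rows indexed by the $n$ copies of $A_{\otimes}$, columns by the algebras $\mathrm{A}_{j}$), after which $\mathbf{\mu}_{1}^{\left(n\right)}\otimes\ldots\otimes\mathbf{\mu}_{n}^{\left(n\right)}$ multiplies column-wise, so that
\begin{equation*}
\mathbf{\mu}_{\otimes}\circ\left(a_{1}\otimes\ldots\otimes a_{n}\right)=\mu_{A_{1}}^{\left(n\right)}\left[a_{1}^{\left(1\right)},\ldots,a_{n}^{\left(1\right)}\right]\otimes\ldots\otimes\mu_{A_{n}}^{\left(n\right)}\left[a_{1}^{\left(n\right)},\ldots,a_{n}^{\left(n\right)}\right].
\end{equation*}
Hence the $j$-th tensor factor of the output of $\mathbf{\mu}_{\otimes}$ depends only on the $j$-th factors of the inputs, through $\mu_{A_{j}}^{\left(n\right)}$.

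Next I would evaluate the left-hand side of~(\ref{as1}), for a fixed $i$, on a generic $a_{1}\otimes\ldots\otimes a_{2n-1}$, $a_{k}\in A_{\otimes}$. The inner map $\operatorname*{id}\nolimits_{A_{\otimes}}^{\otimes\left(n-1-i\right)}\otimes\mathbf{\mu}_{\otimes}\otimes\operatorname*{id}\nolimits_{A_{\otimes}}^{\otimes i}$ acts on the block of $n$ consecutive copies starting at position $p=n-i$; by the column-wise formula above it replaces, in every factor $j$, the entries $a_{p}^{\left(j\right)},\ldots,a_{p+n-1}^{\left(j\right)}$ by $\mu_{A_{j}}^{\left(n\right)}\left[a_{p}^{\left(j\right)},\ldots,a_{p+n-1}^{\left(j\right)}\right]$ and leaves the remaining $n-1$ copies untouched. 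Applying the outer $\mathbf{\mu}_{\otimes}$ then produces, in the $j$-th tensor factor,
\begin{equation*}
\mu_{A_{j}}^{\left(n\right)}\left[a_{1}^{\left(j\right)},\ldots,a_{p-1}^{\left(j\right)},\mu_{A_{j}}^{\left(n\right)}\left[a_{p}^{\left(j\right)},\ldots,a_{p+n-1}^{\left(j\right)}\right],a_{p+n}^{\left(j\right)},\ldots,a_{2n-1}^{\left(j\right)}\right],
\end{equation*}
which is exactly the expression of the total associativity~(\ref{mn}) for $\mathrm{A}_{j}^{\left(n\right)}$, with the inner multiplication at place $p$. Since each $\mathrm{A}_{j}^{\left(n\right)}$ is totally associative, this value is independent of $p$, hence of $i$; and since this holds simultaneously for all $j=1,\ldots,n$, the whole tensor is independent of $i$, which is precisely~(\ref{as1}). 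The analogous computation shows $\mathbf{\mu}_{\otimes}$ reproduces, factorwise, the $\ell$-iterated products of the $\mathrm{A}_{j}^{\left(n\right)}$, so the diagram~(\ref{dia3}) for $\mathrm{A}_{\otimes}^{\left(n\right)}$ commutes.

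The one step that requires care — the main, though still routine, obstacle — is the bookkeeping of the composite of medial maps hidden in $\mathbf{\mu}_{\otimes}\circ\left(\operatorname*{id}\nolimits_{A_{\otimes}}^{\otimes\left(n-1-i\right)}\otimes\mathbf{\mu}_{\otimes}\otimes\operatorname*{id}\nolimits_{A_{\otimes}}^{\otimes i}\right)$: one must check that transposing an $n\times n$ sub-block of the $(2n-1)$-row array of inputs and then transposing the resulting $n\times n$ array does reassemble, in each column $j$, the letters $a_{1}^{\left(j\right)},\ldots,a_{2n-1}^{\left(j\right)}$ in their original order with a single inner bracket at place $p$, as asserted above. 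This is most transparent from the matrix picture~(\ref{an1})--(\ref{an}). A fully diagrammatic alternative would instead establish the corresponding equality of composites of $\mathbf{\tau}_{medial}^{\left(n,n\right)}$'s directly and then invoke the associativity square~(\ref{dia3}) for each factor, but the componentwise computation is the shortest route.
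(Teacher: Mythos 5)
Your proposal is correct and follows essentially the same route as the paper: compute the elementwise (matrix) action of $\mathbf{\mu}_{\otimes}$ via the medial map to obtain the factorwise formula, and then observe that iterating this computation places the inner product at position $p$ in each factor $j$, so independence of $i,j$ follows from the total associativity of each $\mathrm{A}_{j}^{\left(  n\right)  }$. The paper's proof states this more tersely (``repeat the same derivation twice''), while you make the block-position bookkeeping explicit, but the argument is the same.
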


\begin{proof}
We act by the multiplication map $\mathbf{\mu}_{\otimes}$ on the element's
tensor product matrix (\ref{an1}) and obtain%
\begin{align}
&  \mathbf{\mu}_{\otimes}\circ\left(  \left(  a_{1}^{\left(  1\right)
}\otimes a_{1}^{\left(  2\right)  }\otimes\ldots\otimes a_{1}^{\left(
n\right)  }\right)  \otimes\ldots\otimes\left(  a_{n}^{\left(  1\right)
}\otimes a_{n}^{\left(  2\right)  }\otimes\ldots\otimes a_{n}^{\left(
n\right)  }\right)  \right) \nonumber\\
&  =\left(  \mathbf{\mu}_{1}^{\left(  n\right)  }\otimes\ldots\otimes
\mathbf{\mu}_{n}^{\left(  n\right)  }\right)  \circ\mathbf{\tau}%
_{medial}^{\left(  n,n\right)  }\circ\left(  \left(  a_{1}^{\left(  1\right)
}\otimes a_{1}^{\left(  2\right)  }\otimes\ldots\otimes a_{1}^{\left(
n\right)  }\right)  \otimes\ldots\otimes\left(  a_{n}^{\left(  1\right)
}\otimes a_{n}^{\left(  2\right)  }\otimes\ldots\otimes a_{n}^{\left(
n\right)  }\right)  \right) \nonumber\\
&  =\left(  \mathbf{\mu}_{1}^{\left(  n\right)  }\otimes\ldots\otimes
\mathbf{\mu}_{n}^{\left(  n\right)  }\right)  \circ\left(  \left(
a_{1}^{\left(  1\right)  }\otimes a_{2}^{\left(  1\right)  }\otimes
\ldots\otimes a_{n}^{\left(  1\right)  }\right)  \otimes\ldots\otimes\left(
a_{1}^{\left(  n\right)  }\otimes a_{2}^{\left(  n\right)  }\otimes
\ldots\otimes a_{n}^{\left(  n\right)  }\right)  \right) \nonumber\\
&  =\mu_{1}^{\left(  n\right)  }\left[  a_{1}^{\left(  1\right)  }%
,a_{2}^{\left(  1\right)  },\ldots,a_{n}^{\left(  1\right)  }\right]
\otimes\ldots\otimes\mu_{n}^{\left(  n\right)  }\left[  a_{1}^{\left(
n\right)  },a_{2}^{\left(  n\right)  },\ldots,a_{n}^{\left(  n\right)
}\right]  , \label{mp}%
\end{align}
which proves that $\mathbf{\mu}_{\otimes}$ is indeed a polyadic algebra
multiplication. To prove the associativity (\ref{as1}) we repeat the same
derivation (\ref{mp}) twice and show that the result is independent of $i,j$.
\end{proof}

If \textsf{all} $\mathrm{A}_{i}^{\left(  n\right)  }$ have their polyadic unit
map $\mathbf{\eta}_{i}^{\left(  r,n\right)  }$ defined by (\ref{mu}) and
acting as (\ref{ne}), then we have

\begin{proposition}
The polyadic unit map of {\upshape$\mathrm{A}_{\otimes}^{\left(  n\right)  }$}
is $\mathbf{\eta}_{\otimes}:K^{\otimes nr}\rightarrow A_{1}^{\otimes\left(
n-1\right)  }\otimes\ldots\otimes A_{n}^{\otimes\left(  n-1\right)  }$ acting
as%
\begin{equation}
\mathbf{\eta}_{\otimes}\circ\left(  \overset{nr}{\overbrace{e_{k}\otimes
\ldots\otimes e_{k}}}\right)  =\left(  \overset{n-1}{\overbrace{e_{a_{1}%
}\otimes\ldots\otimes e_{a_{1}}}}\right)  \otimes\ldots\otimes\left(
\overset{n-1}{\overbrace{e_{a_{n}}\otimes\ldots\otimes e_{a_{n}}}}\right)  .
\end{equation}

\end{proposition}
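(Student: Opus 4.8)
The plan is to exhibit $\mathbf{\eta}_{\otimes}$ explicitly as the tensor product of the component unit maps and then to check that it satisfies the algebra unit axiom (\ref{mu}) for $\mathrm{A}_{\otimes}^{(n)}$ together with the normalization (\ref{ne}). First I would set the number of places to $r_{\otimes}=nr$ and, using the identification $K^{\otimes nr}=K^{\otimes r}\otimes\ldots\otimes K^{\otimes r}$ ($n$ blocks of $r$ scalars), define $\mathbf{\eta}_{\otimes}:=\mathbf{\eta}_{1}^{(r,n)}\otimes\ldots\otimes\mathbf{\eta}_{n}^{(r,n)}$, so that the $i$-th factor $\mathbf{\eta}_{i}^{(r,n)}:K^{\otimes r}\rightarrow A_{i}^{\otimes(n-1)}$ acts on the $i$-th block. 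Being a tensor product of $\Bbbk$-linear maps, this is a $\Bbbk$-linear map $K^{\otimes nr}\rightarrow A_{1}^{\otimes(n-1)}\otimes\ldots\otimes A_{n}^{\otimes(n-1)}$; and through a medial-type reshuffle of the $n(n-1)$ tensor slots its target is canonically identified with $A_{\otimes}^{\otimes(n-1)}=(A_{1}\otimes\ldots\otimes A_{n})^{\otimes(n-1)}$, which is the codomain required of a polyadic unit map of $\mathrm{A}_{\otimes}^{(n)}$, so $\mathbf{\eta}_{\otimes}$ is a legitimate candidate.

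Next I would verify the unit axiom $\mathbf{\mu}_{\otimes}\circ(\mathbf{\eta}_{\otimes}\otimes\operatorname{id}_{A_{\otimes}})=\mathbf{\rho}_{\otimes}$, where $\mathbf{\rho}_{\otimes}$ is the $nr$-place multiaction on $A_{\otimes}$ that acts blockwise by the $\mathbf{\rho}_{i}^{(r)}$ on the respective tensor components. Using the formula $\mathbf{\mu}_{\otimes}=(\mathbf{\mu}_{1}^{(n)}\otimes\ldots\otimes\mathbf{\mu}_{n}^{(n)})\circ\mathbf{\tau}_{medial}^{(n,n)}$ of Proposition \ref{prop-tp}, I would feed $\mathbf{\mu}_{\otimes}$ the element tensor-product matrix (\ref{an1}) with $m=n$ whose first $n-1$ rows are the components produced by $\mathbf{\eta}_{\otimes}$ from the $nr$ scalars (the $(j,i)$ entry being the $j$-th tensor component of $\mathbf{\eta}_{i}^{(r,n)}$ applied to the $i$-th block) and whose last row is an arbitrary element $a_{1}\otimes\ldots\otimes a_{n}\in A_{\otimes}$. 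Exactly as in the proof of Proposition \ref{prop-tp}, $\mathbf{\tau}_{medial}^{(n,n)}$ transposes this matrix, so that for each $i$ the map $\mathbf{\mu}_{i}^{(n)}$ acts on the $i$-th column, i.e. as $\mathbf{\mu}_{i}^{(n)}\circ(\mathbf{\eta}_{i}^{(r,n)}\otimes\operatorname{id}_{A_{i}})$ applied to (the $i$-th block of scalars) $\otimes\,a_{i}$, which by the component unit axiom (\ref{mu}) for $\mathrm{A}_{i}^{(n)}$ equals $\mathbf{\rho}_{i}^{(r)}$ applied to the same arguments. Tensoring the $n$ results over $i$ gives exactly $\mathbf{\rho}_{\otimes}$, so the diagram (\ref{dia-n}) for $\mathrm{A}_{\otimes}^{(n)}$ commutes and $\mathbf{\eta}_{\otimes}$ is its polyadic unit map.

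The displayed normalization formula then follows by substituting $e_{k}$ for every scalar: since $\mathbf{\eta}_{i}^{(r,n)}\circ(e_{k}^{\otimes r})=e_{a_{i}}^{\otimes(n-1)}$ by (\ref{ne}) for each $\mathrm{A}_{i}^{(n)}$, tensoring yields $\mathbf{\eta}_{\otimes}\circ(e_{k}^{\otimes nr})=\bigotimes_{i=1}^{n}e_{a_{i}}^{\otimes(n-1)}$, which is the asserted expression. The only delicate point, and the one I expect to be the main obstacle, is the index bookkeeping of the medial permutation $\mathbf{\tau}_{medial}^{(n,n)}$: one has to confirm that the transpose redistributes the $n-1$ ``unit rows'' and the single ``variable row'' so that in each resulting block the factors coming from $\mathbf{\eta}_{i}^{(r,n)}$ occupy $n-1$ slots and $a_{i}$ the remaining one, matching the shape $\mathbf{\eta}_{i}^{(r,n)}\otimes\operatorname{id}_{A_{i}}$ of (\ref{mu}) (or, if one places the variable row elsewhere, invoking the ``$\operatorname{id}_{A}$ on any place'' freedom of the unit axiom). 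This is the same matrix-transpose argument as in the proof of Proposition \ref{prop-tp}, now carried out with one distinguished row, so it needs no new idea beyond careful tracking of tensor-factor positions.
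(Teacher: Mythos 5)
Your proposal is correct and coincides with the argument the paper leaves implicit: the proposition is stated without proof, the unit of $\mathrm{A}_{\otimes}^{\left(  n\right)  }$ being just the tensor product of the component units $\mathbf{\eta}_{i}^{\left(  r,n\right)  }$ acting blockwise on $K^{\otimes nr}$, with the unit axiom (\ref{mu}) verified by the same medial-transpose computation used in the proof of Proposition \ref{prop-tp} and the displayed formula following componentwise from the normalization (\ref{ne}). The only bookkeeping point, the reshuffle identifying $A_{1}^{\otimes\left(  n-1\right)  }\otimes\ldots\otimes A_{n}^{\otimes\left(  n-1\right)  }$ with $A_{\otimes}^{\otimes\left(  n-1\right)  }$ so that each $\mathbf{\mu}_{i}^{\left(  n\right)  }$ meets $\mathbf{\eta}_{i}^{\left(  r,n\right)  }\otimes\operatorname*{id}\nolimits_{A_{i}}$, is handled correctly in your write-up.
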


\begin{assertion}
The polyadic unit of {\upshape$\mathrm{A}_{\otimes}^{\left(  n\right)  }$} is
a $\left(  n^{2}-n\right)  $-valued function of $nr$ arguments.
\end{assertion}

Note that concepts of tensor product and derived polyadic algebras are different.

\subsection{Heteromorphisms of polyadic associative algebras}

The standard homomorphism between binary associative algebras is defined as a
linear map $\varphi$ which \textquotedblleft commutes\textquotedblright\ with
the algebra multiplications (\textquotedblleft$\varphi\circ\mathbf{\mu}%
_{1}=\mathbf{\mu}_{2}\circ\left(  \varphi\otimes\varphi\right)  $%
\textquotedblright). In the polyadic case there exists the possibility to
change arity of the algebras, and for that, one needs to use the heteromorphisms
(or multiplace maps) introduced in \cite{dup2018a}. Let us consider two
polyadic $\Bbbk$-algebras $\mathrm{A}_{1}^{\left(  n_{1}\right)
}=\left\langle A_{1}\mid\mathbf{\mu}_{1}^{\left(  n_{1}\right)  }\right\rangle
$ and $\mathrm{A}_{2}^{\left(  n_{2}\right)  }=\left\langle A_{2}%
\mid\mathbf{\mu}_{2}^{\left(  n_{2}\right)  }\right\rangle $ (over the same
polyadic field $\Bbbk$).

\begin{definition}
A \textit{heteromorphism} between two polyadic $\Bbbk$-algebras $\mathrm{A}%
_{1}^{\left(  n_{1}\right)  }$ and $\mathrm{A}_{2}^{\left(  n_{2}\right)  }$
(of different arities $n_{1}$ and $n_{2}$) is a $s$-place $\Bbbk$-linear map
$\mathbf{\Phi}_{s}^{\left(  n_{1},n_{2}\right)  }:A_{1}^{\otimes s}\rightarrow
A_{2}$, such that%
\begin{equation}
\mathbf{\Phi}_{s}^{\left(  n_{1},n_{2}\right)  }\circ\left(  \overset
{s-\ell_{\operatorname*{id}}}{\overbrace{\mathbf{\mu}_{1}^{\left(
n_{1}\right)  }\otimes\ldots\otimes\mathbf{\mu}_{1}^{\left(  n_{1}\right)  }}%
}\otimes\overset{\ell_{\operatorname*{id}}}{\overbrace{\operatorname*{id}%
\nolimits_{A_{1}}\otimes\ldots\otimes\operatorname*{id}\nolimits_{A_{1}}}%
}\right)  =\mathbf{\mu}_{2}^{\left(  n_{2}\right)  }\circ\left(
\overset{n_{2}}{\overbrace{\mathbf{\Phi}_{s}^{\left(  n_{1},n_{2}\right)
}\otimes\ldots\otimes\mathbf{\Phi}_{s}^{\left(  n_{1},n_{2}\right)  }}%
}\right)  , \label{f}%
\end{equation}
and the diagram%
\begin{equation}
\begin{diagram} {A}_{1}^{\otimes s n_2} & \rTo^{\left(\mathbf{\Phi}_{s}^{\left( n_{1},n_{2}\right) }\right)^{\otimes n_2}\;\;\;\;\;\;} & {A}_{2}^{\otimes n_2} \\ \dTo^{\left(\mathbf{\mu}_{1}^{\left( n_{1}\right) }\right)^{\otimes \left( s-\ell_{\operatorname*{id}}\right)}\otimes \left(\operatorname*{id}\nolimits_{A_{1}} \right)^{\otimes \ell_{\operatorname*{id}}}} & & \dTo_{\mathbf{\mu}_{2}^{\left( n_{2}\right) }} \\ {A}_{1}^{\otimes s } & \rTo^{\mathbf{\Phi}_{s}^{\left( n_{1},n_{2}\right) } \;\;\;\;\;} & A_2 \\ \end{diagram} \label{dia-f}%
\end{equation}
commutes. The arities satisfy%
\begin{equation}
sn_{2}=n_{1}\left(  s-\ell_{\operatorname*{id}}\right)  +\ell
_{\operatorname*{id}}, \label{sn2}%
\end{equation}
where $0\leq\ell_{\operatorname*{id}}\leq s-1$ is an integer (the number of
\textquotedblleft intact elements\textquotedblright\ of $\mathrm{A}_{1}$), and
therefore $2\leq n_{2}\leq n_{1}$.
\end{definition}

\begin{assertion}
If $\ell_{\operatorname*{id}}=0$ (there are no \textquotedblleft intact
elements\textquotedblright), then the ($s$-place) heteromorphism does not
change the arity of the polyadic algebra.
\end{assertion}

\begin{definition}
A \textit{homomorphism} between two polyadic $\Bbbk$-algebras $\mathrm{A}%
_{1}^{\left(  n\right)  }$ and $\mathrm{A}_{2}^{\left(  n\right)  }$ (of the
same arity or \textit{equiary}) is a $1$-place $\Bbbk$-linear map
$\mathbf{\Phi}^{\left(  n\right)  }=\mathbf{\Phi}_{s=1}^{\left(  n,n\right)
}:A_{1}\rightarrow A_{2}$, such that%
\begin{equation}
\mathbf{\Phi}^{\left(  n\right)  }\circ\mathbf{\mu}_{1}^{\left(  n\right)
}=\mathbf{\mu}_{2}^{\left(  n\right)  }\circ\left(  \overset{n}{\overbrace
{\mathbf{\Phi}^{\left(  n\right)  }\otimes\ldots\otimes\mathbf{\Phi}^{\left(
n\right)  }}}\right)  , \label{ha}%
\end{equation}
and the diagram%
\begin{equation}
\begin{diagram} {A}_{1}^{\otimes n} & \rTo^{\left(\mathbf{\Phi}^{\left( n\right) }\right)^{\otimes n}} & {A}_{2}^{\otimes n} \\ \dTo^{\mathbf{\mu}_{1}^{\left( n\right) }} & & \dTo_{\mathbf{\mu}_{2}^{\left( n\right) }} \\ {A}_{1} & \rTo^{\mathbf{\Phi}^{\left( n\right) } } & A_2 \\ \end{diagram} \label{dia-f1}%
\end{equation}
commutes.
\end{definition}

The above definitions do not include the behavior of the polyadic unit under
heteromorphism, because a polyadic associative algebra need not contain a
unit. However, if both units exist, this will lead to strong arity restrictions.

\begin{proposition}
If in $\Bbbk$-algebras {\upshape$\mathrm{A}_{1}^{\left(  n_{1}\right)  }$} and
{\upshape$\mathrm{A}_{2}^{\left(  n_{2}\right)  }$} (of arities $n_{1}$ and
$n_{2}$) there exist both polyadic units {\upshape (\ref{mu})} $\mathbf{\eta}%
_{1}^{\left(  r,n_{1}\right)  }:K^{\otimes r}\rightarrow A_{1}^{\otimes\left(
n_{1}-1\right)  }$ and $\mathbf{\eta}_{2}^{\left(  r,n_{2}\right)
}:K^{\otimes r}\rightarrow A_{2}^{\otimes\left(  n_{2}-1\right)  }$, then

\begin{enumerate}
\item The heteromorphism {\upshape (\ref{f})} connects them as%
\begin{equation}
\overset{s}{\overbrace{\mathbf{\eta}_{2}^{\left(  r,n_{2}\right)  }%
\otimes\ldots\otimes\mathbf{\eta}_{2}^{\left(  r,n_{2}\right)  }}}=\left(
\overset{n_{1}-1}{\overbrace{\mathbf{\Phi}_{s}^{\left(  n_{1},n_{2}\right)
}\otimes\ldots\otimes\mathbf{\Phi}_{s}^{\left(  n_{1},n_{2}\right)  }}%
}\right)  \circ\left(  \overset{s}{\overbrace{\mathbf{\eta}_{1}^{\left(
r,n_{1}\right)  }\otimes\ldots\otimes\mathbf{\eta}_{1}^{\left(  r,n_{1}%
\right)  }}}\right)  ,
\end{equation}
and the diagram%
\begin{equation}
\begin{diagram} K^{r s} &\; \rTo^{\left( \mathbf{\eta}_{1}^{\left( r,n_{1}\right) } \right) ^{\otimes s}}& A_{1}^{\otimes s \left( n_{1}-1\right) }\\ \dTo^{\left( \mathbf{\eta}_{2}^{\left( r,n_{2}\right) } \right) ^{\otimes s}}&\ldTo(3,2)_{\left( \mathbf{\Phi}_{s}^{\left( n_{1},n_{2}\right) } \right) ^{\otimes \left( n_{1}-1\right)}}&\\ A_{2}^{\otimes s \left( n_{2}-1\right) \;\;\;}\; & &\end{diagram} \label{dia-n12}%
\end{equation}
commutes.

\item The number of \textquotedblleft intact elements\textquotedblright\ is
fixed by its maximum value%
\begin{equation}
\ell_{\operatorname*{id}}=s-1, \label{ls}%
\end{equation}
such that in the l.h.s. of {\upshape (\ref{f})} there is only one multiplication
$\mathbf{\mu}_{1}^{\left(  n_{1}\right)  }$.

\item The number of places $s$ in the heteromorphism $\mathbf{\Phi}%
_{s}^{\left(  n_{1},n_{2}\right)  }$ is fixed by the arities of the polyadic algebras%
\begin{equation}
s\left(  n_{2}-1\right)  =n_{1}-1. \label{s2}%
\end{equation}

\end{enumerate}
\end{proposition}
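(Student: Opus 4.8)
The plan is to substitute the neutral unit sequences of $\mathrm{A}_{1}$ into the heteromorphism relation (\ref{f}) and to collapse the multiplications using the two unit axioms of the form (\ref{mu}). Concretely, into the left-hand side of (\ref{f}) I would feed $\mathbf{\eta}_{1}^{\left( r,n_{1}\right) }$, which produces $n_{1}-1$ elements of $A_{1}$, into the first $n_{1}-1$ input slots of each of the $s-\ell_{\operatorname*{id}}$ copies of $\mathbf{\mu}_{1}^{\left( n_{1}\right) }$, leaving a generic element in the remaining slot of each such block together with generic elements in the $\ell_{\operatorname*{id}}$ intact slots. By the $\mathrm{A}_{1}$-unit axiom (\ref{mu}), each block $\mathbf{\mu}_{1}^{\left( n_{1}\right) }\circ\left( \mathbf{\eta}_{1}^{\left( r,n_{1}\right) }\otimes\operatorname*{id}\nolimits_{A_{1}}\right) $ becomes the multiaction $\mathbf{\rho}_{1}^{\left( r\right) }$, and since $\mathbf{\Phi}_{s}^{\left( n_{1},n_{2}\right) }$ is $\Bbbk$-linear, the resulting scalars can be carried through it. On the right-hand side the same substitution distributes the neutral sequences among the $n_{2}$ copies of $\mathbf{\Phi}_{s}^{\left( n_{1},n_{2}\right) }$; imposing that the two sides agree for all generic elements and comparing with the $\mathrm{A}_{2}$-unit axiom $\mathbf{\mu}_{2}^{\left( n_{2}\right) }\circ\left( \mathbf{\eta}_{2}^{\left( r,n_{2}\right) }\otimes\operatorname*{id}\nolimits_{A_{2}}\right) =\mathbf{\rho}_{2}^{\left( r\right) }$, one identifies the image under the $\mathbf{\Phi}_{s}$'s of the $\mathrm{A}_{1}$-neutral sequences with the $\mathrm{A}_{2}$-neutral sequences. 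This gives item 1, namely $\left( \mathbf{\eta}_{2}^{\left( r,n_{2}\right) }\right) ^{\otimes s}=\left( \mathbf{\Phi}_{s}^{\left( n_{1},n_{2}\right) }\right) ^{\otimes\left( n_{1}-1\right) }\circ\left( \mathbf{\eta}_{1}^{\left( r,n_{1}\right) }\right) ^{\otimes s}$, together with the commutativity of the triangle (\ref{dia-n12}).

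Items 2 and 3 then come out of elementary bookkeeping of tensor factors applied to item 1. For the right-hand composite in item 1 to be well-defined with the correct codomain, the factor counts must match: $\left( \mathbf{\eta}_{1}^{\left( r,n_{1}\right) }\right) ^{\otimes s}$ outputs $s\left( n_{1}-1\right) $ copies of $A_{1}$, which is exactly what $\left( \mathbf{\Phi}_{s}^{\left( n_{1},n_{2}\right) }\right) ^{\otimes\left( n_{1}-1\right) }$ consumes, and the latter outputs $n_{1}-1$ copies of $A_{2}$, while $\left( \mathbf{\eta}_{2}^{\left( r,n_{2}\right) }\right) ^{\otimes s}$ outputs $s\left( n_{2}-1\right) $ copies of $A_{2}$; equating gives $s\left( n_{2}-1\right) =n_{1}-1$, which is (\ref{s2}), item 3. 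Substituting $n_{1}=s\left( n_{2}-1\right) +1$ into the general heteromorphism arity relation (\ref{sn2}), $sn_{2}=n_{1}\left( s-\ell_{\operatorname*{id}}\right) +\ell_{\operatorname*{id}}$, and simplifying, one obtains $\left( n_{2}-1\right) \left( s-\ell_{\operatorname*{id}}\right) =n_{2}-1$; since $2\leq n_{2}$, this forces $s-\ell_{\operatorname*{id}}=1$, i.e. $\ell_{\operatorname*{id}}=s-1$, which is (\ref{ls}), item 2, so the left-hand side of (\ref{f}) indeed carries a single multiplication $\mathbf{\mu}_{1}^{\left( n_{1}\right) }$.

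I expect the arity arithmetic of the last step to be routine; the genuine obstacle is the first step, i.e. making the substitution of neutral unit sequences into (\ref{f}) precise. Unlike the binary case, a polyadic neutral sequence does not act as the literal identity but yields the multiaction $\mathbf{\rho}^{\left( r\right) }$, so one must carry the $r$ scalar arguments through both sides and verify that the reorganization of inputs among the $n_{2}$ copies of $\mathbf{\Phi}_{s}^{\left( n_{1},n_{2}\right) }$ on the right of (\ref{f}), combined with the $\Bbbk$-linearity of the heteromorphism, is consistent precisely when the $\mathbf{\Phi}_{s}$'s send $\mathrm{A}_{1}$-neutral sequences to $\mathrm{A}_{2}$-neutral sequences. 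As a sanity check, all three items reduce in the binary case $n_{1}=n_{2}=2$, $s=1$, $\ell_{\operatorname*{id}}=0$ to the classical compatibility $\varphi\left( e_{1}\right) =e_{2}$.
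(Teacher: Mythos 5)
Your second paragraph --- the arity bookkeeping --- is exactly the paper's proof: the composite $\left(\mathbf{\Phi}_{s}^{\left(n_{1},n_{2}\right)}\right)^{\otimes\left(n_{1}-1\right)}\circ\left(\mathbf{\eta}_{1}^{\left(r,n_{1}\right)}\right)^{\otimes s}$ lands in $A_{2}^{\otimes\left(n_{1}-1\right)}$, matching the codomain $A_{2}^{\otimes s\left(n_{2}-1\right)}$ of $\left(\mathbf{\eta}_{2}^{\left(r,n_{2}\right)}\right)^{\otimes s}$ forces (\ref{s2}), and combining this with (\ref{sn2}) forces $s-\ell_{\operatorname*{id}}=1$, i.e. (\ref{ls}). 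That part is correct and coincides with the argument in the paper.

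The genuine gap is your first step, the claimed derivation of item 1 from (\ref{f}) and the unit axioms (\ref{mu}). There are two problems. First, it is circular: for the substituted $\mathbf{\eta}_{1}$-outputs to fill whole copies of $\mathbf{\Phi}_{s}^{\left(n_{1},n_{2}\right)}$ on the right-hand side of (\ref{f}) --- which is what lets you ``compare with the $\mathrm{A}_{2}$-unit axiom'' at all --- the $\left(s-\ell_{\operatorname*{id}}\right)\left(n_{1}-1\right)$ unit elements must occupy exactly the first $n_{2}-1$ blocks of $s$ arguments, and this already presupposes $\ell_{\operatorname*{id}}=s-1$ and $s\left(n_{2}-1\right)=n_{1}-1$, i.e. items 2 and 3, which in your plan are deduced only afterwards from item 1. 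Second, and more fundamentally, even with the arities fixed, the comparison of the two sides only shows that $\left(\mathbf{\Phi}_{s}^{\left(n_{1},n_{2}\right)}\right)^{\otimes\left(n_{2}-1\right)}$ applied to the $\mathbf{\eta}_{1}$-output acts as a neutral sequence (\ref{me}) on elements lying in the image of $\mathbf{\Phi}_{s}^{\left(n_{1},n_{2}\right)}$. Since $\mathbf{\Phi}_{s}^{\left(n_{1},n_{2}\right)}$ need not be surjective and, as the paper stresses right after (\ref{me}), neutral sequences are in general not unique, this cannot be promoted to the identification with $\mathbf{\eta}_{2}^{\left(r,n_{2}\right)}$ stated in item 1; already in the binary case $\varphi\left(e_{1}\right)=e_{2}$ is an extra axiom imposed on a unital homomorphism, not a consequence of multiplicativity, so your closing ``sanity check'' points precisely at the assumption you cannot discharge. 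The paper does not attempt such a derivation: item 1 is taken as the defining compatibility of the heteromorphism with the units (the polyadic analogue of requiring a homomorphism to be unital), and its proof consists solely of the arity constraints you reproduce in your second paragraph.
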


\begin{proof}
Using (\ref{sn2}) we obtain $s\left(  n_{2}-1\right)  =\left(  s-\ell
_{\operatorname*{id}}\right)  \left(  n_{1}-1\right)  $, then the $\left(
n_{1}-1\right)  $ power of the heteromorphism $\mathbf{\Phi}_{s}^{\left(
n_{1},n_{2}\right)  }$ maps $A_{1}^{\otimes s\left(  n_{1}-1\right)
}\rightarrow A_{2}^{\otimes\left(  n_{1}-1\right)  }$, and we have
$s-\ell_{\operatorname*{id}}=1$, which, together with (\ref{sn2}), gives
(\ref{ls}), and (\ref{s2}).
\end{proof}

\subsection{Structure constants}

Let $\mathrm{A}^{\left(  n\right)  }$ be a finite-dimensional polyadic algebra
(\ref{al}) having the basis $\mathrm{e}_{i}\in A,i=1,\ldots,N$, where $N$ is
its dimension as a polyadic vector space%
\begin{equation}
\mathrm{A}_{vect}=\left\langle A,K\mid\nu^{\left(  m\right)  };\nu
_{k}^{\left(  m_{k}\right)  },\mu_{k}^{\left(  n_{k}\right)  };\rho^{\left(
r\right)  }\right\rangle
\end{equation}
where we denote $\nu^{\left(  m\right)  }=\nu_{A}^{\left(  m_{a}\right)  }$ (see
(\ref{v}) and (\ref{av}), here $N=d_{v}$). In the binary case
\textquotedblleft$a=\sum_{i}\lambda^{\left(  i\right)  }\mathrm{e}_{i}%
$\textquotedblright, any element $a\in A$ is determined by the number
$N_{\lambda}$ of scalars $\lambda\in K$, which coincides with the algebra
dimension $N_{\lambda}=N$, because $r=1$. In the polyadic case, it can be that
$r>1$, and moreover with $m\geq2$ the admissible number of \textquotedblleft
words\textquotedblright\ (in the expansion of $a$ by $\mathrm{e}_{i}$) is
\textquotedblleft quantized\textquotedblright, such that $1,m,2m-1,3m-2,\ldots
\ell_{N}\left(  m-1\right)  +1$, where $\ell_{N}\in\mathbb{N}_{0}$ is the
\textquotedblleft number of additions\textquotedblright. So we have

\begin{definition}
In $N$-dimensional $n$-ary algebra $\mathrm{A}^{\left(  n\right)  }$ (with
$m$-ary addition and $r$-place \textquotedblleft scalar\textquotedblright%
\ multiplication) the expansion of any element $a\in A$ by the basis $\left\{
\mathrm{e}_{i}\mid i=1,\ldots,N\right\}  $ is%
\begin{equation}
a=\left(  \nu^{\left(  m\right)  }\right)  ^{\circ\ell_{N}}\left[
\rho^{\left(  r\right)  }\left\{  \lambda_{1}^{\left(  1\right)  }%
,\ldots,\lambda_{r}^{\left(  1\right)  }\mid\mathrm{e}_{1}\right\}
,\ldots,\rho^{\left(  r\right)  }\left\{  \lambda_{1}^{\left(  N\right)
},\ldots,\lambda_{r}^{\left(  N\right)  }\mid\mathrm{e}_{N}\right\}  \right]
, \label{ae}%
\end{equation}
and is determined by $N_{\lambda}\in\mathbb{N}$ \textquotedblleft
scalars\textquotedblright, where%
\begin{align}
N_{\lambda}  &  =rN,\label{nl}\\
N  &  =\ell_{N}\left(  m-1\right)  +1,\ \ \ \ell_{N}\in\mathbb{N}_{0}%
,\ \ N\in\mathbb{N},\ \ m\geq2. \label{nl1}%
\end{align}

\end{definition}

In the binary case $m=2$, the dimension $N$ of an algebra is not restricted
and is a natural number, because, $N=\ell_{N}+1$.

\begin{assertion}
\label{as-quanN}The dimension of $n$-ary algebra {\upshape$\mathrm{A}^{\left(
n\right)  }$} having $m$-ary addition is not arbitrary, but \textquotedblleft
quantized\textquotedblright\ and can only have the following values for
$m\geq3$%
\begin{align}
m  &  =3,\ \ N=1,3,5,\ldots,2\ell_{N}+1,\\
m  &  =4,\ \ N=1,4,7,\ldots,3\ell_{N}+1,\\
m  &  =5,\ \ N=1,5,9,\ldots,4\ell_{N}+1,\\
&  \ldots\nonumber
\end{align}

\end{assertion}

\begin{proof}
It follows from (\ref{nl1}) and demanding that the \textquotedblleft number of
additions\textquotedblright\ $\ell_{N}$ is natural or zero.
\end{proof}

In a similar way, by considering a product of the basis elements, which can
also be expanded in the basis \textquotedblleft$\mathrm{e}_{i}\mathrm{e}%
_{j}=\sum_{k}\chi_{\left(  i,j\right)  }^{\left(  k\right)  }\mathrm{e}_{k}%
$\textquotedblright, we can define a polyadic analog of the structure
constants $\chi_{\left(  i,j\right)  }^{\left(  k\right)  }\in K$.

\begin{definition}
The \textit{polyadic structure constants} $\chi_{r,\left(  i_{1},\ldots
i_{n}\right)  }^{\left(  j\right)  }\in K$, $i_{1},\ldots i_{n},j=1,\ldots,n$
of the $N$-dimensional $n$-ary algebra $\mathrm{A}^{\left(  n\right)  }$ (with
$m$-ary addition $\nu^{\left(  m\right)  }$ and $r$-place multiaction
$\rho^{\left(  r\right)  }$) are defined by the expansion of the $n$-ary
product of the basis elements $\left\{  \mathrm{e}_{i}\mid i=1,\ldots
,N\right\}  $ as%
\begin{align}
&  \mu^{\left(  n\right)  }\left[  \mathrm{e}_{i_{1}},\ldots,\mathrm{e}%
_{i_{n}}\right] \nonumber\\
&  =\left(  \nu^{\left(  m\right)  }\right)  ^{\circ\ell_{N}}\left[
\rho^{\left(  r\right)  }\left\{  \chi_{1,\left(  i_{1},\ldots i_{n}\right)
}^{\left(  1\right)  },\ldots,\chi_{r,\left(  i_{1},\ldots i_{n}\right)
}^{\left(  1\right)  }\mid\mathrm{e}_{1}\right\}  ,\ldots,\rho^{\left(
r\right)  }\left\{  \chi_{1,\left(  i_{1},\ldots i_{n}\right)  }^{\left(
N\right)  },\ldots,\chi_{r,\left(  i_{1},\ldots i_{n}\right)  }^{\left(
N\right)  }\mid\mathrm{e}_{N}\right\}  \right]  ,
\end{align}

where%
\begin{align}
N_{\chi}  &  =rN^{n+1},\ \ N,N_{\chi}\in\mathbb{N}\label{nx}\\
N  &  =\ell_{N}\left(  m-1\right)  +1,\ \ \ell_{N}\in\mathbb{N}_{0}%
,\ \ m\geq2.
\end{align}

\end{definition}

As in the binary case, we have

\begin{corollary}
The algebra multiplication $\mu^{\left(  n\right)  }$ of $\mathrm{A}^{\left(
n\right)  }$ is \textsf{fully} determined by the $rN^{n+1}$ polyadic structure
constants $\chi_{r,\left(  i_{1},\ldots i_{n}\right)  }^{\left(  j\right)
}\in K$.
\end{corollary}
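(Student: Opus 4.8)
The plan is to show that the family $\{\chi_{r,(i_1,\ldots,i_n)}^{(j)}\}$ suffices to recover $\mu^{\left(n\right)}\left[a_1,\ldots,a_n\right]$ for arbitrary $a_1,\ldots,a_n\in A$, and then to count these scalars. First I would fix the polyadic basis $\left\{\mathrm{e}_i\mid i=1,\ldots,N\right\}$ and expand each argument $a_k$ by (\ref{ae}) as an $\ell_N$-iterated $m$-ary sum of multiactions $\rho^{\left(r\right)}\left\{\lambda_1^{\left(i,k\right)},\ldots,\lambda_r^{\left(i,k\right)}\mid\mathrm{e}_i\right\}$. The key structural fact is that $\mu^{\left(n\right)}$ is $\Bbbk$-linear, i.e.\ it is compatible with the $m$-ary vector addition $\nu^{\left(m\right)}$ and with the multiaction $\rho^{\left(r\right)}$ through the distributivity and compatibility relations (\ref{c0})--(\ref{c3a}). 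Pushing $\mu^{\left(n\right)}$ through these relations once for each of its $n$ slots turns $\mu^{\left(n\right)}\left[a_1,\ldots,a_n\right]$ into an admissible polyadic sum, indexed by tuples $(i_1,\ldots,i_n)\in\left\{1,\ldots,N\right\}^n$, of terms of the form $\rho^{\left(r\right)}\left\{\ldots\mid\mu^{\left(n\right)}\left[\mathrm{e}_{i_1},\ldots,\mathrm{e}_{i_n}\right]\right\}$, whose scalar entries are explicit polyadic products, built only from $\nu_k^{\left(m_k\right)}$ and $\mu_k^{\left(n_k\right)}$, of the $\lambda$'s.

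Second I would substitute the definition of the polyadic structure constants, replacing each $\mu^{\left(n\right)}\left[\mathrm{e}_{i_1},\ldots,\mathrm{e}_{i_n}\right]$ by its expansion $(\nu^{\left(m\right)})^{\circ\ell_N}\left[\rho^{\left(r\right)}\left\{\chi_{1,(i_1,\ldots,i_n)}^{\left(1\right)},\ldots\mid\mathrm{e}_1\right\},\ldots\right]$. Applying the compatibility with field multiplication (\ref{c3}) once more to collapse the nested multiactions, one obtains a closed formula for $\mu^{\left(n\right)}\left[a_1,\ldots,a_n\right]$ as an admissible polyadic sum of multiactions of the basis vectors whose scalar coefficients are assembled entirely from the $\lambda$'s and the $\chi$'s using only the field operations. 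Since the basis expansion coefficients of an element are precisely the data determining it, this shows that $\mu^{\left(n\right)}$ is fully fixed by $\left\{\chi_{r,(i_1,\ldots,i_n)}^{\left(j\right)}\right\}$, exactly as in the binary case.

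Finally I would count: the upper index $j$ ranges over $N$ values, each tuple $(i_1,\ldots,i_n)$ over $N^n$ values, and for every such choice there are $r$ scalar slots in $\rho^{\left(r\right)}$, giving $N_{\chi}=rN^{n+1}$ as in (\ref{nx}); the admissibility constraint $N=\ell_N\left(m-1\right)+1$ from (\ref{nl1}) is just the ``quantization'' guaranteeing that the iterated sum in the definition has the correct length. I expect the only real obstacle to be bookkeeping: tracking how the numbers of iterated additions and of field multiplications accumulate as $\mu^{\left(n\right)}$ is pushed through the $n$ arguments, and checking that every intermediate word keeps an admissible (``quantized'') length so that all manipulations stay inside the polyadic algebra — but relations (\ref{c0})--(\ref{c3a}) are designed precisely so that this is consistent, so once the arity shapes are handled carefully the argument is routine.
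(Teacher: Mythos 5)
Your argument is correct and is exactly the one the paper has in mind: the corollary is stated there without proof, as the immediate binary-case analogue, relying on basis expansion (\ref{ae}), the $\Bbbk$-linearity of $\mathbf{\mu}^{\left(n\right)}$ together with the compatibilities (\ref{c0})--(\ref{c3a}), and the count $N_{\chi}=rN^{n+1}$ from (\ref{nx}). Your only loose end, the admissibility of the intermediate word lengths, is indeed automatic since $N\equiv1\pmod{m-1}$ forces $N^{n}\equiv1\pmod{m-1}$, so nothing further is needed.
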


Contrary to the binary case $m=2$, when $N_{\chi}$ can be any natural number,
we now have

\begin{assertion}
The number of the polyadic structure constants $N_{\chi}$ of the
finite-dimensional $n$-ary algebra {\upshape$\mathrm{A}^{\left(  n\right)  }$}
with $m$-ary addition and $r$-place multiaction is not arbitrary, but
\textquotedblleft quantized\textquotedblright\ according to%
\begin{equation}
N_{\chi}=r\left(  \ell_{N}\left(  m-1\right)  +1\right)  ^{n+1},\ \ r\in
\mathbb{N},\ \ \ell_{N}\in\mathbb{N}_{0},\ \ m,n\geq2.
\end{equation}

\end{assertion}

\begin{proof}
This follows from (\ref{nx}) and \textquotedblleft
quantization\textquotedblright\ of the algebra dimension $N$, see
\textbf{Assertion \ref{as-quanN}}.
\end{proof}

\section{\textsc{Polyadic coalgebras}}

\subsection{Motivation}

The standard motivation for introducing the comultiplication is from
representation theory \cite{cur/rei,kirillov}. The first examples come from
so-called addition formulas for special functions (\textquotedblleft
anciently\textquotedblright\ started from sin/cos), which actually arise from
representations of groups \cite{shn/ste,haz/gub/kir}.

In brief (and informally), let $\mathbf{\pi}$ be a finite-dimensional
representation of a group $\mathrm{G}$ in a vector space $\mathrm{V}$ over a
field $\mathrm{k}$, such that%
\begin{equation}
\mathbf{\pi}\left(  gh\right)  =\mathbf{\pi}\left(  g\right)  \mathbf{\pi
}\left(  h\right)  ,\ \ \ \ \mathbf{\pi}:\mathrm{G}\rightarrow
\operatorname*{End}\mathrm{V},\ \ \ \ g,h\in\mathrm{G}. \label{p}%
\end{equation}
In some basis of $\mathrm{V}$ the matrix elements $\pi_{ij}\left(  g\right)  $
satisfy $\pi_{ij}\left(  gh\right)  =\sum_{k}\pi_{ik}\left(  g\right)
\pi_{kj}\left(  h\right)  $ (from (\ref{p})) and span a finite dimensional
vector space $C_{\pi}$ of functions with a basis $e_{\pi_{m}}$ as $f_{\pi
}=\sum_{m}\alpha_{m}e_{\pi_{m}}$, $f_{\pi}\in C_{\pi}$. Now (\ref{p}) gives
$f_{\pi}\left(  gh\right)  =\sum_{m,n}\beta_{mn}e_{\pi_{m}}\left(  g\right)
e_{\pi_{n}}\left(  h\right)  $, $f_{\pi}\in C_{\pi}$. If we omit the
evaluation, it can be written in the vector space $C_{\pi}$ using an
additional linear map $\Delta_{\pi}:C_{\pi}\rightarrow C_{\pi}\otimes C_{\pi}%
$, in the following way%
\begin{equation}
\Delta_{\pi}\left(  f_{\pi}\right)  =\sum_{m,n}\beta_{mn}e_{\pi_{m}}\otimes
e_{\pi_{n}}\in C_{\pi}\otimes C_{\pi}. \label{dp}%
\end{equation}
Thus, to any finite-dimensional representation $\mathbf{\pi}$ one can define
the map $\Delta_{\pi}$ of vector spaces $C_{\pi}$ to functions on a group,
called a \textit{comultiplication}.

It is important that all the above operations are binary, and the defining
formula for comultiplication (\ref{dp}) is fully determined by the definition
of a representation (\ref{p}).

The polyadic analog of a representation was introduced and studied in
\cite{dup2018a}. In the case of multiplace representations, arities of the
initial group and its representation can be \textsf{different}. Indeed, let
$\mathrm{G}^{\left(  n\right)  }=\left\langle G\mid\mu_{G}^{\left(  n\right)
}\right\rangle $, $\mu_{G}^{\left(  n\right)  }:G^{\times n}\rightarrow G$, be
a $n$-ary group and $\mathrm{G}_{\operatorname*{End}\mathrm{V}}^{\left(
n^{\prime}\right)  }=\left\langle \left\{  \operatorname*{End}\mathrm{V}%
\right\}  \mid\mu_{E}^{\left(  n^{\prime}\right)  }\right\rangle $, $\mu
_{E}^{\left(  n^{\prime}\right)  }:\left(  \operatorname*{End}\mathrm{V}%
\right)  ^{\times n^{\prime}}\rightarrow\operatorname*{End}\mathrm{V}$, is a
$n^{\prime}$-ary group of endomorphisms of a polyadic vector space
$\mathrm{V}$ (\ref{v}). In \cite{dup2018a} $\mathrm{G}_{\operatorname*{End}%
\mathrm{V}}^{\left(  n^{\prime}\right)  }$ was considered as a derived one,
while here we \textsf{do not restrict} it in this way.

\begin{definition}
A \textit{polyadic (multiplace) representation} of $\mathrm{G}^{\left(
n\right)  }$ in $\mathrm{V}$ is a $s$-place mapping%
\begin{equation}
\mathbf{\Pi}_{s}^{\left(  n,n^{\prime}\right)  }:G^{\times s}\rightarrow
\operatorname*{End}\mathrm{V}, \label{pg}%
\end{equation}
satisfying the associativity preserving heteromorphism equation
\cite{dup2018a}%
\begin{align}
&  \mathbf{\Pi}_{s}^{\left(  n,n^{\prime}\right)  }\left(  \overset
{s-\ell_{\operatorname*{id}}^{\prime}}{\overbrace{\mu_{G}^{\left(  n\right)
}\left[  g_{1},\ldots g_{n}\right]  ,\ldots,\mu_{G}^{\left(  n\right)
}\left[  g_{n\left(  s-\ell_{\operatorname*{id}}^{\prime}-1\right)  }%
,\ldots,g_{n\left(  s-\ell_{\operatorname*{id}}^{\prime}\right)  }\right]  }%
},\ldots,\overset{\ell_{\operatorname*{id}}^{\prime}}{\overbrace{g_{n\left(
s-\ell_{\operatorname*{id}}^{\prime}\right)  +1},\ldots,g_{n\left(
s-\ell_{\operatorname*{id}}^{\prime}\right)  +\ell_{\operatorname*{id}%
}^{\prime}}}}\right) \nonumber\\
&  =\mu_{E}^{\left(  n^{\prime}\right)  }\left[  \mathbf{\Pi}_{s}^{\left(
n,n^{\prime}\right)  }\left(  g_{1},\ldots g_{s}\right)  ,\ldots,\mathbf{\Pi
}_{s}^{\left(  n,n^{\prime}\right)  }\left(  g_{s\left(  n^{\prime}-1\right)
},\ldots g_{sn^{\prime}}\right)  \right]  , \label{pp}%
\end{align}
such that the diagram
\begin{equation}
\begin{diagram} G^{\times s n^{\prime}} & \rTo^{\left( \mathbf{\Pi}_{s}^{\left( n,n^{\prime}\right) }\right) ^{\times n^{\prime}}} &\left( \operatorname*{End}V\right)^{\times n^{\prime}} \\ \dTo^{ \left(\mu_{G}^{\left( n\right)} \right)^{\times \left(s-\ell _{\operatorname*{id}}^{\prime} \right)} \times\left( \operatorname*{id}\nolimits_{G}\right)^{\times \ell _{\operatorname*{id}}^{\prime}}} & & \dTo_{\mu_{E}^{\left( n^{\prime}\right)} }\\ G^{\times n^{\prime}} & \rTo^{ \mathbf{\Pi}_{s}^{\left( n,n^{\prime}\right) }} &\operatorname*{End}V \end{diagram} \label{dia-p}%
\end{equation}
commutes, and the arity changing formula%
\begin{equation}
sn^{\prime}=n\left(  s-\ell_{\operatorname*{id}}^{\prime}\right)
+\ell_{\operatorname*{id}}^{\prime}, \label{sn1}%
\end{equation}
where $\ell_{\operatorname*{id}}^{\prime}$ is the number of \textquotedblleft
intact elements\textquotedblright\ in l.h.s. of (\ref{pp}), $0\leq
\ell_{\operatorname*{id}}^{\prime}\leq s-1$, $2\leq n^{\prime}\leq n$.
\end{definition}

\begin{remark}
Particular examples of $2$-place representations of ternary groups ($s=2$,
which change arity from $n=3$ to $n^{\prime}=2$), together with their matrix
representations, were presented in \cite{dup2018a,bor/dud/dup3}.
\end{remark}

\subsection{Polyadic comultiplication}

Our motivations say that in constructing a polyadic analog of the
comultiplication, one should not only \textquotedblleft reverse
arrows\textquotedblright, but also pay thorough attention to arities.

\begin{assertion}
The arity of polyadic comultiplication coincides with the arity of the
representation and can \textsf{differ} from the arity of the polyadic
algebra.\bigskip
\end{assertion}

\begin{proof}
It follows from (\ref{p}), (\ref{dp}) and (\ref{pg}).
\end{proof}

Let us consider a polyadic vector space over the polyadic field $\Bbbk
^{\left(  m_{k},n_{k}\right)  }$ as (see (\ref{av}))%
\begin{equation}
\mathrm{C}_{vect}=\left\langle C,K\mid\nu_{C}^{\left(  m_{c}\right)  };\nu
_{k}^{\left(  m_{k}\right)  },\mu_{k}^{\left(  n_{k}\right)  };\rho
_{C}^{\left(  r_{c}\right)  }\right\rangle , \label{cvect}%
\end{equation}
where $\nu_{C}^{\left(  m_{c}\right)  }:C^{\times m_{c}}\rightarrow C$ is
$m_{c}$-ary addition and $\rho_{C}^{\left(  r_{c}\right)  }:K^{\times r_{c}%
}\times C\rightarrow C$ is $r_{c}$-place action (see (\ref{r})).

\begin{definition}
\label{def-comult}A \textit{polyadic (}$n^{\prime}$\textit{-ary)}
\textit{comultiplication} is a $\Bbbk$-linear map $\mathbf{\Delta}^{\left(
n^{\prime}\right)  }:C\rightarrow C^{\otimes n^{\prime}}$.
\end{definition}

\begin{definition}
A \textit{polyadic (coassociative) coalgebra (or }$\Bbbk$-coalgebra) is the
polyadic vector space $\mathrm{C}_{vect}$ equipped with the polyadic
comultiplication%
\begin{equation}
\mathrm{C}=\mathrm{C}^{\left(  n^{\prime}\right)  }=\left\langle
\mathrm{C}_{vect}\mid\mathbf{\Delta}^{\left(  n^{\prime}\right)
}\right\rangle , \label{cnn}%
\end{equation}
which is (totally) \textit{coassociative}%
\begin{align}
\left(  \operatorname*{id}\nolimits_{C}^{\otimes\left(  n^{\prime}-1-i\right)
}\otimes\mathbf{\Delta}^{\left(  n^{\prime}\right)  }\otimes\operatorname*{id}%
\nolimits_{C}^{\otimes i}\right)  \circ\mathbf{\Delta}^{\left(  n^{\prime
}\right)  }  &  =\left(  \operatorname*{id}\nolimits_{C}^{\otimes\left(
n-1-j\right)  }\otimes\mathbf{\Delta}^{\left(  n^{\prime}\right)  }%
\otimes\operatorname*{id}\nolimits_{C}^{\otimes j}\right)  \circ
\mathbf{\Delta}^{\left(  n^{\prime}\right)  },\nonumber\\
\forall i,j  &  =0,\ldots n-1,\ i\neq j,\ \ \operatorname*{id}\nolimits_{C}%
:C\rightarrow C, \label{coa}%
\end{align}
and such that the diagram%
\begin{equation}
\begin{diagram} C^{\otimes \left(2n^{\prime}-1\right)} & \lTo^{ \operatorname*{id}\nolimits_{C}^{\otimes\left( n^{\prime}-1-i\right) }\otimes \;\mathbf{\Delta}^{\left( n^{\prime}\right) }\otimes\;\operatorname*{id}\nolimits_{C}^{\otimes i} } &C^{\otimes n^{\prime}} \\ \uTo^{\operatorname*{id}\nolimits_{C}^{\otimes\left( n^{\prime}-1-j\right) }\otimes\;\mathbf{\Delta}^{\left( n^{\prime}\right) }\otimes\;\operatorname*{id}\nolimits_{C}^{\otimes j}} & & \uTo_{\mathbf{\Delta}^{\left( n^{\prime}\right) }} \\ C^{\otimes n^{\prime}} & \lTo^{ \mathbf{\Delta}^{\left( n^{\prime}\right) }} &C \\ \end{diagram} \label{dia-D}%
\end{equation}
commutes (cf.(\ref{dia3})).
\end{definition}

\begin{definition}
A polyadic coalgebra $\mathrm{C}^{\left(  n^{\prime}\right)  }$ is called
\textit{totally co-commutative}, if%
\begin{equation}
\mathbf{\Delta}^{\left(  n^{\prime}\right)  }=\mathbf{\tau}_{n^{\prime}}%
\circ\mathbf{\Delta}^{\left(  n^{\prime}\right)  }, \label{dd}%
\end{equation}
where $\mathbf{\tau}_{n^{\prime}}\in\mathrm{S}_{n^{\prime}}$, and
$\mathrm{S}_{n^{\prime}}$ is the permutation symmetry group on $n^{\prime}$ elements.
\end{definition}

\begin{definition}
A polyadic coalgebra $\mathrm{C}^{\left(  n^{\prime}\right)  }$ is called
\textit{medially co-commutative}, if%
\begin{equation}
\mathbf{\Delta}_{cop}^{\left(  n^{\prime}\right)  }\equiv\mathbf{\tau}%
_{op}^{\left(  n^{\prime}\right)  }\circ\mathbf{\Delta}^{\left(  n^{\prime
}\right)  }=\mathbf{\Delta}^{\left(  n^{\prime}\right)  }, \label{cop}%
\end{equation}
where $\mathbf{\tau}_{op}^{\left(  n^{\prime}\right)  }$ is the medially
allowed polyadic twist map (\ref{top}).
\end{definition}

There are no other axioms in the definition of a polyadic coalgebra, following
the same reasoning as for a polyadic algebra: the possible absence of zeroes
and units (see \textit{Remark \ref{rem-zu}} and \textsc{Table \ref{tab1}}).
Obviously, in a polyadic coalgebra $\mathrm{C}^{\left(  n^{\prime}\right)  }$,
there is no \textquotedblleft unit element\textquotedblright, because there is
no multiplication, and a polyadic analog of counit can be \textsf{only
defined}, when the underlying field $\Bbbk^{\left(  m_{k},n_{k}\right)  }$ is
\textsf{unital} (which is not always the case \cite{dup2017a}).

By analogy with (\ref{mx}), introduce the $\ell^{\prime}$-\textit{coiterated
}$n^{\prime}$-\textit{ary comultiplication} by%
\begin{equation}
\left(  \mathbf{\Delta}^{\left(  n^{\prime}\right)  }\right)  ^{\circ
\ell^{\prime}}=\overset{\ell^{\prime}}{\overbrace{\left(  \operatorname*{id}%
\nolimits_{C}^{\otimes\left(  n^{\prime}-1\right)  }\otimes\ldots\left(
\operatorname*{id}\nolimits_{C}^{\otimes\left(  n^{\prime}-1\right)
}\mathbf{\Delta}^{\left(  n^{\prime}\right)  }\right)  \ldots\circ
\mathbf{\Delta}^{\left(  n^{\prime}\right)  }\right)  \circ\mathbf{\Delta
}^{\left(  n^{\prime}\right)  }}},\ \ \ \ \ell^{\prime}\in\mathbb{N}.
\label{dl}%
\end{equation}
Therefore, the \textit{admissible} length of any co-word is fixed
(\textquotedblleft quantized\textquotedblright) as $\ell^{\prime}\left(
n^{\prime}-1\right)  +1$, but not arbitrary, as in the binary case.

Let us introduce a co-analog of the derived $n$-ary multiplication
(\ref{mder}) by

\begin{definition}
A polyadic comultiplication $\mathbf{\Delta}_{der}^{\left(  n^{\prime}\right)
}$ is called \textit{derived}, if it is $\ell_{d}$-coiterated\ from the
comultiplication $\mathbf{\Delta}_{0}^{\left(  n_{0}^{\prime}\right)  }$ of
lower arity $n_{0}^{\prime}<n^{\prime}$%
\begin{equation}
\mathbf{\Delta}_{der}^{\left(  n^{\prime}\right)  }=\overset{\ell_{d}%
}{\overbrace{\left(  \operatorname*{id}\nolimits_{C}^{\otimes\left(
n_{0}^{\prime}-1\right)  }\otimes\ldots\left(  \operatorname*{id}%
\nolimits_{C}^{\otimes\left(  n_{0}^{\prime}-1\right)  }\mathbf{\Delta}%
_{0}^{\left(  n_{0}^{\prime}\right)  }\right)  \ldots\circ\mathbf{\Delta}%
_{0}^{\left(  n_{0}^{\prime}\right)  }\right)  \circ\mathbf{\Delta}%
_{0}^{\left(  n_{0}^{\prime}\right)  }}}, \label{ndder}%
\end{equation}
or%
\begin{equation}
\mathbf{\Delta}_{der}^{\left(  n^{\prime}\right)  }=\left(  \mathbf{\Delta
}_{0}^{\left(  n_{0}^{\prime}\right)  }\right)  ^{\circ\ell^{\prime}},
\end{equation}
where%
\begin{equation}
n^{\prime}=\ell_{d}\left(  n_{0}^{\prime}-1\right)  +1,
\end{equation}
and $\ell_{d}\geq2$ is the \textquotedblleft number of
coiterations\textquotedblright.
\end{definition}

The standard coiterations of $\mathbf{\Delta}$ are binary and restricted by
$n_{0}^{\prime}=2$ (\cite{swe}).

\begin{example}
The matrix coalgebra generated by the basis $e_{ij}$, $i,j=1,\ldots,N$ of
$\operatorname*{Mat}\nolimits_{N}\left(  \mathbb{C}\right)  $ with the binary
coproduct $\Delta_{0}^{\left(  2\right)  }\left(  e_{ij}\right)  =\sum
_{k}e_{ik}\otimes e_{kj}$ (see, e.g., \cite{abe}) can be extended to the
\textsf{derived} ternary coalgebra by $\Delta_{der}^{\left(  3\right)
}\left(  e_{ij}\right)  =\sum_{k,l}e_{ik}\otimes e_{kl}\otimes e_{lj}$, such
that (\ref{ndder}) becomes $\mathbf{\Delta}_{der}^{\left(  3\right)  }=\left(
\operatorname*{id}\nolimits_{C}\otimes\mathbf{\Delta}_{0}^{\left(  2\right)
}\right)  \mathbf{\Delta}_{0}^{\left(  2\right)  }=\left(  \mathbf{\Delta}%
_{0}^{\left(  2\right)  }\otimes\operatorname*{id}\nolimits_{C}\right)
\mathbf{\Delta}_{0}^{\left(  2\right)  }$.
\end{example}

\begin{example}
Let us consider the ternary coalgebra $\left\langle C\mid\mathbf{\Delta
}^{\left(  3\right)  }\right\rangle $ generated by two elements $\left\{
a,b\right\}  \in C$ with the von Neumann regular looking comultiplication%
\begin{equation}
\Delta^{\left(  3\right)  }\left(  a\right)  =a\otimes b\otimes
a,\ \ \ \ \Delta^{\left(  3\right)  }\left(  b\right)  =b\otimes a\otimes b.
\label{d3}%
\end{equation}
It is easy to check that $\mathbf{\Delta}^{\left(  3\right)  }$ is
coassociative and \textsf{nonderived}.
\end{example}

\begin{definition}
A polyadic coalgebra $\mathrm{C}^{\left(  n^{\prime}\right)  }$ (\ref{cnn}) is
called \textit{co-medial}, if its $n^{\prime}$-ary multiplication map
satisfies the relation%
\begin{equation}
\left(  \left(  \mathbf{\Delta}^{\left(  n^{\prime}\right)  }\right)
^{\otimes n^{\prime}}\right)  \circ\mathbf{\Delta}^{\left(  n^{\prime}\right)
}=\mathbf{\tau}_{medial}^{\left(  n^{\prime},n^{\prime}\right)  }\circ\left(
\left(  \mathbf{\Delta}^{\left(  n^{\prime}\right)  }\right)  ^{\otimes
n^{\prime}}\right)  \circ\mathbf{\Delta}^{\left(  n^{\prime}\right)  },
\label{com}%
\end{equation}
where $\mathbf{\tau}_{medial}^{\left(  n^{\prime},n^{\prime}\right)  }$ is the
polyadic medial map given by (\ref{an1})--(\ref{an}).
\end{definition}

Introduce a $\Bbbk$-linear $r^{\prime}$-\textit{place} \textit{action map}
$\mathbf{\bar{\rho}}^{\left(  r^{\prime}\right)  }:K^{\otimes r^{\prime}%
}\otimes C\rightarrow C$ corresponding to $\rho_{C}^{\left(  r_{c}\right)  }$
by (see (\ref{r1}))%
\begin{equation}
\mathbf{\bar{\rho}}^{\left(  r^{\prime}\right)  }\circ\left(  \lambda
_{1}\otimes\ldots\otimes\lambda_{r^{\prime}}\otimes c\right)  =\rho
_{C}^{\left(  r^{\prime}\right)  }\left(  \lambda_{1},\ldots,\lambda
_{r^{\prime}}\mid c\right)  ,\ \ \ \lambda_{1},\ldots,\lambda_{r^{\prime}}\in
K,\ c\in C. \label{rc}%
\end{equation}

Let $\Bbbk^{\left(  m_{k},n_{k}\right)  }$ be unital with unit $e_{k}$.

\begin{definition}
A $\Bbbk$-linear $r^{\prime}$-\textit{place} \textit{coaction map}
$\mathbf{\sigma}^{\left(  r^{\prime}\right)  }:C\rightarrow K^{\otimes
r^{\prime}}\otimes C$ is defined by%
\begin{equation}
c\mapsto\overset{r^{\prime}}{\overbrace{e_{k}\otimes\ldots\otimes e_{k}}%
}\otimes c. \label{c}%
\end{equation}

\end{definition}

\begin{assertion}
The coaction map $\mathbf{\sigma}^{\left(  r^{\prime}\right)  }$ is a
\textquotedblleft right inverse\textquotedblright\ for the multiaction map
$\mathbf{\bar{\rho}}^{\left(  r^{\prime}\right)  }$%
\begin{equation}
\mathbf{\bar{\rho}}^{\left(  r^{\prime}\right)  }\circ\mathbf{\sigma}^{\left(
r^{\prime}\right)  }=\operatorname*{id}\nolimits_{C}.
\end{equation}

\end{assertion}

\begin{proof}
This follows from the normalization (\ref{re}), (\ref{c}).
\end{proof}

\begin{remark}
The maps (\ref{rc}) and (\ref{c}) establish the isomorphism $\overset
{r^{\prime}}{\overbrace{\Bbbk\otimes\ldots\otimes\ \Bbbk}}\otimes
\mathrm{C}\cong\mathrm{C}$, which is well-known in the binary case (see, e.g.
\cite{yokonuma}).
\end{remark}

We can provide the definition of counit only in the case where the underlying
field $\Bbbk$ has a unit.

\begin{definition}
[\textsl{Counit axiom}]The polyadic coalgebra $\mathrm{C}^{\left(  n\right)
}$ (\ref{aav}) over the \textsf{unital}\textit{ }polyadic field\textit{
}$\Bbbk^{\left(  m_{k},n_{k}\right)  }$ contains a $\Bbbk$-linear
\textit{polyadic (right) counit map} $\mathbf{\varepsilon}^{\left(  n^{\prime
},r^{\prime}\right)  }:C^{\otimes\left(  n^{\prime}-1\right)  }\rightarrow
K^{\otimes r^{\prime}}$ satisfying%
\begin{equation}
\left(  \mathbf{\varepsilon}^{\left(  n^{\prime},r^{\prime}\right)  }%
\otimes\operatorname*{id}\nolimits_{C}\right)  \circ\mathbf{\Delta}^{\left(
n^{\prime}\right)  }=\mathbf{\sigma}^{\left(  r^{\prime}\right)  }, \label{ep}%
\end{equation}
such that the diagram%
\begin{equation}
\begin{diagram} K^{\otimes r^{\prime}}\otimes C &\lTo^{\; \mathbf{\varepsilon}^{\left( n^{\prime},r^{\prime}\right) }\otimes\operatorname*{id}\nolimits_{C}} & C^{\otimes n^{\prime} \; }\\ \uTo^{\mathbf{\sigma}^{\left( r^{\prime}\right) }}&\;\;\ruTo(3,2)_{\mathbf{\Delta}^{\left( n^{\prime}\right) }}&\\C& & \end{diagram} \label{dia-cou}%
\end{equation}
commutes (cf.(\ref{dia-n})).
\end{definition}

\begin{remark}
We cannot write the \textquotedblleft elementwise\textquotedblright%
\ normalization action for the counit analogous to (\ref{ne}) (and state the
\textbf{Assertion} \ref{as-unit}), because a unit element in a (polyadic)
coalgebra is not defined.
\end{remark}

By analogy with the derived polyadic unit (see (\ref{mn1}) and \textbf{Definition
\ref{def-nder}})\textbf{, }consider a \textquotedblleft
derived\textquotedblright\ version of the polyadic counit.

\begin{definition}
The $\Bbbk$-linear \textit{derived polyadic counit }(\textit{neutral counit
sequence})\textit{ }of the polyadic coalgebra $\mathrm{C}^{\left(  n^{\prime
}\right)  }$ is the set $\mathbf{\hat{\varepsilon}}^{\left(  r\right)
}=\left\{  \mathbf{\varepsilon}_{i}^{\left(  r^{\prime}\right)  }\right\}  $
of $n^{\prime}-1$ maps $\mathbf{\varepsilon}_{i}^{\left(  r^{\prime}\right)
}:C\rightarrow K^{\otimes r^{\prime}}$, $i=1,\ldots,n^{\prime}-1$, satisfying%
\begin{equation}
\left(  \mathbf{\varepsilon}_{1}^{\left(  r^{\prime}\right)  }\otimes
\ldots\otimes\mathbf{\varepsilon}_{n^{\prime}-1}^{\left(  r^{\prime}\right)
}\otimes\operatorname*{id}\nolimits_{C}\right)  \circ\mathbf{\Delta}^{\left(
n^{\prime}\right)  }=\mathbf{\sigma}^{\left(  r^{\prime}\right)  }, \label{ed}%
\end{equation}
where $\operatorname*{id}\nolimits_{C}$ can be on any place. If
$\mathbf{\varepsilon}_{1}^{\left(  r^{\prime}\right)  }=\ldots
=\mathbf{\varepsilon}_{n^{\prime}-1}^{\left(  r^{\prime}\right)
}=\mathbf{\varepsilon}_{0}^{\left(  r^{\prime}\right)  }$, we call it the
\textit{strong derived polyadic counit}. In general, we can define formally,
cf. (\ref{nder}),%
\begin{equation}
\mathbf{\varepsilon}_{der}^{\left(  n^{\prime},r^{\prime}\right)
}=\mathbf{\varepsilon}_{1}^{\left(  r^{\prime}\right)  }\otimes\ldots
\otimes\mathbf{\varepsilon}_{n^{\prime}-1}^{\left(  r^{\prime}\right)  }.
\end{equation}

\end{definition}

\begin{definition}
\label{def-derco}A polyadic coassociative coalgebra $\mathrm{C}_{der}^{\left(
n^{\prime}\right)  }=\left\langle \mathrm{C}_{vect}\mid\mathbf{\Delta}%
_{der}^{\left(  n^{\prime}\right)  },\mathbf{\varepsilon}_{der}^{\left(
n^{\prime},r^{\prime}\right)  }\right\rangle $ is called \textit{derived} from
$\mathrm{C}_{0}^{\left(  n^{\prime}\right)  }=\left\langle \mathrm{C}%
_{vect}\mid\mathbf{\Delta}_{0}^{\left(  n_{0}^{\prime}\right)  }%
,\mathbf{\varepsilon}_{0}^{\left(  n_{0}^{\prime},r^{\prime}\right)
}\right\rangle $, if (\ref{ndder}) and%
\begin{equation}
\mathbf{\varepsilon}_{der}^{\left(  n^{\prime},r^{\prime}\right)  }%
=\overset{\ell_{d}}{\overbrace{\mathbf{\varepsilon}_{0}^{\left(  n_{0}%
^{\prime},r^{\prime}\right)  }\otimes\ldots\otimes\mathbf{\varepsilon}%
_{0}^{\left(  n_{0}^{\prime},r^{\prime}\right)  }}}%
\end{equation}
hold, where $\mathbf{\varepsilon}_{0}^{\left(  n_{0}^{\prime},r^{\prime
}\right)  }=\overset{n_{0}^{\prime}-1}{\overbrace{\mathbf{\varepsilon}%
_{0}^{\left(  r^{\prime}\right)  }\otimes\ldots\otimes\mathbf{\varepsilon}%
_{0}^{\left(  r^{\prime}\right)  }}}$ (formally, because $\operatorname*{id}%
\nolimits_{C}$ in (\ref{ed}) can be on any place).
\end{definition}

In \cite{dup26,dup2018b} the particular case for $n^{\prime}=3$ and
$r^{\prime}=1$ was considered.

\subsection{Homomorphisms of polyadic coalgebras}

In the binary case, a morphism of coalgebras is a linear map $\psi
:C_{1}\rightarrow C_{2}$ which \textquotedblleft commutes\textquotedblright%
\ with comultiplications (\textquotedblleft$\left(  \psi\otimes\psi\right)
\circ\mathbf{\Delta}_{1}=\mathbf{\Delta}_{2}\circ\varphi$\textquotedblright).
It seems that for the polyadic coalgebras, one could formally change the
direction of all arrows in (\ref{dia-f}). However, we observed that arity
changing is possible for multivalued morphisms only. Therefore, here we
confine ourselves to homomorphisms (1-place heteromorphisms \cite{dup2018a}).

Let us consider two polyadic (equiary) $\Bbbk$-coalgebras $\mathrm{C}%
_{1}^{\left(  n^{\prime}\right)  }=\left\langle C_{1}\mid\mathbf{\Delta}%
_{1}^{\left(  n^{\prime}\right)  }\right\rangle $ and $\mathrm{C}_{2}^{\left(
n^{\prime}\right)  }=\left\langle C_{2}\mid\mathbf{\Delta}_{2}^{\left(
n^{\prime}\right)  }\right\rangle $ over the same polyadic field
$\Bbbk^{\left(  m_{k},n_{k}\right)  }$.

\begin{definition}
A (\textit{coalgebra}) \textit{homomorphism} between polyadic (equiary)
coalgebras $\mathrm{C}_{1}^{\left(  n^{\prime}\right)  }$ and $\mathrm{C}%
_{2}^{\left(  n^{\prime}\right)  }$ is a $\Bbbk$-linear map $\mathbf{\Psi
}^{\left(  n^{\prime}\right)  }:C_{1}\rightarrow C_{2}$, such that%
\begin{equation}
\left(  \overset{n^{\prime}}{\overbrace{\mathbf{\Psi}^{\left(  n^{\prime
}\right)  }\otimes\ldots\otimes\mathbf{\Psi}^{\left(  n^{\prime}\right)  }}%
}\right)  \circ\mathbf{\Delta}_{1}^{\left(  n^{\prime}\right)  }%
=\mathbf{\Delta}_{2}^{\left(  n^{\prime}\right)  }\circ\mathbf{\Psi}^{\left(
n^{\prime}\right)  }, \label{hc}%
\end{equation}
and the diagram%
\begin{equation}
\begin{diagram} {C}_{2}^{\otimes n^{\prime}} & \lTo^{\left(\mathbf{\Psi}^{\left( n^{\prime}\right) }\right)^{\otimes n^{\prime}}} & {C}_{1}^{\otimes n^{\prime}} \\ \uTo^{\mathbf{\Delta}_{2}^{\left( n^{\prime}\right) }} & & \uTo_{\mathbf{\Delta}_{1}^{\left( n^{\prime}\right) }} \\ {C}_{2} & \lTo^{\mathbf{\Psi}^{\left( n^{\prime}\right) } } & C_1 \\ \end{diagram} \label{dia-fc}%
\end{equation}
commutes (cf. (\ref{dia-f})).
\end{definition}

Only when the underlying field $\Bbbk$ is unital, we can also define a
morphism for counits.

\begin{definition}
The \textit{counit homomorphism} for $\mathbf{\varepsilon}_{1,2}^{\left(
n^{\prime},r^{\prime}\right)  }:C_{1,2}^{\otimes\left(  n^{\prime}-1\right)
}\rightarrow K^{\otimes r^{\prime}}$ is given by%
\begin{equation}
\mathbf{\varepsilon}_{2}^{\left(  n^{\prime},r^{\prime}\right)  }%
=\mathbf{\varepsilon}_{1}^{\left(  n^{\prime},r^{\prime}\right)  }\circ\left(
\overset{n^{\prime}-1}{\overbrace{\mathbf{\Psi}^{\left(  n^{\prime}\right)
}\otimes\ldots\otimes\mathbf{\Psi}^{\left(  n^{\prime}\right)  }}}\right)  ,
\end{equation}
and the diagram%
\begin{equation}
\begin{diagram} K^{r^{\prime}} &\; \lTo^{\mathbf{\varepsilon}_{2}^{\left( n^{\prime},r^{\prime}\right) } }& C_{2}^{\otimes \left( n^{\prime}-1\right) }\\ \uTo^{ \mathbf{\varepsilon}_{1}^{\left( n^{\prime},r^{\prime}\right) } }&\ruTo(3,2)_{\left( \mathbf{\Psi}^{\left( n^{\prime}\right) } \right) ^{\otimes \left( n^{\prime}-1\right)}}&\\ C_{1}^{\otimes \left( n^{\prime}-1\right) }\;\;\;\;\; & &\end{diagram} \label{dia-co}%
\end{equation}
commutes (cf. (\ref{dia-n12})).
\end{definition}

\subsection{Tensor product of polyadic coalgebras}

Let us consider $n^{\prime}$ polyadic equiary coalgebras $\mathrm{C}%
_{i}^{\left(  n^{\prime}\right)  }=\left\langle C_{i}\mid\mathbf{\Delta}%
_{i}^{\left(  n^{\prime}\right)  }\right\rangle $, $i=1,\ldots,n^{\prime}$.

\begin{proposition}
The tensor product of the coalgebras has a structure of the polyadic
coassociative coalgebra {\upshape$\mathrm{C}_{\otimes}^{\left(  n^{\prime
}\right)  }=\left\langle C_{\otimes}\mid\mathbf{\Delta}_{\otimes}^{\left(
n^{\prime}\right)  }\right\rangle $}, $C_{\otimes}=\bigotimes_{i=1}%
^{n^{\prime}}C_{i}$, if%
\begin{equation}
\mathbf{\Delta}_{\otimes}^{\left(  n^{\prime}\right)  }=\mathbf{\tau}%
_{medial}^{\left(  n^{\prime},n^{\prime}\right)  }\circ\left(  \mathbf{\Delta
}_{i}^{\left(  n^{\prime}\right)  }\otimes\ldots\otimes\mathbf{\Delta}%
_{i}^{\left(  n^{\prime}\right)  }\right)  ,
\end{equation}
where $\mathbf{\tau}_{medial}^{\left(  n^{\prime},n^{\prime}\right)  }$ is
defined in {\upshape(\ref{an})} and $\mathbf{\Delta}_{\otimes}^{\left(
n^{\prime}\right)  }:C_{\otimes}\rightarrow\overset{n^{\prime}}{\overbrace
{C_{\otimes}\otimes\ldots\otimes C_{\otimes}}}$.
\end{proposition}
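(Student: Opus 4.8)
The plan is to dualize the proof of \textbf{Proposition \ref{prop-tp}}, reversing all arrows and replacing the algebra multiplications $\mathbf{\mu}_{i}^{\left( n\right) }$ by the comultiplications $\mathbf{\Delta}_{i}^{\left( n^{\prime}\right) }$. First I would check that $\mathbf{\Delta}_{\otimes}^{\left( n^{\prime}\right) }$ is well defined and $\Bbbk$-linear: each $\mathbf{\Delta}_{i}^{\left( n^{\prime}\right) }:C_{i}\rightarrow C_{i}^{\otimes n^{\prime}}$ is $\Bbbk$-linear by \textbf{Definition \ref{def-comult}}, so $\mathbf{\Delta}_{1}^{\left( n^{\prime}\right) }\otimes\ldots\otimes\mathbf{\Delta}_{n^{\prime}}^{\left( n^{\prime}\right) }$ lands in $C_{1}^{\otimes n^{\prime}}\otimes\ldots\otimes C_{n^{\prime}}^{\otimes n^{\prime}}$, and the polyadic medial map $\mathbf{\tau}_{medial}^{\left( n^{\prime},n^{\prime}\right) }$ of (\ref{an}) is a permutation of tensor factors (hence $\Bbbk$-linear) which carries this onto $\left( C_{1}\otimes\ldots\otimes C_{n^{\prime}}\right) ^{\otimes n^{\prime}}=C_{\otimes}^{\otimes n^{\prime}}$, since transposing the $n^{\prime}\times n^{\prime}$ tensor-product matrix (\ref{an1}) regroups the $j$-th column as $c_{1}^{\left( j\right) }\otimes\ldots\otimes c_{n^{\prime}}^{\left( j\right) }\in C_{\otimes}$. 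Thus $\mathbf{\Delta}_{\otimes}^{\left( n^{\prime}\right) }:C_{\otimes}\rightarrow C_{\otimes}^{\otimes n^{\prime}}$ as claimed, i.e.\ it is a polyadic comultiplication in the sense of \textbf{Definition \ref{def-comult}}.

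The heart of the matter is total coassociativity (\ref{coa}) for $\mathbf{\Delta}_{\otimes}^{\left( n^{\prime}\right) }$, which I would verify elementwise in Sweedler-like notation. For $c=c_{1}\otimes\ldots\otimes c_{n^{\prime}}\in C_{\otimes}$, writing $\mathbf{\Delta}_{i}^{\left( n^{\prime}\right) }\circ c_{i}=\sum c_{i}^{\left( 1\right) }\otimes\ldots\otimes c_{i}^{\left( n^{\prime}\right) }$, the medial transposition gives (the dual of the chain (\ref{mp}))
\begin{equation}
\mathbf{\Delta}_{\otimes}^{\left( n^{\prime}\right) }\circ c=\sum\left( c_{1}^{\left( 1\right) }\otimes\ldots\otimes c_{n^{\prime}}^{\left( 1\right) }\right) \otimes\ldots\otimes\left( c_{1}^{\left( n^{\prime}\right) }\otimes\ldots\otimes c_{n^{\prime}}^{\left( n^{\prime}\right) }\right) .
\end{equation}
Now $\operatorname*{id}\nolimits_{C_{\otimes}}^{\otimes\left( n^{\prime}-1-k\right) }\otimes\mathbf{\Delta}_{\otimes}^{\left( n^{\prime}\right) }\otimes\operatorname*{id}\nolimits_{C_{\otimes}}^{\otimes k}$ acts on the $\left( k+1\right) $-st group $c_{1}^{\left( k+1\right) }\otimes\ldots\otimes c_{n^{\prime}}^{\left( k+1\right) }$ precisely by applying $\operatorname*{id}\nolimits_{C_{i}}^{\otimes\left( n^{\prime}-1-k\right) }\otimes\mathbf{\Delta}_{i}^{\left( n^{\prime}\right) }\otimes\operatorname*{id}\nolimits_{C_{i}}^{\otimes k}$ to $c_{i}^{\left( k+1\right) }$ in each component $C_{i}$, followed by a further medial regrouping. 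Hence the composite $\left( \operatorname*{id}\nolimits^{\otimes\left( n^{\prime}-1-i\right) }\otimes\mathbf{\Delta}_{\otimes}\otimes\operatorname*{id}\nolimits^{\otimes i}\right) \circ\mathbf{\Delta}_{\otimes}$ decomposes, component by component in the $C_{i}$, as $\left( \operatorname*{id}\nolimits^{\otimes\left( n^{\prime}-1-i\right) }\otimes\mathbf{\Delta}_{i}^{\left( n^{\prime}\right) }\otimes\operatorname*{id}\nolimits^{\otimes i}\right) \circ\mathbf{\Delta}_{i}^{\left( n^{\prime}\right) }$, conjugated by a fixed iteration of medial maps that does not depend on $i$. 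Since each $\mathrm{C}_{i}^{\left( n^{\prime}\right) }$ is coassociative, (\ref{coa}) makes the $C_{i}$-component independent of the insertion index, hence so is the whole composite; that is exactly total coassociativity of $\mathbf{\Delta}_{\otimes}^{\left( n^{\prime}\right) }$, so the diagram (\ref{dia-D}) for $\mathrm{C}_{\otimes}^{\left( n^{\prime}\right) }$ commutes.

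The main obstacle I anticipate is the combinatorial bookkeeping of the two successive medial maps: one must check that the permutation of the $n^{\prime}\left( 2n^{\prime}-1\right) $ tensor slots of $C_{\otimes}^{\otimes\left( 2n^{\prime}-1\right) }$ coming from applying $\mathbf{\tau}_{medial}^{\left( n^{\prime},n^{\prime}\right) }$ at two stages is the same whether $\mathbf{\Delta}_{\otimes}$ is inserted at position $i$ or $j$, so that the reduction to componentwise coassociativity is genuinely clean — this is the co-analog of the step ``repeat the derivation (\ref{mp}) twice and show the result is independent of $i,j$'' in the proof of \textbf{Proposition \ref{prop-tp}}, and it is most transparently checked in the $m\times n$ matrix picture (\ref{an1})--(\ref{an}). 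Finally, if each $\mathrm{C}_{i}^{\left( n^{\prime}\right) }$ is counital over a unital $\Bbbk^{\left( m_{k},n_{k}\right) }$, I would note (dually to the preceding proposition on $\mathbf{\eta}_{\otimes}$) that a counit of $\mathrm{C}_{\otimes}^{\left( n^{\prime}\right) }$ is obtained as $\mathbf{\varepsilon}_{1}^{\left( n^{\prime},r^{\prime}\right) }\otimes\ldots\otimes\mathbf{\varepsilon}_{n^{\prime}}^{\left( n^{\prime},r^{\prime}\right) }$ composed with the $n^{\prime}$-fold scalar multiplication, but this is not required for the structural statement above.
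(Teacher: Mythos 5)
Your proposal is correct and takes essentially the same route as the paper, which simply declares the proof to be ``in full analogy with'' \textbf{Proposition \ref{prop-tp}}: you dualize that argument by evaluating $\mathbf{\Delta}_{\otimes}^{\left(n^{\prime}\right)}$ on the tensor-product matrix (\ref{an1}) and reducing total coassociativity to the componentwise coassociativity of each $\mathbf{\Delta}_{i}^{\left(n^{\prime}\right)}$ via a medial regrouping that is independent of the insertion position. The only blemish is an immaterial indexing slip: $\operatorname*{id}\nolimits^{\otimes\left(n^{\prime}-1-k\right)}\otimes\mathbf{\Delta}_{\otimes}^{\left(n^{\prime}\right)}\otimes\operatorname*{id}\nolimits^{\otimes k}$ acts on the $\left(n^{\prime}-k\right)$-th group rather than the $\left(k+1\right)$-st, which does not affect the argument.
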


The proof is in full analogy with that of \textbf{Proposition }\ref{prop-tp}.
If all of the coalgebras $\mathrm{C}_{i}^{\left(  n^{\prime}\right)  }$ have
counits, we denote them $\mathbf{\varepsilon}_{i}^{\left(  n^{\prime
},r^{\prime}\right)  }:C_{i}^{\otimes\left(  n^{\prime}-1\right)  }\rightarrow
K^{\otimes r^{\prime}}$, $i=1,\ldots,n^{\prime}$, and the counit map of
$\mathrm{C}_{\otimes}^{\left(  n^{\prime}\right)  }$ will be denoted by
$\mathbf{\varepsilon}_{\otimes}^{\left(  n^{\prime},r^{\prime}\right)
}:C_{\otimes}^{\otimes\left(  n^{\prime}-1\right)  }\rightarrow K^{\otimes
r^{\prime}}$. We have (in analogy to \textquotedblleft$\varepsilon_{C_{1}\otimes
C_{2}}\left(  c_{1}\otimes c_{2}\right)  =\varepsilon_{C_{1}}\left(
c_{1}\right)  \varepsilon_{C_{2}}\left(  c_{2}\right)  $\textquotedblright)

\begin{proposition}
The tensor product coalgebra {\upshape$\mathrm{C}_{\otimes}^{\left(
n^{\prime}\right)  }$} has a counit which is defined by%
\begin{align}
&  \mathbf{\varepsilon}_{\otimes}^{\left(  n^{\prime},r^{\prime}\right)
}\circ\left(  c_{1}\otimes\ldots\otimes c_{n^{\prime}\left(  n^{\prime
}-1\right)  }\right) \nonumber\\
&  =\mathbf{\mu}_{k}^{n_{k}}\circ\left(  \mathbf{\varepsilon}_{1}^{\left(
n^{\prime},r^{\prime}\right)  }\circ\left(  c_{1}\otimes\ldots\otimes
c_{\left(  n^{\prime}-1\right)  }\right)  \otimes\ldots\otimes
\mathbf{\varepsilon}_{n^{\prime}}^{\left(  n^{\prime},r^{\prime}\right)
}\circ\left(  c_{\left(  n^{\prime}-1\right)  \left(  n^{\prime}-1\right)
}\otimes\ldots\otimes c_{n^{\prime}\left(  n^{\prime}-1\right)  }\right)
\right)  ,\\
&  c_{i} \in C_{i},\ \ i=1,\ldots,n^{\prime}\left(  n^{\prime}-1\right)
,\nonumber
\end{align}
and the arity of the comultiplication coincides with the arity of the
underlying field%
\begin{equation}
n^{\prime}=n_{k}.
\end{equation}

\end{proposition}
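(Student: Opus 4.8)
The plan is to take the map written in the statement as a candidate polyadic counit $\mathbf{\varepsilon}_{\otimes}^{\left(n^{\prime},r^{\prime}\right)}:C_{\otimes}^{\otimes\left(n^{\prime}-1\right)}\rightarrow K^{\otimes r^{\prime}}$ and to verify for it the counit axiom (\ref{ep}) relative to $\mathbf{\Delta}_{\otimes}^{\left(n^{\prime}\right)}$; coassociativity of $\mathbf{\Delta}_{\otimes}^{\left(n^{\prime}\right)}$ is already supplied by the preceding Proposition, and $\Bbbk$-linearity of $\mathbf{\varepsilon}_{\otimes}^{\left(n^{\prime},r^{\prime}\right)}$ is automatic as it is a composition of $\Bbbk$-linear maps. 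First I would take $c=c^{\left(1\right)}\otimes\ldots\otimes c^{\left(n^{\prime}\right)}\in C_{\otimes}$ with $c^{\left(i\right)}\in C_{i}$ and apply $\mathbf{\Delta}_{\otimes}^{\left(n^{\prime}\right)}=\mathbf{\tau}_{medial}^{\left(n^{\prime},n^{\prime}\right)}\circ\left(\mathbf{\Delta}_{1}^{\left(n^{\prime}\right)}\otimes\ldots\otimes\mathbf{\Delta}_{n^{\prime}}^{\left(n^{\prime}\right)}\right)$. Since each $\mathbf{\Delta}_{i}^{\left(n^{\prime}\right)}\left(c^{\left(i\right)}\right)$ lies in $C_{i}^{\otimes n^{\prime}}$, the argument of $\mathbf{\tau}_{medial}^{\left(n^{\prime},n^{\prime}\right)}$ is a tensor of the $\left(n^{\prime}\times n^{\prime}\right)$-matrix type (\ref{an1}) whose $i$-th row carries the $n^{\prime}$ tensor slots of $\mathbf{\Delta}_{i}^{\left(n^{\prime}\right)}\left(c^{\left(i\right)}\right)$; by the evaluation (\ref{an}) the matrix is transposed, so the $j$-th $C_{\otimes}$-factor of $\mathbf{\Delta}_{\otimes}^{\left(n^{\prime}\right)}\left(c\right)\in C_{\otimes}^{\otimes n^{\prime}}$ collects the $j$-th slots of all $n^{\prime}$ factors $\mathbf{\Delta}_{i}^{\left(n^{\prime}\right)}\left(c^{\left(i\right)}\right)$, one from each $C_{i}$.

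Next I would compose with $\mathbf{\varepsilon}_{\otimes}^{\left(n^{\prime},r^{\prime}\right)}\otimes\operatorname{id}_{C_{\otimes}}$. The factor $\mathbf{\varepsilon}_{\otimes}^{\left(n^{\prime},r^{\prime}\right)}$ acts on the first $n^{\prime}-1$ of the $C_{\otimes}$-factors above and, by its defining formula, must first regroup the $n^{\prime}\left(n^{\prime}-1\right)$ entries fibrewise, handing to $\mathbf{\varepsilon}_{i}^{\left(n^{\prime},r^{\prime}\right)}$ exactly the first $n^{\prime}-1$ tensor slots of $\mathbf{\Delta}_{i}^{\left(n^{\prime}\right)}\left(c^{\left(i\right)}\right)\in C_{i}^{\otimes\left(n^{\prime}-1\right)}$, while $\operatorname{id}_{C_{\otimes}}$ keeps the $n^{\prime}$-th $C_{\otimes}$-factor. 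For each fixed $i$ the resulting composite $\left(\mathbf{\varepsilon}_{i}^{\left(n^{\prime},r^{\prime}\right)}\otimes\operatorname{id}_{C_{i}}\right)\circ\mathbf{\Delta}_{i}^{\left(n^{\prime}\right)}$ is precisely the left-hand side of the counit axiom (\ref{ep}) for $\mathrm{C}_{i}$, hence equals $\mathbf{\sigma}^{\left(r^{\prime}\right)}\left(c^{\left(i\right)}\right)=e_{k}^{\otimes r^{\prime}}\otimes c^{\left(i\right)}$ by (\ref{c}). Collecting over $i=1,\ldots,n^{\prime}$, the composite $\left(\mathbf{\varepsilon}_{\otimes}^{\left(n^{\prime},r^{\prime}\right)}\otimes\operatorname{id}_{C_{\otimes}}\right)\circ\mathbf{\Delta}_{\otimes}^{\left(n^{\prime}\right)}$ evaluated on $c$ reduces to $\mathbf{\mu}_{k}^{n_{k}}$ applied componentwise to $n^{\prime}$ copies of $e_{k}^{\otimes r^{\prime}}$, tensored with $c^{\left(1\right)}\otimes\ldots\otimes c^{\left(n^{\prime}\right)}=c$.

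It is exactly here that the arity condition is forced: $\mathbf{\mu}_{k}^{n_{k}}$ accepts these $n^{\prime}$ scalar arguments as one field multiplication precisely when $n^{\prime}=n_{k}$ (for $n^{\prime}\neq n_{k}$ one would instead need an iterated product $\left(\mu_{k}^{\left(n_{k}\right)}\right)^{\circ\ell}$ subject to the admissibility relation $\ell\left(n_{k}-1\right)+1=n^{\prime}$, the generic ``quantized'' case, which alters the shape of the formula). Granting $n^{\prime}=n_{k}$, the polyadic unit property (\ref{e}) gives $\mu_{k}^{\left(n_{k}\right)}\left[e_{k},\ldots,e_{k}\right]=e_{k}$ in each of the $r^{\prime}$ slots, so the scalar part collapses to $e_{k}^{\otimes r^{\prime}}$ and the whole composite equals $e_{k}^{\otimes r^{\prime}}\otimes c=\mathbf{\sigma}_{\otimes}^{\left(r^{\prime}\right)}\left(c\right)$, i.e. (\ref{ep}) holds for $\mathrm{C}_{\otimes}^{\left(n^{\prime}\right)}$. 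The corresponding ``left'' version is checked identically by moving $\operatorname{id}_{C_{\otimes}}$ to the first factor, and specialising the formula to rank-one tensors recovers the familiar ``$\varepsilon_{C_{1}\otimes C_{2}}\left(c_{1}\otimes c_{2}\right)=\varepsilon_{C_{1}}\left(c_{1}\right)\varepsilon_{C_{2}}\left(c_{2}\right)$''.

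The step I expect to be the main obstacle is the index bookkeeping around the medial map: one must check carefully that transposing the $\left(n^{\prime}\times n^{\prime}\right)$ tensor matrix makes the fibrewise regrouping built into $\mathbf{\varepsilon}_{\otimes}^{\left(n^{\prime},r^{\prime}\right)}$ coincide with the ``first $n^{\prime}-1$ tensor slots'' grouping demanded by each individual counit axiom (\ref{ep}), and that the leftover $n^{\prime}$-th column genuinely reassembles $c$. Once this combinatorial alignment is pinned down the remainder is the routine reduction above, together with the unit identity $\mu_{k}^{\left(n_{k}\right)}\left[e_{k},\ldots,e_{k}\right]=e_{k}$ that produces the coaction map $\mathbf{\sigma}_{\otimes}^{\left(r^{\prime}\right)}$ on the nose.
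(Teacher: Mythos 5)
Your verification is correct and is essentially the argument the paper intends: the paper states this proposition without proof, by analogy with the binary rule ``$\varepsilon_{C_{1}\otimes C_{2}}\left(c_{1}\otimes c_{2}\right)=\varepsilon_{C_{1}}\left(c_{1}\right)\varepsilon_{C_{2}}\left(c_{2}\right)$'' and with \textbf{Proposition \ref{prop-tp}}, and your direct check of the counit axiom (\ref{ep}) for $\mathbf{\Delta}_{\otimes}^{\left(n^{\prime}\right)}$ --- medial transposition, the individual counit axioms giving $e_{k}^{\otimes r^{\prime}}\otimes c^{\left(i\right)}$, and then $\mu_{k}^{\left(n_{k}\right)}\left[e_{k},\ldots,e_{k}\right]=e_{k}$ --- is exactly that analogy made explicit. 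Your fibrewise reading of the regrouping (each $\mathbf{\varepsilon}_{i}^{\left(n^{\prime},r^{\prime}\right)}$ receiving the $C_{i}$-entries) and your identification of the single field multiplication of $n^{\prime}$ scalars as the step forcing $n^{\prime}=n_{k}$ are both consistent with the paper's (loosely indexed) statement, so there is no gap.
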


\subsection{Polyadic coalgebras in the Sweedler notation}

The $\Bbbk$-linear coalgebra comultiplication map $\mathbf{\Delta}^{\left(
n^{\prime}\right)  }$ defined in \textbf{Definition \ref{def-comult}} is
useful for a \textquotedblleft diagrammatic\textquotedblright\ description of
polyadic coalgebras, and it corresponds to the algebra multiplication map
$\mathbf{\mu}^{\left(  n\right)  }$, which both manipulate with sets. However,
for concrete computations (with elements) we need an analog of the polyadic
algebra multiplication $\mu^{\left(  n\right)  }\equiv\mu_{A}^{\left(
n_{a}\right)  }$ from (\ref{mn}). The connection of $\mathbf{\mu}^{\left(
n\right)  }$ and $\mu^{\left(  n\right)  }$ is given by (\ref{ma}), which can
be treated as a \textquotedblleft bridge\textquotedblright\ between the
\textquotedblleft diagrammatic\textquotedblright\ and \textquotedblleft
elementwise\textquotedblright\ descriptions. The co-analog of (\ref{ma}) was
not considered, because the comultiplication has only one argument. To be
consistent, we introduce the \textquotedblleft elementwise\textquotedblright%
\ comultiplication $\Delta^{\left(  n^{\prime}\right)  }$ as the coanalog of
$\mu^{\left(  n\right)  }$ by the evaluation%
\begin{equation}
\mathbf{\Delta}^{\left(  n^{\prime}\right)  }\circ\left(  c\right)
=\Delta^{\left(  n^{\prime}\right)  }\left(  c\right)  ,\ \ \ c\in C.
\label{dc}%
\end{equation}
In general, one does not distinguish $\mathbf{\Delta}^{\left(  n^{\prime
}\right)  }$ and $\Delta^{\left(  n^{\prime}\right)  }$ and may use one symbol in
both descriptions.

In real \textquotedblleft elementwise\textquotedblright\ coalgebra
computations with many variables and comultiplications acting on them, the
indices and various letters reproduce themselves in such a way that it is
impossible to observe the structure of the expressions. Therefore, instead of
different letters in the binary decomposition (\textquotedblleft$\Delta\left(
c\right)  =\sum_{i}a_{i}\otimes b_{i}$\textquotedblright\ and (\ref{dp})) it
was proposed \cite{swe68} to use the \textsf{same letter} (\textquotedblleft%
$\Delta\left(  c\right)  =\sum_{i}c_{\left[  1\right]  ,i}\otimes c_{\left[
2\right]  ,i}$\textquotedblright), and then go from the \textsf{real} sum
$\sum_{i}$ to the \textsf{formal} sum $\sum_{\left[  c\right]  }$ as
(\textquotedblleft$\Delta\left(  c\right)  =\sum_{\left[  c\right]
}c_{\left[  1\right]  }\otimes c_{\left[  2\right]  }$\textquotedblright%
\ remembering the place of the components $c_{\left[  1\right]  },c_{\left[
2\right]  }$ only), because the real indices pullulate in complicated formulas
enormously. In simple cases, the sum sign was also omitted (\textquotedblleft%
$\Delta\left(  c\right)  =c_{\left[  1\right]  }\otimes c_{\left[  2\right]
}$\textquotedblright), which recalls the Einstein index summation rule in
physics. This trick abbreviated tedious coalgebra computations and was called
the (\textit{sumless}) \textit{Sweedler }(\textit{sigma})\textit{ notation}
(sometimes it is called the \textit{Heyneman-Sweedler notation} \cite{hey/swe}).

Now we can write $\Delta^{\left(  n^{\prime}\right)  }$ as a $n^{\prime}$-ary
decomposition in the manifest \textquotedblleft elementwise\textquotedblright%
\ form%
\begin{equation}
\Delta^{\left(  n^{\prime}\right)  }\left(  c\right)  =\left(  \nu^{\left(
m\right)  }\right)  ^{\circ\ell_{\Delta}}\left[  c_{\left[  1\right]
,1}\otimes c_{\left[  2\right]  ,1}\otimes\ldots\otimes c_{\left[  n^{\prime
}\right]  ,1},\ldots,c_{\left[  1\right]  ,N_{\Delta}}\otimes c_{\left[
2\right]  ,N_{\Delta}}\otimes\ldots\otimes c_{\left[  n^{\prime}\right]
,N_{\Delta}}\right]  ,\ \ c_{\left[  j\right]  ,i}\in C, \label{dn}%
\end{equation}
where $\ell_{\Delta}\in\mathbb{N}_{0}$ is a \textquotedblleft number of
additions\textquotedblright, and $N_{\Delta}\in\mathbb{N}$ is the
\textquotedblleft number of summands\textquotedblright. In the binary case,
the number of summands in the decomposition is not \textquotedblleft
algebraically\textquotedblright\ restricted, because $N_{\Delta}=\ell_{\Delta
}+1$. In the polyadic case, we have

\begin{assertion}
The admissible \textquotedblleft number of summands\textquotedblright%
\ $N_{\Delta}$\ in the polyadic comultiplication is%
\begin{equation}
N_{\Delta}=\ell_{\Delta}\left(  m-1\right)  +1,\ \ \ \ell_{\Delta}%
\in\mathbb{N}_{0},\ \ \ m\geq2. \label{nd}%
\end{equation}

\end{assertion}

Therefore, the \textquotedblleft quantization\textquotedblright\ of
$N_{\Delta}$ coincides with that of the $N$-dimensional polyadic algebra (see
\textbf{Assertion} \ref{as-quanN}).

Introduce the \textit{polyadic Sweedler notation} by exchanging in (\ref{dn})
the \textsf{real} $m$-ary addition $\nu^{\left(  m\right)  }$ by the
\textsf{formal} addition $\nu_{\left[  c\right]  }$ and writing%
\begin{equation}
\Delta^{\left(  n^{\prime}\right)  }\left(  c\right)  =\nu_{\left[  c\right]
}\left[  c_{\left[  1\right]  }\otimes c_{\left[  2\right]  }\otimes
\ldots\otimes c_{\left[  n^{\prime}\right]  }\right]  \Rightarrow c_{\left[
1\right]  }\otimes c_{\left[  2\right]  }\otimes\ldots\otimes c_{\left[
n^{\prime}\right]  }. \label{dv}%
\end{equation}
Remember here that we can formally add only $N_{\Delta}$ summands, because of
the \textquotedblleft quantization\textquotedblright\ (\ref{nd}) rule.

The polyadic Sweedler notation power can be seen in the following

\begin{example}
We apply (\ref{dv}) to the coassociativity (\ref{coa}) with $n^{\prime}=3$, to
obtain%
\begin{align}
\left(  \operatorname*{id}\otimes\operatorname*{id}\otimes\Delta^{\left(
3\right)  }\right)  \circ\Delta^{\left(  3\right)  }\left(  c\right)   &
=\left(  \operatorname*{id}\otimes\Delta^{\left(  3\right)  }\otimes
\operatorname*{id}\right)  \circ\Delta^{\left(  3\right)  }\left(  c\right)
=\left(  \Delta^{\left(  3\right)  }\otimes\operatorname*{id}\otimes
\operatorname*{id}\right)  \circ\Delta^{\left(  3\right)  }\left(  c\right)
\Rightarrow\\
&  =\nu_{\left[  c\right]  }\left[  c_{\left[  1\right]  }\otimes c_{\left[
2\right]  }\otimes\nu_{\left[  c_{2}\right]  }\left[  \left(  c_{\left[
3\right]  }\right)  _{\left[  1\right]  }\otimes\left(  c_{\left[  3\right]
}\right)  _{\left[  2\right]  }\otimes\left(  c_{\left[  3\right]  }\right)
_{\left[  3\right]  }\right]  \right] \nonumber\\
&  =\nu_{\left[  c\right]  }\left[  c_{\left[  1\right]  }\otimes\nu_{\left[
c_{2}\right]  }\left[  \left(  c_{\left[  2\right]  }\right)  _{\left[
1\right]  }\otimes\left(  c_{\left[  2\right]  }\right)  _{\left[  2\right]
}\otimes\left(  c_{\left[  2\right]  }\right)  _{\left[  3\right]  }\right]
\otimes c_{\left[  3\right]  }\right] \nonumber\\
&  =\nu_{\left[  c\right]  }\left[  \nu_{\left[  c_{2}\right]  }\left[
\left(  c_{\left[  1\right]  }\right)  _{\left[  1\right]  }\otimes\left(
c_{\left[  1\right]  }\right)  _{\left[  2\right]  }\otimes\left(  c_{\left[
1\right]  }\right)  _{\left[  3\right]  }\right]  \otimes c_{\left[  2\right]
}\otimes c_{\left[  3\right]  }\right]  .
\end{align}
After dropping the brackets and applying the Sweedler trick for the second
time, we get the same formal expression in all three cases%
\begin{equation}
\left(  \nu_{\left[  c\right]  }\right)  ^{\circ2}\left[  c_{\left[  1\right]
}\otimes c_{\left[  2\right]  }\otimes c_{\left[  3\right]  }\otimes
c_{\left[  4\right]  }\otimes c_{\left[  5\right]  }\right]  .
\end{equation}

\end{example}

Unfortunately, in the polyadic case the Sweedler notation looses too much information to be useful.

\begin{assertion}
The polyadic Sweedler notation can be applied to only the derived polyadic
coalgebras (see \textbf{Definition }\ref{def-derco}).
\end{assertion}

Nevertheless, if in an expression there are no coiterations, one can formally
use it (e.g., in the polyadic analog (\ref{ep}) of the counting axiom
\textquotedblleft$\sum\varepsilon\left(  c_{\left[  1\right]  }\right)
c_{\left[  2\right]  }=c$\textquotedblright).

\subsection{Polyadic group-like and primitive elements}

Let us consider some special kinds of elements in a polyadic coalgebra
$\mathrm{C}^{\left(  n^{\prime}\right)  }$. We should take into account that
in the polyadic case, as in (\ref{dn}), there can only be the admissible
\textquotedblleft number of summands\textquotedblright\ $N_{\Delta}$ (\ref{nd}).

\begin{definition}
An element $g\ $of $\mathrm{C}^{\left(  n^{\prime}\right)  }$ is called
\textit{polyadic semigroup-like}, if%
\begin{equation}
\Delta^{\left(  n^{\prime}\right)  }\left(  g\right)  =\overset{n^{\prime}%
}{\overbrace{g\otimes\ldots\otimes g}},,\ \ g\in C. \label{dg}%
\end{equation}
When $\mathrm{C}^{\left(  n^{\prime}\right)  }$ has the counit
$\mathbf{\varepsilon}^{\left(  n^{\prime},r^{\prime}\right)  }$ (\ref{ep}),
then $g$ is called \textit{polyadic group-like}, if (\textquotedblleft%
$\varepsilon\left(  g\right)  =1$\textquotedblright)%
\begin{equation}
\mathbf{\varepsilon}^{\left(  n^{\prime},r^{\prime}\right)  }\circ\left(
\overset{n^{\prime}-1}{\overbrace{g\otimes\ldots\otimes g}}\right)
=\overset{r^{\prime}}{\overbrace{e_{k}\otimes\ldots\otimes e_{k}}},
\end{equation}
where $e_{k}$ is the unit of the underlying polyadic field $\Bbbk$.
\end{definition}

\begin{definition}
An element $x\ $of $\mathrm{C}^{\left(  n^{\prime}\right)  }$ is called
\textit{polyadic skew }$k_{p}$-primitive, if (\textquotedblleft$\Delta\left(
x\right)  =g_{1}\otimes x+x\otimes g_{2}$\textquotedblright)%
\begin{align}
\Delta^{\left(  n^{\prime}\right)  }\left(  x\right)   &  =\left(
\nu^{\left(  m\right)  }\right)  ^{\circ\ell_{\Delta}}\left[  \left(
\overset{k_{p}}{\overbrace{g_{1}\otimes\ldots\otimes g_{k_{p}}}}%
\otimes\overset{n^{\prime}-k_{p}}{\overbrace{x\otimes\ldots\otimes x}}\right)
,\ldots,\right. \nonumber\\
&  \left.  \left(  \overset{n^{\prime}-k_{p}}{\overbrace{x\otimes\ldots\otimes
x}}\otimes\overset{k_{p}}{\overbrace{g_{\left(  N_{\Delta}-1\right)  k_{p}%
+1}\otimes\ldots\otimes g_{N_{\Delta}k_{p}}}}\right)  \right]  , \label{dx}%
\end{align}
where $1\leq k_{p}\leq n^{\prime}-1$, $N_{\Delta}=\ell_{\Delta}\left(
m-1\right)  +1\ $is the total \textquotedblleft number of
summands\textquotedblright, here $\ell_{\Delta}\in\mathbb{N}$ is the
\textquotedblleft number of $m$-ary additions\textquotedblright, and $g_{i}\in
C$, $i=1,\ldots,N_{\Delta}k_{p}$ are polyadic (semi-)group-like (\ref{dg}). In
(\ref{dx}) the $n^{\prime}-k_{p}$ elements $x$ move from the right to the left
by one.
\end{definition}

\begin{assertion}
If $k_{p}=n^{\prime}-1$, then $\Delta^{\left(  n^{\prime}\right)  }\left(
x\right)  $ is \textquotedblleft linear\textquotedblright\ in $x$, and
$n^{\prime}=\ell_{\Delta}\left(  m-1\right)  +1$.
\end{assertion}

In this case, we call $x$ a \textit{polyadic primitive element}.

\begin{example}
Let $n^{\prime}=3$ and $k_{p}=2$, then $m=3$, and we have only one ternary
addition $\ell_{\Delta}=1$%
\begin{align}
\Delta^{\left(  3\right)  }\left(  x\right)   &  =\nu^{\left(  3\right)
}\left[  g_{1}\otimes g_{2}\otimes x,\ g_{3}\otimes x\otimes g_{4},\ x\otimes
g_{5}\otimes g_{6}\right]  ,\\
\Delta^{\left(  3\right)  }\left(  g_{i}\right)   &  =g_{i}\otimes
g_{i}\otimes g_{i},\ \ \ i=1,\ldots,6.
\end{align}

The ternary coassociativity gives $g_{1}=g_{2}=g_{3}$ and $g_{4}=g_{5}=g_{6}$.
Therefore, the general form of the ternary primitive element is%
\begin{equation}
\Delta^{\left(  3\right)  }\left(  x\right)  =\nu^{\left(  3\right)  }\left[
g_{1}\otimes g_{1}\otimes x,\ g_{1}\otimes x\otimes g_{2},\ x\otimes
g_{2}\otimes g_{2}\right]  .
\end{equation}

Note that coassociativity leads to the \textsf{derived} comultiplication
(\ref{ndder}), because%
\begin{align}
\Delta^{\left(  3\right)  }\left(  x\right)   &  =\left(  \operatorname*{id}%
\otimes\Delta^{\left(  2\right)  }\right)  \Delta^{\left(  2\right)  }\left(
x\right)  =\left(  \Delta^{\left(  2\right)  }\otimes\operatorname*{id}%
\right)  \Delta^{\left(  2\right)  }\left(  x\right)  ,\\
\Delta^{\left(  2\right)  }\left(  x\right)   &  =g_{1}\otimes x+x\otimes
g_{2}.
\end{align}

The same situation occurs with the \textquotedblleft linear\textquotedblright%
\ comultiplication of any arity $n^{\prime}$, i.e. when $k_{p}=n^{\prime}-1$.
\end{example}

The most important difference with the binary case is the \textquotedblleft
intermediate\textquotedblright\ possibility $k_{p}<n^{\prime}-1$, when the
r.h.s. is \textquotedblleft nonlinear\textquotedblright\ in $x$.

\begin{example}
In the case where $n^{\prime}=3$ and $k_{p}=1$, we have $m=3$, and
$\ell_{\Delta}=1$%
\begin{align}
\Delta^{\left(  3\right)  }\left(  x\right)   &  =\nu^{\left(  3\right)
}\left[  g_{1}\otimes x\otimes x,\ x\otimes g_{2}\otimes x,\ x\otimes x\otimes
g_{3}\right]  ,\\
\Delta^{\left(  3\right)  }\left(  g_{i}\right)   &  =g_{i}\otimes
g_{i}\otimes g_{i},\ \ \ i=1,\ldots,3.
\end{align}

Now ternary coassociativity cannot be achieved with any values of $g_{i}$.
This is true for any arity $n^{\prime}$ and any \textquotedblleft
nonlinear\textquotedblright\ comultiplication.
\end{example}

Therefore, we arrive at the general structure

\begin{assertion}
In a polyadic coassociative coalgebra {\upshape$\mathrm{C}^{\left(  n^{\prime
}\right)  }$} polyadic primitive elements exist, if and only if the
$n^{\prime}$-ary comultiplication $\Delta^{\left(  n^{\prime}\right)  }$ is
derived {\upshape(\ref{ndder})} from the binary comultiplication
$\Delta^{\left(  2\right)  }$.
\end{assertion}

\subsection{Polyadic analog of duality}

The connection between binary associative algebras and coassociative
coalgebras (formally named as \textquotedblleft reversing
arrows\textquotedblright) is given in terms of the dual vector space (dual
module) concept. Informally, for a binary coalgebra $\mathrm{C}^{\left(
2\right)  }=\left\langle C\mid\Delta,\varepsilon\right\rangle $ considered as
a vector space over a binary field $\mathrm{k}$ (a $\mathrm{k}$-vector space),
its \textit{dual} is $C_{2}^{\ast}=\operatorname*{Hom}%
\nolimits_{\,{\scriptsize \mathrm{k}}\!}\left(  C,\mathrm{k}\right)  $ with
the natural \textit{pairing} $C^{\ast}\times C\rightarrow\mathrm{k}$ given by
$f\left(  c\right)  $, $f\in C^{\ast}$, $c\in C$. The \textit{canonical
injection} $\theta:C^{\ast}\otimes C^{\ast}\rightarrow\left(  C\otimes
C\right)  ^{\ast}$ is defined by%
\begin{equation}
\theta\left(  f_{1}\otimes f_{2}\right)  \circ\left(  c_{1}\otimes
c_{2}\right)  =f_{1}\left(  c_{1}\right)  f_{2}\left(  c_{2}\right)
,\ \ \ \ c_{1,2}\in C,\ \ \ f_{1,2}\in C^{\ast}, \label{ff}%
\end{equation}
which is an isomorphism in the finite-dimensional case. The transpose of
$\Delta:C\rightarrow C\otimes C$ is a $\mathrm{k}$-linear map $\Delta_{\ast
}:\left(  C\otimes C\right)  ^{\ast}\rightarrow C^{\ast}$ acting as
$\Delta_{\ast}\left(  \xi\right)  \left(  c\right)  =\xi\circ\left(
\Delta\left(  c\right)  \right)  $, where $\xi\in\left(  C\otimes C\right)
^{\ast}$, $c\in C$. The multiplication $\mu_{\ast}$ on the set $C^{\ast}$ is
the map $C^{\ast}\otimes C^{\ast}\rightarrow C^{\ast}$, and therefore we have
to use the canonical injection $\theta$ as follows%
\begin{align}
\mu_{\ast}  &  :C^{\ast}\otimes C^{\ast}\overset{\theta}{\rightarrow}\left(
C\otimes C\right)  ^{\ast}\overset{\Delta_{\ast}}{\rightarrow}C^{\ast},\\
\mu_{\ast}  &  =\Delta_{\ast}\circ\theta.
\end{align}

The associativity of $\mu_{\ast}$ follows from the coassociativity of $\Delta$.
Since $\mathrm{k}^{\ast}\simeq\mathrm{k}$, the dual of the counit is the unit
$\eta_{\ast}:\mathrm{k}\overset{\varepsilon^{\ast}}{\rightarrow}C^{\ast}$.
Therefore, $\mathrm{C}^{\left(  2\right)  \ast}=\left\langle C^{\ast}\mid
\mu_{\ast},\eta_{\ast}\right\rangle $ is a binary associative algebra which is
called the \textit{dual algebra }of the binary coalgebra\textit{ }%
$\mathrm{C}^{\left(  2\right)  }=\left\langle C\mid\Delta,\varepsilon
\right\rangle $ (see, e.g. \cite{radford}).

In the polyadic case, arities of the comultiplication, its dual multiplication
and the underlying field can be different, but connected by (\ref{ff}). Let us
consider a polyadic coassociative coalgebra $\mathrm{C}^{\left(  n^{\prime
}\right)  }$ with $n^{\prime}$-ary comultiplication $\Delta^{\left(
n^{\prime}\right)  }$ (\ref{dc}) over $\Bbbk^{\left(  m_{k},n_{k}\right)  }$.
In search of the most general polyadic analog of the injection (\ref{ff}), we
arrive at the possibility of multiplace morphisms.

\begin{definition}
For the polyadic coalgebra $\mathrm{C}^{\left(  n^{\prime}\right)  }$
considered as a polyadic vector space over $\Bbbk^{\left(  m_{k},n_{k}\right)
}$, a \textit{polyadic dual }is $C^{\ast}=\operatorname*{Hom}\nolimits_{\Bbbk
}\left(  C^{\otimes s},K\right)  $ with $s$-\textit{place pairing} $C^{\ast
}\times\overset{s}{\overbrace{C\times\ldots\times C}}\rightarrow K$ giving by
$f^{\left(  s\right)  }\left(  c_{1},\ldots,c_{s}\right)  $, $f\in C^{\ast}$,
$c_{i}\in C$, $s\in\mathbb{N}$.
\end{definition}

While constructing a polyadic analog of (\ref{ff}), recall that for any
$n^{\prime}$-ary operation the admissible length of a co-word is $\ell
^{\prime}\left(  n^{\prime}-1\right)  +1$, where $\ell^{\prime}$ is the number
of the iterated operation (\ref{dl}).

\begin{definition}
A \textit{polyadic canonical injection map} $\mathbf{\theta}^{\left(  n^{\ast
},n^{\prime},s\right)  }$ of $\mathrm{C}^{\left(  n^{\prime}\right)  }$ is
defined by%
\begin{align}
&  \mathbf{\theta}^{\left(  n^{\ast},n^{\prime},s\right)  }\circ\left(
f_{1}^{\left(  s\right)  }\otimes\ldots\otimes f_{n^{\ast}}^{\left(  s\right)
}\right)  \circ\left(  c_{1}\otimes\ldots\otimes c_{\ell^{\prime}\left(
n^{\prime}-1\right)  +1}\right)  =\nonumber\\
&  \left(  \mu_{k}^{\left(  n_{k}\right)  }\right)  ^{\circ\ell_{k}}\left[
f_{1}^{\left(  s\right)  }\left(  c_{1},\ldots,c_{s}\right)  ,\ldots
,f_{\ell_{k}\left(  n_{k}-1\right)  +1}^{\left(  s\right)  }\left(  c_{\left(
n^{\ast}-1\right)  s+1},\ldots,c_{n^{\ast}s}\right)  \right]  , \label{tn}%
\end{align}
where%
\begin{align}
n^{\ast}s  &  =\ell^{\prime}\left(  n^{\prime}-1\right)  +1,\ \ \ \ell
^{\prime}\in\mathbb{N},\ \ n^{\prime}\geq2,\label{ns1}\\
n^{\ast}  &  =\ell_{k}\left(  n_{k}-1\right)  +1,\ \ \ \ell_{k}\in
\mathbb{N},\ \ n_{k}\geq2. \label{ns2}%
\end{align}

\end{definition}

It is obvious that $\theta^{\left(  2,2,1\right)  }=\theta$ from (\ref{ff}).
Then, the \textit{polyadic transpose map} of the $n^{\prime}$-ary
comultiplication $\Delta^{\left(  n^{\prime}\right)  }:C\rightarrow
\overset{n^{\prime}}{\overbrace{C\otimes\ldots\otimes C}}$ \thinspace is a
$\Bbbk$-linear map $\Delta_{\ast}^{\left(  n^{\prime\prime}\right)  }:\left(
\overset{n^{\prime\prime}}{\overbrace{C\otimes\ldots\otimes C}}\right)
^{\ast}\rightarrow C^{\ast}$ such that%
\begin{align}
&  \Delta_{\ast}^{\left(  n^{\prime\prime}\right)  }\circ\left(  \xi^{\left(
n^{\prime\prime}\right)  }\right)  \left(  c\right)  =\xi^{\left(
n^{\prime\prime}\right)  }\circ\left(  \left(  \Delta^{\left(  n^{\prime
}\right)  }\right)  ^{\circ\ell^{\prime}}\left(  c\right)  \right)
,\nonumber\\
&  \xi^{\left(  n^{\prime\prime}\right)  }\in\left(  \overset{n^{\prime\prime
}}{\overbrace{C\otimes\ldots\otimes C}}\right)  ^{\ast},\ n^{\prime\prime
}=\ell^{\prime}\left(  n^{\prime}-1\right)  +1,\ c\in C \label{dns}%
\end{align}
where $\ell^{\prime}$ is the \textquotedblleft number of
comultiplications\textquotedblright\ (see (\ref{mx}) for multiplications and
(\ref{dl})).

\begin{definition}
\label{def-dual}A $n^{\ast}$-ary multiplication map $\mathbf{\mu}_{\ast
}^{\left(  n^{\ast}\right)  }$ which is (\textit{one way}) \textit{dual} to
the $n^{\prime}$-ary comultiplication map $\mathbf{\Delta}^{\left(  n^{\prime
}\right)  }$ is given by the composition of the polyadic canonical injection
$\mathbf{\theta}^{\left(  n^{\ast},n^{\prime},s\right)  }$ (\ref{tn}) and the
polyadic transpose $\mathbf{\Delta}_{\ast}^{\left(  n^{\prime\prime}\right)
}$ (\ref{dns}) by%
\begin{equation}
\mathbf{\mu}_{\ast}^{\left(  n^{\ast}\right)  }=\mathbf{\Delta}_{\ast
}^{\left(  n^{\prime\prime}\right)  }\circ\mathbf{\theta}^{\left(  n^{\ast
},n^{\prime},s\right)  }.
\end{equation}

\end{definition}

Indeed, using (\ref{tn}) and (\ref{dns}) we obtain (in Sweedler notation)%
\begin{align}
&  \mathbf{\mu}_{\ast}^{\left(  n^{\ast}\right)  }\circ\left(  f_{1}^{\left(
s\right)  }\otimes\ldots\otimes f_{n^{\ast}}^{\left(  s\right)  }\right)
\circ\left(  c\right)  =\mathbf{\Delta}_{\ast}^{\left(  n^{\prime\prime
}\right)  }\circ\mathbf{\theta}^{\left(  n^{\ast},n^{\prime},s\right)  }%
\circ\left(  f_{1}^{\left(  s\right)  }\otimes\ldots\otimes f_{n^{\ast}%
}^{\left(  s\right)  }\right)  \circ\left(  c\right) \nonumber\\
&  =\mathbf{\theta}^{\left(  n^{\ast},n^{\prime},s\right)  }\circ\left(
f_{1}^{\left(  s\right)  }\otimes\ldots\otimes f_{n^{\ast}}^{\left(  s\right)
}\right)  \circ\left(  \left(  \Delta^{\left(  n^{\prime}\right)  }\right)
^{\circ\ell^{\prime}}\left(  c\right)  \right) \nonumber\\
&  =\left(  \mu_{k}^{\left(  n_{k}\right)  }\right)  ^{\circ\ell_{k}}\left[
f_{1}^{\left(  s\right)  }\left(  c_{\left[  1\right]  },\ldots,c_{\left[
s\right]  }\right)  ,\ldots,f_{\ell_{k}\left(  n_{k}-1\right)  +1}^{\left(
s\right)  }\left(  c_{\left[  \left(  n^{\ast}-1\right)  s+1\right]  }%
,\ldots,c_{\left[  n^{\ast}s\right]  }\right)  \right]  , \label{mfc}%
\end{align}
and (\ref{ns1})--(\ref{ns2}) are valid, from which we arrive at

\begin{assertion}
\label{as-dual}In the polyadic case the arity $n^{\ast}$ of the multiplication
{\upshape$\mathbf{\mu}_{\ast}^{\left(  n^{\ast}\right)  }$} can be different
from the arity $n^{\prime}$ of the initial coalgebra {\upshape$\mathrm{C}%
^{\left(  n^{\prime}\right)  }$}.
\end{assertion}

\begin{remark}
If $n^{\ast}\neq n^{\prime}$ and $s\geq2$, the word \textquotedblleft
duality\textquotedblright\ can only be used conditionally.
\end{remark}

\subsection{Polyadic convolution product}

If $\mathrm{A}^{\left(  2\right)  }=\left\langle A\mid\mu,\eta\right\rangle $
is a binary algebra and $\mathrm{C}^{\left(  2\right)  }=\left\langle
C\mid\Delta,\varepsilon\right\rangle $ is a binary coalgebra over a binary
field $\mathrm{k}$, then a more general set of $\mathrm{k}$-linear maps
$\operatorname*{Hom}\nolimits_{\,{\scriptsize \mathrm{k}}\!}\left(
C,A\right)  $ can be considered, while its particular case where $\mathrm{A}%
^{\left(  2\right)  }=\mathrm{k}$ corresponds to the above duality. The
multiplication on $\operatorname*{Hom}\nolimits_{\,{\scriptsize \mathrm{k}}%
\!}\left(  C,A\right)  $ is the \textit{convolution product} $\left(
\star\right)  $ which can be \textsf{uniquely} constructed in the natural way:
by applying first comultiplication $\Delta$ and then multiplication $\mu
\equiv\left(  \cdot\right)  $ to an element of $C$, as $C\overset{\Delta
}{\longrightarrow}C\otimes C\overset{f\otimes g}{\longrightarrow}A\otimes
A\overset{\mu}{\longrightarrow}A$ or $f\star g=\mu\circ\left(  f\otimes
g\right)  \circ\Delta$, where $f,g\in\operatorname*{Hom}%
\nolimits_{\,{\scriptsize \mathrm{k}}\!}\left(  C,A\right)  $. The
associativity of the convolution product follows from the associativity of
$\mu$ and coassociativity of $\Delta$, and the role of the identity (neutral
element) in $\operatorname*{Hom}\nolimits_{\,{\scriptsize \mathrm{k}}%
\!}\left(  C,A\right)  $ is played by the composition of the unit map
$\eta:\mathrm{k}\rightarrow A$ and the counit map $\varepsilon:C\rightarrow
\mathrm{k}$, such that $e_{\star}=\eta\circ\varepsilon\in\operatorname*{Hom}%
\nolimits_{\,{\scriptsize \mathrm{k}}\!}\left(  C,A\right)  $, because
$e_{\star}\star f=f\star e_{\star}=f$. Indeed, from the obvious relation
$\operatorname*{id}\nolimits_{A}\circ f\circ\operatorname*{id}\nolimits_{C}=f$
and the unit and counit axioms it follows that%
\begin{equation}
\mu\circ\left(  \eta\otimes\operatorname*{id}\nolimits_{A}\right)
\circ\left(  \operatorname*{id}\nolimits_{K}\otimes f\right)  \circ\left(
\varepsilon\otimes\operatorname*{id}\nolimits_{C}\right)  \circ\Delta=\mu
\circ\left(  \eta\circ\operatorname*{id}\nolimits_{K}\circ\varepsilon\right)
\otimes\left(  \operatorname*{id}\nolimits_{A}\circ f\circ\operatorname*{id}%
\nolimits_{C}\right)  \circ\Delta=e_{\star}\star f=f, \label{mi}%
\end{equation}
or in Sweedler notation $\varepsilon\left(  c_{\left[  1\right]  }\right)
\cdot f\left(  c_{\left[  2\right]  }\right)  =f\left(  c_{\left[  1\right]
}\right)  \cdot\varepsilon\left(  c_{\left[  2\right]  }\right)  =f\left(
c\right)  $.

The polyadic analog of duality and (\ref{mfc}) offer an idea of how to
generalize the binary convolution product to the most exotic case, when the
algebra and coalgebra have \textsf{different arities} $n\neq n^{\prime}$.

Let $\mathrm{A}^{\left(  n\right)  }$ and $\mathrm{C}^{\left(  n^{\prime
}\right)  }$ be, respectively, a polyadic associative algebra and a
coassociative coalgebra over the same polyadic field $\Bbbk^{\left(
m_{k},n_{k}\right)  }$. If they are both unital and counital respectively,
then we can consider a polyadic analog of the composition $\eta\circ
\varepsilon$. The crucial difference from the binary case is that now
$\mathbf{\eta}^{\left(  r,n\right)  }$ and $\mathbf{\varepsilon}^{\left(
n^{\prime},r^{\prime}\right)  }$ are multiplace multivalued maps (\ref{mu})
and (\ref{ep}). Their composition is%
\begin{equation}
\mathbf{e}_{\star}^{\left(  n^{\prime},n\right)  }=\mathbf{\eta}^{\left(
r,n\right)  }\circ\mathbf{\gamma}^{\left(  r^{\prime},r\right)  }%
\circ\mathbf{\varepsilon}^{\left(  n^{\prime},r^{\prime}\right)  }%
\in\operatorname*{Hom}\nolimits_{\Bbbk}\left(  C^{\otimes\left(  n^{\prime
}-1\right)  },A^{\otimes\left(  n-1\right)  }\right)  , \label{es}%
\end{equation}
where the multiplace multivalued map $\mathbf{\gamma}^{\left(  r^{\prime
},r\right)  }\in\operatorname*{Hom}\nolimits_{\Bbbk}\left(  K^{\otimes
r^{\prime}},K^{\otimes r}\right)  $ is, obviously, $\left(  \simeq\right)  $,
and the diagram%
\begin{equation}
\begin{diagram} C^{\otimes\left( n^{\prime }-1\right) }& \rTo^{\mathbf{\varepsilon}^{\left( n^{\prime},r^{\prime}\right) }} & K^{\otimes r^{\prime},} \\ \dTo^{\mathbf{e}_{\star}^{\left( n,n^{\prime}\right) }} & & \dTo_{\mathbf{\gamma}^{\left( r^{\prime},r\right) } \left(\simeq\right)} \\ A^{\otimes\left( n-1\right) } & \lTo^{ \mathbf{\eta}^{\left( r,n\right) }} & K^{\otimes r} \\ \end{diagram}
\end{equation}
commutes.

The formula (\ref{es}) leads us to propose

\begin{conjecture}
A polyadic analog of the convolution should be considered for
\textsf{multiplace multivalued} $\Bbbk$-linear maps in $\operatorname*{Hom}%
\nolimits_{\Bbbk}\left(  C^{\otimes\left(  n^{\prime}-1\right)  }%
,A^{\otimes\left(  n-1\right)  }\right)  $.
\end{conjecture}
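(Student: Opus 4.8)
The plan is to promote the \textbf{Conjecture} to an explicit construction by building the polyadic convolution product directly on $\operatorname{Hom}_{\Bbbk}(C^{\otimes(n^{\prime}-1)},A^{\otimes(n-1)})$ and then checking that it inherits polyadic associativity from $(\ref{coa})$ and $(\ref{as})$, and a (possibly derived) neutral element from the unit and counit axioms $(\ref{mu})$, $(\ref{ep})$. First I would fix the arity bookkeeping exactly as in the polyadic duality above: given maps $f_{1},\dots,f_{p}$ in this Hom-space, one applies an $\ell^{\prime}$-coiterated comultiplication $(\mathbf{\Delta}^{(n^{\prime})})^{\circ\ell^{\prime}}$ to an element $c\in C$, obtaining $\ell^{\prime}(n^{\prime}-1)+1$ tensor factors of $C$; these are fed, $(n^{\prime}-1)$ at a time, to the $f_{i}$'s, each of which outputs an element of $A^{\otimes(n-1)}$; finally an appropriately $\ell_{\mu}$-iterated $\mathbf{\mu}^{(n)}$ (using $(\ref{mx})$) reassembles the $p(n-1)$ resulting factors of $A$ back into $A^{\otimes(n-1)}$. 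The combinatorial constraints --- the polyadic analogs of $(\ref{ns1})$--$(\ref{ns2})$ together with the arity-changing and associativity-quiver relations of \cite{dup2018a} --- then determine which triples $(p,\ell^{\prime},\ell_{\mu})$ are admissible and, in general, \textsf{quantize} the number $p$ of convolved maps, exactly as the number of summands $N_{\Delta}$ and the count of structure constants were quantized in \textbf{Assertion \ref{as-quanN}}.

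Second, I would establish the polyadic associativity of the resulting product. The strategy is the one used in \textbf{Proposition \ref{prop-tp}} and in the proof that the convolution $(\ref{mf})$ is associative: expand a twice-convolved expression in two ways, push the inner comultiplication through with total coassociativity $(\ref{coa})$ and its $\ell^{\prime}$-coiterated form $(\ref{dl})$, push the inner multiplication through with total associativity $(\ref{as})$ and $(\ref{mx})$, and invoke the associativity-quiver technique of \cite{dup2018a} to conclude that the two expansions agree and that the result is invariant in the sense of $(\ref{a})$.

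Third, when $\Bbbk$ is unital, $\mathrm{A}^{(n)}$ is unital and $\mathrm{C}^{(n^{\prime})}$ is counital, I would show that the composite $\mathbf{e}_{\star}^{(n^{\prime},n)}=\mathbf{\eta}^{(r,n)}\circ\mathbf{\gamma}^{(r^{\prime},r)}\circ\mathbf{\varepsilon}^{(n^{\prime},r^{\prime})}$ of $(\ref{es})$ is a neutral element of the polyadic convolution semigroup: it may occupy any admissible position in the product, leaving the remaining arguments unchanged. This is the polyadic rewriting of $(\ref{mi})$ --- collapse the $\mathbf{\varepsilon}$-factor against $\mathbf{\Delta}^{(n^{\prime})}$ into the coaction map $\mathbf{\sigma}^{(r^{\prime})}$ by the counit axiom $(\ref{ep})$, use the isomorphism $\Bbbk^{\otimes r^{\prime}}\otimes C\cong C$ and its dual on $A$, collapse the $\mathbf{\eta}$-factor against $\mathbf{\mu}^{(n)}$ into $\mathbf{\rho}^{(r)}$ by the unit axiom $(\ref{mu})$, and finish with $\operatorname{id}_{A}\circ f\circ\operatorname{id}_{C}=f$ and the normalizations $(\ref{re})$, $(\ref{ne})$. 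Since $\mathbf{e}_{\star}$ is itself multiplace, one should expect it to be a neutral \emph{sequence} rather than a single neutral element (cf.\ $(\ref{me})$ and \textbf{Definition \ref{def-nder}}), so only existence of a polyadic convolution unit, not its uniqueness, can be claimed.

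The hard part will be the first step: forcing the arities to close. The comultiplication side naturally produces $\ell^{\prime}(n^{\prime}-1)+1$ factors of $C$, while the multiplication side consumes and re-emits them in blocks governed by $n$ and $n-1$, and for $n\neq n^{\prime}$ these need not match for the naive choice $p=1$; one is then compelled either to convolve several maps simultaneously or to iterate, and the real content is to pin down the integer solutions \emph{and} to verify that the corresponding associativity quiver is actually realizable --- not merely that the arity equation admits a solution. The notion of \textquotedblleft multivalued\textquotedblright\ must also be fixed carefully from the outset, since $\mathbf{\mu}^{(n)}$ applied after the $f_{i}$ lands in $A^{\otimes(n-1)}$ and not in $A$, so the convolution of maps is itself an $(n-1)$-valued object in the sense of \textbf{Assertion \ref{as-unit}}; consistency of this with the chosen Hom-space $\operatorname{Hom}_{\Bbbk}(C^{\otimes(n^{\prime}-1)},A^{\otimes(n-1)})$ is precisely what singles it out as the correct target.
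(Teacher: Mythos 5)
Your overall route is the one the paper itself follows immediately after stating the conjecture: the conjecture is motivated exactly by your third step, namely that the composite $\mathbf{e}_{\star}^{\left(  n^{\prime},n\right)  }=\mathbf{\eta}^{\left(  r,n\right)  }\circ\mathbf{\gamma}^{\left(  r^{\prime},r\right)  }\circ\mathbf{\varepsilon}^{\left(  n^{\prime},r^{\prime}\right)  }$ of (\ref{es}) lives in $\operatorname*{Hom}\nolimits_{\Bbbk}\left(  C^{\otimes\left(  n^{\prime}-1\right)  },A^{\otimes\left(  n-1\right)  }\right)  $, and the conjecture is then substantiated by constructing $\mathbf{\mu}_{\star}^{\left(  n_{\star}\right)  }$, proving its associativity in Sweedler notation, and showing $\mathbf{e}_{\star}$ is its polyadic unit via (\ref{fi}), (\ref{mu}), (\ref{ep}). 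However, your first (constructive) step has a genuine gap in the arity bookkeeping. You propose to evaluate the convolution on a single $c\in C$, apply $\left(  \mathbf{\Delta}^{\left(  n^{\prime}\right)  }\right)  ^{\circ\ell^{\prime}}$ to obtain $\ell^{\prime}\left(  n^{\prime}-1\right)  +1$ factors, and feed them \textquotedblleft$\left(  n^{\prime}-1\right)  $ at a time\textquotedblright\ to the maps. This cannot close: the maps have domain $C^{\otimes\left(  n^{\prime}-1\right)  }$, not $C$, and $\ell^{\prime}\left(  n^{\prime}-1\right)  +1\equiv1\pmod{n^{\prime}-1}$, so the factors cannot be partitioned into blocks of size $n^{\prime}-1$ unless $n^{\prime}=2$. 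The paper's construction (\ref{cn})--(\ref{mff}) instead applies $\left(  \left(  \mathbf{\Delta}^{\left(  n^{\prime}\right)  }\right)  ^{\circ\ell^{\prime}}\right)  ^{\otimes\left(  n^{\prime}-1\right)  }$ to the whole domain and then uses the polyadic medial map $\mathbf{\tau}_{medial}^{\left(  n_{\star},n^{\prime}-1\right)  }$ to transpose the $\left(  n^{\prime}-1\right)  \times n_{\star}$ matrix of Sweedler components, so that each of the $n_{\star}$ maps receives one component of each of the $n^{\prime}-1$ inputs; dually, $\mathbf{\tau}_{medial}^{\left(  n-1,n_{\star}\right)  }$ regroups the $n_{\star}\left(  n-1\right)  $ output factors before the iterated multiplications collapse them back to $A^{\otimes\left(  n-1\right)  }$. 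These medial maps are not an optional refinement: they are precisely what forces the consistency condition $n_{\star}-1=\ell\left(  n-1\right)  =\ell^{\prime}\left(  n^{\prime}-1\right)  $ of (\ref{ns}) (your \textquotedblleft quantization\textquotedblright\ of the number of convolved maps), and the component formula (\ref{ms}) on which the associativity proof runs is unavailable without them.

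Two smaller points. First, the regrouping on the output side must also be specified (again by the medial map); a naive sequential grouping of the $A$-factors would give a different, ill-motivated product. Second, your caution that only a neutral \emph{sequence} rather than a unit can be expected undersells what actually holds: with the medial construction the single element $\mathbf{e}_{\star}^{\left(  n^{\prime},n\right)  }$, repeated in the remaining $n_{\star}-1$ slots, is a genuine polyadic unit of $\mathrm{C}_{\star}^{\left(  n^{\prime},n\right)  }$ in the sense of (\ref{e}), on any place of $\mathbf{\mu}_{\star}^{\left(  n_{\star}\right)  }$.
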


In this way, we arrive at the following

\begin{condition}
Introduce the $\Bbbk$-linear maps $\mathfrak{f}^{\left(  i\right)
}:C^{\otimes\left(  n^{\prime}-1\right)  }\rightarrow A^{\otimes\left(
n-1\right)  }$, $i=1,\ldots,n_{\star}$, where $n_{\star}\geq2$. To create a
closed $n_{\star}$-ary operation for them, we use the $\ell$-iterated
multiplication map $\left(  \mathbf{\mu}^{\left(  n\right)  }\right)
^{\circ\ell}:A^{\otimes\ell\left(  n-1\right)  +1}\rightarrow A$ and
$\ell^{\prime}$-iterated comultiplication map $\left(  \mathbf{\Delta
}^{\left(  n^{\prime}\right)  }\right)  ^{\circ\ell^{\prime}}:C\rightarrow
C^{\otimes\ell^{\prime}\left(  n^{\prime}-1\right)  +1}$. Then we compose the
above $\Bbbk$-linear maps in the same way as is done above for the binary
case%
\begin{align}
&
\begin{diagram} C^{\otimes\left( n^{\prime}-1\right) }&\rTo{\left(\left( \mathbf{\Delta }^{\left( n^{\prime}\right) }\right) ^{\circ\ell^{\prime}}\right)^{\otimes\left( n^{\prime}-1\right)}}& C^{\otimes\left( n^{\prime}-1\right) \left( \ell^{\prime}\left( n^{\prime}-1\right) +1\right) }& \rTo{\mathbf{\tau}_{medial}^{\left( n_{\star },n^{\prime}-1\right) } }C^{\otimes\left( n^{\prime}-1\right) \left( \ell^{\prime}\left( n^{\prime}-1\right) +1\right) }\end{diagram}\nonumber\\
&
\begin{diagram}&\rTo{\mathfrak{f}^{\left( 1\right) }\otimes\ldots\otimes\mathfrak{f}^{\left( n_{\star}\right) }}&A^{\otimes\left( n-1\right) \left( \ell^{\prime}\left( n^{\prime}-1\right) +1\right) } & \rTo{\mathbf{\tau}_{medial}^{\left( n-1,n_{\star}\right) }}&A^{\otimes\left( n-1\right) \left( \ell\left( n-1\right) +1\right) }&\rTo{\left(\left( \mathbf{\mu}^{\left( n\right) }\right) ^{\circ\ell}\right)^{\otimes\left( n-1\right)}}&A^{\otimes\left( n-1\right) }.\end{diagram} \label{cn}%
\end{align}
where $\mathbf{\tau}_{medial}^{\left(  n_{\star},n^{\prime}-1\right)  }$ and
$\mathbf{\tau}_{medial}^{\left(  n-1,n_{\star}\right)  }$ are the medial maps
(\ref{an}) acting on the Sweedler components of $c$ and $\mathfrak{f}^{\left(
i\right)  }$, respectively. To make the sequence of maps (\ref{cn})
consistent, the arity $n_{\star}$ is connected with the iteration numbers
$\ell,\ell^{\prime}$ by $n_{\star}=\ell\left(  n-1\right)  +1=\ell^{\prime
}\left(  n^{\prime}-1\right)  +1$, $\ell,\ell^{\prime}\in\mathbb{N}$.
\end{condition}

\begin{definition}
Let $\mathrm{A}^{\left(  n\right)  }$ and $\mathrm{C}^{\left(  n^{\prime
}\right)  }$ be a $n$-ary associative algebra and $n^{\prime}$-ary coassociative
coalgebra over a polyadic field $\Bbbk$ (the existence of the unit and counit
here is mandatory), then the set $\operatorname*{Hom}\nolimits_{\Bbbk}\left(
C^{\otimes\left(  n^{\prime}-1\right)  },A^{\otimes\left(  n-1\right)
}\right)  $ is closed under the $n_{\star}$-\textit{ary convolution product}
map $\mathbf{\mu}_{\star}^{\left(  n_{\star}\right)  }$ defined by%
\begin{align}
&  \mathbf{\mu}_{\star}^{\left(  n_{\star}\right)  }\circ\left(
\mathfrak{f}^{\left(  1\right)  }\otimes\ldots\otimes\mathfrak{f}^{\left(
n_{\star}\right)  }\right)  =\label{mff}\\
&  \left(  \left(  \mathbf{\mu}^{\left(  n\right)  }\right)  ^{\circ\ell
}\right)  ^{\otimes\left(  n-1\right)  }\circ\mathbf{\tau}_{medial}^{\left(
n-1,n_{\star}\right)  }\circ\left(  \mathfrak{f}^{\left(  1\right)  }%
\otimes\ldots\otimes\mathfrak{f}^{\left(  n_{\star}\right)  }\right)
\circ\mathbf{\tau}_{medial}^{\left(  n_{\star},n^{\prime}-1\right)  }%
\circ\left(  \left(  \mathbf{\Delta}^{\left(  n^{\prime}\right)  }\right)
^{\circ\ell^{\prime}}\right)  ^{\otimes\left(  n^{\prime}-1\right)
},\nonumber
\end{align}
and its arity is given by the following $n_{\star}$-\textit{consistency
condition}%
\begin{equation}
n_{\star}-1=\ell\left(  n-1\right)  =\ell^{\prime}\left(  n^{\prime}-1\right)
. \label{ns}%
\end{equation}

\end{definition}

\begin{definition}
The set of $\Bbbk$-linear maps $\mathfrak{f}^{\left(  i\right)  }%
\in\operatorname*{Hom}\nolimits_{\Bbbk}\left(  C^{\otimes\left(  n^{\prime
}-1\right)  },A^{\otimes\left(  n-1\right)  }\right)  $ endowed with the
convolution product (\ref{mff}) is called a \textit{polyadic convolution
algebra}%
\begin{equation}
\mathrm{C}_{\star}^{\left(  n^{\prime},n\right)  }=\left\langle
\operatorname*{Hom}\nolimits_{\Bbbk}\left(  C^{\otimes\left(  n^{\prime
}-1\right)  },A^{\otimes\left(  n-1\right)  }\right)  \mid\mathbf{\mu}_{\star
}^{\left(  n_{\star}\right)  }\right\rangle . \label{cs}%
\end{equation}

\end{definition}

\begin{example}
An important case is given by the binary algebra $\mathrm{A}^{\left(
2\right)  }$ and coalgebra $\mathrm{C}^{\left(  2\right)  }$ ($n=n^{\prime}%
=2$), when the number of iterations are equal $\ell=\ell^{\prime}$, and the
arity $n_{\star}$ becomes%
\begin{equation}
n_{\star}=\ell+1=\ell^{\prime}+1,\ \ \ \ell,\ell^{\prime}\in\mathbb{N},
\label{nll}%
\end{equation}
while the $n_{\star}$-ary convolution product in $\operatorname*{Hom}%
\nolimits_{\Bbbk}\left(  C,A\right)  $ takes the form%
\begin{equation}
\mathbf{\mu}_{\star}^{\left(  n_{\star}\right)  }\circ\left(  \mathfrak{f}%
^{\left(  1\right)  }\otimes\ldots\otimes\mathfrak{f}^{\left(  n_{\star
}\right)  }\right)  =\mathbf{\mu}^{\circ\left(  n_{\star}-1\right)  }%
\circ\left(  \mathfrak{f}^{\left(  1\right)  }\otimes\ldots\otimes
\mathfrak{f}^{\left(  n_{\star}\right)  }\right)  \circ\mathbf{\Delta}%
^{\circ\left(  n_{\star}-1\right)  },\ \ \mathfrak{f}^{\left(  i\right)  }%
\in\operatorname*{Hom}\nolimits_{\Bbbk}\left(  C,A\right)  , \label{mnf}%
\end{equation}
where $\mathbf{\mu=\mu}^{\left(  2\right)  }$ and $\mathbf{\Delta=\Delta
}^{\left(  2\right)  }$ are the binary multiplication and comultiplication
maps respectively.
\end{example}

\begin{definition}
\label{def-der}The polyadic convolution algebra $\mathrm{C}_{\star}^{\left(
2,2\right)  }$ determined by the binary algebra and binary coalgebra
(\ref{mnf}) is called \textit{derived}.
\end{definition}

\begin{corollary}
The arity $n_{\star}$ of the derived polyadic convolution algebra is
\textsf{unrestricted} and can take any integer value $n_{\star}\geq2$.
\end{corollary}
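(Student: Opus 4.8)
The plan is to read the claim off directly from the $n_{\star}$-consistency condition (\ref{ns}), specialized to the derived situation. By \textbf{Definition \ref{def-der}}, the derived polyadic convolution algebra $\mathrm{C}_{\star}^{\left(  2,2\right)  }$ is built from a binary associative algebra $\mathrm{A}^{\left(  2\right)  }$ and a binary coassociative coalgebra $\mathrm{C}^{\left(  2\right)  }$, so that $n=n^{\prime}=2$. First I would substitute these values into $n_{\star}-1=\ell\left(  n-1\right)  =\ell^{\prime}\left(  n^{\prime}-1\right)  $ with $\ell,\ell^{\prime}\in\mathbb{N}$; since $n-1=n^{\prime}-1=1$, the condition collapses to $n_{\star}-1=\ell=\ell^{\prime}$, which is exactly (\ref{nll}).

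Next I would note that there is no residual divisibility constraint: because $1$ divides every integer, the iteration number $\ell$ (equivalently $\ell^{\prime}$), i.e. the number of iterations of the binary multiplication $\mathbf{\mu}^{\left(  2\right)  }$ and comultiplication $\mathbf{\Delta}^{\left(  2\right)  }$ used in (\ref{mnf}), may be taken to be any natural number. Hence, given any prescribed integer $n_{\star}\geq2$, one sets $\ell=\ell^{\prime}=n_{\star}-1$; the iterated maps $\left(  \mathbf{\mu}^{\left(  2\right)  }\right)  ^{\circ\ell}$ and $\left(  \mathbf{\Delta}^{\left(  2\right)  }\right)  ^{\circ\ell}$ are then well defined by the (binary) associativity of $\mathrm{A}^{\left(  2\right)  }$ and coassociativity of $\mathrm{C}^{\left(  2\right)  }$, and the $n_{\star}$-ary convolution product $\mathbf{\mu}_{\star}^{\left(  n_{\star}\right)  }$ of (\ref{mff}) is a legitimate operation on $\operatorname*{Hom}\nolimits_{\Bbbk}\left(  C,A\right)  $. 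This establishes both directions: every $n_{\star}\geq2$ is attained, and every admissible choice of $\ell=\ell^{\prime}$ yields such an algebra.

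The only point of substance — worth a sentence of commentary rather than real computation — is the contrast with the genuinely polyadic case. When $n,n^{\prime}\geq3$ the same condition (\ref{ns}) forces $n_{\star}-1$ to be a common multiple of $n-1$ and $n^{\prime}-1$, so the admissible arities form a proper subset of the integers $\geq2$ (they are ``quantized'' in the terminology used earlier for the algebra dimension, cf. \textbf{Assertion \ref{as-quanN}}); the derived, binary-generated case is precisely the one in which this quantization degenerates and all arities become available. I do not anticipate any obstacle: the corollary is an immediate consequence of (\ref{ns}) and (\ref{nll}), the entire content being the observation that divisibility by $1$ is vacuous.
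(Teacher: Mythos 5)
Your proof is correct and follows essentially the same route as the paper: the corollary is an immediate consequence of the consistency condition (\ref{ns}) specialized to $n=n^{\prime}=2$, i.e. of (\ref{nll}), where $n_{\star}=\ell+1$ with $\ell\in\mathbb{N}$ unconstrained. Your added remarks on well-definedness of the iterated binary maps and on the contrast with the quantized case $n,n^{\prime}\geq3$ are accurate but not needed.
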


\begin{remark}
If the polyadic tensor product and the underlying polyadic field $\Bbbk$ are
derived (see discussion in \textsc{Section} \ref{sec-polf} and \cite{dup2017}%
), while all maps coincide $\mathfrak{f}^{\left(  i\right)  }=\mathfrak{f}$,
the convolution product (\ref{mnf}) is called the \textit{Sweedler power} of
$\mathfrak{f}$ \cite{kas/mon/ng} or the \textit{Adams operator} \cite{agu/lau}%
. In the binary case they denoted it by $\left(  \mathfrak{f}\right)
^{n_{\star}}$, but for the $n_{\star}$-ary product this is the \textsf{first}
polyadic power of $\mathfrak{f}$ (see (\ref{mx})).
\end{remark}

Obviously, some interesting algebraic objects are nonderived, and here they are
determined by $n+n^{\prime}\geq5$, and also the arities of the algebra and coalgebra
can be different $n\neq n^{\prime}$, which is a more exotic and exciting possibility.
Generally, the arity $n_{\star}$ of the convolution product (\ref{mff}) is
not arbitrary and is \textquotedblleft quantized\textquotedblright\ by solving
(\ref{ns}) in integers. The values $n_{\star}$ for minimal arities
$n,n^{\prime}$ are presented in \textsc{Table} \ref{tab-nstar}.

\begin{table}[th]
\caption[Arity values $n_{\star}$ of the polyadic convolution product.]{ Arity
values $n_{\star}$ of the polyadic convolution product (\ref{mff}), allowed by
(\ref{ns}). The framed box corresponds to the binary convolution product.}%
\label{tab-nstar}
\begin{center}
{ \resizebox{\textwidth}{!} {
\begin{tabular}
[c]{||c|c|c|c|c|c|c|c|c|c|c|c|c|c|}\hline\hline
\backslashbox{$n^{\prime}$}{$n$} & $\mathbf{\mu}$ & \multicolumn{3}{|c|}{$n=2$} & \multicolumn{3}{|c|}{$n=3$} & \multicolumn{3}{|c|}{$n=4$} &
\multicolumn{3}{|c|}{$n=5$}\\\hline
$\mathbf{\Delta}$ & \backslashbox{$^{\ell^\prime}$}{$\ell$} & $\ell=1$ &
$\ell=2$ & $\ell=3$ & $\ell=1$ & $\ell=2$ & $\ell=3$ & $\ell=1$ & $\ell=2$ &
$\ell=3$ & $\ell=1$ & $\ell=2$ & $\ell=3$\\\hline\hline
& $\ell^{\prime}=1$ & \multicolumn{1}{||c|}{\framebox{\textbf{2}}} &  &  &  &  &  &  &  &
&  &  & \\\cline{2-14}$n^{\prime}=2$ & $\ell^{\prime}=2$ & \multicolumn{1}{||c|}{} & \textbf{3} &  &
\textbf{3} &  &  &  &  &  &  &  & \\\cline{2-14}
& $\ell^{\prime}=3$ & \multicolumn{1}{||c|}{} &  & \textbf{4} &  &  &  &
\textbf{4} &  &  &  &  & \\\hline\hline
& $\ell^{\prime}=1$ & \multicolumn{1}{||c|}{} & \textbf{3} &  & \textbf{3} &
&  &  &  &  &  &  & \\\cline{2-14}$n^{\prime}=3$ & $\ell^{\prime}=2$ & \multicolumn{1}{||c|}{} &  &  &  &
\textbf{5} &  &  &  &  & \textbf{5} &  & \\\cline{2-14}
& $\ell^{\prime}=3$ & \multicolumn{1}{||c|}{} &  &  &  &  & \textbf{7} &  &
\textbf{7} &  &  &  & \\\hline\hline
& $\ell^{\prime}=1$ & \multicolumn{1}{||c|}{} &  & \textbf{4} &  &  &  &
\textbf{4} &  &  &  &  & \\\cline{2-14}$n^{\prime}=4$ & $\ell^{\prime}=2$ & \multicolumn{1}{||c|}{} &  &  &  &  &
\textbf{7} &  & \textbf{7} &  &  &  & \\\cline{2-14}
& $\ell^{\prime}=3$ & \multicolumn{1}{||c|}{} &  &  &  &  &  &  &  &
\textbf{10} &  &  & \\\hline\hline
& $\ell^{\prime}=1$ & \multicolumn{1}{||c|}{} &  &  &  & \textbf{5} &  &  &  &
& \textbf{5} &  & \\\cline{2-14}$n^{\prime}=5$ & $\ell^{\prime}=2$ & \multicolumn{1}{||c|}{} &  &  &  &  &  &
&  &  &  & \textbf{9} & \\\cline{2-14}
& $\ell^{\prime}=3$ & \multicolumn{1}{||c|}{} &  &  &  &  &  &  &  &  &  &  &
\textbf{13}\\\hline
\end{tabular}
}}
\end{center}
\end{table}

The most unusual possibility is the existence of nondiagonal entries, which
correspond to unequal arities of multiplication and comultiplication $n\neq
n^{\prime}$. The table is symmetric, which means that the arity $n_{\star}$ is
invariant under the exchange $\left(  n,\ell\right)  \longleftrightarrow
\left(  n^{\prime},\ell^{\prime}\right)  $ following from (\ref{ns}).

\begin{example}
[$\operatorname*{Hom}\nolimits_{\Bbbk}\left(  C,A\right)  $]In the simplest
derived case (\ref{mnf}), when both algebra $\mathrm{A}^{\left(
2\right)  }=\left\langle A\mid\mathbf{\mu}\right\rangle $ and coalgebra
$\mathrm{C}^{\left(  2\right)  }=\left\langle C\mid\mathbf{\Delta
}\right\rangle $ are binary with $n=2,\ell=2,n^{\prime}=2,\ell^{\prime}=2$, it
is possible to obtain the ternary convolution product $\mathbf{\mu}_{\star
}^{\left(  3\right)  }$ of the maps $\mathfrak{f}^{\left(  i\right)
}:C\rightarrow A$, $i=1,2,3$, using Sweedler notation for $\mathbf{\Delta
}^{\circ2}\equiv\left(  \operatorname*{id}\nolimits_{C}\otimes\mathbf{\Delta
}\right)  \circ\mathbf{\Delta}$ as $\Delta^{\circ2}\left(  c\right)
=c_{\left[  1\right]  }\otimes c_{\left[  2\right]  }\otimes c_{\left[
3\right]  }$, $\mathbf{\mu}^{\circ2}\equiv\mathbf{\mu\circ}\left(
\operatorname*{id}\nolimits_{A}\otimes\mathbf{\mu}\right)  :A^{\otimes
3}\rightarrow A$, and the elementwise description using the evaluation%
\begin{equation}
\mathbf{\mu}_{\star,der}^{\left(  3\right)  }\circ\left(  \mathfrak{f}%
^{\left(  1\right)  }\otimes\mathfrak{f}^{\left(  2\right)  }\otimes
\mathfrak{f}^{\left(  3\right)  }\right)  \circ\left(  c\right)  =\mu^{\circ
2}\left[  \mathrm{f}^{\left(  1\right)  }\left(  c_{\left[  1\right]
}\right)  ,\mathrm{f}^{\left(  2\right)  }\left(  c_{\left[  2\right]
}\right)  ,\mathrm{f}^{\left(  3\right)  }\left(  c_{\left[  3\right]
}\right)  \right]  . \label{msder}%
\end{equation}

\end{example}

\begin{example}
[$\operatorname*{Hom}\nolimits_{\Bbbk}\left(  C,A^{\otimes2}\right)  $,
$\operatorname*{Hom}\nolimits_{\Bbbk}\left(  C^{\otimes2},A\right)  $%
]Nonbinary, nonderived and nonsymmetric cases:

\begin{enumerate}
\item The ternary algebra $\mathrm{A}^{\left(  3\right)  }=\left\langle
A\mid\mathbf{\mu}^{\left(  3\right)  }\right\rangle $ and the binary coalgebra
$\mathrm{C}^{\left(  2\right)  }=\left\langle C\mid\mathbf{\Delta
}\right\rangle $, such that $n=3,\ell=1,n^{\prime}=2,\ell^{\prime}=2$ giving a
ternary convolution product of the maps $\mathfrak{f}^{\left(  i\right)
}:C\rightarrow A^{\otimes2}$, $i=1,2,3$. In the elementwise description
$\mathfrak{f}^{\left(  i\right)  }\circ\left(  c\right)  =\mathrm{f}_{\left[
1\right]  }^{\left(  i\right)  }\left(  c\right)  \otimes\mathrm{f}_{\left[
2\right]  }^{\left(  i\right)  }\left(  c\right)  $, $c\in C$. Using
(\ref{mff}), we obtain the manifest form of the nonderived ternary convolution
product by evaluation%
\begin{align}
&  \mathbf{\mu}_{\star}^{\left(  3\right)  }\circ\left(  \mathfrak{f}^{\left(
1\right)  }\otimes\mathfrak{f}^{\left(  2\right)  }\otimes\mathfrak{f}%
^{\left(  3\right)  }\right)  \circ\left(  c\right) \nonumber\\
&  =\mu^{\left(  3\right)  }\left[  \mathrm{f}_{\left[  1\right]  }^{\left(
1\right)  }\left(  c_{\left[  1\right]  }\right)  ,\mathrm{f}_{\left[
1\right]  }^{\left(  2\right)  }\left(  c_{\left[  2\right]  }\right)
,\mathrm{f}_{\left[  1\right]  }^{\left(  3\right)  }\left(  c_{\left[
3\right]  }\right)  \right]  \otimes\mu^{\left(  3\right)  }\left[
\mathrm{f}_{\left[  2\right]  }^{\left(  1\right)  }\left(  c_{\left[
1\right]  }\right)  ,\mathrm{f}_{\left[  2\right]  }^{\left(  2\right)
}\left(  c_{\left[  2\right]  }\right)  ,\mathrm{f}_{\left[  2\right]
}^{\left(  3\right)  }\left(  c_{\left[  3\right]  }\right)  \right]  ,
\end{align}
where $\Delta^{\circ2}\left(  c\right)  =c_{\left[  1\right]  }\otimes
c_{\left[  2\right]  }\otimes c_{\left[  3\right]  }.$

\item The algebra is binary $\mathrm{A}^{\left(  2\right)  }=\left\langle
A\mid\mathbf{\mu}\right\rangle $, and the coalgebra is ternary $\mathrm{C}%
^{\left(  3\right)  }=\left\langle C\mid\mathbf{\Delta}^{\left(  3\right)
}\right\rangle $, which corresponds to $n=2,\ell=2,n^{\prime}=3,\ell^{\prime
}=1$, the maps $\mathfrak{f}^{\left(  i\right)  }:C^{\otimes2}\rightarrow A$,
$i=1,2,3$ in the elementwise description are two place, $\mathfrak{f}^{\left(
i\right)  }\circ\left(  c_{1}\otimes c_{2}\right)  =\mathrm{f}^{\left(
i\right)  }\left(  c_{1},c_{2}\right)  $, $c_{1,2}\in C$, and $\left(
\Delta^{\left(  3\right)  }\right)  ^{\otimes2}\left(  c^{\left(  1\right)
}\otimes c^{\left(  2\right)  }\right)  =\left(  c_{\left[  1\right]
}^{\left(  1\right)  }\otimes c_{\left[  2\right]  }^{\left(  1\right)
}\otimes c_{\left[  3\right]  }^{\left(  1\right)  }\right)  \otimes\left(
c_{\left[  1\right]  }^{\left(  2\right)  }\otimes c_{\left[  2\right]
}^{\left(  2\right)  }\otimes c_{\left[  3\right]  }^{\left(  2\right)
}\right)  $. The ternary convolution product is%
\begin{equation}
\mathbf{\mu}_{\star}^{\left(  3\right)  }\circ\left(  \mathfrak{f}^{\left(
1\right)  }\otimes\mathfrak{f}^{\left(  2\right)  }\otimes\mathfrak{f}%
^{\left(  3\right)  }\right)  \circ\left(  c^{\left(  1\right)  }\otimes
c^{\left(  2\right)  }\right)  =\mu^{\circ2}\left[  \mathrm{f}^{\left(
1\right)  }\left(  c_{\left[  1\right]  }^{\left(  1\right)  },c_{\left[
1\right]  }^{\left(  2\right)  }\right)  ,\mathrm{f}^{\left(  2\right)
}\left(  c_{\left[  2\right]  }^{\left(  1\right)  },c_{\left[  2\right]
}^{\left(  2\right)  }\right)  ,\mathrm{f}^{\left(  3\right)  }\left(
c_{\left[  3\right]  }^{\left(  1\right)  },c_{\left[  3\right]  }^{\left(
2\right)  }\right)  \right]  .
\end{equation}

\end{enumerate}
\end{example}

\begin{example}
[$\operatorname*{Hom}\nolimits_{\Bbbk}\left(  C^{\otimes2},A^{\otimes
2}\right)  $]The last (fourth) possibility for the ternary convolution product
(see \textsc{Table} \ref{tab-nstar}) is nonderived and symmetric
$n=3,\ell=1,n^{\prime}=3,\ell^{\prime}=1$, with both a ternary algebra
$\mathrm{A}^{\left(  3\right)  }=\left\langle A\mid\mathbf{\mu}^{\left(
3\right)  }\right\rangle $ and coalgebra $\mathrm{C}^{\left(  3\right)
}=\left\langle C\mid\mathbf{\Delta}^{\left(  3\right)  }\right\rangle $. In
the elementwise description the maps $\mathfrak{f}^{\left(  i\right)
}:C^{\otimes2}\rightarrow A^{\otimes2}$, $i=1,2,3$ are $\mathfrak{f}^{\left(
i\right)  }\circ\left(  c_{1}\otimes c_{2}\right)  =\mathrm{f}_{\left[
1\right]  }^{\left(  i\right)  }\left(  c_{1},c_{2}\right)  \otimes
\mathrm{f}_{\left[  2\right]  }^{\left(  i\right)  }\left(  c_{1}%
,c_{2}\right)  $, $c_{1,2}\in C$. Then%
\begin{align}
\mathbf{\mu}_{\star}^{\left(  3\right)  }\circ\left(  \mathfrak{f}^{\left(
1\right)  }\otimes\mathfrak{f}^{\left(  2\right)  }\otimes\mathfrak{f}%
^{\left(  3\right)  }\right)  \circ\left(  c^{\left(  1\right)  }\otimes
c^{\left(  2\right)  }\right)   &  =\mu^{\left(  3\right)  }\left[
\mathrm{f}_{\left[  1\right]  }^{\left(  1\right)  }\left(  c_{\left[
1\right]  }^{\left(  1\right)  },c_{\left[  1\right]  }^{\left(  2\right)
}\right)  ,\mathrm{f}_{\left[  1\right]  }^{\left(  2\right)  }\left(
c_{\left[  2\right]  }^{\left(  1\right)  },c_{\left[  2\right]  }^{\left(
2\right)  }\right)  ,\mathrm{f}_{\left[  1\right]  }^{\left(  3\right)
}\left(  c_{\left[  3\right]  }^{\left(  1\right)  },c_{\left[  3\right]
}^{\left(  2\right)  }\right)  \right] \nonumber\\
&  \otimes\mu^{\left(  3\right)  }\left[  \mathrm{f}_{\left[  2\right]
}^{\left(  1\right)  }\left(  c_{\left[  1\right]  }^{\left(  1\right)
},c_{\left[  1\right]  }^{\left(  2\right)  }\right)  ,\mathrm{f}_{\left[
2\right]  }^{\left(  2\right)  }\left(  c_{\left[  2\right]  }^{\left(
1\right)  },c_{\left[  2\right]  }^{\left(  2\right)  }\right)  ,\mathrm{f}%
_{\left[  2\right]  }^{\left(  3\right)  }\left(  c_{\left[  3\right]
}^{\left(  1\right)  },c_{\left[  3\right]  }^{\left(  2\right)  }\right)
\right]  .
\end{align}

\end{example}

The above examples present clearly the possible forms of the $n_{\star}$-ary
convolution product, which can be convenient for lowest arity computations.

The \textit{general polyadic convolution product} (\ref{mff}) in Sweedler
notation can be presented as%
\begin{align}
&  \mathbf{\mu}_{\star}^{\left(  n_{\star}\right)  }\circ\left(
\mathfrak{f}^{\left(  1\right)  }\otimes\mathfrak{f}^{\left(  2\right)
}\otimes\mathfrak{\ldots}\otimes\mathfrak{f}^{\left(  n_{\star}\right)
}\right)  =\mathfrak{g},\ \ \ \mathfrak{f}^{\left(  i\right)  },\mathfrak{g}%
\in\operatorname*{Hom}\nolimits_{\Bbbk}\left(  C^{\otimes\left(  n^{\prime
}-1\right)  },A^{\otimes\left(  n-1\right)  }\right)  ,\nonumber\\
&  \mathfrak{g}_{\left[  j\right]  }\circ\left(  c^{\left(  1\right)  }%
\otimes\ldots\otimes c^{\left(  n^{\prime}-1\right)  }\right) \nonumber\\
&  =\left(  \mu^{\left(  n\right)  }\right)  ^{\circ\ell}\left[
\mathrm{f}_{\left[  j\right]  }^{\left(  1\right)  }\left(  \overset
{n^{\prime}-1}{\overbrace{c_{\left[  1\right]  }^{\left(  1\right)  }%
,\ldots,c_{\left[  1\right]  }^{\left(  n^{\prime}-1\right)  }}}\right)
,\mathrm{f}_{\left[  j\right]  }^{\left(  2\right)  }\left(  \overset
{n^{\prime}-1}{\overbrace{c_{\left[  2\right]  }^{\left(  1\right)  }%
,\ldots,c_{\left[  2\right]  }^{\left(  n^{\prime}-1\right)  }}}\right)
,\ldots,\mathrm{f}_{\left[  j\right]  }^{\left(  n_{\star}\right)  }\left(
\overset{n^{\prime}-1}{\overbrace{c_{\left[  n_{\star}\right]  }^{\left(
1\right)  },\ldots,c_{\left[  n_{\star}\right]  }^{\left(  n^{\prime
}-1\right)  }}}\right)  \right]  ,\nonumber\\
&  \mathrm{f}_{\left[  j\right]  }^{\left(  i\right)  }\in\operatorname*{Hom}%
\nolimits_{\Bbbk}\left(  C^{\otimes\left(  n^{\prime}-1\right)  },A\right)
,\ \ i\in1,\ldots,n_{\star},\ \ j\in1,\ldots,n-1,\ \ \ c\in C, \label{ms}%
\end{align}
where $\mathfrak{g}_{\left[  j\right]  }$ are the Sweedler components of
$\mathfrak{g}$.

Recall that the associativity of the binary convolution product $\left(
\star\right)  $ is transparent in the Sweedler notation. Indeed, if $\left(
f\star g\right)  \circ\left(  c\right)  =f\left(  c_{\left[  1\right]
}\right)  \cdot g\left(  c_{\left[  2\right]  }\right)  $, $f,g,h\in
\operatorname*{Hom}\nolimits_{\,{\scriptsize \mathrm{k}}\!}\left(  C,A\right)
$, $c\in C$, $\left(  \cdot\right)  \equiv\mu_{A}^{\left(  2\right)  }$, then
$\left(  \left(  f\star g\right)  \star h\right)  \circ\left(  c\right)
=\left(  f\left(  c_{\left[  1\right]  }\right)  \cdot g\left(  c_{\left[
2\right]  }\right)  \right)  \cdot h\left(  c_{\left[  3\right]  }\right)
=f\left(  c_{\left[  1\right]  }\right)  \cdot\left(  g\left(  c_{\left[
2\right]  }\right)  \cdot h\left(  c_{\left[  3\right]  }\right)  \right)
=\left(  f\star\left(  g\star h\right)  \right)  \circ\left(  c\right)  $.

\begin{lemma}
The polyadic convolution algebra $\mathrm{C}_{\star}^{\left(  n^{\prime
},n\right)  }$ (\ref{cs}) is associative.
\end{lemma}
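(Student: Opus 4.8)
The plan is to reduce the statement to the elementwise Sweedler form \eqref{ms} and then to exploit the two structural facts already at our disposal: the total associativity \eqref{as} of $\mathbf{\mu}^{\left(n\right)}$ and the total coassociativity \eqref{coa} of $\mathbf{\Delta}^{\left(n^{\prime}\right)}$. Since $\mathrm{C}_{\star}^{\left(n^{\prime},n\right)}$ carries an $n_{\star}$-ary operation, ``associative'' here means total associativity in the sense of \eqref{mn}: given $2n_{\star}-1$ maps $\mathfrak{f}^{\left(1\right)},\ldots,\mathfrak{f}^{\left(2n_{\star}-1\right)}\in\operatorname*{Hom}\nolimits_{\Bbbk}\left(C^{\otimes\left(n^{\prime}-1\right)},A^{\otimes\left(n-1\right)}\right)$, applying $\mathbf{\mu}_{\star}^{\left(n_{\star}\right)}$ with a second copy of $\mathbf{\mu}_{\star}^{\left(n_{\star}\right)}$ inserted on any of the $n_{\star}$ admissible slots must give the same element of $\operatorname*{Hom}\nolimits_{\Bbbk}\left(C^{\otimes\left(n^{\prime}-1\right)},A^{\otimes\left(n-1\right)}\right)$. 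This is the higher-arity analog of the binary computation $\left(\left(f\star g\right)\star h\right)\circ\left(c\right)=f\left(c_{\left[1\right]}\right)\cdot\left(g\left(c_{\left[2\right]}\right)\cdot h\left(c_{\left[3\right]}\right)\right)=\left(f\star\left(g\star h\right)\right)\circ\left(c\right)$ displayed just before the statement.

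First I would compute the nested convolution from \eqref{mff}--\eqref{ms}. Inserting $\mathbf{\mu}_{\star}^{\left(n_{\star}\right)}$ on the $i$-th slot means: on the coalgebra side one coiterates $\mathbf{\Delta}^{\left(n^{\prime}\right)}$ once more over the block of $n_{\star}$ Sweedler legs feeding the inner convolution, producing $\left(\mathbf{\Delta}^{\left(n^{\prime}\right)}\right)^{\circ2\ell^{\prime}}$ applied to $c$ (because $\ell^{\prime}\left(n^{\prime}-1\right)+1=n_{\star}$ by \eqref{ns}, a second coiteration over a length-$n_{\star}$ block adds exactly $\ell^{\prime}\left(n^{\prime}-1\right)$ new legs, giving $2\ell^{\prime}\left(n^{\prime}-1\right)+1=2n_{\star}-1$ components); on the algebra side one applies $\left(\mu^{\left(n\right)}\right)^{\circ\ell}$ to a block of $n_{\star}$ entries one of which is itself $\left(\mu^{\left(n\right)}\right)^{\circ\ell}$ of $n_{\star}$ entries, and $\ell\left(n-1\right)+1=n_{\star}$ again, so this collapses to $\left(\mu^{\left(n\right)}\right)^{\circ2\ell}$ acting on $2\ell\left(n-1\right)+1=2n_{\star}-1$ arguments.

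Then I would invoke \eqref{as} and \eqref{coa}. By total associativity the iterated map $\left(\mu^{\left(n\right)}\right)^{\circ2\ell}:A^{\otimes\left(2\ell\left(n-1\right)+1\right)}\to A$ is the \emph{unique} $2\ell$-fold product and does not depend on which inner bracketing (equivalently, on which slot $i$ carried the inner convolution); dually, by total coassociativity $\left(\mathbf{\Delta}^{\left(n^{\prime}\right)}\right)^{\circ2\ell^{\prime}}:C\to C^{\otimes\left(2\ell^{\prime}\left(n^{\prime}-1\right)+1\right)}$ is independent of the position of the inner comultiplication. Hence the nested convolution, written componentwise as in \eqref{ms}, depends only on the ordered list $\mathfrak{f}^{\left(1\right)},\ldots,\mathfrak{f}^{\left(2n_{\star}-1\right)}$ and on $c$, which is exactly the total associativity \eqref{mn} of $\mathbf{\mu}_{\star}^{\left(n_{\star}\right)}$, so $\mathrm{C}_{\star}^{\left(n^{\prime},n\right)}$ is a polyadic associative algebra.

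The main obstacle is the bookkeeping of the two medial maps $\mathbf{\tau}_{medial}^{\left(n_{\star},n^{\prime}-1\right)}$ and $\mathbf{\tau}_{medial}^{\left(n-1,n_{\star}\right)}$ in \eqref{mff}: one must check that transposing the $\left(n-1\right)\times n_{\star}$ array of $\mathfrak{f}$-outputs, performing the outer multiplication, and then re-transposing in the nested composition is the same as transposing the larger $\left(n-1\right)\times\left(2n_{\star}-1\right)$ array directly, and likewise on the coalgebra side. I expect this to follow from the matrix description \eqref{an1}--\eqref{an} of the medial map together with the functoriality of $\otimes$, or more systematically from the associativity quiver technique of \cite{dup2018a} already used to prove the associativity of the polyadic convolution \eqref{mf}. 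Everything else is a direct rewriting in Sweedler notation.
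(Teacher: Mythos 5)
Your proposal is correct and follows essentially the same route as the paper's proof: both express the nested convolution in Sweedler/elementwise form, use coassociativity together with the consistency condition (\ref{ns}) to obtain the $\left(2n_{\star}-1\right)$-component decomposition $\left(\Delta^{\left(n^{\prime}\right)}\right)^{\circ2\ell^{\prime}}\left(c\right)=c_{\left[1\right]}\otimes\ldots\otimes c_{\left[2n_{\star}-1\right]}$, and then invoke the total associativity of $\mu^{\left(n\right)}$ to conclude that the inner $\left(\mu^{\left(n\right)}\right)^{\circ\ell}$ (hence the inner convolution factor) can sit in any slot. Your extra remark on the medial-map bookkeeping is a fair observation, but it is handled implicitly in the paper exactly as you anticipate, so no gap remains.
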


\begin{proof}
To prove the claimed associativity of polyadic convolution $\mathbf{\mu}_{\star
}^{\left(  n_{\star}\right)  }$ we express (\ref{a}) in Sweedler notation.
Starting from%
\begin{equation}
\mathfrak{h}=\mathbf{\mu}_{\star}^{\left(  n_{\star}\right)  }\circ\left(
\mathfrak{g}\otimes\mathfrak{f}^{\left(  n_{\star}+1\right)  }\otimes
\mathfrak{f}^{\left(  n_{\star}+2\right)  }\otimes\mathfrak{\ldots}%
\otimes\mathfrak{f}^{\left(  2n_{\star}-1\right)  }\right)  ,\ \ \mathfrak{h}%
\in\operatorname*{Hom}\nolimits_{\Bbbk}\left(  C^{\otimes\left(  n^{\prime
}-1\right)  },A^{\otimes\left(  n-1\right)  }\right)  , \label{h}%
\end{equation}
where $\mathfrak{g}$ is given by (\ref{ms}), it follows that $\mathfrak{h}$
should not depend of place of $\mathfrak{g}$ in (\ref{h}). Applying
$\mathfrak{h}$ to $c\in C$ twice, we obtain for its Sweedler components
$\mathfrak{h}_{\left[  j\right]  },\ \ j\in1,\ldots,n-1,$%
\begin{align}
&  \mathfrak{h}_{\left[  j\right]  }\circ\left(  c^{\left(  1\right)  }%
\otimes\ldots\otimes c^{\left(  n^{\prime}-1\right)  }\right)  =\left(
\mu^{\left(  n\right)  }\right)  ^{\circ\ell}\left[  \left(  \mu^{\left(
n\right)  }\right)  ^{\circ\ell}\left[  \mathrm{f}_{\left[  j\right]
}^{\left(  1\right)  }\left(  \overset{n^{\prime}-1}{\overbrace{c_{\left[
1\right]  }^{\left(  1\right)  },\ldots,c_{\left[  1\right]  }^{\left(
n^{\prime}-1\right)  }}}\right)  ,\mathrm{f}_{\left[  j\right]  }^{\left(
2\right)  }\left(  \overset{n^{\prime}-1}{\overbrace{c_{\left[  2\right]
}^{\left(  1\right)  },\ldots,c_{\left[  2\right]  }^{\left(  n^{\prime
}-1\right)  }}}\right)  ,\right.  \right. \nonumber\\
&  \left.  \ldots,\mathrm{f}_{\left[  j\right]  }^{\left(  n_{\star}\right)
}\left(  \overset{n^{\prime}-1}{\overbrace{c_{\left[  2n_{\star}-1\right]
}^{\left(  1\right)  },\ldots,c_{\left[  2n_{\star}-1\right]  }^{\left(
n^{\prime}-1\right)  }}}\right)  \right]  ,\mathrm{f}_{\left[  j\right]
}^{\left(  n_{\star}+1\right)  }\left(  \overset{n^{\prime}-1}{\overbrace
{c_{\left[  1\right]  }^{\left(  1\right)  },\ldots,c_{\left[  1\right]
}^{\left(  n^{\prime}-1\right)  }}}\right)  ,\mathrm{f}_{\left[  j\right]
}^{\left(  n_{\star}+2\right)  }\left(  \overset{n^{\prime}-1}{\overbrace
{c_{\left[  2\right]  }^{\left(  1\right)  },\ldots,c_{\left[  2\right]
}^{\left(  n^{\prime}-1\right)  }}}\right)  ,\nonumber\\
&  \left.  \ldots,\mathrm{f}_{\left[  j\right]  }^{\left(  2n_{\star
}-1\right)  }\left(  \overset{n^{\prime}-1}{\overbrace{c_{\left[  2n_{\star
}-1\right]  }^{\left(  1\right)  },\ldots,c_{\left[  2n_{\star}-1\right]
}^{\left(  n^{\prime}-1\right)  }}}\right)  \right]  , \label{hj}%
\end{align}
and here coassociativity and (\ref{ns}) gives $\left(  \Delta^{\left(  n^{\prime
}\right)  }\right)  ^{\circ2l^{\prime}}\left(  c\right)  =c_{\left[  1\right]
}\otimes c_{\left[  2\right]  }\otimes\ldots\otimes c_{\left[  2n_{\star
}-1\right]  }$. Since $n$-ary algebra multiplication $\mu^{\left(  n\right)
}$ is associative, the internal $\left(  \mu^{\left(  n\right)  }\right)
^{\circ\ell}$ in (\ref{hj}) can be on any place, and $\mathfrak{g}$ in
(\ref{h}) can be on any place as well. This means that the polyadic
convolution product $\mathbf{\mu}_{\star}^{\left(  n_{\star}\right)  }$ is associative.
\end{proof}

Observe the polyadic version of the identity used in (\ref{mi}): for any
$\mathfrak{f}\in\operatorname*{Hom}\nolimits_{\Bbbk}\left(  C^{\otimes\left(
n^{\prime}-1\right)  },A^{\otimes\left(  n-1\right)  }\right)  $%
\begin{equation}
\operatorname*{id}\nolimits_{A}^{\otimes\left(  n-1\right)  }\circ
\,\mathfrak{f}\circ\operatorname*{id}\nolimits_{C}^{\otimes\left(  n^{\prime
}-1\right)  }=\mathfrak{f}. \label{fi}%
\end{equation}

\begin{proposition}
If the polyadic associative algebra {\upshape$\mathrm{A}^{\left(  n\right)  }%
$} is unital with {\upshape$\mathbf{\eta}^{\left(  r,n\right)  }%
:K^{r}\rightarrow A^{\otimes\left(  n-1\right)  }$}, and the polyadic
coassociative coalgebra {\upshape$\mathrm{C}^{\left(  n^{\prime}\right)  }$}
is counital with $\mathbf{\varepsilon}^{\left(  n^{\prime},r^{\prime}\right)
}:C^{\otimes\left(  n^{\prime}-1\right)  }\rightarrow K^{r^{\prime}}$, both
over the same polyadic field $\Bbbk$, then the polyadic convolution algebra
{\upshape$\mathrm{C}_{\star}^{\left(  n^{\prime},n\right)  }$} (\ref{cs}) is
unital, and its unit is given by {\upshape$\mathbf{e}_{\star}^{\left(
n^{\prime},n\right)  }$} \emph{(\ref{ns})}.
\end{proposition}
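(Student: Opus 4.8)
\emph{Proof proposal.} The plan is to mimic the binary argument recorded in (\ref{mi}): to exhibit $\mathbf{e}_{\star}^{(n^{\prime},n)}$ of (\ref{es}) as a polyadic (right) unit for the $n_{\star}$-ary convolution product $\mathbf{\mu}_{\star}^{(n_{\star})}$, i.e. to verify that $\mathbf{\mu}_{\star}^{(n_{\star})}\circ\bigl(\mathbf{e}_{\star}^{(n^{\prime},n)}\otimes\ldots\otimes\mathbf{e}_{\star}^{(n^{\prime},n)}\otimes\mathfrak{f}\bigr)=\mathfrak{f}$ for every $\mathfrak{f}\in\operatorname*{Hom}\nolimits_{\Bbbk}\bigl(C^{\otimes(n^{\prime}-1)},A^{\otimes(n-1)}\bigr)$, where there are $n_{\star}-1$ copies of $\mathbf{e}_{\star}^{(n^{\prime},n)}$, and then to promote this to a genuine polyadic unit in the sense of (\ref{e}) by invoking the total coassociativity of $\mathbf{\Delta}^{(n^{\prime})}$ and the total associativity of $\mathbf{\mu}^{(n)}$, which let $\mathfrak{f}$ (equivalently, the neutral entries $\mathbf{e}_{\star}^{(n^{\prime},n)}$) occupy any place.

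First I would substitute $\mathbf{e}_{\star}^{(n^{\prime},n)}=\mathbf{\eta}^{(r,n)}\circ\mathbf{\gamma}^{(r^{\prime},r)}\circ\mathbf{\varepsilon}^{(n^{\prime},r^{\prime})}$ into the defining composition (\ref{mff}) (or, for a concrete check with elements, into the Sweedler form (\ref{ms})). On the coalgebra side one discharges the $n_{\star}-1$ counit maps $\mathbf{\varepsilon}^{(n^{\prime},r^{\prime})}$ against the iterated comultiplication $\bigl(\bigl(\mathbf{\Delta}^{(n^{\prime})}\bigr)^{\circ\ell^{\prime}}\bigr)^{\otimes(n^{\prime}-1)}$: applying the counit axiom (\ref{ep}) once for each of the $\ell^{\prime}$ layers of the coiteration, in exactly the nested pattern in which $\bigl(\mathbf{\Delta}^{(n^{\prime})}\bigr)^{\circ\ell^{\prime}}$ is assembled, each $\mathbf{\varepsilon}^{(n^{\prime},r^{\prime})}$ produces the $r^{\prime}$-fold tensor of field units $e_{k}$ and removes its $n^{\prime}-1$ coproduct slots; here the $n_{\star}$-consistency condition (\ref{ns}), $n_{\star}-1=\ell^{\prime}(n^{\prime}-1)$, is precisely what guarantees that after all $n_{\star}-1$ counits have acted exactly the argument block of $\mathfrak{f}$ survives, up to the isomorphism $\Bbbk^{\otimes r^{\prime}}\otimes\mathrm{C}\cong\mathrm{C}$ established by $\mathbf{\bar{\rho}}^{(r^{\prime})}$ and $\mathbf{\sigma}^{(r^{\prime})}$ (cf. (\ref{rc}), (\ref{c})). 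Dually, on the algebra side, the $n_{\star}-1$ entries of the form $\mathbf{\eta}^{(r,n)}\bigl(\mathbf{\gamma}^{(r^{\prime},r)}(e_{k}^{\otimes r^{\prime}})\bigr)=\mathbf{\eta}^{(r,n)}(e_{k}^{\otimes r})$ (since $\mathbf{\gamma}^{(r^{\prime},r)}$ is an isomorphism) are absorbed by the iterated multiplication $\bigl(\bigl(\mathbf{\mu}^{(n)}\bigr)^{\circ\ell}\bigr)^{\otimes(n-1)}$ through $\ell$ successive applications of the unit axiom (\ref{mu}), $\mathbf{\mu}^{(n)}\circ\bigl(\mathbf{\eta}^{(r,n)}\otimes\operatorname*{id}\nolimits_{A}\bigr)=\mathbf{\rho}^{(r)}$, together with the normalisations (\ref{ne}) and (\ref{re}) giving $\mathbf{\rho}^{(r)}(e_{k}^{\times r}\mid a)=a$; again $n_{\star}-1=\ell(n-1)$ from (\ref{ns}) makes the count exact, so only the output of $\mathfrak{f}$ remains. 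Combining the two collapses with the trivial polyadic identity (\ref{fi}), $\operatorname*{id}\nolimits_{A}^{\otimes(n-1)}\circ\,\mathfrak{f}\circ\operatorname*{id}\nolimits_{C}^{\otimes(n^{\prime}-1)}=\mathfrak{f}$, yields $\mathfrak{f}$, which is the desired unit property; the (co)associativity invoked above then gives place-independence, so $\mathbf{e}_{\star}^{(n^{\prime},n)}$ is a bona fide polyadic unit of $\mathrm{C}_{\star}^{(n^{\prime},n)}$ (\ref{cs}).

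The hard part will be the combinatorial bookkeeping of the two medial maps $\mathbf{\tau}_{medial}^{(n_{\star},n^{\prime}-1)}$ and $\mathbf{\tau}_{medial}^{(n-1,n_{\star})}$ of (\ref{an}), and the verification that the counit axiom (\ref{ep}) and the unit axiom (\ref{mu}) really can be peeled off one coiteration (resp. iteration) at a time against $\bigl(\mathbf{\Delta}^{(n^{\prime})}\bigr)^{\circ\ell^{\prime}}$ (resp. $\bigl(\mathbf{\mu}^{(n)}\bigr)^{\circ\ell}$) once $n^{\prime}>2$ (resp. $n>2$), so that the slot counts governed by (\ref{ns}) match on the nose; once this transposition-versus-coiteration compatibility is laid out, the field-side identifications via $\mathbf{\gamma}^{(r^{\prime},r)}$ and the normalisations (\ref{re}), (\ref{c}), (\ref{ne}) are routine. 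In the derived binary case ($n=n^{\prime}=2$, \textbf{Definition \ref{def-der}}) all the medial maps are trivial and the argument degenerates exactly to (\ref{mi}).
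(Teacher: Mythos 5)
Your proposal is correct and follows essentially the same route as the paper's proof: insert $\mathbf{e}_{\star}^{\left(n^{\prime},n\right)}=\mathbf{\eta}^{\left(r,n\right)}\circ\mathbf{\gamma}^{\left(r^{\prime},r\right)}\circ\mathbf{\varepsilon}^{\left(n^{\prime},r^{\prime}\right)}$ into (\ref{mff}) and collapse the counits against $\left(\mathbf{\Delta}^{\left(n^{\prime}\right)}\right)^{\circ\ell^{\prime}}$ and the units against $\left(\mathbf{\mu}^{\left(n\right)}\right)^{\circ\ell}$ via (\ref{ep}), (\ref{mu}), the normalizations (\ref{re}), (\ref{ne}) and the identity (\ref{fi}), with the slot count matching exactly by (\ref{ns}). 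The only caveat concerns place-independence: the paper obtains it by repeating the same derivation with $\mathfrak{f}$ on any place (the unit and counit axioms being understood with $\operatorname*{id}$ on any slot), not from total (co)associativity, which by itself would not upgrade a one-sided unit to an any-place unit.
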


\begin{proof}
In analogy with (\ref{mi}) we compose%
\begin{align}
&  \mathfrak{f}=\left(  \left(  \mathbf{\mu}^{\left(  n\right)  }\right)
^{\circ\ell}\right)  ^{\otimes\left(  n-1\right)  }\circ\mathbf{\tau}%
_{medial}^{\left(  n-1,n_{\star}\right)  }\circ\left(  \left(  \mathbf{\eta
}^{\left(  r,n\right)  }\right)  ^{\otimes\left(  n_{\star}-1\right)  }%
\otimes\operatorname*{id}\nolimits_{A}^{\otimes\left(  n-1\right)  }\right)
\circ\left(  \left(  \mathbf{\gamma}^{\left(  r^{\prime},r\right)  }\right)
^{\otimes\left(  n_{\star}-1\right)  }\otimes\operatorname*{id}\nolimits_{A}%
^{\otimes\left(  n-1\right)  }\right) \nonumber\\
&  \circ\left(  \left(  \operatorname*{id}\nolimits_{K}^{\otimes r^{\prime}%
}\right)  ^{\otimes\left(  n_{\star}-1\right)  }\otimes\mathfrak{f}\right)
\circ\left(  \left(  \mathbf{\varepsilon}^{\left(  n^{\prime},r^{\prime
}\right)  }\right)  ^{\otimes\left(  n_{\star}-1\right)  }\otimes
\operatorname*{id}\nolimits_{C}^{\otimes\left(  n^{\prime}-1\right)  }\right)
\circ\mathbf{\tau}_{medial}^{\left(  n_{\star},n^{\prime}-1\right)  }%
\circ\left(  \left(  \mathbf{\Delta}^{\left(  n^{\prime}\right)  }\right)
^{\circ\ell^{\prime}}\right)  ^{\otimes\left(  n^{\prime}-1\right)
}\nonumber\\
&  =\left(  \left(  \mathbf{\mu}^{\left(  n\right)  }\right)  ^{\circ\ell
}\right)  ^{\otimes\left(  n-1\right)  }\circ\mathbf{\tau}_{medial}^{\left(
n-1,n_{\star}\right)  }\circ\left(  \mathbf{\eta}^{\left(  r,n\right)  }%
\circ\mathbf{\gamma}^{\left(  r^{\prime},r\right)  }\circ\mathbf{\varepsilon
}^{\left(  n^{\prime},r^{\prime}\right)  }\right)  ^{\otimes\left(  n_{\star
}-1\right)  }\nonumber\\
&  \circ\left(  \operatorname*{id}\nolimits_{A}^{\otimes\left(  n-1\right)
}\circ\,\mathfrak{f}\circ\operatorname*{id}\nolimits_{C}^{\otimes\left(
n^{\prime}-1\right)  }\right)  \circ\mathbf{\tau}_{medial}^{\left(  n_{\star
},n^{\prime}-1\right)  }\circ\left(  \left(  \mathbf{\Delta}^{\left(
n^{\prime}\right)  }\right)  ^{\circ\ell^{\prime}}\right)  ^{\otimes\left(
n^{\prime}-1\right)  }=\mathbf{\mu}_{\star}^{\left(  n_{\star}\right)  }%
\circ\left(  \left(  \mathbf{e}_{\star}^{\left(  n^{\prime},n\right)
}\right)  ^{\otimes\left(  n_{\star}-1\right)  }\otimes\mathfrak{f}\right)  ,
\end{align}
which coincides with the polyadic unit definition (\ref{e}). We use the
identity (\ref{fi}) and the axioms for a polyadic unit (\ref{mu}) and counit
(\ref{ep}). The same derivation can be made for any place of $\mathbf{\mu
}_{\star}^{\left(  n_{\star}\right)  }$.
\end{proof}

As in the general theory of $n$-ary groups \cite{dor3}, the invertibility of maps
in $\mathrm{C}_{\star}^{\left(  n^{\prime},n\right)  }$ should be defined not
by using the unit, but by using the querelement (\ref{q}).

\begin{definition}
\label{def-coq}For a fixed $\mathfrak{f}\in\mathrm{C}_{\star}^{\left(
n^{\prime},n\right)  }$ its \textit{coquerelement} $\mathbf{q}_{\star}\left(
\mathfrak{f}\right)  \in\mathrm{C}_{\star}^{\left(  n^{\prime},n\right)  }$ is
the querelement in the $n_{\star}$-ary convolution product%
\begin{equation}
\mathbf{\mu}_{\star}^{\left(  n_{\star}\right)  }\circ\left(  \mathfrak{f}%
^{\otimes\left(  n_{\star}-1\right)  }\otimes\mathbf{q}_{\star}\left(
\mathfrak{f}\right)  \right)  =\mathfrak{f}, \label{mfq}%
\end{equation}
where $\mathbf{q}_{\star}\left(  \mathfrak{f}\right)  $ can be on any place
and $n_{\star}\geq3$. The maps in a polyadic convolution algebra which have
a coquerelement are called \textit{coquerable}.
\end{definition}

Define the \textit{positive convolution power} $\ell_{\star}$ of an element
$\mathfrak{f}\in\mathrm{C}_{\star}^{\left(  n^{\prime},n\right)  }$ not
recursively as in \cite{pos}, but through the $\ell_{\star}$-iterated
multiplication (\ref{mx})%
\begin{equation}
\mathfrak{f}^{\left\langle \ell_{\star}\right\rangle }=\left(  \mathbf{\mu
}_{\star}^{\left(  n_{\star}\right)  }\right)  ^{\circ\ell_{\star}}%
\circ\left(  \mathfrak{f}^{\otimes\left(  \ell_{\star}\left(  n_{\star
}-1\right)  +1\right)  }\right)  , \label{fl}%
\end{equation}
and an element in the \textit{negative convolution power} $\mathfrak{f}%
^{\left\langle -\ell_{\star}\right\rangle }$ satisfies the equation%
\begin{equation}
\left(  \mathbf{\mu}_{\star}^{\left(  n_{\star}\right)  }\right)  ^{\circ
\ell_{\star}}\circ\left(  \mathfrak{f}^{\left\langle \ell_{\star
}-1\right\rangle }\otimes\mathfrak{f}^{\otimes\left(  n_{\star}-2\right)
}\otimes\mathfrak{f}^{\left\langle -\ell_{\star}\right\rangle }\right)
=\left(  \mathbf{\mu}_{\star}^{\left(  n_{\star}\right)  }\right)  ^{\circ
\ell_{\star}}\circ\left(  \mathfrak{f}^{\otimes\ell_{\star}\left(  n_{\star
}-1\right)  }\otimes\mathfrak{f}^{\left\langle -\ell_{\star}\right\rangle
}\right)  =\mathfrak{f}. \label{mnl}%
\end{equation}
It follows from (\ref{fl}) that the polyadic analogs of the exponent laws
hold%
\begin{align}
\mathbf{\mu}_{\star}^{\left(  n_{\star}\right)  }\circ\left(  \mathfrak{f}%
^{\left\langle \ell_{\star}^{\left(  1\right)  }\right\rangle }\otimes
\mathfrak{f}^{\left\langle \ell_{\star}^{\left(  2\right)  }\right\rangle
}\otimes\ldots\otimes\mathfrak{f}^{\left\langle \ell_{\star}^{\left(
n_{\star}\right)  }\right\rangle }\right)   &  =\mathfrak{f}^{\left\langle
\ell_{\star}^{\left(  1\right)  }+\ell_{\star}^{\left(  2\right)  }\ldots
+\ell_{\star}^{\left(  n_{\star}\right)  }+1\right\rangle },\\
\left(  \mathfrak{f}^{\left\langle \ell_{\star}^{\left(  1\right)
}\right\rangle }\right)  ^{\left\langle \ell_{\star}^{\left(  2\right)
}\right\rangle }  &  =\mathfrak{f}^{\left\langle \ell_{\star}^{\left(
1\right)  }\ell_{\star}^{\left(  2\right)  }\left(  n_{\star}-1\right)
+\ell_{\star}^{\left(  1\right)  }+\ell_{\star}^{\left(  2\right)
}\right\rangle }.
\end{align}

Comparing (\ref{mfq}) and (\ref{mnl}), we have%
\begin{equation}
\mathbf{q}_{\star}\left(  \mathfrak{f}\right)  =\mathfrak{f}^{\left\langle
-1\right\rangle }.
\end{equation}
An arbitrary polyadic power $\ell_{Q}$ of the coquerelement $\mathbf{q}%
_{\star}^{\circ\ell_{Q}}\left(  \mathfrak{f}\right)  $ is defined by
(\ref{mfq}) recursively and can be expressed through the negative polyadic
power of $\mathfrak{f}$ (see, e.g. \cite{dud07} for $n$-ary groups). In terms
of the Heine numbers \cite{heine} (or $q$-deformed numbers \cite{kac/che})%
\begin{equation}
\left[  \left[  l\right]  \right]  _{q}=\frac{q^{l}-1}{q-1},\ \ \ l\in
\mathbb{N}_{0},\ \ q\in\mathbb{Z},
\end{equation}
we obtain \cite{dup2018a}%
\begin{equation}
\mathbf{q}_{\star}^{\circ\ell_{Q}}\left(  \mathfrak{f}\right)  =\mathfrak{f}%
^{\left\langle -\left[  \left[  \ell_{Q}\right]  \right]  _{2-n_{\star}%
}\right\rangle }.
\end{equation}

\section{\textsc{Polyadic bialgebras}}

The next step is to combine algebras and coalgebras into a common algebraic
structure in some \textquotedblleft natural\textquotedblright\ way.
Informally, a bialgebra is defined as a vector space which is
\textquotedblleft simultaneously\textquotedblright\ an algebra and a coalgebra with
some compatibility conditions (e.g., \cite{swe,abe}).

In search of a polyadic analog of bialgebras, we observe two structural
differences with the binary case: \textbf{1}) since the unit and counit do not
necessarily exist, we obtain \textsf{4 different kinds} of bialgebras (similar
to the unit and zero in \textsc{Table \ref{tab1}}); \textbf{2}) where the most
exotic is the possibility of \textsf{unequal arities} of multiplication and
comultiplication $n\neq n^{\prime}$ (see \textbf{Assertion }\ref{as-dual}).
Initially, we take them as arbitrary and then try to find restrictions arising from
some \textquotedblleft natural\textquotedblright\ relations.

Let $\mathrm{B}_{vect}$ be a polyadic vector space over the polyadic field
$\Bbbk^{\left(  m_{k},n_{k}\right)  }$ as (see (\ref{av}) and (\ref{cvect}))%
\begin{equation}
\mathrm{B}_{vect}=\left\langle B,K\mid\nu^{\left(  m\right)  };\nu
_{k}^{\left(  m_{k}\right)  },\mu_{k}^{\left(  n_{k}\right)  };\rho^{\left(
r\right)  }\right\rangle ,
\end{equation}
where $\nu^{\left(  m\right)  }:B^{\times m}\rightarrow B$ is $m$-ary addition
and $\rho^{\left(  r\right)  }:K^{\times r}\times B\rightarrow B$ is $r$-place
action (see (\ref{r})).

\begin{definition}
\label{def-bialg}A \textit{polyadic bialgebra} $\mathrm{B}^{\left(  n^{\prime
},n\right)  }$ is $\mathrm{B}_{vect}$ equipped with a $\Bbbk$-linear $n$-ary
multiplication map $\mathbf{\mu}^{\left(  n\right)  }:B^{\otimes n}\rightarrow
B$ and a $\Bbbk$-linear $n^{\prime}$-ary comultiplication map $\mathbf{\Delta
}^{\left(  n^{\prime}\right)  }:B\rightarrow B^{\otimes n^{\prime}}$ such that:

\begin{enumerate}
\item
\begin{enumerate}
\item $\mathrm{B}_{A}^{\left(  n\right)  }=\left\langle \mathrm{B}_{vect}%
\mid\mathbf{\mu}^{\left(  n\right)  }\right\rangle $ is a $n$-ary algebra;

\item The map $\mathbf{\mu}^{\left(  n\right)  }$ is a coalgebra
(homo)morphism (\ref{hc}).
\end{enumerate}

\item
\begin{enumerate}
\item $\mathrm{B}_{C}^{\left(  n^{\prime}\right)  }=\left\langle
\mathrm{B}_{vect}\mid\mathbf{\Delta}^{\left(  n^{\prime}\right)
}\right\rangle $ is a $n^{\prime}$-ary coalgebra;

\item The map $\mathbf{\Delta}^{\left(  n^{\prime}\right)  }$ is an algebra
(homo)morphism (\ref{ha}).
\end{enumerate}
\end{enumerate}
\end{definition}

The equivalence of the compatibility conditions \textbf{1}\textsf{b}\textbf{)}
and \textbf{2}\textsf{b}\textbf{)} can be expressed in the form (polyadic
analog of \textquotedblleft$\Delta\circ\mu=\left(  \mu\otimes\mu\right)
\left(  \operatorname*{id}\otimes\tau\otimes\operatorname*{id}\right)  \left(
\Delta\otimes\Delta\right)  $\textquotedblright)%
\begin{equation}
\mathbf{\Delta}^{\left(  n^{\prime}\right)  }\circ\mathbf{\mu}^{\left(
n\right)  }=\left(  \overset{n^{\prime}}{\overbrace{\mathbf{\mu}^{\left(
n\right)  }\otimes\ldots\otimes\mathbf{\mu}^{\left(  n\right)  }}}\right)
\circ\mathbf{\tau}_{medial}^{\left(  n,n^{\prime}\right)  }\circ\left(
\overset{n}{\overbrace{\mathbf{\Delta}^{\left(  n^{\prime}\right)  }%
\otimes\ldots\otimes\mathbf{\Delta}^{\left(  n^{\prime}\right)  }}}\right)  ,
\label{dm}%
\end{equation}
where $\mathbf{\tau}_{medial}^{\left(  n,n^{\prime}\right)  }$ is the medial
map (\ref{an}) acting on $B$, while the diagram%
\begin{equation}
\begin{diagram} B^{\otimes n } & \rTo^{\left(\mathbf{\Delta}^{\left( n^{\prime}\right) }\right)^{\otimes n }}& & \left( B^{\otimes n^{\prime}}\right) ^{\otimes n}&& \rTo^{\mathbf{\tau}_{medial}^{\left( n,n^{\prime}\right) } } & \left( B^{\otimes n}\right) ^{\otimes n^{\prime}} \\ \dTo^{\mathbf{\mu}^{\left( n\right) }} && &&& & \dTo_{\left(\mathbf{\mu}^{\left( n\right) }\right)^{\otimes n ^{\prime}}} \\ B &&& \rTo^{\mathbf{\Delta}^{\left( n^{\prime}\right) }} & & &B^{\otimes n^{\prime}} \\ \end{diagram}
\end{equation}
commutes.

In an elementwise description it is the commutation of $n$-ary multiplication and
$n^{\prime}$-ary comultiplication
\begin{equation}
\Delta^{\left(  n^{\prime}\right)  }\left(  \mu^{\left(  n\right)  }\left[
b_{1},\ldots,b_{n}\right]  \right)  =\mu^{\left(  n\right)  }\left[
\Delta^{\left(  n^{\prime}\right)  }\left(  b_{1}\right)  ,\ldots
,\Delta^{\left(  n^{\prime}\right)  }\left(  b_{n}\right)  \right]  ,
\label{dnm}%
\end{equation}
which in the Sweedler notation becomes%
\begin{align}
&  \mu^{\left(  n\right)  }\left[  b_{1},\ldots,b_{n}\right]  _{\left[
1\right]  }\otimes\mu^{\left(  n\right)  }\left[  b_{1},\ldots,b_{n}\right]
_{\left[  2\right]  }\otimes\ldots\otimes\mu^{\left(  n\right)  }\left[
b_{1},\ldots,b_{n}\right]  _{\left[  n^{\prime}\right]  }\nonumber\\
&  =\mu^{\left(  n\right)  }\left[  b_{\left[  1\right]  }^{\left(  1\right)
},\ldots,b_{\left[  1\right]  }^{\left(  n\right)  }\right]  \otimes
\mu^{\left(  n\right)  }\left[  b_{\left[  2\right]  }^{\left(  1\right)
},\ldots,b_{\left[  2\right]  }^{\left(  n\right)  }\right]  \otimes
\ldots\otimes\mu^{\left(  n\right)  }\left[  b_{\left[  n^{\prime}\right]
}^{\left(  1\right)  },\ldots,b_{\left[  n^{\prime}\right]  }^{\left(
n\right)  }\right]  .
\end{align}

Consider the example of a nonderived bialgebra $\mathrm{B}^{\left(
n,n\right)  }$ which follows from the von Neumann higher $n$-regularity
relations \cite{dup18,dup/mar2001a,dup/mar7,dup/mar2018a}.

\begin{example}
[\textsl{von Neumann }$n$\textsl{-regular bialgebra}]\label{ex-vn}Let
$\mathrm{B}^{\left(  n,n\right)  }=\left\langle B\mid\mathbf{\mu}^{\left(
n\right)  },\mathbf{\Delta}^{\left(  n\right)  }\right\rangle $ be a polyadic
bialgebra generated by the elements $b_{i}\in B,\ \ i=1,\ldots,n-1$ subject to
the \textsf{nonderived} $n$-ary multiplication%
\begin{align}
\mu^{\left(  n\right)  }\left(  b_{1},b_{2},b_{3}\ldots,b_{n-2},b_{n-1}%
,b_{1}\right)   &  =b_{1},\\
\mu^{\left(  n\right)  }\left(  b_{2},b_{3},b_{4}\ldots b_{n-1},b_{1}%
,b_{2}\right)   &  =b_{2},\\
&  \vdots\nonumber\\
\mu^{\left(  n\right)  }\left(  b_{n-1},b_{1},b_{2}\ldots b_{n-3}%
,b_{n-2},b_{n-1}\right)   &  =b_{n-1},
\end{align}
and the \textsf{nonderived} $n$-ary comultiplication (cf. \ref{d3})%
\begin{align}
\Delta^{\left(  n\right)  }\left(  b_{1}\right)   &  =b_{1}\otimes
b_{2}\otimes b_{3}\ldots,b_{n-2}\otimes b_{n-1}\otimes b_{1},\\
\Delta^{\left(  n\right)  }\left(  b_{2}\right)   &  =b_{2}\otimes
b_{3}\otimes b_{4}\ldots,b_{n-1}\otimes b_{1}\otimes b_{2},\\
&  \vdots\nonumber\\
\Delta^{\left(  n\right)  }\left(  b_{n-1}\right)   &  =b_{n-1}\otimes
b_{1}\otimes b_{2}\otimes\ldots\otimes b_{n-3}\otimes b_{n-2}\otimes b_{n-1}.
\end{align}
It is straightforward to check that the compatibility condition (\ref{dnm})
holds.  Many possibilities exist for choosing other operations---algebra
addition, field addition and multiplication, action---so to demonstrate
the compatibility we have confined ourselves to only the algebra multiplication and comultiplication.
\end{example}

If the $n$-ary algebra $\mathrm{B}_{A}^{\left(  n\right)  }$ has unit
\textsf{and/or} $n^{\prime}$-ary coalgebra $\mathrm{B}_{C}^{\left(  n^{\prime
}\right)  }$ has counit $\mathbf{\varepsilon}^{\left(  n^{\prime},r^{\prime
}\right)  }$, we should add the following additional axioms.

\begin{definition}
[\textsl{Unit axiom}]\label{def-bi-un}If $\mathrm{B}_{A}^{\left(  n\right)
}=\left\langle \mathrm{B}_{vect}\mid\mathbf{\mu}^{\left(  n\right)
}\right\rangle $ is unital, then the unit $\mathbf{\eta}^{\left(  r,n\right)
}$ is a (homo)morphism of the coalgebra $\mathrm{B}_{C}^{\left(  n^{\prime
}\right)  }=\left\langle \mathrm{B}_{vect}\mid\mathbf{\Delta}^{\left(
n^{\prime}\right)  }\right\rangle $ (see (\ref{hc}))%
\begin{equation}
\left(  \overset{n-1}{\overbrace{\mathbf{\Delta}^{\left(  n^{\prime}\right)
}\otimes\ldots\otimes\mathbf{\Delta}^{\left(  n^{\prime}\right)  }}}\right)
\circ\mathbf{\eta}^{\left(  r,n\right)  }=\left(  \overset{n^{\prime}%
}{\overbrace{\mathbf{\eta}^{\left(  r,n\right)  }\otimes\ldots\otimes
\mathbf{\eta}^{\left(  r,n\right)  }}}\right)
\end{equation}
such that the diagram%
\begin{equation}
\begin{diagram} K^{\otimes r} & \rTo^{\mathbf{\eta}^{\left( r,n\right) }} & B^{\otimes \left( n-1\right)} \\ \dTo^{\simeq} & & \dTo_{\left(\mathbf{\Delta}^{\left( n^{\prime}\right) }\right)^{\otimes \left( n-1\right) }} \\ K^{\otimes r n^{\prime}}& \rTo^{\;\;\left( \mathbf{\eta}^{\left( r,n\right) }\right)^{n^{\prime}}} & B^{\otimes \left( n-1\right) n^{\prime}}\\ \end{diagram}
\end{equation}
commutes.
\end{definition}

\begin{definition}
[\textsl{Counit axiom}]\label{def-bi-coun}If $\mathrm{B}_{C}^{\left(
n^{\prime}\right)  }=\left\langle \mathrm{B}_{vect}\mid\mathbf{\Delta
}^{\left(  n^{\prime}\right)  }\right\rangle $ is counital, then the counit
$\mathbf{\varepsilon}^{\left(  n^{\prime},r^{\prime}\right)  }$ is a
(homo)morphism of the algebra $\mathrm{B}_{A}^{\left(  n\right)
}=\left\langle \mathrm{B}_{vect}\mid\mathbf{\mu}^{\left(  n\right)
}\right\rangle $ (see (\ref{ha}))%
\begin{equation}
\mathbf{\varepsilon}^{\left(  n^{\prime},r^{\prime}\right)  }\circ\left(
\overset{n^{\prime}-1}{\overbrace{\mathbf{\mu}^{\left(  n\right)  }%
\otimes\ldots\otimes\mathbf{\mu}^{\left(  n\right)  }}}\right)  =\left(
\overset{n}{\overbrace{\mathbf{\varepsilon}^{\left(  n^{\prime},r^{\prime
}\right)  }\otimes\ldots\otimes\mathbf{\varepsilon}^{\left(  n^{\prime
},r^{\prime}\right)  }}}\right)
\end{equation}
such that the diagram%
\begin{equation}
\begin{diagram} K^{\otimes r} & \lTo^{\mathbf{\varepsilon}^{\left( n^{\prime },r^{\prime}\right) } }& B^{\otimes \left( n^{\prime}-1\right)} \\ \uTo^{\simeq} & & \uTo_{\left( \mathbf{\mu}^{\left( n\right) }\right)^{\otimes \left( n^{\prime}-1\right) }} \\ K^{\otimes r n}& \lTo^{\;\;} & B^{\otimes \left( n^{\prime}-1\right) n}\\ \end{diagram}
\end{equation}
commutes.
\end{definition}

If \textsf{both} the polyadic unit and polyadic counit exist, then we include
their compatibility condition%
\begin{equation}
\left(  \mathbf{\varepsilon}^{\left(  n^{\prime},r^{\prime}\right)  }\right)
^{\otimes\left(  n-1\right)  }\circ\left(  \mathbf{\eta}^{\left(  r,n\right)
}\right)  ^{\otimes\left(  n^{\prime}-1\right)  }\simeq\operatorname*{id}%
\nolimits_{K}, \label{en}%
\end{equation}
such that the diagram%
\begin{equation}
\begin{diagram} & B^{\otimes \left( n^{\prime}-1\right) \left(n-1\right)}&\\ \ruTo(1,2)^{\left( \mathbf{\eta}^{\left( r,n\right) }\right)^{\otimes\left(n^{\prime}-1\right)} }&&\rdTo(1,2)^{\left(\mathbf{\varepsilon}^{\left( n^{\prime },r^{\prime}\right) } \right)^{\otimes\left(n-1\right)}}\\ K^{\otimes r \left( n^{\prime}-1\right)} &\rTo^{\simeq} &K^{\otimes r \left( n-1\right)} \end{diagram}
\end{equation}
commutes.

\begin{assertion}
There are \textsf{four kinds} of polyadic bialgebras depending on whether the unit
${\mathbf{\eta}}^{\left(  r,n\right)  }$ and counit ${\mathbf{\varepsilon}%
}^{\left(  n^{\prime},r^{\prime}\right)  }$ exist:

{\upshape\textbf{1) }} nonunital-noncounital; {\upshape\textbf{2) }}
unital-noncounital; {\upshape\textbf{3) }} nonunital-counital;
{\upshape\textbf{4) }} unital-counital.
\end{assertion}

\begin{definition}
A polyadic bialgebra $\mathrm{B}^{\left(  n^{\prime},n\right)  }$ is called
\textit{totally co-commutative}, if%
\begin{align}
\mathbf{\mu}^{\left(  n\right)  }  &  =\mathbf{\mu}^{\left(  n\right)  }%
\circ\mathbf{\tau}_{n},\label{tcm}\\
\mathbf{\Delta}^{\left(  n^{\prime}\right)  }  &  =\mathbf{\tau}_{n^{\prime}%
}\circ\mathbf{\Delta}^{\left(  n^{\prime}\right)  }, \label{tcd}%
\end{align}
where $\mathbf{\tau}_{n}\in\mathrm{S}_{n}$, $\mathbf{\tau}_{n^{\prime}}%
\in\mathrm{S}_{n^{\prime}}$, and $\mathrm{S}_{n},\mathrm{S}_{n^{\prime}}$ are
the symmetry permutation groups on $n$ and $n^{\prime}$ elements respectively.
\end{definition}

\begin{definition}
A polyadic bialgebra $\mathrm{B}^{\left(  n^{\prime},n\right)  }$ is called
\textit{medially co-commutative}, if%
\begin{align}
\mathbf{\mu}_{op}^{\left(  n\right)  }  &  \equiv\mathbf{\mu}^{\left(
n\right)  }\circ\mathbf{\tau}_{op}^{\left(  n\right)  }=\mathbf{\mu}^{\left(
n\right)  },\label{mcm}\\
\mathbf{\Delta}_{cop}^{\left(  n^{\prime}\right)  }  &  \equiv\mathbf{\tau
}_{op}^{\left(  n^{\prime}\right)  }\circ\mathbf{\Delta}^{\left(  n^{\prime
}\right)  }=\mathbf{\Delta}^{\left(  n^{\prime}\right)  }, \label{mcd}%
\end{align}
where $\mathbf{\tau}_{op}^{\left(  n\right)  }$ and $\mathbf{\tau}%
_{op}^{\left(  n^{\prime}\right)  }$ are the medially allowed polyadic twist
maps (\ref{top}).
\end{definition}

\section{\textsc{Polyadic Hopf algebras}}

Here we introduce the most general approach to \textquotedblleft
polyadization\textquotedblright\ of the Hopf algebra concept
\cite{abe,swe,radford}. Informally, the transition from bialgebra to Hopf algebra
is, in some sense, \textquotedblleft dualizing\textquotedblright\ the passage
from semigroup (containing noninvertible elements) to group (in which all
elements are invertible). Schematically, if multiplication $\mu=\left(
\cdot\right)  $ in a semigroup $G$ is binary, the invertibility of all
elements demands \textsf{two} extra and \textsf{necessary} set-ups:
\textbf{1}) \textsl{An additional element} (identity $e\in G$ or the
corresponding map from a one point set to group $\epsilon$); \textbf{2})
\textsl{An additional map} (inverse $\iota:G\rightarrow G$), such that
$g\cdot\iota\left(  g\right)  =e$ in diagrammatic form is $\mu\circ\left(
\operatorname*{id}\nolimits_{G}\times\iota\right)  \circ D_{2}=\epsilon$
($D_{2}:G\rightarrow G\times G$ is the diagonal map). When \textquotedblleft
dualizing\textquotedblright, in a (binary) bialgebra $B$ (with multiplication
$\mu$ and comultiplication $\Delta$) again \textsf{two} set-ups should be
considered in order to get a (binary) Hopf algebra: \textbf{1}) \textsl{An analog of
identity} $e_{\star}=\eta\varepsilon$ (where $\eta:k\rightarrow B$ is unit and
$\varepsilon:B\rightarrow k$ is counit); \textbf{2}) \textsl{An analog of
inverse} $S:B\rightarrow B$ called the \textit{antipode}, such that $\mu
\circ\left(  \operatorname*{id}\nolimits_{B}\otimes S\right)  \circ
\Delta=e_{\star}$ or in terms of the (binary) convolution product
$\operatorname*{id}\nolimits_{B}\star\,S=e_{\star}$. By multiplying both sides
by $S$ from the left and by $\operatorname*{id}\nolimits_{B}$ from the right,
we obtain weaker (von Neumann regularity) conditions $S\star\operatorname*{id}%
\nolimits_{B}\star\,S=S$, $\operatorname*{id}\nolimits_{B}\star\,S\star
\operatorname*{id}\nolimits_{B}=\operatorname*{id}\nolimits_{B}$, which do not
contain an identity $e_{\star}$ and lead to the concept of weak Hopf algebras
\cite{dup/li3,dup/li2,szl}.

The crucial peculiarity of the polyadic generalization is the possible absence
of an identity or \textbf{1}) in both cases. The role and necessity of the
polyadic identity (\ref{e}) is not so important: there polyadic groups without
identity exist (see, e.g. \cite{galmak1}, and the discussion after
(\ref{me})). Invertibility is determined by the querelement (\ref{q}) in
$n$-ary group or the quermap (\ref{mq}) in polyadic algebra. So there are two
ways forward: \textquotedblleft dualize\textquotedblright\ the quermap (\ref{mq})
directly (as in the binary case) or use the most general version of the
polyadic convolution product (\ref{mff}) and apply possible restrictions, if
any. We will choose the second method, because the first one is a particular
case of it. Thus, if the standard (binary) antipode is the convolution inverse
(coinverse) to the identity in a bialgebra, then its polyadic counterpart
should be a coquerelement (\ref{mfq}) of some polyadic \textsf{analog} for the
identity map in the polyadic bialgebra. We consider two possibilities to
define a polyadic analog of identity: \textbf{1}) \textsl{Singular case}. The
comultiplication is binary $n^{\prime}=2$; \textbf{2}) \textsl{Symmetric
case}. The arities of multiplication and comultiplication need not be
binary, but should coincide $n=n^{\prime}$.

In the \textsl{singular case} a polyadic multivalued map in
$\operatorname*{End}\nolimits_{\Bbbk}\left(  B,B^{\otimes\left(  n-1\right)
}\right)  $ is a reminder of how an identity can be defined: its components are
to be functions of one variable. That is, with more than one argument it is not
possible to determine its value when these are unequal.

\begin{definition}
We take for a \textit{singular polyadic identity} $\mathfrak{Id}_{0}$ the
diagonal map $\mathfrak{Id}_{0}=\mathfrak{D}\in\operatorname*{End}%
\nolimits_{\Bbbk}\left(  B,B^{\otimes\left(  n-1\right)  }\right)  $, such
that $b\mapsto b^{\otimes\left(  n-1\right)  }$, for any $b\in B$.
\end{definition}

We call the polyadic convolution product (\ref{mff}) with the binary
comultiplication $n^{\prime}=2$ \textit{reduced} and denote it by
$\mathbf{\bar{\mu}}_{\star}^{\left(  n_{\star}\right)  }$ which in
Sweedler notation can be obtained from (\ref{ms})%
\begin{align}
&  \mathbf{\bar{\mu}}_{\star}^{\left(  n_{\star}\right)  }\circ\left(
\mathfrak{f}^{\left(  1\right)  }\otimes\mathfrak{f}^{\left(  2\right)
}\otimes\mathfrak{\ldots}\otimes\mathfrak{f}^{\left(  n_{\star}\right)
}\right)  =\mathfrak{g},\ \ \ \mathfrak{f}^{\left(  i\right)  },\mathfrak{g}%
\in\operatorname*{End}\nolimits_{\Bbbk}\left(  B,B^{\otimes\left(  n-1\right)
}\right)  ,\nonumber\\
&  \mathfrak{g}_{\left[  j\right]  }\circ\left(  b\right)  =\left(
\mu^{\left(  n\right)  }\right)  ^{\circ\ell}\left[  \mathrm{f}_{\left[
j\right]  }^{\left(  1\right)  }\left(  b_{\left[  1\right]  }\right)
,\mathrm{f}_{\left[  j\right]  }^{\left(  2\right)  }\left(  b_{\left[
2\right]  }\right)  ,\ldots,\mathrm{f}_{\left[  j\right]  }^{\left(  n_{\star
}\right)  }\left(  b_{\left[  n_{\star}\right]  }\right)  \right]
,\nonumber\\
&  \mathrm{f}_{\left[  j\right]  }^{\left(  i\right)  }\in\operatorname*{End}%
\nolimits_{\Bbbk}\left(  B,B\right)  ,\ \ i\in1,\ldots,n_{\star}%
,\ \ j\in1,\ldots,n-1,\ \ \ b\in B,
\end{align}

The consistency condition (\ref{ns}) becomes \textit{reduced}%
\begin{equation}
n_{\star}=\ell\left(  n-1\right)  +1=\ell^{\prime}+1.
\end{equation}

\begin{definition}
The set of the multivalued maps $\mathfrak{f}^{\left(  i\right)  }%
\in\operatorname*{End}\nolimits_{\Bbbk}\left(  B,B^{\otimes\left(  n-1\right)
}\right)  $ (together with the polyadic identity $\mathfrak{Id}_{0}$) endowed
with the reduced convolution product $\mathbf{\bar{\mu}}_{\star}^{\left(
n_{\star}\right)  }$ is called a \textit{reduced }$n_{\star}$\textit{-ary}
\textit{convolution algebra}%
\begin{equation}
\mathrm{C}_{\star}^{\left(  2,n\right)  }=\left\langle \operatorname*{End}%
\nolimits_{\Bbbk}\left(  B,B^{\otimes\left(  n-1\right)  }\right)
\mid\mathbf{\bar{\mu}}_{\star}^{\left(  n_{\star}\right)  }\right\rangle .
\end{equation}

\end{definition}

\begin{remark}
The reduced convolution algebra $\mathrm{C}_{\star}^{\left(  2,n\right)  }$
having $n\neq2$ is not derived (\textbf{Definition} \ref{def-der}).
\end{remark}

Having the distinguished element $\mathfrak{Id}_{0}\in\mathrm{C}_{\star
}^{\left(  2,n\right)  }$ as an analog of $\operatorname*{id}\nolimits_{B}$
and the querelement (\ref{mfq}) for any $\mathfrak{f}\in\mathrm{C}_{\star
}^{\left(  2,n\right)  }$ (the polyadic version of inverse in the convolution
algebra), we are now in a position to \textquotedblleft
polyadize\textquotedblright\ the concept of the (binary) antipode.

\begin{definition}
A multivalued map $\mathbf{Q}_{0}:B\rightarrow B^{\otimes\left(  n-1\right)
}$ in the polyadic bialgebra $\mathrm{B}^{\left(  2,n\right)  }$ is called a
\textit{singular querantipode}, if it is the coquerelement of the polyadic
identity $\mathbf{Q}_{0}=\mathbf{q}_{\star}\left(  \mathfrak{Id}_{0}\right)  $
in the reduced $n_{\star}$-ary convolution algebra%
\begin{equation}
\mathbf{\bar{\mu}}_{\star}^{\left(  n_{\star}\right)  }\circ\left(
\mathfrak{Id}_{0}^{\otimes\left(  n_{\star}-1\right)  }\otimes\mathbf{Q}%
_{0}\right)  =\mathfrak{Id}_{0}, \label{mid}%
\end{equation}
where $\mathbf{Q}_{0}$ can be on any place, such that the diagram%
\begin{equation}
\begin{diagram} B && \rTo^{\mathbf{\Delta}^{\circ\left( n_{\star}-1\right) }} & &B^{\otimes n_{\star}} \\ \dTo^{\mathfrak{Id}_0} && & & \dTo_{\mathfrak{Id}_0^{\otimes\left( n_{\star}-1\right) }\otimes\mathbf{Q}_0} \\ B^{\otimes\left( n-1\right) }&\lTo^{\left( \left( \mathbf{\mu}^{\left( n\right) }\right) ^{\circ\ell}\right) ^{\otimes\left( n-1\right) }}&\left( B^{\otimes n_{\star}}\right) ^{\otimes \left( n-1\right) }& \lTo^{\mathbf{\tau }_{medial}^{\left( n-1,n_{\star}\right) } } & \left( B^{\otimes\left( n-1\right) }\right) ^{\otimes n_{\star}} \\ \end{diagram}
\end{equation}
commutes.
\end{definition}

In Sweedler notation%
\begin{equation}
\left(  \mu^{\left(  n\right)  }\right)  ^{\circ\ell}\left[  b_{\left[
1\right]  },b_{\left[  2\right]  },\ldots,b_{\left[  n_{\star}-1\right]
},\mathrm{Q}_{0\,\left[  j\right]  }\left(  b_{\left[  n_{\star}\right]
}\right)  \right]  =b,\ \ \ \ \ \ j\in1,\ldots,n-1,\ \ \ i\in1,\ldots
,n_{\star},
\end{equation}
where $\mathbf{\Delta}^{\circ\left(  n_{\star}-1\right)  }\left(  b\right)
=b_{\left[  1\right]  }\otimes b_{\left[  2\right]  }\otimes\ldots\otimes
b_{\left[  n_{\star}\right]  }$,\ $\mathbf{Q}_{0}\circ\left(  b\right)
=\mathrm{Q}_{0\,\left[  1\right]  }\left(  b\right)  \otimes\ldots
\otimes\mathrm{Q}_{0\,\left[  n-1\right]  }\left(  b\right)  $%
,\ \ $b,b_{\left[  i\right]  }\in B$.

\begin{definition}
A polyadic bialgebra $\mathrm{B}^{\left(  2,n\right)  }$ equipped with the
reduced $n_{\star}$-ary convolution product $\mathbf{\bar{\mu}}_{\star
}^{\left(  n_{\star}\right)  }$ and the singular querantipode $\mathbf{Q}_{0}$
(\ref{mid}) is called a \textit{singular polyadic Hopf algebra} and is denoted
by $\mathrm{H}_{\text{\textit{sing}}}^{\left(  n\right)  }=\left\langle
\mathrm{B}^{\left(  n,n\right)  }\mid\mathbf{\bar{\mu}}_{\star}^{\left(
n_{\star}\right)  },\mathbf{Q}_{0}\right\rangle $.
\end{definition}

Due to their exotic properties we will not consider singular polyadic Hopf
algebras $\mathrm{H}_{\text{\textit{sing}}}^{\left(  n\right)  }$ in detail.

In the \textsl{symmetric case} a polyadic identity-like map in
$\operatorname*{End}\nolimits_{\Bbbk}\left(  B^{\otimes\left(  n-1\right)
},B^{\otimes\left(  n-1\right)  }\right)  $ can be defined in a more natural way.

\begin{definition}
A \textit{symmetric polyadic identity} $\mathfrak{Id}:$ $B^{\otimes\left(
n-1\right)  }\rightarrow B^{\otimes\left(  n-1\right)  }$ is a polyadic tensor
product of ordinary identities in $\mathrm{B}^{\left(  n,n\right)  }$%
\begin{equation}
\mathfrak{Id}=\overset{n-1}{\overbrace{\operatorname*{id}\nolimits_{B}%
\otimes\ldots\otimes\operatorname*{id}\nolimits_{B}}},\ \ \ \operatorname*{id}%
\nolimits_{B}:B\rightarrow B. \label{id}%
\end{equation}

\end{definition}

Indeed, for any map $\mathfrak{f}\in\operatorname*{End}\nolimits_{\Bbbk
}\left(  B^{\otimes\left(  n-1\right)  },B^{\otimes\left(  n-1\right)
}\right)  $, obviously $\mathfrak{Id}\circ\mathfrak{f}=\mathfrak{f}%
\circ\mathfrak{Id}=\mathfrak{f}$.

The numbers of iterations are now equal $\ell=\ell^{\prime}$, and the
consistency condition (\ref{ns}) becomes%
\begin{equation}
n_{\star}-1=\ell\left(  n-1\right)  . \label{nsy}%
\end{equation}

\begin{definition}
The set of the multiplace multivalued maps $\mathfrak{f}^{\left(  i\right)
}\in\operatorname*{End}\nolimits_{\Bbbk}\left(  B^{\otimes\left(  n-1\right)
},B^{\otimes\left(  n-1\right)  }\right)  $ (together with the polyadic
identity $\mathfrak{Id}$) endowed with the symmetric convolution product
$\mathbf{\hat{\mu}}_{\star}^{\left(  n_{\star}\right)  }=\mathbf{\mu}_{\star
}^{\left(  n_{\star}\right)  }|_{n=n^{\prime}}$ (\ref{mff}) is called a
\textit{symmetric }$n_{\star}$\textit{-ary} \textit{convolution algebra}%
\begin{equation}
\mathrm{C}_{\star}^{\left(  n,n\right)  }=\left\langle \operatorname*{End}%
\nolimits_{\Bbbk}\left(  B^{\otimes\left(  n-1\right)  },B^{\otimes\left(
n-1\right)  }\right)  \mid\mathbf{\hat{\mu}}_{\star}^{\left(  n_{\star
}\right)  }\right\rangle .
\end{equation}

\end{definition}

For a polyadic analog of antipode in the symmetric case we have

\begin{definition}
A multiplace multivalued map $\mathbf{Q}_{\operatorname*{id}}:B^{\otimes
\left(  n-1\right)  }\rightarrow B^{\otimes\left(  n-1\right)  }$ in the
polyadic bialgebra $\mathrm{B}^{\left(  n,n\right)  }$ is called a
\textit{symmetric querantipode}, if it is the coquerelement (see (\ref{mfq}))
of the polyadic identity $\mathbf{Q}_{\operatorname*{id}}=\mathbf{q}_{\star
}\left(  \mathfrak{Id}\right)  $ in the symmetric $n_{\star}$-ary convolution
algebra%
\begin{equation}
\mathbf{\hat{\mu}}_{\star}^{\left(  n_{\star}\right)  }\circ\left(
\mathfrak{Id}^{\otimes\left(  n_{\star}-1\right)  }\otimes\mathbf{Q}%
_{\operatorname*{id}}\right)  =\mathfrak{Id}, \label{mnd}%
\end{equation}
where $\mathbf{Q}_{\operatorname*{id}}$ can be on any place, such that the
diagram%
\begin{equation}
\begin{diagram} B^{\otimes\left( n-1\right) } & \rTo^{ \left( \left( \mathbf{\Delta}^{\left( n\right) }\right) ^{\circ\ell}\right) ^{\otimes\left( n-1\right) }} &\left( B^{\otimes n_{\star}}\right) ^{\otimes \left( n-1\right) }&\rTo^{\mathbf{\tau }_{medial}^{ \left( n_{\star }, n-1\right)} } & \left( B^{\otimes\left( n-1\right) }\right) ^{\otimes n_{\star}} \\ \dTo^{\mathfrak{Id}} && & & \dTo_{\mathfrak{Id}^{\otimes\left( n_{\star}-1\right) }\otimes\mathbf{Q}_{\operatorname*{id}}} \\ B^{\otimes\left( n-1\right) }&\lTo^{\left( \left( \mathbf{\mu}^{\left( n\right) }\right) ^{\circ\ell}\right) ^{\otimes\left( n-1\right) }}&\left( B^{\otimes n_{\star}}\right) ^{\otimes \left( n-1\right) }& \lTo^{\mathbf{\tau }_{medial}^{\left( n-1,n_{\star}\right) } } & \left( B^{\otimes\left( n-1\right) }\right) ^{\otimes n_{\star}} \\ \end{diagram}
\end{equation}
commutes.
\end{definition}

In Sweedler notation we obtain (see (\ref{mff}) and (\ref{ms}))%
\begin{equation}
\left(  \mu^{\left(  n\right)  }\right)  ^{\circ\ell}\left[  b_{\left[
1\right]  }^{\left(  j\right)  },b_{\left[  2\right]  }^{\left(  j\right)
},b_{\left[  n_{\star}-1\right]  }^{\left(  j\right)  },\mathrm{Q}_{\left[
j\right]  }\left(  b_{\left[  n_{\star}\right]  }^{\left(  1\right)
},b_{\left[  n_{\star}\right]  }^{\left(  2\right)  },\ldots,b_{\left[
n_{\star}\right]  }^{\left(  n-1\right)  }\right)  \right]  =b^{\left(
j\right)  },\ j\in1,\ldots,n-1,\ i\in1,\ldots,n_{\star}, \label{mb}%
\end{equation}
where $\left(  \Delta^{\left(  n\right)  }\right)  ^{\circ\ell}\left(
b^{\left(  j\right)  }\right)  =b_{\left[  1\right]  }^{\left(  j\right)
}\otimes b_{\left[  2\right]  }^{\left(  j\right)  }\otimes\ldots\otimes
b_{\left[  n_{\star}\right]  }^{\left(  j\right)  },\ \ b_{\left[  i\right]
}^{\left(  j\right)  }\in B$, $\ell\in\mathbb{N}$, $\mathrm{Q}_{\left[
j\right]  }\in\operatorname*{End}\nolimits_{\Bbbk}\left(  B^{\otimes\left(
n-1\right)  },B\right)  $ are components of $\mathbf{Q}_{\operatorname*{id}}$,
and the convolution product arity is $n_{\star}=\ell\left(  n-1\right)  +1$
(\ref{nsy}).

\begin{definition}
\label{def-hopf}A polyadic bialgebra $\mathrm{B}^{\left(  n,n\right)  }$
equipped with the symmetric $n_{\star}$-ary convolution product $\mathbf{\hat
{\mu}}_{\star}^{\left(  n_{\star}\right)  }$ and the symmetric querantipode
$\mathbf{Q}_{\operatorname*{id}}$ (\ref{mnd}) is called a \textit{symmetric
polyadic Hopf algebra} and is denoted by $\mathrm{H}_{sym}^{\left(  n\right)
}=\left\langle \mathrm{B}^{\left(  n,n\right)  }\mid\mathbf{\hat{\mu}}_{\star
}^{\left(  n_{\star}\right)  },\mathbf{Q}_{\operatorname*{id}}\right\rangle $.
\end{definition}

\begin{example}
In the case where $n=n^{\prime}=3$ and $\ell=1$ we have $\Delta^{\left(
3\right)  }\left(  b^{\left(  j\right)  }\right)  =b_{\left[  1\right]
}^{\left(  j\right)  }\otimes b_{\left[  2\right]  }^{\left(  j\right)
}\otimes b_{\left[  3\right]  }^{\left(  j\right)  }$, $j=1,2,3$,%
\begin{align}
\mu^{\left(  3\right)  }\left[  b_{\left[  1\right]  }^{\left(  1\right)
},b_{\left[  2\right]  }^{\left(  1\right)  },\mathrm{Q}_{\left[  1\right]
}\left(  b_{\left[  3\right]  }^{\left(  1\right)  },b_{\left[  3\right]
}^{\left(  2\right)  }\right)  \right]   &  =b^{\left(  1\right)
},\ \ \ \ \ \mu^{\left(  3\right)  }\left[  b_{\left[  1\right]  }^{\left(
2\right)  },b_{\left[  2\right]  }^{\left(  2\right)  },\mathrm{Q}_{\left[
2\right]  }\left(  b_{\left[  3\right]  }^{\left(  1\right)  },b_{\left[
3\right]  }^{\left(  2\right)  }\right)  \right]  =b^{\left(  2\right)
},\nonumber\\
\mu^{\left(  3\right)  }\left[  b_{\left[  1\right]  }^{\left(  1\right)
},\mathrm{Q}_{\left[  1\right]  }\left(  b_{\left[  2\right]  }^{\left(
1\right)  },b_{\left[  2\right]  }^{\left(  2\right)  }\right)  ,b_{\left[
3\right]  }^{\left(  1\right)  }\right]   &  =b^{\left(  1\right)
},\ \ \ \ \ \mu^{\left(  3\right)  }\left[  b_{\left[  1\right]  }^{\left(
2\right)  },\mathrm{Q}_{\left[  2\right]  }\left(  b_{\left[  2\right]
}^{\left(  1\right)  },b_{\left[  2\right]  }^{\left(  2\right)  }\right)
,b_{\left[  3\right]  }^{\left(  2\right)  }\right]  =b^{\left(  2\right)
},\nonumber\\
\mu^{\left(  3\right)  }\left[  \mathrm{Q}_{\left[  1\right]  }\left(
b_{\left[  1\right]  }^{\left(  1\right)  },b_{\left[  1\right]  }^{\left(
2\right)  }\right)  ,b_{\left[  2\right]  }^{\left(  1\right)  },b_{\left[
3\right]  }^{\left(  1\right)  }\right]   &  =b^{\left(  1\right)
},\ \ \ \ \ \mu^{\left(  3\right)  }\left[  \mathrm{Q}_{\left[  2\right]
}\left(  b_{\left[  1\right]  }^{\left(  1\right)  },b_{\left[  1\right]
}^{\left(  2\right)  }\right)  ,b_{\left[  2\right]  }^{\left(  2\right)
},b_{\left[  3\right]  }^{\left(  2\right)  }\right]  =b^{\left(  2\right)  },
\label{mbb}%
\end{align}
which can be compared with the binary case ($b_{\left[  1\right]  }S\left(
b_{\left[  2\right]  }\right)  =S\left(  b_{\left[  1\right]  }\right)
b_{\left[  2\right]  }=\eta\left(  \varepsilon\left(  b\right)  \right)  $)
and (\ref{q}), (\ref{mfq}).
\end{example}

Recall that the main property of the antipode $S$ of a binary bialgebra $B$ is
its \textquotedblleft anticommutation\textquotedblright\ with the
multiplication $\mu$ and comultiplication $\Delta$ (e.g., \cite{swe})%
\begin{align}
S\circ\mu &  =\mu\circ\tau_{op}\circ\left(  S\otimes S\right)  ,\ \ \ S\circ
\eta=\eta,\label{sm}\\
\Delta\circ S  &  =\tau_{op}\circ\left(  S\otimes S\right)  \circ
\Delta,\ \ \ \varepsilon\circ S=\varepsilon, \label{ds}%
\end{align}
where $\tau_{op}$ is the binary twist (see (\ref{ti})). The first relation
means that $S$ is an algebra anti-endomorphism, because in the elementwise
description $S\left(  a\cdot b\right)  =S\left(  b\right)  \cdot S\left(
a\right)  $, $a,b\in B$, $\left(  \cdot\right)  \equiv\mu$.

We propose the polyadic analogs of (\ref{sm})-(\ref{ds}) without proofs, which
are too cumbersome, but their derivations almost coincide with those for the
binary case.

\begin{proposition}
The querantipode $\mathbf{Q}_{\operatorname*{id}}:B^{\otimes\left(
n-1\right)  }\rightarrow B^{\otimes\left(  n-1\right)  }$ of the polyadic
bialgebra $\mathrm{B}^{\left(  n,n\right)  }=\left\langle B\mid\mathbf{\mu
}^{\left(  n\right)  },\mathbf{\Delta}^{\left(  n\right)  }\right\rangle $
satisfies the polyadic version of \textquotedblleft
antimultiplicativity\textquotedblright\ (\textquotedblleft antialgebra
map\textquotedblright)%
\begin{equation}
\mathbf{Q}_{\operatorname*{id}}\circ\left(  \left(  \mathbf{\mu}^{\left(
n\right)  }\right)  ^{\circ\ell}\right)  ^{\otimes\left(  n-1\right)
}=\left(  \left(  \mathbf{\mu}^{\left(  n\right)  }\right)  ^{\circ\ell
}\right)  ^{\otimes\left(  n-1\right)  }\circ\mathbf{\tau}_{op}^{\left(
\ell_{\tau}\right)  }\circ\mathbf{Q}_{\operatorname*{id}}^{\otimes n_{\star}%
}\circ\mathbf{\tau}_{medial}^{\left(  n_{\star},n-1\right)  }, \label{qm}%
\end{equation}
and \textquotedblleft anticomultiplicativity\textquotedblright%
\ (\textquotedblleft anticoalgebra map\textquotedblright)%
\begin{equation}
\left(  \left(  \mathbf{\Delta}^{\left(  n\right)  }\right)  ^{\circ\ell
}\right)  ^{\otimes\left(  n-1\right)  }\circ\mathbf{Q}_{\operatorname*{id}%
}=\mathbf{\tau}_{op}^{\left(  \ell_{\tau}\right)  }\circ\mathbf{Q}%
_{\operatorname*{id}}^{\otimes n_{\star}}\circ\mathbf{\tau}_{medial}^{\left(
n-1,n_{\star}\right)  }\circ\left(  \left(  \mathbf{\Delta}^{\left(  n\right)
}\right)  ^{\circ\ell}\right)  ^{\otimes\left(  n-1\right)  }, \label{aqm}%
\end{equation}
where $\mathbf{\tau}_{medial}^{\left(  n,m\right)  }$ is the medial map
\emph{(\ref{an})}, $\mathbf{\tau}_{op}^{\left(  \ell_{\tau}\right)  }$ is the
polyadic twist \emph{(\ref{top})} and $\ell_{\tau}=\left(  n-1\right)
n_{\star}$ should be allowed \emph{(}see \textsc{Table} \ref{tab-twist}%
\emph{)}.
\end{proposition}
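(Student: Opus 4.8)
The plan is to reproduce, in the $n_{\star}$-ary symmetric convolution setting of \textbf{Definition \ref{def-hopf}}, the classical argument that the antipode of a binary bialgebra is forced to be an algebra and coalgebra anti-endomorphism because it is (essentially) the convolution inverse of the identity. Two structural inputs drive everything. First, by \textbf{Definition \ref{def-bialg}} the multiplication $\mathbf{\mu}^{(n)}$ is a coalgebra homomorphism (\ref{hc}) and the comultiplication $\mathbf{\Delta}^{(n)}$ is an algebra homomorphism (\ref{ha}), so that $\mathbf{M}:=((\mathbf{\mu}^{(n)})^{\circ\ell})^{\otimes(n-1)}:B^{\otimes(n-1)n_{\star}}\rightarrow B^{\otimes(n-1)}$ is again a coalgebra homomorphism and $\mathbf{M}^{\vee}:=((\mathbf{\Delta}^{(n)})^{\circ\ell})^{\otimes(n-1)}$ is again an algebra homomorphism. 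Second, the symmetric convolution product $\mathbf{\hat{\mu}}_{\star}^{(n_{\star})}$ is polyadically associative, by the associativity of the polyadic convolution algebra $\mathrm{C}_{\star}^{(n^{\prime},n)}$ (\ref{cs}) specialised to $n=n^{\prime}$, with arity obeying the consistency condition $n_{\star}-1=\ell(n-1)$ (\ref{nsy}).

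First I would upgrade the convolution product (\ref{mff})--(\ref{ms}) to an $n_{\star}$-ary product $\mathbf{\hat{\mu}}_{\star}^{\prime}$ on the larger space $\operatorname*{Hom}\nolimits_{\Bbbk}(B^{\otimes(n-1)n_{\star}},B^{\otimes(n-1)})$ in which both sides of (\ref{qm}) live, built from $(\mathbf{\mu}^{(n)})^{\circ\ell}$, $(\mathbf{\Delta}^{(n)})^{\circ\ell}$ and the medial maps $\mathbf{\tau}_{medial}^{(n-1,n_{\star})},\mathbf{\tau}_{medial}^{(n_{\star},n-1)}$ in the same pattern as in (\ref{mff}). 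Next I would observe that precomposition with $\mathbf{M}$ defines a homomorphism $\mathrm{C}_{\star}^{(n,n)}\rightarrow\langle\operatorname*{Hom}\nolimits_{\Bbbk}(B^{\otimes(n-1)n_{\star}},B^{\otimes(n-1)})\mid\mathbf{\hat{\mu}}_{\star}^{\prime}\rangle$ precisely because $\mathbf{M}$ is a coalgebra homomorphism; since it sends $\mathfrak{Id}\mapsto\mathfrak{Id}\circ\mathbf{M}=\mathbf{M}$ and $\mathbf{Q}_{\operatorname*{id}}\mapsto\mathbf{Q}_{\operatorname*{id}}\circ\mathbf{M}$, and homomorphisms carry querelements to querelements, the map $\mathbf{Q}_{\operatorname*{id}}\circ\mathbf{M}$ is a coquerelement of $\mathbf{M}$. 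The heart of the argument is then the direct verification that $\mathbf{M}\circ\mathbf{\tau}_{op}^{(\ell_{\tau})}\circ\mathbf{Q}_{\operatorname*{id}}^{\otimes n_{\star}}\circ\mathbf{\tau}_{medial}^{(n_{\star},n-1)}$ is \emph{also} a coquerelement of $\mathbf{M}$ in this product: one expands the $n_{\star}$-ary convolution using the bialgebra compatibility (\ref{dnm}) to commute $(\mathbf{\Delta}^{(n)})^{\circ\ell}$ past $(\mathbf{\mu}^{(n)})^{\circ\ell}$, then reshuffles the $(n-1)n_{\star}$ tensor slots with $\mathbf{\tau}_{medial}^{(n_{\star},n-1)}$ and the polyadic twist $\mathbf{\tau}_{op}^{(\ell_{\tau})}$ (\ref{top}) --- a legitimate permutation of the tensor factors since $\ell_{\tau}=(n-1)n_{\star}$ is assumed allowed (\textsc{Table \ref{tab-twist}}) --- so that each copy of $\mathbf{Q}_{\operatorname*{id}}$ ends up acting on a block on which the defining relation (\ref{mnd}) of the querantipode applies, collapsing the whole expression back to $\mathbf{M}$.

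Having produced two coquerelements of the same element $\mathbf{M}$, I would close by uniqueness: on the polyadic subsemigroup of $\langle\operatorname*{Hom}\nolimits_{\Bbbk}(B^{\otimes(n-1)n_{\star}},B^{\otimes(n-1)})\mid\mathbf{\hat{\mu}}_{\star}^{\prime}\rangle$ generated by $\mathbf{M}$ and its convolution powers the product behaves as in an $n_{\star}$-ary group, so the coquerelement of $\mathbf{M}$ is unique (as for $n$-ary groups, cf. \cite{dor3}), whence $\mathbf{Q}_{\operatorname*{id}}\circ\mathbf{M}=\mathbf{M}\circ\mathbf{\tau}_{op}^{(\ell_{\tau})}\circ\mathbf{Q}_{\operatorname*{id}}^{\otimes n_{\star}}\circ\mathbf{\tau}_{medial}^{(n_{\star},n-1)}$, i.e.\ (\ref{qm}). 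The ``anticomultiplicativity'' (\ref{aqm}) follows from the formally dual computation: reverse the arrows, exchange the roles of $(\mathbf{\mu}^{(n)})^{\circ\ell}$ and $(\mathbf{\Delta}^{(n)})^{\circ\ell}$, use that $\mathbf{M}^{\vee}$ is an \emph{algebra} homomorphism (\textbf{Definition \ref{def-bialg}}) exactly where the first part used that $\mathbf{M}$ is a coalgebra homomorphism, and again invoke uniqueness of the querelement in the resulting co-convolution structure. I expect the genuine difficulties to be precisely the two points where the polyadic argument departs in form from the binary one: (i) the combinatorial bookkeeping of $\mathbf{\tau}_{medial}^{(n_{\star},n-1)}$ interleaved with $\mathbf{\tau}_{op}^{(\ell_{\tau})}$ and the iterated (co)multiplications, i.e.\ checking that after the reshuffle every block sits in the right position for (\ref{mnd}) to be applicable; and (ii) justifying uniqueness of the coquerelement of $\mathbf{M}$ in the absence of a convolution unit --- automatic in the binary case from the monoid structure, but here needing the relevant sub-object of the enlarged convolution algebra to carry a genuine $n_{\star}$-ary group structure.
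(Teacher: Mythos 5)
The paper gives you nothing to match against here: it states explicitly that the polyadic analogues of (\ref{sm})--(\ref{ds}) are proposed \emph{without} proofs, with only the remark that the derivations \textquotedblleft almost coincide\textquotedblright\ with the binary case, so your proposal must stand on its own. Its overall shape is the classical one (both $S\circ\mu$ and $\mu\circ\tau_{op}\circ(S\otimes S)$ are convolution inverses of $\mu$, hence equal), and that is very plausibly the intended argument; but as you have set it up there is a genuine gap at the step you yourself label (ii), and it is not a technicality. In the binary case the equality of the two candidates rests on uniqueness of a two-sided inverse in the convolution \emph{monoid} $\operatorname*{Hom}\nolimits_{\Bbbk}\left(  B\otimes B,B\right)  $, i.e.\ on the convolution unit $\eta\circ\varepsilon$ (cf.\ (\ref{mi})). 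Here the querantipode is defined by the querelement equation (\ref{mnd}) precisely so as not to assume a unit or counit, and a querelement equation in a bare $n_{\star}$-ary semigroup does not determine its solution: $\mathbf{\hat{\mu}}_{\star}^{\prime}\circ\left(  \mathbf{M}^{\otimes\left(  n_{\star}-1\right)  }\otimes X\right)  =\mathbf{M}$, with $\mathbf{M}=\left(  \left(  \mathbf{\mu}^{\left(  n\right)  }\right)  ^{\circ\ell}\right)  ^{\otimes\left(  n-1\right)  }$, is the polyadic analogue of \textquotedblleft$aX=a$\textquotedblright\ in a semigroup, which may have many solutions $X$. Uniqueness of the querelement is a theorem for $n$-ary \emph{groups} \cite{dor3}, where all elements are querable and equations are solvable; your Hom-space $\operatorname*{Hom}\nolimits_{\Bbbk}\left(  B^{\otimes\left(  n-1\right)  n_{\star}},B^{\otimes\left(  n-1\right)  }\right)  $ under $\mathbf{\hat{\mu}}_{\star}^{\prime}$ is at best an associative $n_{\star}$-ary semigroup. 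Your patch --- that the subsemigroup generated by $\mathbf{M}$ and its convolution powers \textquotedblleft behaves as an $n_{\star}$-ary group\textquotedblright\ --- does not close this: it is unproved, and, worse, the two candidate coquerelements $\mathbf{Q}_{\operatorname*{id}}\circ\mathbf{M}$ and $\mathbf{M}\circ\mathbf{\tau}_{op}^{\left(  \ell_{\tau}\right)  }\circ\mathbf{Q}_{\operatorname*{id}}^{\otimes n_{\star}}\circ\mathbf{\tau}_{medial}^{\left(  n_{\star},n-1\right)  }$ involve $\mathbf{Q}_{\operatorname*{id}}$ and do not lie in that subsemigroup, so uniqueness there would say nothing about them.

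To make the strategy work you would have to either (a) assume $\mathrm{B}^{\left(  n,n\right)  }$ unital and counital, so that the convolution unit $\mathbf{e}_{\star}^{\left(  n,n\right)  }$ of (\ref{es}) exists and the binary cancellation $F=F\star\left(  M\star G\right)  =\left(  F\star M\right)  \star G=G$ can be transported to the $n_{\star}$-ary product with $\mathbf{e}_{\star}$-padding in the extra slots, or (b) prove directly a cancellation property for $\mathbf{M}$ in $\left\langle \operatorname*{Hom}\nolimits_{\Bbbk}\left(  B^{\otimes\left(  n-1\right)  n_{\star}},B^{\otimes\left(  n-1\right)  }\right)  \mid\mathbf{\hat{\mu}}_{\star}^{\prime}\right\rangle $ strong enough to force coincidence of all solutions of (\ref{mfq}) for $\mathbf{M}$. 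Separately, what you call the heart of the argument --- the verification that the right-hand side of (\ref{qm}) satisfies the coquerelement equation for $\mathbf{M}$, using the compatibility (\ref{dm})--(\ref{dnm}) and the interleaving of $\mathbf{\tau}_{medial}^{\left(  n_{\star},n-1\right)  }$ with $\mathbf{\tau}_{op}^{\left(  \ell_{\tau}\right)  }$, $\ell_{\tau}=\left(  n-1\right)  n_{\star}$ --- is exactly the cumbersome computation the paper declines to write, and in your text it remains a description rather than a check; since this is the only place where the specific permutations appearing in (\ref{qm}) can be seen to emerge, it cannot be left at the level of \textquotedblleft each copy of $\mathbf{Q}_{\operatorname*{id}}$ ends up on a block where (\ref{mnd}) applies\textquotedblright. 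The dual claim (\ref{aqm}) inherits both issues.
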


\begin{proposition}
If the polyadic unit $\mathbf{\eta}^{\left(  r,n\right)  }$ \emph{(\ref{mu})}
and counit $\mathbf{\varepsilon}^{\left(  n,r\right)  }$ \emph{(\ref{ep})} in
$\mathrm{B}^{\left(  n,n\right)  }$ exist, then%
\begin{align}
\mathbf{Q}_{\operatorname*{id}}\circ\mathbf{\eta}^{\left(  r,n\right)  }  &
=\mathbf{\eta}^{\left(  r,n\right)  }\mathbf{,}\\
\mathbf{\varepsilon}^{\left(  n,r\right)  }\circ\mathbf{Q}_{\operatorname*{id}%
}  &  =\mathbf{\varepsilon}^{\left(  n,r\right)  }\mathbf{.}%
\end{align}

\end{proposition}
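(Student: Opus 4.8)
The plan is to transport through the polyadic convolution machinery the classical argument that the antipode fixes the unit and the counit, the role of ``uniqueness of the convolution inverse'' being played by the fact that a polyadic unit coincides with its own querelement (see \textbf{Definition \ref{def-coq}}, and cf. \cite{dor3}). Write $e_{B}\in B$ for the polyadic unit of $\mathrm{B}^{(n,n)}$, so that the unit map sends $e_{k}^{\otimes r}$ to $e_{B}^{\otimes(n-1)}$ by the normalization (\ref{ne}), and (when the counit exists) the counit map sends $e_{B}^{\otimes(n-1)}$ to $e_{k}^{\otimes r}$ by the unit--counit compatibility (\ref{en}).

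For $\mathbf{Q}_{\operatorname*{id}}\circ\mathbf{\eta}^{(r,n)}=\mathbf{\eta}^{(r,n)}$, I would work in $\operatorname*{Hom}\nolimits_{\Bbbk}(K^{\otimes r},B^{\otimes(n-1)})$ equipped with the $n_{\star}$-ary convolution built from the $\ell$-iterated multiplication $(\mathbf{\mu}^{(n)})^{\circ\ell}$ on $B$ and the $\ell$-iterated version of the trivial $n$-ary comultiplication on $K^{\otimes r}$ (the left vertical arrow of the diagram of \textbf{Definition \ref{def-bi-un}}), so that $n_{\star}-1=\ell(n-1)$ as in (\ref{nsy}). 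In this algebra $\mathbf{\eta}^{(r,n)}$ is a polyadic unit: this is exactly the polyadic multiplicativity of the unit map combined with the normalizations (\ref{re}) and (\ref{ne}). Precomposing the defining relation (\ref{mnd}) of the symmetric querantipode with $\mathbf{\eta}^{(r,n)}$ and pulling $\mathbf{\eta}^{(r,n)}$ rightward through the $\ell$-iterated comultiplications and the medial maps inside $\mathbf{\hat{\mu}}_{\star}^{(n_{\star})}$, using that $\mathbf{\eta}^{(r,n)}$ is a coalgebra morphism (\textbf{Definition \ref{def-bi-un}}, iterated $\ell$ times), I would obtain that $\mathbf{Q}_{\operatorname*{id}}\circ\mathbf{\eta}^{(r,n)}$ is a querelement of $\mathbf{\eta}^{(r,n)}$ in this convolution algebra --- on any place, since $\mathbf{Q}_{\operatorname*{id}}$ may sit on any place in (\ref{mnd}). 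Since $\mathbf{\hat{\mu}}_{\star}^{(n_{\star})}\circ\bigl((\mathbf{\eta}^{(r,n)})^{\otimes(n_{\star}-1)}\otimes\mathfrak{x}\bigr)=\mathfrak{x}$ for every $\mathfrak{x}$, the querelement of $\mathbf{\eta}^{(r,n)}$ is $\mathbf{\eta}^{(r,n)}$ itself, whence $\mathbf{Q}_{\operatorname*{id}}\circ\mathbf{\eta}^{(r,n)}=\mathbf{\eta}^{(r,n)}$.

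For $\mathbf{\varepsilon}^{(n,r)}\circ\mathbf{Q}_{\operatorname*{id}}=\mathbf{\varepsilon}^{(n,r)}$ I would run the dual argument in $\operatorname*{Hom}\nolimits_{\Bbbk}(B^{\otimes(n-1)},K^{\otimes r})$ with the $n_{\star}$-ary convolution built from the $\ell$-iterated comultiplication $(\mathbf{\Delta}^{(n)})^{\circ\ell}$ on $B$ and the $\ell$-iterated trivial $n$-ary multiplication on $K^{\otimes r}$ (the left vertical arrow of the diagram of \textbf{Definition \ref{def-bi-coun}}). There $\mathbf{\varepsilon}^{(n,r)}$ is a polyadic unit, because $\mathbf{\varepsilon}^{(n,r)}$ is an algebra morphism (\textbf{Definition \ref{def-bi-coun}}) and by the compatibility (\ref{en}). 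Postcomposing (\ref{mnd}) with $\mathbf{\varepsilon}^{(n,r)}$ and pulling $\mathbf{\varepsilon}^{(n,r)}$ leftward through the $\ell$-iterated multiplications and the medial maps inside $\mathbf{\hat{\mu}}_{\star}^{(n_{\star})}$, using the algebra-morphism property of $\mathbf{\varepsilon}^{(n,r)}$ iterated $\ell$ times, I would obtain that $\mathbf{\varepsilon}^{(n,r)}\circ\mathbf{Q}_{\operatorname*{id}}$ is a querelement of $\mathbf{\varepsilon}^{(n,r)}$; as before, it therefore equals $\mathbf{\varepsilon}^{(n,r)}$.

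The main obstacle, in both halves, is to check that these two auxiliary $\operatorname*{Hom}$-spaces are genuine associative $n_{\star}$-ary polyadic convolution algebras in which $\mathbf{\eta}^{(r,n)}$ (respectively $\mathbf{\varepsilon}^{(n,r)}$) is a polyadic unit: one must verify that the trivial $n$-ary (co)multiplication on $K^{\otimes r}$ is (co)associative and compatible with $\mu_{k}^{(n_{k})}$, that the arity and iteration bookkeeping (the relation $n_{\star}=\ell(n-1)+1$ together with the extra $r$- and $(n-1)$-fold tensor factors that the multiplace maps $\mathbf{\eta}^{(r,n)}$, $\mathbf{\varepsilon}^{(n,r)}$ carry) closes up consistently, and that the medial maps $\mathbf{\tau}_{medial}^{(\ast,\ast)}$ inside $\mathbf{\hat{\mu}}_{\star}^{(n_{\star})}$ intertwine so that pre- and post-composition with $\mathbf{\eta}^{(r,n)}$ and $\mathbf{\varepsilon}^{(n,r)}$ are honest $n_{\star}$-ary homomorphisms. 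Executing this forces the polyadic normalizations and the bialgebra (co)unit axioms (Definitions \ref{def-bi-un} and \ref{def-bi-coun}) to be used in concert, which is precisely the part that is much heavier than in the binary case --- consistent with the author's remark that the full computation is too cumbersome to reproduce.
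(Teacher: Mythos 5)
The paper gives no proof to compare against: this proposition is one of the polyadic analogs of (\ref{sm})--(\ref{ds}) that the author explicitly states without proof, remarking only that the derivations ``almost coincide with those for the binary case.'' Your plan is indeed a transport of the binary argument, and its core steps are sound: precomposing (\ref{mnd}) with $\mathbf{\eta}^{\left(r,n\right)}$ and pushing it through the comultiplications via Definition \ref{def-bi-un}, respectively postcomposing with $\mathbf{\varepsilon}^{\left(n,r\right)}$ and pulling it through the multiplications via Definition \ref{def-bi-coun}, reduces both identities to a ``querelement of a unit-like element'' statement, which then forces equality.

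Two corrections, however. First, the auxiliary $\operatorname{Hom}$-spaces that you yourself flag as the main obstacle are an unnecessary detour, and they are exactly where your attempt leaves the real work undone: you never need $\operatorname{Hom}\nolimits_{\Bbbk}\left(K^{\otimes r},B^{\otimes\left(n-1\right)}\right)$ or $\operatorname{Hom}\nolimits_{\Bbbk}\left(B^{\otimes\left(n-1\right)},K^{\otimes r}\right)$ to be genuine associative $n_{\star}$-ary convolution algebras, because only the two composite identities are ever used. For the first identity it suffices to evaluate the Sweedler form (\ref{mb}) at the unit elements: by Definition \ref{def-bi-un} all Sweedler legs of $e_{a}$ are again $e_{a}$, and the algebra unit axiom (\ref{mu}) with the normalization (\ref{ne}) collapses $\left(\mu^{\left(n\right)}\right)^{\circ\ell}\left[e_{a},\ldots,e_{a},\mathrm{Q}_{\left[j\right]}\left(e_{a},\ldots,e_{a}\right)\right]$ to $\mathrm{Q}_{\left[j\right]}\left(e_{a},\ldots,e_{a}\right)$, giving $\mathrm{Q}_{\left[j\right]}\left(e_{a},\ldots,e_{a}\right)=e_{a}$; note that neither the counit nor (\ref{en}) plays any role here. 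For the second identity, postcompose (\ref{mnd}) with $\mathbf{\varepsilon}^{\left(n,r\right)}$, use Definition \ref{def-bi-coun} (iterated $\ell$ times) to convert $\varepsilon$ of the iterated products into field products of $\varepsilon$-values, and then collapse with the counit axiom (\ref{ep}) together with the unitality of $\Bbbk$. Second, your justification that $\mathbf{\varepsilon}^{\left(n,r\right)}$ is a polyadic unit of the second auxiliary convolution algebra ``because it is an algebra morphism and by (\ref{en})'' is miscited: that unit property is precisely the iterated counit axiom (\ref{ep}) plus absorption of the $e_{k}$ factors in the unital field, while the algebra-morphism property enters only when pulling $\varepsilon$ through $\left(\mathbf{\mu}^{\left(n\right)}\right)^{\circ\ell}$, and (\ref{en}) is not needed at all. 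With these repairs your argument closes and is in substance the binary-style derivation the author alludes to.
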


\begin{example}
If $n=3$, $\ell=1$, $n_{\star}=3$, $\ell_{\tau}=6$ (and (\ref{t6})), then
using Sweedler notation, for (\ref{qm}) we have%
\begin{align}
\mathrm{Q}_{\left[  1\right]  }\left(  \mu^{\left(  3\right)  }\left[
a_{1},a_{2},a_{3}\right]  ,\mu^{\left(  3\right)  }\left[  b_{1},b_{2}%
,b_{3}\right]  \right)   &  =\mu^{\left(  3\right)  }\left[  \mathrm{Q}%
_{\left[  2\right]  }\left(  a_{2},b_{2}\right)  ,\mathrm{Q}_{\left[
1\right]  }\left(  a_{1},b_{1}\right)  ,\mathrm{Q}_{\left[  1\right]  }\left(
a_{3},b_{3}\right)  \right]  ,\\
\mathrm{Q}_{\left[  2\right]  }\left(  \mu^{\left(  3\right)  }\left[
a_{1},a_{2},a_{3}\right]  ,\mu^{\left(  3\right)  }\left[  b_{1},b_{2}%
,b_{3}\right]  \right)   &  =\mu^{\left(  3\right)  }\left[  \mathrm{Q}%
_{\left[  2\right]  }\left(  a_{1},b_{1}\right)  ,\mathrm{Q}_{\left[
2\right]  }\left(  a_{3},b_{3}\right)  ,\mathrm{Q}_{\left[  1\right]  }\left(
a_{2},b_{2}\right)  \right]  ,
\end{align}
where $\mathbf{Q}_{\operatorname*{id}}\circ\left(  a,b\right)  =\mathrm{Q}%
_{\left[  1\right]  }\left(  a,b\right)  \otimes\mathrm{Q}_{\left[  2\right]
}\left(  a,b\right)  \in\operatorname*{End}\nolimits_{\Bbbk}\left(  B\otimes
B,B\otimes B\right)  $, $a,b,a_{i},b_{i}\in B$, (cf. (\ref{sm})).
\end{example}

The key property of the binary antipode $S$ is its involutivity
$S^{\circ2}=\operatorname*{id}_{B}$ for either commutative ($\mu=\mu\circ
\tau_{op}$) or co-commutative ($\Delta=\tau_{op}\circ\Delta$) Hopf algebras,
which follows from (\ref{sm}) or (\ref{ds}) applied to $S\star S^{\circ2}$
giving $\eta\varepsilon$ ($=S\star\operatorname*{id}_{B}$).

\begin{proposition}
If in a symmetric Hopf algebra $\mathrm{H}_{sym}^{\left(  n\right)  }$ either
multiplication or comultiplication is invariant under polyadic twist map
$\mathbf{\tau}_{op}^{\left(  \ell_{\tau}\right)  }$ \emph{(\ref{top})}, then
the querantipode $\mathbf{Q}_{\operatorname*{id}}$ \emph{(\ref{mnd})}
satisfies%
\begin{equation}
\mathbf{\hat{\mu}}_{\star}^{\left(  n_{\star}\right)  }\left[  \overset
{\left(  n_{\star}-1\right)  }{\overbrace{\mathbf{Q}_{\operatorname*{id}%
},\ldots,\mathbf{Q}_{\operatorname*{id}}}},\mathbf{Q}_{\operatorname*{id}%
}\circ\mathbf{Q}_{\operatorname*{id}}\right]  =\mathbf{Q}_{\operatorname*{id}%
}, \label{mqq}%
\end{equation}
where $\mathbf{Q}_{\operatorname*{id}}^{\circ2}$ can be on any place, or the
convolution querelement \emph{(\ref{mfq})} of the querantipode $\mathbf{Q}%
_{\operatorname*{id}}$ is%
\begin{equation}
\mathbf{q}_{\star}\left(  \mathbf{Q}_{\operatorname*{id}}\right)
=\mathbf{Q}_{\operatorname*{id}}^{\circ2}. \label{qq}%
\end{equation}

\end{proposition}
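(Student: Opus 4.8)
The plan is to imitate the classical proof of involutivity of the antipode in a (co)commutative Hopf algebra, where one derives $S\star S^{\circ 2}=S$ from the anti-(co)multiplicativity of $S$ together with (co)commutativity, and then cancels using uniqueness of the convolution inverse. In the polyadic setting there is no convolution identity, so the right reformulation is that $\mathbf{Q}_{\operatorname*{id}}^{\circ 2}$ is \emph{the} coquerelement of $\mathbf{Q}_{\operatorname*{id}}$ in the symmetric $n_{\star}$-ary convolution algebra $\mathrm{C}_{\star}^{\left(  n,n\right)  }$; since $\mathbf{\hat{\mu}}_{\star}^{\left(  n_{\star}\right)  }$ is associative and the coquerelement defined by (\ref{mfq}) is unique, exactly as the querelement in an $n$-ary group is unique (cf. (\ref{q}) and \cite{dor3}), the two displayed formulas (\ref{mqq}) and (\ref{qq}) are equivalent. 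Hence it suffices to prove (\ref{mqq}), and by the ``$\mathbf{Q}_{\operatorname*{id}}$ can be on any place'' clause in Definition \ref{def-coq} it is enough to check it with $\mathbf{Q}_{\operatorname*{id}}^{\circ 2}$ in the last slot.

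To prove (\ref{mqq}) I would expand the left-hand side $\mathbf{\hat{\mu}}_{\star}^{\left(  n_{\star}\right)  }\circ\left(  \mathbf{Q}_{\operatorname*{id}}^{\otimes\left(  n_{\star}-1\right)  }\otimes\mathbf{Q}_{\operatorname*{id}}\circ\mathbf{Q}_{\operatorname*{id}}\right)  $ via its definition (\ref{mff}) in the Sweedler form (\ref{mb}): after co-iterating $\left(  \mathbf{\Delta}^{\left(  n\right)  }\right)  ^{\circ\ell}$ on each argument, applying $\mathbf{\tau}_{medial}^{\left(  n_{\star},n-1\right)  }$, letting the $n_{\star}-1$ copies of $\mathbf{Q}_{\operatorname*{id}}$ and the extra $\mathbf{Q}_{\operatorname*{id}}\circ\mathbf{Q}_{\operatorname*{id}}$ act on the Sweedler components, and recombining with $\left(  \left(  \mathbf{\mu}^{\left(  n\right)  }\right)  ^{\circ\ell}\right)  ^{\otimes\left(  n-1\right)  }\circ\mathbf{\tau}_{medial}^{\left(  n-1,n_{\star}\right)  }$. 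In the medially co-commutative case (\ref{mcd}) one rewrites the iterated coproduct so that the outermost $\left(  \left(  \mathbf{\mu}^{\left(  n\right)  }\right)  ^{\circ\ell}\right)  ^{\otimes\left(  n-1\right)  }$ acting on the $\mathbf{Q}_{\operatorname*{id}}$'s can be pulled through the anti-multiplicativity relation (\ref{qm}): this converts it into a single $\mathbf{Q}_{\operatorname*{id}}$ applied to an $n$-ary product, the polyadic twist $\mathbf{\tau}_{op}^{\left(  \ell_{\tau}\right)  }$ produced by (\ref{qm}) being annihilated by co-commutativity. The resulting inner product is then precisely the left-hand side of the querantipode axiom (\ref{mnd})/(\ref{mb}), which collapses it and leaves $\mathbf{Q}_{\operatorname*{id}}$, proving (\ref{mqq}). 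In the medially commutative case (\ref{mcm}) I would run the dual computation, using the anti-comultiplicativity relation (\ref{aqm}) in place of (\ref{qm}) and absorbing $\mathbf{\tau}_{op}^{\left(  \ell_{\tau}\right)  }$ with the invariance $\mathbf{\mu}^{\left(  n\right)  }=\mathbf{\mu}^{\left(  n\right)  }\circ\mathbf{\tau}_{op}^{\left(  \ell_{\tau}\right)  }$; throughout, coassociativity (\ref{coa}) and the consistency condition $n_{\star}-1=\ell\left(  n-1\right)  $ (\ref{nsy}) supply the Sweedler bookkeeping (so that, e.g., $\left(  \Delta^{\left(  n\right)  }\right)  ^{\circ 2\ell}\left(  b\right)  =b_{\left[  1\right]  }\otimes\ldots\otimes b_{\left[  2n_{\star}-1\right]  }$).

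The main obstacle I expect is not conceptual but combinatorial: carrying the two medial maps $\mathbf{\tau}_{medial}^{\left(  n_{\star},n-1\right)  }$, $\mathbf{\tau}_{medial}^{\left(  n-1,n_{\star}\right)  }$ and the polyadic twist $\mathbf{\tau}_{op}^{\left(  \ell_{\tau}\right)  }$ with $\ell_{\tau}=\left(  n-1\right)  n_{\star}$ consistently through the composition, and verifying at each step that this number of places is \emph{allowed}, i.e. lands in an admissible cell of \textsc{Table} \ref{tab-twist}, so that (\ref{qm}) and (\ref{aqm}) may legitimately be invoked. Once the arities are matched, the only algebraic input is the $n$-ary associativity of $\mathbf{\mu}^{\left(  n\right)  }$ (respectively coassociativity of $\mathbf{\Delta}^{\left(  n\right)  }$) together with the querantipode axiom, so the derivation reproduces the binary one essentially verbatim; this is the polyadic incarnation of the classical involutivity $S^{\circ 2}=\operatorname*{id}\nolimits_{B}$.
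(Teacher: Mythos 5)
Your proposal follows essentially the same route as the paper's (one-line) proof: apply the anti(co)multiplicativity relation (\ref{qm}) or (\ref{aqm}) to the left-hand side of (\ref{mqq}), absorb the resulting polyadic twist using the assumed invariance of $\mathbf{\mu}^{\left(n\right)}$ or $\mathbf{\Delta}^{\left(n\right)}$, and collapse the inner product via the querantipode axiom (\ref{mnd}), i.e.\ the coquerelement definition (\ref{mfq}); your Sweedler-level expansion and arity bookkeeping merely fill in details the paper leaves implicit. Note only that the passage from (\ref{mqq}) to (\ref{qq}) does not require a uniqueness argument for the coquerelement --- it is immediate from the defining relation (\ref{mfq}), which is exactly how the paper uses it.
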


\begin{proof}
The proposition follows from applying either (\ref{qm}) or (\ref{aqm}) to the l.h.s. of
(\ref{mqq}), to use (\ref{mfq}).
\end{proof}

\section{\textsc{Towards polyadic quantum groups}}

Bialgebras with a special relaxation of co-commutativity, almost
co-commutativity, are the ground objects in the construction of quantum groups
identified with the non-commutative and non-co-commutative quasitriangular
Hopf algebras \cite{dri0,dri2}.

\subsection{Quantum Yang-Baxter equation}

Here we recall the binary case (informally) in a notation that will allow us
to provide the \textquotedblleft polyadization\textquotedblright\ in a
clearer way.

Let us consider a (binary) bialgebra $\mathrm{B}^{\left(  2,2\right)
}=\left\langle B\mid\mu,\Delta\right\rangle $, where $\mu=\mu^{\left(  2\right)  }$
is the binary multiplication, $\Delta=\Delta^{\left(  2\right)  }$ (see
\textbf{Definition} \ref{def-bialg}), and the opposite comultiplication is given by
$\Delta_{cop}\equiv\tau_{op}\circ\Delta$, where $\tau_{op}$ is the binary
twist (\ref{ti}). To relax the co-commutativity condition ($\Delta_{cop}=\Delta$), the
following construction inspired by \textsf{conjugation} in groups was
proposed \cite{dri0,dri2}. A bialgebra $\mathrm{B}^{\left(  2,2\right)  }$ is
\textit{almost co-commutative}, if there exists $\mathit{R}\in B\otimes B$
such that (in the elementwise notation)%
\begin{equation}
\mu\left[  \Delta_{cop}\left(  b\right)  ,\mathit{R}\right]  =\mu\left[
\mathit{R},\Delta\left(  b\right)  \right]  ,\ \ \ \ \forall b\in B.
\label{md}%
\end{equation}

A \textsf{fixed} element $\mathit{R}$ of a bialgebra satisfying (\ref{md}) is
called a \textit{universal }$R$\textit{-matrix}. For a co-commutative
bialgebra we have $\mathit{R}=e_{B}\otimes e_{B}$, where $e_{B}\in B$ is the
unit (element) of the algebra $\left\langle B\mid\mu\right\rangle $.

If we demand that $\left\langle B\mid\Delta_{cop}\right\rangle $ is the
opposite coalgebra of $\left\langle B\mid\Delta\right\rangle $, and therefore
$\Delta_{cop}$ be coassociative, then $\mathit{R}$ cannot be
\textsf{arbitrary}, but has to satisfy some additional conditions, which we will call
the \textit{almost co-commutativity equations} for the $R$-matrix. Indeed, using
(\ref{md}) we can write%
\begin{align}
&  \mu\left[  \left(  \Delta_{cop}\otimes\operatorname*{id}\nolimits_{B}%
\right)  \circ\Delta_{cop}\left(  b\right)  ,\mu\left[  \left(  \mathit{R}%
\otimes e_{B}\right)  ,\left(  \Delta\otimes\operatorname*{id}\nolimits_{B}%
\right)  \left(  \mathit{R}\right)  \right]  \right] \nonumber\\
&  =\mu\left[  \mu\left[  \left(  \mathit{R}\otimes e_{B}\right)  ,\left(
\Delta\otimes\operatorname*{id}\nolimits_{B}\right)  \left(  \mathit{R}%
\right)  \right]  ,\left(  \Delta\otimes\operatorname*{id}\nolimits_{B}%
\right)  \circ\Delta\left(  b\right)  \right]  ,\\
&  \mu\left[  \left(  \operatorname*{id}\nolimits_{B}\otimes\Delta
_{cop}\right)  \circ\Delta_{cop}\left(  b\right)  ,\mu\left[  \left(
e_{B}\otimes\mathit{R}\right)  ,\left(  \operatorname*{id}\nolimits_{B}%
\otimes\Delta\right)  \left(  \mathit{R}\right)  \right]  \right] \nonumber\\
&  =\mu\left[  \mu\left[  \left(  e_{B}\otimes\mathit{R}\right)  ,\left(
\operatorname*{id}\nolimits_{B}\otimes\Delta\right)  \left(  \mathit{R}%
\right)  \right]  ,\left(  \operatorname*{id}\nolimits_{B}\otimes
\Delta\right)  \circ\Delta\left(  b\right)  \right]  ,
\end{align}
Therefore, the coassociativity of $\Delta_{cop}$ leads to the \textsf{first}
almost co-commutativity equation%
\begin{equation}
\mu\left[  \left(  \mathit{R}\otimes e_{B}\right)  ,\left(  \Delta
\otimes\operatorname*{id}\nolimits_{B}\right)  \left(  \mathit{R}\right)
\right]  =\mu\left[  \left(  e_{B}\otimes\mathit{R}\right)  ,\left(
\operatorname*{id}\nolimits_{B}\otimes\Delta\right)  \left(  \mathit{R}%
\right)  \right]  . \label{mr}%
\end{equation}
On the other hand, directly from (\ref{md}), we have relations which can be treated
as the next \textsf{two }almost co-commutativity equations
(\textsf{unconnected} to the coassociativity of $\Delta_{cop}$)%
\begin{align}
\mu\left[  \left(  \mathit{R}\otimes e_{B}\right)  ,\left(  \Delta
\otimes\operatorname*{id}\nolimits_{B}\right)  \left(  \mathit{R}\right)
\right]   &  =\mu\left[  \left(  \Delta_{cop}\otimes\operatorname*{id}%
\nolimits_{B}\right)  \left(  \mathit{R}\right)  ,\left(  \mathit{R}\otimes
e_{B}\right)  \right] \nonumber\\
&  =\mu\left[  \left(  \tau_{op}\otimes\operatorname*{id}\nolimits_{B}\right)
\circ\left(  \Delta\otimes\operatorname*{id}\nolimits_{B}\right)  \left(
\mathit{R}\right)  ,\left(  \mathit{R}\otimes e_{B}\right)  \right]
,\label{mr1}\\
\mu\left[  \left(  e_{B}\otimes\mathit{R}\right)  ,\left(  \operatorname*{id}%
\nolimits_{B}\otimes\Delta\right)  \left(  \mathit{R}\right)  \right]   &
=\mu\left[  \left(  \operatorname*{id}\nolimits_{B}\otimes\Delta_{cop}\right)
\left(  \mathit{R}\right)  ,\left(  e_{B}\otimes\mathit{R}\right)  \right]
\nonumber\\
&  =\mu\left[  \left(  \operatorname*{id}\nolimits_{B}\otimes\tau_{op}\right)
\circ\left(  \operatorname*{id}\nolimits_{B}\otimes\Delta\right)  \left(
\mathit{R}\right)  ,\left(  e_{B}\otimes\mathit{R}\right)  \right]  .
\label{mr2}%
\end{align}

The equations (\ref{mr})--(\ref{mr2}) for the components of%
\begin{equation}
\mathit{R}=\sum_{\alpha}\mathsf{r}_{\alpha}^{\left(  1\right)  }%
\otimes\mathsf{r}_{\alpha}^{\left(  2\right)  }\in B\otimes B \label{rb}%
\end{equation}
are on $B\otimes B\otimes B$. In components the almost co-commutativity
(\ref{md}) can be expressed as follows%
\begin{equation}
\sum_{\left[  b\right]  }\sum_{\alpha}\mu\left[  b_{\left[  2\right]
},\mathsf{r}_{\alpha}^{\left(  1\right)  }\right]  \otimes\mu\left[
b_{\left[  1\right]  },\mathsf{r}_{\alpha}^{\left(  2\right)  }\right]
=\sum_{\left[  b\right]  ^{\prime}}\sum_{\alpha^{\prime}}\mu\left[
\mathsf{r}_{\alpha^{\prime}}^{\left(  1\right)  },b_{\left[  1\right]
^{\prime}}\right]  \otimes\mu\left[  \mathsf{r}_{\alpha^{\prime}}^{\left(
2\right)  },b_{\left[  2\right]  ^{\prime}}\right]  . \label{mbr}%
\end{equation}
Now introduce the \textquotedblleft extended\textquotedblright\ form of the
$R$-matrix $\mathcal{R}_{ij}\in B\otimes B\otimes B$, $i,j=1,2,3$, by%
\begin{align}
\mathcal{R}_{12}  &  =\sum_{\alpha}\mathsf{r}_{\alpha}^{\left(  1\right)
}\otimes\mathsf{r}_{\alpha}^{\left(  2\right)  }\otimes e_{B}\equiv
\mathit{R}\otimes e_{B},\label{r12}\\
\mathcal{R}_{13}  &  =\sum_{\alpha}\mathsf{r}_{\alpha}^{\left(  1\right)
}\otimes e_{B}\otimes\mathsf{r}_{\alpha}^{\left(  2\right)  }=\left(
\operatorname*{id}\nolimits_{B}\otimes\tau_{op}\right)  \circ\left(
\mathit{R}\otimes e_{B}\right)  ,\label{r13}\\
\mathcal{R}_{23}  &  =\sum_{\alpha}e_{B}\otimes\mathsf{r}_{\alpha}^{\left(
1\right)  }\otimes\mathsf{r}_{\alpha}^{\left(  2\right)  }\equiv e_{B}%
\otimes\mathit{R}. \label{r23}%
\end{align}

Obviously, one can try to solve (\ref{mr})--(\ref{mr2}) with respect to the
$\mathsf{r}_{\alpha}^{\left(  1\right)  }$, $\mathsf{r}_{\alpha}^{\left(
2\right)  }$ directly, but then we are confronted with a difficulty arising
from the Sweedler components, because now (see (\ref{dn})--(\ref{dv}))%
\begin{align}
\left(  \Delta\otimes\operatorname*{id}\nolimits_{B}\right)  \left(
\mathit{R}\right)   &  =\sum_{\left[  \mathsf{r}_{\alpha}^{\left(  1\right)
}\right]  }\sum_{\alpha}\mathsf{r}_{\alpha,\left[  1\right]  }^{\left(
1\right)  }\otimes\mathsf{r}_{\alpha,\left[  2\right]  }^{\left(  1\right)
}\otimes\mathsf{r}_{\alpha}^{\left(  2\right)  },\label{dr1}\\
\left(  \operatorname*{id}\nolimits_{B}\otimes\Delta\right)  \left(
\mathit{R}\right)   &  =\sum_{\left[  \mathsf{r}_{\alpha}^{\left(  2\right)
}\right]  }\sum_{\alpha}\mathsf{r}_{\alpha}^{\left(  1\right)  }%
\otimes\mathsf{r}_{\alpha,\left[  1\right]  }^{\left(  2\right)  }%
\otimes\mathsf{r}_{\alpha,\left[  2\right]  }^{\left(  2\right)  }.
\label{dr2}%
\end{align}

To avoid computations in the Sweedler components, one can substitute them by
the components of $\mathit{R}$ directly as $\mathsf{r}_{\left[  j\right]
}^{\left(  i\right)  }\longrightarrow\mathsf{r}^{\left(  i\right)  }$
(schematically). This allows us to express (\ref{dr1})--(\ref{dr2}) solely
through elements of the \textquotedblleft extended\textquotedblright%
\ $R$-matrix $\mathcal{R}_{ij}$\textit{ }by%
\begin{align}
\left(  \Delta\otimes\operatorname*{id}\nolimits_{B}\right)  \left(
\mathit{R}\right)   &  =\mu\left[  \mathcal{R}_{13},\mathcal{R}_{23}\right]
\equiv\sum_{\alpha,\beta}\mathsf{r}_{\alpha}^{\left(  1\right)  }%
\otimes\mathsf{r}_{\beta}^{\left(  1\right)  }\otimes\mu\left[  \mathsf{r}%
_{\alpha}^{\left(  2\right)  },\mathsf{r}_{\beta}^{\left(  2\right)  }\right]
,\label{drr1}\\
\left(  \operatorname*{id}\nolimits_{B}\otimes\Delta\right)  \left(
\mathit{R}\right)   &  =\mu\left[  \mathcal{R}_{13},\mathcal{R}_{12}\right]
\equiv\sum_{\alpha,\beta}\mu\left[  \mathsf{r}_{\alpha}^{\left(  1\right)
},\mathsf{r}_{\beta}^{\left(  1\right)  }\right]  \otimes\mathsf{r}_{\beta
}^{\left(  2\right)  }\otimes\mathsf{r}_{\alpha}^{\left(  2\right)  },
\label{drr2}%
\end{align}
which do not contain Sweedler components of $\mathit{R}$ at all. The
equations (\ref{drr1})--(\ref{drr2}) \textsf{define} a \textit{quasitriangular
}$R$-matrix \cite{dri0}. The corresponding almost co-commutative (binary)
bialgebra $\mathrm{B}_{braid}^{\left(  2\right)  }=\left\langle \mathrm{B}%
^{\left(  2,2\right)  },\mathit{R}\right\rangle $ is called a
\textit{quasitriangular almost co-commutative bialgebra} (or \textit{braided
bialgebra} \cite{kassel}). Only for them can the almost co-commutativity equations
(\ref{mr})--(\ref{mr2}) be expressed solely in terms of $R$-matrix
components or through the \textquotedblleft extended\textquotedblright%
\ $R$-matrix $\mathcal{R}_{ij}$, using (\ref{drr1})--(\ref{drr2}).

\begin{theorem}
In the binary case, three almost co-commutativity equations for the $R$-matrix
\textsf{coincide} with%
\begin{equation}
\mu^{\circ2}\left[  \mathcal{R}_{12},\mathcal{R}_{13},\mathcal{R}_{23}\right]
=\mu^{\circ2}\left[  \mathcal{R}_{23},\mathcal{R}_{13},\mathcal{R}%
_{12}\right]  . \label{rrr}%
\end{equation}

\end{theorem}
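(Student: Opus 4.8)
The plan is to show that each of the three almost co-commutativity equations $(\ref{mr})$, $(\ref{mr1})$ and $(\ref{mr2})$ collapses to $(\ref{rrr})$ once the quasitriangularity relations $(\ref{drr1})$--$(\ref{drr2})$ are inserted --- which removes every comultiplication --- and the associativity of $\mu$ is used to merge the two nested binary products into one $\mu^{\circ 2}$ acting on three factors. The whole argument thus lives on $B\otimes B\otimes B$ equipped with the componentwise product.

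First I would record how the two partial twists act on the extended $R$-matrix. Since $\mathit{R}=\sum_{\alpha}\mathsf{r}_{\alpha}^{(1)}\otimes\mathsf{r}_{\alpha}^{(2)}$ is a fixed element, the definitions $(\ref{r12})$--$(\ref{r23})$ give, as permutations of the tensor legs of $B^{\otimes 3}$, the relations $(\tau_{op}\otimes\operatorname{id}_{B})(\mathcal{R}_{13})=\mathcal{R}_{23}$, $(\tau_{op}\otimes\operatorname{id}_{B})(\mathcal{R}_{23})=\mathcal{R}_{13}$, $(\operatorname{id}_{B}\otimes\tau_{op})(\mathcal{R}_{13})=\mathcal{R}_{12}$ and $(\operatorname{id}_{B}\otimes\tau_{op})(\mathcal{R}_{12})=\mathcal{R}_{13}$. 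Because the product on $B^{\otimes 3}$ is componentwise, every leg permutation is an algebra automorphism of $\langle B^{\otimes 3}\mid\mu\rangle$, so the partial twists commute with the $\mu$ on the right of $(\ref{drr1})$--$(\ref{drr2})$. Together with $\Delta_{cop}=\tau_{op}\circ\Delta$ this produces the $\Delta_{cop}$-versions $(\Delta_{cop}\otimes\operatorname{id}_{B})(\mathit{R})=\mu[\mathcal{R}_{23},\mathcal{R}_{13}]$ and $(\operatorname{id}_{B}\otimes\Delta_{cop})(\mathit{R})=\mu[\mathcal{R}_{12},\mathcal{R}_{13}]$.

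Then I would substitute these into the three equations. For $(\ref{mr})$ the left side becomes $\mu[\mathcal{R}_{12},\mu[\mathcal{R}_{13},\mathcal{R}_{23}]]=\mu^{\circ 2}[\mathcal{R}_{12},\mathcal{R}_{13},\mathcal{R}_{23}]$ and the right side becomes $\mu[\mathcal{R}_{23},\mu[\mathcal{R}_{13},\mathcal{R}_{12}]]=\mu^{\circ 2}[\mathcal{R}_{23},\mathcal{R}_{13},\mathcal{R}_{12}]$, which is exactly $(\ref{rrr})$. For $(\ref{mr1})$ the left side is again $\mu^{\circ 2}[\mathcal{R}_{12},\mathcal{R}_{13},\mathcal{R}_{23}]$, while the right side, via the $\Delta_{cop}$-version above, is $\mu[\mu[\mathcal{R}_{23},\mathcal{R}_{13}],\mathcal{R}_{12}]=\mu^{\circ 2}[\mathcal{R}_{23},\mathcal{R}_{13},\mathcal{R}_{12}]$. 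For $(\ref{mr2})$ the left side is $\mu[\mathcal{R}_{23},\mu[\mathcal{R}_{13},\mathcal{R}_{12}]]=\mu^{\circ 2}[\mathcal{R}_{23},\mathcal{R}_{13},\mathcal{R}_{12}]$ and the right side is $\mu[\mu[\mathcal{R}_{12},\mathcal{R}_{13}],\mathcal{R}_{23}]=\mu^{\circ 2}[\mathcal{R}_{12},\mathcal{R}_{13},\mathcal{R}_{23}]$. In each of the three cases the identity reduces to $(\ref{rrr})$, which proves the asserted coincidence.

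The step I expect to be the main obstacle is making the schematic substitution $\mathsf{r}_{[j]}^{(i)}\to\mathsf{r}^{(i)}$ used around $(\ref{dr1})$--$(\ref{drr2})$ fully precise: one must verify that the partial twists act on the $\mathcal{R}_{ij}$ with the summation indices matching up, that $\tau_{op}$ really is an algebra map for the componentwise product on the triple tensor square, and that replacing $(\Delta\otimes\operatorname{id}_{B})(\mathit{R})$ by $\mu[\mathcal{R}_{13},\mathcal{R}_{23}]$ (and the analogue on the other leg) is nothing but the definition of quasitriangularity. Beyond that, the proof is a routine bookkeeping computation in $B^{\otimes 3}$ using only associativity of $\mu$.
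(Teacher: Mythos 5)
Your proposal is correct: substituting the quasitriangularity relations (\ref{drr1})--(\ref{drr2}) (and their $\Delta_{cop}$-versions obtained via the leg-permutation identities for $\mathcal{R}_{12},\mathcal{R}_{13},\mathcal{R}_{23}$) into each of (\ref{mr})--(\ref{mr2}) and using associativity of $\mu$ does reduce all three to (\ref{rrr}), which is exactly the route the paper intends, since it states that only for quasitriangular $R$ can (\ref{mr})--(\ref{mr2}) be expressed through the extended $R$-matrix and then records the theorem without writing out the computation. Your filled-in details (the twists acting as algebra automorphisms of the componentwise product on $B^{\otimes 3}$, and the identifications $(\tau_{op}\otimes\operatorname{id}_B)\mathcal{R}_{13}=\mathcal{R}_{23}$, $(\operatorname{id}_B\otimes\tau_{op})\mathcal{R}_{13}=\mathcal{R}_{12}$, etc.) check out, so this is the standard derivation of the quantum Yang--Baxter equation carried out for all three equations.
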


Conversely, any quasitriangular $R$-matrix is a solution of (\ref{rrr}) by the
above construction. The equation for the \textquotedblleft
extended\textquotedblright\ $R$-matrix $\mathcal{R}_{ij}$ (\ref{rrr}) is
called the \textit{quantum Yang-Baxter equation} \cite{lam/rad,majid} (or the
triangle relation \cite{dri2}). In terms of the $R$-matrix components
(\ref{rb}) the quantum Yang-Baxter equation (\ref{rrr}) takes the form%
\begin{equation}
\sum_{\alpha,\beta,\gamma}\mu\left[  \mathsf{r}_{\alpha}^{\left(  1\right)
},\mathsf{r}_{\beta}^{\left(  1\right)  }\right]  \otimes\mu\left[
\mathsf{r}_{\alpha}^{\left(  2\right)  },\mathsf{r}_{\gamma}^{\left(
1\right)  }\right]  \otimes\mu\left[  \mathsf{r}_{\beta}^{\left(  2\right)
},\mathsf{r}_{\gamma}^{\left(  2\right)  }\right]  =\sum_{\alpha^{\prime
},\beta^{\prime},\gamma^{\prime}}\mu\left[  \mathsf{r}_{\beta^{\prime}%
}^{\left(  1\right)  },\mathsf{r}_{\alpha^{\prime}}^{\left(  1\right)
}\right]  \otimes\mu\left[  \mathsf{r}_{\gamma^{\prime}}^{\left(  1\right)
},\mathsf{r}_{\alpha^{\prime}}^{\left(  2\right)  }\right]  \otimes\mu\left[
\mathsf{r}_{\gamma^{\prime}}^{\left(  2\right)  },\mathsf{r}_{\beta^{\prime}%
}^{\left(  2\right)  }\right]  . \label{mmrr}%
\end{equation}

Let us consider modules over the braided bialgebra $\mathrm{B}_{braid}%
^{\left(  2,2\right)  }$ and recall \cite{dri89} how the universal $R$-matrix
generalizes the standard flip $\mathbf{\tau}_{V_{1}V_{2}}:V_{1}\otimes
V_{2}\rightarrow V_{2}\otimes V_{1}$. Define the isomorphism of modules (which in
our notation correspond to a 1-place action $\rho$ (\ref{r}))
$\mathbf{c}_{V_{1}V_{2}}:V_{1}\otimes V_{2}\rightarrow V_{2}\otimes V_{1}$ by%
\begin{equation}
\mathbf{c}_{V_{1}V_{2}}\circ\left(  v_{1}\otimes v_{2}\right)  =\mathbf{\tau
}_{V_{1}V_{2}}\circ\mathit{R}\circ\left(  v_{1}\otimes v_{2}\right)
=\sum_{\alpha}\rho\left(  \mathsf{r}_{\alpha}^{\left(  2\right)  }\mid
v_{2}\right)  \otimes\rho\left(  \mathsf{r}_{\alpha}^{\left(  1\right)  }\mid
v_{1}\right)  ,v_{i}\in V_{i},\ \mathsf{r}_{\alpha}^{\left(  i\right)  }\in
B,i=1,2. \label{cr}%
\end{equation}
The quasitriangularity (\ref{drr1})--(\ref{drr2}) and (\ref{cr}) on
$V_{1}\otimes V_{2}\otimes V_{3}$ leads to (see, e.g., \cite{kassel})%
\begin{align}
\left(  \mathbf{c}_{V_{1}V_{3}}\otimes\operatorname*{id}\nolimits_{V_{2}%
}\right)  \circ\left(  \operatorname*{id}\nolimits_{V_{1}}\otimes
\mathbf{c}_{V_{2}V_{3}}\right)   &  =\mathbf{c}_{V_{1}\otimes V_{2},V_{3}%
},\label{cv1}\\
\left(  \operatorname*{id}\nolimits_{V_{2}}\otimes\mathbf{c}_{V_{1}V_{3}%
}\right)  \circ\left(  \mathbf{c}_{V_{1}V_{2}}\otimes\operatorname*{id}%
\nolimits_{V_{3}}\right)   &  =\mathbf{c}_{V_{1},V_{2}\otimes V_{3}}.
\label{cv2}%
\end{align}

Similarly, the quantum Yang-Baxter equation (\ref{rrr}) gives the
\textit{braid equation} \cite{dri89} mapping $V_{1}\otimes V_{2}\otimes
V_{3}\rightarrow V_{3}\otimes V_{2}\otimes V_{1}$:%
\begin{equation}
\left(  \mathbf{c}_{V_{2}V_{3}}\otimes\operatorname*{id}\nolimits_{V_{1}%
}\right)  \circ\left(  \operatorname*{id}\nolimits_{V_{2}}\otimes
\mathbf{c}_{V_{1}V_{3}}\right)  \circ\left(  \mathbf{c}_{V_{1}V_{2}}%
\otimes\operatorname*{id}\nolimits_{V_{3}}\right)  =\left(  \operatorname*{id}%
\nolimits_{V_{3}}\otimes\mathbf{c}_{V_{1}V_{2}}\right)  \circ\left(
\mathbf{c}_{V_{1}V_{3}}\otimes\operatorname*{id}\nolimits_{V_{2}}\right)
\circ\left(  \operatorname*{id}\nolimits_{V_{1}}\otimes\mathbf{c}_{V_{2}V_{3}%
}\right)  . \label{cii}%
\end{equation}

Putting $V_{1}=V_{2}=V_{3}=V$ shows that $\mathbf{c}_{VV}$ is a solution of
the braid equation (\ref{cii}) for any module $V$, if the $\mathit{R}$ is a
solution of the Yang-Baxter equation \cite{dri89,kassel}.

\subsection{$n^{\prime}$-ary braid equation}

Let us consider possible higher arity generalizations of the braid equation
(\ref{cii}), informally. Introduce the modules $V_{i}$ over the polyadic
bialgebra $\mathrm{B}^{\left(  n^{\prime},n\right)  }$ (\textbf{Definition
}\ref{def-bialg}) by the $r$-place actions $\rho_{V_{i}}^{\left(  r\right)
}\left(  b_{1},\ldots b_{r}\mid v_{i}\right)  $, $b_{j}\in B$, $v_{i}\in
V_{i}$, $i=1,\ldots s$, $j=1,\ldots,r$ (see (\ref{r1})). Define the following
morphisms of modules%
\begin{equation}
\mathbf{c}_{V_{1}\ldots V_{n^{\prime}}}:V_{1}\otimes\ldots\otimes
V_{n^{\prime}}\rightarrow V_{n^{\prime}}\otimes\ldots\otimes V_{1}.
\label{cvn}%
\end{equation}

We use the shorthand notation $\mathbf{c}_{V^{n^{\prime}}}\equiv
\mathbf{c}_{V_{1}\ldots V_{n^{\prime}}}$, $\operatorname*{id}_{V}%
\equiv\operatorname*{id}_{V_{i}}$ and introduce indices manifestly only when
it will be needed.

\begin{proposition}
The $n^{\prime}$-ary braid equation has the form%
\begin{align}
&  \left(  \mathbf{c}_{V^{n^{\prime}}}\otimes\overset{n^{\prime}-1}%
{\overbrace{\operatorname*{id}\nolimits_{V}\otimes\ldots\otimes
\operatorname*{id}\nolimits_{V}}}\right)  \circ\left(  \operatorname*{id}%
\nolimits_{V}\otimes\mathbf{c}_{V^{n^{\prime}}}\otimes\overset{n^{\prime}%
-2}{\overbrace{\operatorname*{id}\nolimits_{V}\otimes\ldots\otimes
\operatorname*{id}\nolimits_{V}}}\right)  \circ\ldots\nonumber\\
&  \circ\left(  \overset{n^{\prime}-1}{\overbrace{\operatorname*{id}%
\nolimits_{V}\otimes\ldots\otimes\operatorname*{id}\nolimits_{V}}}%
\otimes\mathbf{c}_{V^{n^{\prime}}}\right)  \circ\left(  \mathbf{c}%
_{V^{n^{\prime}}}\otimes\overset{n^{\prime}-1}{\overbrace{\operatorname*{id}%
\nolimits_{V}\otimes\ldots\otimes\operatorname*{id}\nolimits_{V}}}\right)
\nonumber\\
&  =\left(  \overset{n^{\prime}-1}{\overbrace{\operatorname*{id}%
\nolimits_{V}\otimes\ldots\otimes\operatorname*{id}\nolimits_{V}}}%
\otimes\mathbf{c}_{V^{n^{\prime}}}\right)  \circ\left(  \mathbf{c}%
_{V^{n^{\prime}}}\otimes\overset{n^{\prime}-1}{\overbrace{\operatorname*{id}%
\nolimits_{V}\otimes\ldots\otimes\operatorname*{id}\nolimits_{V}}}\right)
\circ\ldots\nonumber\\
&  \circ\left(  \overset{n^{\prime}-2}{\overbrace{\operatorname*{id}%
\nolimits_{V}\otimes\ldots\otimes\operatorname*{id}\nolimits_{V}}}%
\otimes\mathbf{c}_{V^{n^{\prime}}}\otimes\operatorname*{id}\nolimits_{V}%
\right)  \circ\left(  \overset{n^{\prime}-1}{\overbrace{\operatorname*{id}%
\nolimits_{V}\otimes\ldots\otimes\operatorname*{id}\nolimits_{V}}}%
\otimes\mathbf{c}_{V^{n^{\prime}}}\right)  , \label{nc}%
\end{align}
where each side consists of $\left(  n^{\prime}+1\right)  $ brackets with
$\left(  2n^{\prime}-1\right)  $ multipliers.
\end{proposition}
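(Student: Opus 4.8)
The plan is to derive the $n^{\prime}$-ary braid equation \eqref{nc} by iterating the two ``higher compatibility'' identities that generalize \eqref{cv1}--\eqref{cv2}, exactly as the binary braid equation \eqref{cii} follows from composing \eqref{cv1}, \eqref{cv2} and the quantum Yang-Baxter equation. First I would establish the polyadic analog of the module morphism $\mathbf{c}$: starting from a polyadic almost co-commutative bialgebra $\mathrm{B}^{\left(n^{\prime},n\right)}$ with a universal $R$-matrix $\mathit{R}\in B^{\otimes n^{\prime}}$ (whose components appear in the $n^{\prime}$-ary analogs of \eqref{mr}--\eqref{mr2}), define $\mathbf{c}_{V^{n^{\prime}}}$ by composing the polyadic medial map $\mathbf{\tau}_{medial}^{\left(n^{\prime},n^{\prime}\right)}$ (or, more precisely, the relevant reversal permutation on $n^{\prime}$ tensor factors) with the $r$-place action $\rho_{V}^{\left(r\right)}$ of the components of $\mathit{R}$, mirroring \eqref{cr}. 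This is the place where the ``arity freedom'' shows up: one must check that the reversal $V_1\otimes\cdots\otimes V_{n^{\prime}}\to V_{n^{\prime}}\otimes\cdots\otimes V_1$ is a medially allowed polyadic twist map $\mathbf{\tau}_{op}^{\left(\ell_\tau\right)}$ in the sense of \eqref{top}, which it is for $m=n=n^{\prime}$ by the Assertion following \textsc{Table}~\ref{tab-twist}.

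Next I would prove the two polyadic compatibility lemmas: the $n^{\prime}$-ary versions of \eqref{cv1}--\eqref{cv2} stating that $\mathbf{c}_{V^{n^{\prime}}}$ applied to a tensor factor which is itself an $n^{\prime}$-fold tensor product factorizes as a composite of $\left(n^{\prime}-1\right)$ copies of $\mathbf{c}_{V^{n^{\prime}}}$ interleaved with identities, in the two possible ``orientations.'' These follow from the $n^{\prime}$-ary quasitriangularity equations --- the higher analogs of \eqref{drr1}--\eqref{drr2} --- which express $\left(\mathbf{\Delta}^{\left(n^{\prime}\right)}\otimes\operatorname*{id}^{\otimes\left(n^{\prime}-1\right)}\right)\mathit{R}$ and the other ``slotted'' comultiplications of $\mathit{R}$ purely in terms of products of the ``extended'' $R$-matrices $\mathcal{R}_{I}$, $I$ an $n^{\prime}$-subset of $\{1,\dots,2n^{\prime}-1\}$. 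With those in hand, the braid equation \eqref{nc} is obtained by repeatedly substituting one side into the other: one applies $\mathbf{c}_{V^{n^{\prime}}}$ to a cluster of $n^{\prime}$ factors, uses the first compatibility lemma to split it, then reassembles using the second; after $\left(n^{\prime}+1\right)$ such moves one arrives at the right-hand side of \eqref{nc}. The bookkeeping on the number of brackets and multipliers (``$\left(n^{\prime}+1\right)$ brackets with $\left(2n^{\prime}-1\right)$ multipliers on each side'') is just counting the factors $V_1,\dots,V_{2n^{\prime}-1}$ permuted to $V_{2n^{\prime}-1},\dots,V_1$ and tracking that each $\mathbf{c}_{V^{n^{\prime}}}$ moves exactly $n^{\prime}$ adjacent factors.

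The main obstacle I anticipate is purely combinatorial rather than conceptual: verifying that the particular interleaving of $\mathbf{c}_{V^{n^{\prime}}}$ and $\operatorname*{id}_{V}$ written in \eqref{nc} is the one that actually arises, i.e. that the permutation realized by the left-hand composite equals the permutation realized by the right-hand composite (namely the full reversal on $2n^{\prime}-1$ letters), and that the medial maps line up correctly at each stage. In the binary case this is the elementary identity $s_1 s_2 s_1 = s_2 s_1 s_2$, but for general $n^{\prime}$ one needs the analogous relation in the appropriate ``higher'' Coxeter-like structure; I would handle it by an explicit induction on $n^{\prime}$, checking the base case $n^{\prime}=2$ against \eqref{cii} and the case $n^{\prime}=3$ directly, then showing the general step reduces to the compatibility lemmas plus associativity of the $n^{\prime}$-ary medial map. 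The analytic content --- that each $\mathbf{c}_{V^{n^{\prime}}}$ intertwines the module structures --- is already encoded in the $n^{\prime}$-ary quasitriangularity relations, so beyond these the proof is a diagram chase of the same flavor as the binary derivation sketched after \eqref{cii}.
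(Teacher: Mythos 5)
Your proposal inverts the paper's logic, and within this framework that inversion is a genuine gap rather than an alternative route. The paper's own proof of \eqref{nc} is a purely combinatorial appeal to the associative quiver (Post-like quiver) technique of \cite{dup2018a}: the admissible interleavings of the reversal morphisms $\mathbf{c}_{V^{n^{\prime}}}$ with identities, and hence the shape of the $n^{\prime}$-ary braid relation with $\left(n^{\prime}+1\right)$ brackets of $\left(2n^{\prime}-1\right)$ factors, are generated from polyadic associativity alone, before any $R$-matrix or braiding is introduced. Immediately after the ternary example the paper states that it follows the \emph{opposite} way to the standard binary derivation of \cite{dri89}: equations \eqref{ccc}--\eqref{cc2} (i.e.\ the braid equation you are asked to prove) are the input from which the polyadic $R$-matrix equations and the quasitriangularity (quasipolyangularity) conditions \eqref{qp1}--\eqref{qp3} are subsequently extracted. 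Your plan takes as given precisely the ``higher analogs of \eqref{drr1}--\eqref{drr2}'' and the $n^{\prime}$-ary versions of \eqref{cv1}--\eqref{cv2}, which in this paper are outputs, not hypotheses; at the point where the proposition is stated they do not exist, so the argument is circular relative to the intended statement, which concerns the form of the braid relation for general module morphisms $\mathbf{c}_{V_{1}\ldots V_{n^{\prime}}}$.

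Even taken on its own terms, the construction you sketch would not go through. Defining $\mathbf{c}_{V^{n^{\prime}}}$ through a $1$-place action of the components of $\mathit{R}^{\left(n^{\prime}\right)}$, mirroring \eqref{cr} (i.e.\ as in \eqref{cvr}), is explicitly flagged in the paper as inconsistent with the polyadic analog of quasitriangularity, because polyadic almost co-commutativity \eqref{mrr} involves $\left(n-1\right)$ copies of $\mathit{R}^{\left(n^{\prime}\right)}$; this forces the $\left(n-1\right)$-place action \eqref{cvv} instead. Moreover, by Remark \ref{rem-qr} only the first and last quasipolyangularity relations can be written through the extended $R$-matrix, so the full family of ``slotted'' compatibility lemmas your substitution argument needs is not available, and by Remark \ref{rem-tau} the polyadic twist $\tau_{op}^{\left(n^{\prime}\right)}$ is not unique, so the identification of the reversal with ``the'' medially allowed twist is not automatic. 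Finally, the combinatorial heart of the claim --- that both sides of \eqref{nc} are admissible compositions realizing the same map, with the stated count of brackets and multipliers (and, per Remark \ref{rem-add}, possibly supplemented by extra relations for particular $n^{\prime}$) --- is exactly what the associative quiver technique supplies; your proposed induction ``reducing to the compatibility lemmas plus associativity of the medial map'' leaves this step unproved while resting on the unavailable lemmas.
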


\begin{proof}
Use the associative quiver technique from \cite{dup2018a} (The Post-like
quiver in \textbf{Section 6}).
\end{proof}

\begin{remark}
\label{rem-add}There can be additional equations depending on the concrete
values of $n^{\prime}$ which can contain a different number of brackets
determined by the corresponding diagram commutation.
\end{remark}

\begin{example}
In case $n^{\prime}=3$ we have the \textit{ternary braided equation} for
$\mathbf{c}_{V_{1}V_{2}V_{3}}:V_{1}\otimes V_{2}\otimes V_{3}\rightarrow
V_{3}\otimes V_{2}\otimes V_{1}$ on the tensor product of modules
$V_{1}\otimes V_{2}\otimes V_{3}$ $\otimes V_{4}\otimes V_{5}$, as%
\begin{align}
&  \left(  \mathbf{c}_{V_{3}V_{4}V_{5}}\otimes\operatorname*{id}%
\nolimits_{V_{2}}\otimes\operatorname*{id}\nolimits_{V_{1}}\right)
\circ\left(  \operatorname*{id}\nolimits_{V_{3}}\otimes\mathbf{c}_{V_{2}%
V_{5}V_{4}}\otimes\operatorname*{id}\nolimits_{V_{1}}\right)  \circ\left(
\operatorname*{id}\nolimits_{V_{3}}\otimes\operatorname*{id}\nolimits_{V_{2}%
}\otimes\mathbf{c}_{V_{1}V_{4}V_{5}}\right)  \circ\left(  \mathbf{c}%
_{V_{1}V_{2}V_{3}}\otimes\operatorname*{id}\nolimits_{V_{4}}\otimes
\operatorname*{id}\nolimits_{V_{5}}\right)  =\nonumber\\
&  \left(  \operatorname*{id}\nolimits_{V_{5}}\otimes\operatorname*{id}%
\nolimits_{V_{4}}\otimes\mathbf{c}_{V_{1}V_{2}V_{3}}\right)  \circ\left(
\mathbf{c}_{V_{1}V_{4}V_{5}}\otimes\operatorname*{id}\nolimits_{V_{2}}%
\otimes\operatorname*{id}\nolimits_{V_{3}}\right)  \circ\left(
\operatorname*{id}\nolimits_{V_{1}}\otimes\mathbf{c}_{V_{2}V_{5}V_{4}}%
\otimes\operatorname*{id}\nolimits_{V_{3}}\right)  \circ\left(
\operatorname*{id}\nolimits_{V_{1}}\otimes\operatorname*{id}\nolimits_{V_{2}%
}\otimes\mathbf{c}_{V_{3}V_{4}V_{5}}\right)  . \label{ccc}%
\end{align}
The ternary compatibility conditions for $\mathbf{c}_{V_{1}V_{2}V_{3}}$
(corresponding to (\ref{cv1})--(\ref{cv2})) are%
\begin{align}
\left(  \mathbf{c}_{V_{1}V_{4}V_{5}}\otimes\operatorname*{id}\nolimits_{V_{2}%
}\otimes\operatorname*{id}\nolimits_{V_{3}}\right)  \circ\left(
\operatorname*{id}\nolimits_{V_{1}}\otimes\mathbf{c}_{V_{2}V_{5}V_{4}}%
\otimes\operatorname*{id}\nolimits_{V_{3}}\right)  \circ\left(
\operatorname*{id}\nolimits_{V_{1}}\otimes\operatorname*{id}\nolimits_{V_{2}%
}\otimes\mathbf{c}_{V_{3}V_{4}V_{5}}\right)   &  =\mathbf{c}_{V_{1}\otimes
V_{2}\otimes V_{3},V_{4},V_{5}},\label{cc1}\\
\left(  \operatorname*{id}\nolimits_{V_{3}}\otimes\mathbf{c}_{V_{2}V_{5}V_{4}%
}\otimes\operatorname*{id}\nolimits_{V_{1}}\right)  \circ\left(
\operatorname*{id}\nolimits_{V_{3}}\otimes\operatorname*{id}\nolimits_{V_{2}%
}\otimes\mathbf{c}_{V_{1}V_{4}V_{5}}\right)  \circ\left(  \mathbf{c}%
_{V_{1}V_{2}V_{3}}\otimes\operatorname*{id}\nolimits_{V_{4}}\otimes
\operatorname*{id}\nolimits_{V_{5}}\right)   &  =\mathbf{c}_{V_{1},V_{2}%
,V_{3}\otimes V_{4}\otimes V_{5}}. \label{cc2}%
\end{align}

\end{example}

Now we follow the opposite (to the standard \cite{dri89}), but consistent
way: using the equations (\ref{ccc})--(\ref{cc2}) we find polyadic analogs of the
corresponding equations for the $R$-matrix and the quasitriangularity conditions
(\ref{drr1})--(\ref{drr2}), which will fix the comultiplication structure of a
polyadic bialgebra $\mathrm{B}^{\left(  n^{\prime},n\right)  }$.

\subsection{Polyadic almost co-commutativity}

We will see that the almost co-commutativity equations for the $R$-matrix are more
complicated in the polyadic case, because the main condition (\ref{md}) will
have a different form coming from $n$-ary group theory \cite{galmak1}.
Indeed, let $\mathrm{G}^{\left(  n\right)  }=\left\langle G\mid\mu^{\left(
n\right)  }\right\rangle $ be an $n$-ary group and $\mathrm{H}^{\prime
}=\left\langle H^{\prime}\mid\mu^{\left(  n\right)  }\right\rangle $,
$\mathrm{H}^{\prime\prime}=\left\langle H^{\prime\prime}\mid\mu^{\left(
n\right)  }\right\rangle $ are its $n$-ary subgroups. Recall \cite{galmak1}
that $\mathrm{H}^{\prime}$ and $\mathrm{H}^{\prime\prime}$ are
\textit{semiconjugated} in $\mathrm{G}^{\left(  n\right)  }$, if there exist
$g\in G$, such that $\mu^{\left(  n\right)  }\left[  g,h_{1}^{\prime}%
,\ldots,h_{n-1}^{\prime}\right]  =\mu^{\left(  n\right)  }\left[
h_{1}^{\prime\prime},\ldots,h_{n-1}^{\prime\prime},g\right]  $, $h_{i}%
^{\prime}\in H^{\prime}$, $h_{i}^{\prime\prime}\in H^{\prime\prime}$, and if
$g$ can be on any place, then $\mathrm{H}^{\prime}$ and $\mathrm{H}%
^{\prime\prime}$ are \textit{conjugated} in $\mathrm{G}^{\left(  n\right)  }$.
Based on this notion and on analogy with (\ref{me}), we can \textquotedblleft
polyadize\textquotedblright\ the almost co-commutativity condition (\ref{md}) in the
following way.

Let $\mathrm{B}^{\left(  n^{\prime},n\right)  }=\left\langle B\mid\mu^{\left(
n\right)  },\Delta^{\left(  n^{\prime}\right)  }\right\rangle $, be a polyadic
bialgebra (see \textbf{Definition} \ref{def-bialg}), and the opposite
comultiplication $\Delta_{cop}^{\left(  n^{\prime}\right)  }=\tau
_{op}^{\left(  n^{\prime}\right)  }\circ\Delta^{\left(  n^{\prime}\right)  }$,
where $\tau_{op}^{\left(  n^{\prime}\right)  }$ is the polyadic twist
(\ref{top}).

\begin{definition}
A polyadic bialgebra $\mathrm{B}^{\left(  n^{\prime},n\right)  }$ is called
\textit{polyadic sequenced almost co-commutative}, if there exist fixed
$\left(  n-1\right)  $ elements $\mathit{R}_{i}^{\left(  n^{\prime}\right)
}\in B^{\otimes n^{\prime}}$, $i=1,\ldots,n-1$, called a \textit{polyadic }%
$R$\textit{-matrix sequence}, such that%
\begin{align}
&  \mu^{\left(  n\right)  }\left[  \Delta_{cop}^{\left(  n^{\prime}\right)
}\left(  b\right)  ,\mathit{R}_{1}^{\left(  n^{\prime}\right)  }%
,\mathit{R}_{2}^{\left(  n^{\prime}\right)  },\ldots,\mathit{R}_{n-1}^{\left(
n^{\prime}\right)  }\right] \nonumber\\
&  =\mu^{\left(  n\right)  }\left[  \mathit{R}_{1}^{\left(  n^{\prime}\right)
},\Delta^{\left(  n^{\prime}\right)  }\left(  b\right)  ,\mathit{R}%
_{2}^{\left(  n^{\prime}\right)  },\ldots,\mathit{R}_{n-1}^{\left(  n^{\prime
}\right)  }\right] \nonumber\\
&  \vdots\nonumber\\
&  =\mu^{\left(  n\right)  }\left[  \mathit{R}_{1}^{\left(  n^{\prime}\right)
},\mathit{R}_{2}^{\left(  n^{\prime}\right)  },\ldots,\mathit{R}%
_{n-1}^{\left(  n^{\prime}\right)  },\Delta^{\left(  n^{\prime}\right)
}\left(  b\right)  \right]  ,\ \ \ \ \forall b\in B. \label{mndb}%
\end{align}

\end{definition}

\begin{definition}
A polyadic bialgebra $\mathrm{B}^{\left(  n^{\prime},n\right)  }$ is called
\textit{polyadic sequenced almost semico-commutative}, if only the first and
the last relations in (\ref{mndb}) hold %
\begin{equation}
\mu^{\left(  n\right)  }\left[  \Delta_{cop}^{\left(  n^{\prime}\right)
}\left(  b\right)  ,\mathit{R}_{1}^{\left(  n^{\prime}\right)  }%
,\mathit{R}_{2}^{\left(  n^{\prime}\right)  },\ldots,\mathit{R}_{n-1}^{\left(
n^{\prime}\right)  }\right]  =\mu^{\left(  n\right)  }\left[  \mathit{R}%
_{1}^{\left(  n^{\prime}\right)  },\mathit{R}_{2}^{\left(  n^{\prime}\right)
},\ldots,\mathit{R}_{n-1}^{\left(  n^{\prime}\right)  },\Delta^{\left(
n^{\prime}\right)  }\left(  b\right)  \right]  ,\ \ \ \ \forall b\in B.
\label{mnd1}%
\end{equation}

\end{definition}

\begin{remark}
Using $\left(  n-1\right)  $ polyadic $R$-matrices $\mathit{R}_{i}^{\left(
n^{\prime}\right)  }$ is the only way to build a polyadic analog for
the almost commutativity concept, since now there is no binary multiplication.
\end{remark}

The definition (\ref{mndb}) is too general and needs to consider $\left(
n-1\right)  $ different polyadic analogs of the $R$-matrix which might not be unique.
Therefore, in a similar way to the correspondence of the neutral sequence
(\ref{me}) and the polyadic unit (\ref{e}), we arrive at

\begin{definition}
A polyadic bialgebra $\mathrm{B}^{\left(  n^{\prime},n\right)  }=\left\langle
B\mid\mu^{\left(  n\right)  },\Delta^{\left(  n^{\prime}\right)
}\right\rangle $ is called \textit{polyadic almost (semi)co-commutative}, if
there exists \textsf{one} fixed element $\mathit{R}^{\left(  n^{\prime
}\right)  }\in B^{\otimes n^{\prime}}$ called a $n^{\prime}$-\textit{ary }%
$R$\textit{-matrix}, such that%
\begin{equation}
\mu^{\left(  n\right)  }\left[  \Delta_{cop}^{\left(  n^{\prime}\right)
}\left(  b\right)  ,\overset{n-1}{\overbrace{\mathit{R}^{\left(  n^{\prime
}\right)  },\ldots,\mathit{R}^{\left(  n^{\prime}\right)  }}}\right]
=\mu^{\left(  n\right)  }\left[  \overset{n-1}{\overbrace{\mathit{R}^{\left(
n^{\prime}\right)  },\ldots,\mathit{R}^{\left(  n^{\prime}\right)  }}}%
,\Delta^{\left(  n^{\prime}\right)  }\left(  b\right)  \right]
,\ \ \ \ \forall b\in B. \label{mrr}%
\end{equation}

\end{definition}

In components the $n^{\prime}$-ary $R$-matrix $\mathit{R}^{\left(
n^{\prime}\right)  }$ is%
\begin{equation}
\mathit{R}^{\left(  n^{\prime}\right)  }=\sum_{\alpha}\mathsf{r}_{\alpha
}^{\left(  1\right)  }\otimes\ldots\otimes\mathsf{r}_{\alpha}^{\left(
n^{\prime}\right)  },\ \ \ \ \mathsf{r}_{\alpha}^{\left(  i\right)  }\in B.
\label{rn}%
\end{equation}

\begin{remark}
\label{rem-tau} Polyadic almost co-commutativity (\ref{mrr}) can be
expressed in component form, as in the binary case (\ref{mbr}), only if we
know concretely the polyadic twist $\tau_{op}^{\left(  n^{\prime}\right)  }%
\in\mathrm{S}_{n^{\prime}}$ (where $\mathrm{S}_{n^{\prime}}$ is the symmetry
permutation group ON $n^{\prime}$ elements), which is not unique for
arbitrary $n^{\prime}>2$.
\end{remark}

\begin{example}
For $\mathrm{B}^{\left(  3,3\right)  }$ the ternary almost
(semi)co-commutativity (\ref{mrr}) is given by%
\begin{equation}
\mu^{\left(  3\right)  }\left[  \Delta_{cop}^{\left(  3\right)  }\left(
b\right)  ,\mathit{R}^{\left(  3\right)  },\mathit{R}^{\left(  3\right)
}\right]  =\mu^{\left(  3\right)  }\left[  \mathit{R}^{\left(  3\right)
},\mathit{R}^{\left(  3\right)  },\Delta^{\left(  3\right)  }\left(  b\right)
\right]  ,\ \ \ \ \forall b\in B, \label{md3}%
\end{equation}
which with $\tau_{op}^{\left(  3\right)  }=\binom{123}{321}$ becomes, in components,%
\begin{align}
&  \sum_{\left[  b\right]  }\sum_{\alpha,\beta}\mu^{\left(  3\right)  }\left[
b_{\left[  3\right]  },\mathsf{r}_{\alpha}^{\left(  1\right)  },\mathsf{r}%
_{\beta}^{\left(  1\right)  }\right]  \otimes\mu^{\left(  3\right)  }\left[
b_{\left[  2\right]  },\mathsf{r}_{\alpha}^{\left(  2\right)  },\mathsf{r}%
_{\beta}^{\left(  2\right)  }\right]  \otimes\mu^{\left(  3\right)  }\left[
b_{\left[  1\right]  },\mathsf{r}_{\alpha}^{\left(  3\right)  },\mathsf{r}%
_{\beta}^{\left(  3\right)  }\right] \nonumber\\
&  =\sum_{\left[  b\right]  ^{\prime}}\sum_{\alpha^{\prime}\beta^{\prime}}%
\mu^{\left(  3\right)  }\left[  \mathsf{r}_{\alpha^{\prime}}^{\left(
1\right)  },\mathsf{r}_{\beta^{\prime}}^{\left(  1\right)  },b_{\left[
1\right]  ^{\prime}}\right]  \otimes\mu^{\left(  3\right)  }\left[
\mathsf{r}_{\alpha^{\prime}}^{\left(  2\right)  },\mathsf{r}_{\beta^{\prime}%
}^{\left(  2\right)  },b_{\left[  2\right]  ^{\prime}}\right]  \otimes
\mu^{\left(  3\right)  }\left[  \mathsf{r}_{\alpha^{\prime}}^{\left(
3\right)  },\mathsf{r}_{\beta^{\prime}}^{\left(  3\right)  },b_{\left[
3\right]  ^{\prime}}\right]  . \label{m3}%
\end{align}

\end{example}

\begin{example}
In the exotic mixed case $\mathrm{B}^{\left(  4,3\right)  }$ where the polyadic
twist \textquotedblleft without fixed points\textquotedblright\ (\ref{top}) is
$\tau_{op}^{\left(  4\right)  }=\binom{1234}{3142}$, the polyadic almost
co-commutativity (\ref{mrr}) becomes%
\begin{align}
&  \sum_{\left[  b\right]  }\sum_{\alpha,\beta}\mu^{\left(  3\right)  }\left[
b_{\left[  3\right]  },\mathsf{r}_{\alpha}^{\left(  1\right)  },\mathsf{r}%
_{\beta}^{\left(  1\right)  }\right]  \otimes\mu^{\left(  3\right)  }\left[
b_{\left[  1\right]  },\mathsf{r}_{\alpha}^{\left(  2\right)  },\mathsf{r}%
_{\beta}^{\left(  2\right)  }\right]  \otimes\mu^{\left(  3\right)  }\left[
b_{\left[  4\right]  },\mathsf{r}_{\alpha}^{\left(  3\right)  },\mathsf{r}%
_{\beta}^{\left(  3\right)  }\right]  \otimes\mu^{\left(  3\right)  }\left[
b_{\left[  2\right]  },\mathsf{r}_{\alpha}^{\left(  4\right)  },\mathsf{r}%
_{\beta}^{\left(  4\right)  }\right]  =\nonumber\\
&  \sum_{\left[  b\right]  ^{\prime}}\sum_{\alpha^{\prime}\beta^{\prime}}%
\mu^{\left(  3\right)  }\left[  \mathsf{r}_{\alpha^{\prime}}^{\left(
1\right)  },\mathsf{r}_{\beta^{\prime}}^{\left(  1\right)  },b_{\left[
1\right]  ^{\prime}}\right]  \otimes\mu^{\left(  3\right)  }\left[
\mathsf{r}_{\alpha^{\prime}}^{\left(  2\right)  },\mathsf{r}_{\beta^{\prime}%
}^{\left(  2\right)  },b_{\left[  2\right]  ^{\prime}}\right]  \otimes
\mu^{\left(  3\right)  }\left[  \mathsf{r}_{\alpha^{\prime}}^{\left(
3\right)  },\mathsf{r}_{\beta^{\prime}}^{\left(  3\right)  },b_{\left[
3\right]  ^{\prime}}\right]  \otimes\mu^{\left(  3\right)  }\left[
\mathsf{r}_{\alpha^{\prime}}^{\left(  4\right)  },\mathsf{r}_{\beta^{\prime}%
}^{\left(  4\right)  },b_{\left[  4\right]  ^{\prime}}\right]  . \label{m4}%
\end{align}

\end{example}

\subsection{Equations for the $n^{\prime}$-ary $R$-matrix}

Here we consider the most consistent way (from a categorical viewpoint) to
derive equations for the polyadic $R$-matrix, in other words, through using the
braided equation (\ref{cii}) (and $n^{\prime}$-ary braided equation
(\ref{nc})) with the concrete choice of the braiding $\mathbf{c}%
_{V^{n^{\prime}}}$.

Suppose that the $n^{\prime}$-ary braiding $\mathbf{c}_{V^{n^{\prime}}}$ is
defined still by a $1$-place action $\rho^{\left(  1\right)  }$, as in the
binary case (\ref{cr}). At first glance, we could define the braiding (similar
to (\ref{cr}))%
\begin{align}
&  \mathbf{c}_{V_{1}\ldots V_{n^{\prime}}}\circ\left(  v_{1}\otimes
\ldots\otimes v_{n^{\prime}}\right)  =\tau_{V_{1}\ldots V_{n^{\prime}}}%
\circ\mathit{R}^{\left(  n^{\prime}\right)  }\circ\left(  v_{1}\otimes
\ldots\otimes v_{n^{\prime}}\right) \nonumber\\
&  =\tau_{V_{1}\ldots V_{n^{\prime}}}\circ\left(  \sum_{\alpha}\rho^{\left(
1\right)  }\left(  \mathsf{r}_{\alpha}^{\left(  1\right)  }\mid v_{1}\right)
\otimes\ldots\otimes\rho^{\left(  1\right)  }\left(  \mathsf{r}_{\alpha
}^{\left(  n^{\prime}\right)  }\mid v_{n^{\prime}}\right)  \right)
,\ \ v_{i}\in V_{i},\ \ \mathsf{r}_{\alpha}^{\left(  i\right)  }\in B,
\label{cvr}%
\end{align}
where $\rho^{\left(  1\right)  }:B\otimes V_{i}\rightarrow V_{i}$ is the
$1$-place action (see (\ref{r})). We recall that only the $n$-ary composition
of $1$-place actions ($n$ is the arity of multiplication $\mu^{\left(
n\right)  }$) is defined here (see \cite{dup2017})%
\begin{equation}
\overset{n}{\overbrace{\rho^{\left(  1\right)  }\left(  b_{1}\mid\rho^{\left(
1\right)  }\left(  b_{2}\ldots\rho^{\left(  1\right)  }\left(  b_{n}\mid
v\right)  \right)  \right)  }}=\rho^{\left(  1\right)  }\left(  \mu^{\left(
n\right)  }\left[  b_{1},\ldots b_{n}\right]  \mid v\right)  ,\ \ b_{i}\in
B,\ \ v\in V. \label{rbr}%
\end{equation}

As in the binary case (\ref{r12})--(\ref{r23}), we need the \textquotedblleft
extended\textquotedblright\ polyadic $R$-matrix.

\begin{remark}
\label{rem-unit}The standard definition of the \textquotedblleft
extended\textquotedblright\ $n^{\prime}$-ary $R$-matrix can be possible, if
the algebra $\left\langle B\mid\mu^{\left(  n\right)  }\right\rangle $
contains \textsf{one} polyadic unit (element) $e_{B}$, because in the polyadic
case there are new intriguing possibilities (which did not exist in the binary case)
of having several units, or even where all elements are units (see the discussion after
(\ref{me}) and \cite{dup2018a}).
\end{remark}

\begin{definition}
The \textquotedblleft extended\textquotedblright\ form of the $n^{\prime}$-ary
$R$-matrix is defined by $\mathcal{R}_{i_{1}\ldots i_{n^{\prime}}}^{\left(
2n^{\prime}-1\right)  }\in B^{\otimes\left(  2n^{\prime}-1\right)  }$, such
that%
\begin{equation}
\mathcal{R}_{i_{1}\ldots i_{n^{\prime}}}^{\left(  2n^{\prime}-1\right)  }%
=\sum_{\alpha}e_{B}\otimes\ldots\otimes\mathsf{r}_{\alpha}^{\left(
i_{1}\right)  }\otimes\ldots\otimes\mathsf{r}_{\alpha}^{\left(  i_{n^{\prime}%
}\right)  }\otimes\ldots\otimes e_{B},\ \ \ i_{1},\ldots,i_{n^{\prime}}%
\in\left\{  1,\ldots2n^{\prime}-1\right\}  \label{rext}%
\end{equation}
where $\mathsf{r}_{\alpha}^{\left(  i_{k}\right)  }$ are on the $i_{k}$-place.
\end{definition}

In this way we can express in terms of the \textquotedblleft
extended\textquotedblright\ $n^{\prime}$-ary $R$-matrix (\ref{rext}) the
$n^{\prime}$-ary braided equation (\ref{nc}), in full analogy with the binary
case (\ref{rrr}).

\begin{example}
For the ternary case%
\begin{align}
&  \mathbf{c}_{V_{1}V_{2}V_{3}}\circ\left(  v_{1}\otimes v_{2}\otimes
v_{3}\right)  =\tau_{V_{1}V_{2}V_{3}}\circ\mathit{R}^{\left(  3\right)  }%
\circ\left(  v_{1}\otimes v_{2}\otimes v_{3}\right) \nonumber\\
&  =\sum_{\alpha}\rho^{\left(  1\right)  }\left(  \mathsf{r}_{\alpha}^{\left(
1\right)  }\mid v_{3}\right)  \otimes\rho^{\left(  1\right)  }\left(
\mathsf{r}_{\alpha}^{\left(  2\right)  }\mid v_{2}\right)  \otimes
\rho^{\left(  1\right)  }\left(  \mathsf{r}_{\alpha}^{\left(  3\right)  }\mid
v_{1}\right)  ,\ \ v_{i}\in V_{i},\ \ \mathsf{r}_{\alpha}^{\left(  i\right)
}\in B, \label{cv3}%
\end{align}
and we define $\mathcal{R}_{i_{1}i_{2}i_{3}}^{\left(  5\right)  }$ by
(\ref{rext}), $i_{1},i_{2},i_{3}\in\left\{  1,\ldots,5\right\}  $,
$\tau_{V_{1}V_{2}V_{3}}=\binom{123}{321}$, and consider the ternary braid
equation (\ref{ccc}). Using (\ref{rbr}) we obtain (informally)%
\begin{equation}
\mathcal{R}_{123}^{\left(  5\right)  }\mathcal{R}_{145}^{\left(  5\right)
}\mathcal{R}_{254}^{\left(  5\right)  }\mathcal{R}_{345}^{\left(  5\right)
}=\mathcal{R}_{345}^{\left(  5\right)  }\mathcal{R}_{254}^{\left(  5\right)
}\mathcal{R}_{145}^{\left(  5\right)  }\mathcal{R}_{123}^{\left(  5\right)  }.
\label{r5}%
\end{equation}

\end{example}

\begin{remark}
Unfortunately, a \textquotedblleft linear\textquotedblright\
$\mathit{R}^{\left(  n^{\prime}\right)  }$ $n^{\prime}$-ary braiding
$\mathbf{c}_{V_{1}\ldots V_{n^{\prime}}}$ (as in (\ref{cvr}) and (\ref{cv3}))
is not consistent with the polyadic analog of the quasitriangularity equations
(\ref{cv1})--(\ref{cv2}), because the polyadic almost co-commutativity
(\ref{mrr}) contains $\left(  n-1\right)  $ copies of $n^{\prime}$-ary
$R$-matrix $\mathit{R}^{\left(  n^{\prime}\right)  }$.
\end{remark}

Therefore, in order to agree with (\ref{mrr}), instead of (\ref{cvr}), we have

\begin{definition}
The polyadic braiding $\mathbf{c}_{V_{1}\ldots V_{n^{\prime}}}$ is defined by%
\begin{align}
&  \mathbf{c}_{V_{1}\ldots V_{n^{\prime}}}\circ\left(  v_{1}\otimes
\ldots\otimes v_{n^{\prime}}\right)  =\tau_{V_{1}\ldots V_{n^{\prime}}}%
\circ\rho^{\left(  n-1\right)  }\left(  \overset{n-1}{\overbrace
{\mathit{R}^{\left(  n^{\prime}\right)  },\ldots,\mathit{R}^{\left(
n^{\prime}\right)  }}}\mid\left(  v_{1}\otimes\ldots\otimes v_{n^{\prime}%
}\right)  \right) \nonumber\\
&  =\tau_{V_{1}\ldots V_{n^{\prime}}}\circ\left(  \sum_{\alpha_{1}%
,\ldots\alpha_{n-1}}\rho^{\left(  n-1\right)  }\left(  \mathsf{r}_{\alpha_{1}%
}^{\left(  1\right)  },\ldots\mathsf{r}_{\alpha_{n-1}}^{\left(  1\right)
}\mid v_{1}\right)  \otimes\ldots\otimes\rho^{\left(  n-1\right)  }\left(
\mathsf{r}_{\alpha_{1}}^{\left(  n^{\prime}\right)  },\ldots\mathsf{r}%
_{\alpha_{n-1}}^{\left(  n^{\prime}\right)  }\mid v_{n^{\prime}}\right)
\right)  ,\nonumber\\
&  v_{i}\in V_{i},\ \ \mathsf{r}_{\alpha}^{\left(  i\right)  }\in B,
\label{cvv}%
\end{align}
where $\rho^{\left(  n-1\right)  }:B^{n-1}\otimes V\rightarrow V$ the $\left(
n-1\right)  $-place action (see (\ref{r})).
\end{definition}

\begin{remark}
The twist of the modules $\tau_{V_{1}\ldots V_{n^{\prime}}}$ should be
compatible with the polyadic twist $\tau_{op}^{\left(  n^{\prime}\right)  }$
in (\ref{mndb}). In the binary case they are both the same flip $\binom
{12}{21}$, but in the $n^{\prime}$-ary case they can be different.
\end{remark}

\begin{example}
Consider the ternary braided equation (\ref{ccc}), but now for the braiding
$\mathbf{c}_{V_{1}V_{2}V_{3}}$, instead of (\ref{cv3}), where we have%
\begin{align}
&  \mathbf{c}_{V_{1}V_{2}V_{3}}\circ\left(  v_{1}\otimes v_{2}\otimes
v_{3}\right)  =\tau_{V_{1}V_{2}V_{3}}\circ\rho^{\left(  2\right)  }\left(
\mathit{R}^{\left(  3\right)  },\mathit{R}^{\left(  3\right)  }\mid\left(
v_{1}\otimes v_{2}\otimes v_{3}\right)  \right) \nonumber\\
&  =\sum_{\alpha,\beta}\rho^{\left(  2\right)  }\left(  \mathsf{r}_{\alpha
}^{\left(  3\right)  },\mathsf{r}_{\beta}^{\left(  3\right)  }\mid
v_{3}\right)  \otimes\rho^{\left(  2\right)  }\left(  \mathsf{r}_{\alpha
}^{\left(  2\right)  },\mathsf{r}_{\beta}^{\left(  2\right)  }\mid
v_{2}\right)  \otimes\rho^{\left(  2\right)  }\left(  \mathsf{r}_{\alpha
}^{\left(  1\right)  },\mathsf{r}_{\beta}^{\left(  1\right)  }\mid
v_{1}\right)  ,\ \ v_{i}\in V_{i},\ \ \mathsf{r}_{\alpha,\beta}^{\left(
i\right)  }\in B, \label{cvv2}%
\end{align}
where $\rho^{\left(  2\right)  }:B\otimes B\otimes V\rightarrow V$ is a
$2$-place action (\ref{r}). In this way (\ref{cvv2}) is consistent with
(\ref{md3}). In each place of the $2$-place action $\rho^{\left(
2\right)  }$ we then obtain the relation (\ref{r5}).
\end{example}

\subsection{Polyadic triangularity}

A polyadic analog of triangularity \cite{dri0} can be defined, if we
rewrite (\ref{drr2}) as%
\begin{equation}
\left(  \operatorname*{id}\nolimits_{B}\otimes\Delta\right)  \left(
\mathit{R}\right)  =\mu\left[  \mathcal{R}_{13},\mathcal{R}_{12}\right]
\equiv\sum_{\alpha,\beta}\mu\circ\tau_{op}\left[  \mathsf{r}_{\alpha}^{\left(
1\right)  },\mathsf{r}_{\beta}^{\left(  1\right)  }\right]  \otimes
\mathsf{r}_{\alpha}^{\left(  2\right)  }\otimes\mathsf{r}_{\beta}^{\left(
2\right)  }, \label{idr}%
\end{equation}
where $\tau_{op}$ is the binary twist. Instead of the $R$-matrix formulation
(the left equality in (\ref{idr})), we use the component approach by
\cite{radford}, and propose the following

\begin{definition}
A polyadic almost co-commutative bialgebra $\mathrm{B}^{\left(  n^{\prime
},n\right)  }=\left\langle B\mid\mu^{\left(  n\right)  },\Delta^{\left(
n^{\prime}\right)  }\right\rangle $ with the polyadic $R$-matrix
$\mathit{R}^{\left(  n^{\prime}\right)  }=\sum_{\alpha}\mathsf{r}_{\alpha
}^{\left(  1\right)  }\otimes\ldots\otimes\mathsf{r}_{\alpha}^{\left(
n^{\prime}\right)  },\ \ \ \ \mathsf{r}_{\alpha}^{\left(  i\right)  }\in B$ is
called \textit{quasipolyangular}, if the following $n^{\prime}$ relations hold%
\begin{align}
&  \sum_{\alpha}\Delta^{\left(  n^{\prime}\right)  }\left(  \mathsf{r}%
_{\alpha}^{\left(  1\right)  }\right)  \otimes\mathsf{r}_{\alpha}^{\left(
2\right)  }\otimes\ldots\otimes\mathsf{r}_{\alpha}^{\left(  n^{\prime}\right)
}=\sum_{\alpha_{1},\ldots\alpha_{n^{\prime}}}\mathsf{r}_{\alpha_{1}}^{\left(
1\right)  }\otimes\mathsf{r}_{\alpha_{2}}^{\left(  1\right)  }\otimes
\ldots\otimes\mathsf{r}_{\alpha_{n^{\prime}}}^{\left(  1\right)  }\nonumber\\
&  \otimes\left(  \mu^{\left(  n\right)  }\right)  ^{\circ\ell}\left[
\mathsf{r}_{\alpha_{1}}^{\left(  2\right)  }\otimes\mathsf{r}_{\alpha_{2}%
}^{\left(  2\right)  }\otimes\ldots\otimes\mathsf{r}_{\alpha_{n^{\prime}}%
}^{\left(  2\right)  }\right]  \otimes\ldots\otimes\left(  \mu^{\left(
n\right)  }\right)  ^{\circ\ell}\left[  \mathsf{r}_{\alpha_{1}}^{\left(
n^{\prime}\right)  }\otimes\mathsf{r}_{\alpha_{2}}^{\left(  n^{\prime}\right)
}\otimes\ldots\otimes\mathsf{r}_{\alpha_{n^{\prime}}}^{\left(  n^{\prime
}\right)  }\right]  , \label{qp1}%
\end{align}%
\begin{align}
&  \sum_{\alpha}\mathsf{r}_{\alpha}^{\left(  1\right)  }\otimes\Delta^{\left(
n^{\prime}\right)  }\left(  \mathsf{r}_{\alpha}^{\left(  2\right)  }\right)
\otimes\ldots\otimes\mathsf{r}_{\alpha}^{\left(  n^{\prime}\right)  }%
=\sum_{\alpha_{1},\ldots\alpha_{n^{\prime}}}\left(  \mu^{\left(  n\right)
}\right)  ^{\circ\ell}\circ\tau_{op}^{\left(  n^{\prime}\right)  }\left[
\mathsf{r}_{\alpha_{1}}^{\left(  1\right)  }\otimes\mathsf{r}_{\alpha_{2}%
}^{\left(  1\right)  }\otimes\ldots\otimes\mathsf{r}_{\alpha_{n^{\prime}}%
}^{\left(  1\right)  }\right] \nonumber\\
&  \otimes\mathsf{r}_{\alpha_{1}}^{\left(  2\right)  }\otimes\mathsf{r}%
_{\alpha_{2}}^{\left(  2\right)  }\otimes\ldots\otimes\mathsf{r}%
_{\alpha_{n^{\prime}}}^{\left(  2\right)  }\otimes\ldots\otimes\left(
\mu^{\left(  n\right)  }\right)  ^{\circ\ell}\left[  \mathsf{r}_{\alpha_{1}%
}^{\left(  n^{\prime}\right)  }\otimes\mathsf{r}_{\alpha_{2}}^{\left(
n^{\prime}\right)  }\otimes\ldots\otimes\mathsf{r}_{\alpha_{n^{\prime}}%
}^{\left(  n^{\prime}\right)  }\right]  , \label{qp2}%
\end{align}%
\[
\vdots
\]%
\begin{align}
&  \sum_{\alpha}\mathsf{r}_{\alpha}^{\left(  1\right)  }\otimes\mathsf{r}%
_{\alpha}^{\left(  2\right)  }\otimes\ldots\otimes\Delta^{\left(  n^{\prime
}\right)  }\left(  \mathsf{r}_{\alpha}^{\left(  n^{\prime}\right)  }\right)
=\sum_{\alpha_{1},\ldots\alpha_{n^{\prime}}}\left(  \mu^{\left(  n\right)
}\right)  ^{\circ\ell}\circ\tau_{op}^{\left(  n^{\prime}\right)  }\left[
\mathsf{r}_{\alpha_{1}}^{\left(  1\right)  }\otimes\mathsf{r}_{\alpha_{2}%
}^{\left(  1\right)  }\otimes\ldots\otimes\mathsf{r}_{\alpha_{n^{\prime}}%
}^{\left(  1\right)  }\right] \nonumber\\
&  \otimes\left(  \mu^{\left(  n\right)  }\right)  ^{\circ\ell}\circ\tau
_{op}^{\left(  n^{\prime}\right)  }\left[  \mathsf{r}_{\alpha_{1}}^{\left(
2\right)  }\otimes\mathsf{r}_{\alpha_{2}}^{\left(  2\right)  }\otimes
\ldots\otimes\mathsf{r}_{\alpha_{n^{\prime}}}^{\left(  2\right)  }\right]
\otimes\ldots\otimes\mathsf{r}_{\alpha_{1}}^{\left(  n^{\prime}\right)
}\otimes\mathsf{r}_{\alpha_{2}}^{\left(  n^{\prime}\right)  }\otimes
\ldots\otimes\mathsf{r}_{\alpha_{n^{\prime}}}^{\left(  n^{\prime}\right)  },
\label{qp3}%
\end{align}
where $\tau_{op}^{\left(  n^{\prime}\right)  }$ is the polyadic twist map
(\ref{top}). The arity shape of a quasipolyangular $\mathrm{B}^{\left(
n^{\prime},n\right)  }$ is fixed by
\begin{equation}
n^{\prime}=\ell\left(  n-1\right)  +1,\ \ \ \ell\in\mathbb{N}. \label{nln}%
\end{equation}

\end{definition}

\begin{remark}
\label{rem-qr}As opposed to the binary case (\ref{drr1})--(\ref{drr2}),
the right hand sides here can be expressed in terms of the extended $R$-matrix
in the first equation (\ref{qp1}) and the last one (\ref{qp3}) only, because
in the intermediate equations the sequences of $R$-matrix elements are
permuted. For instance, it is clear that the binary product $\sum
_{\alpha,\beta}\left(  \mathsf{r}_{\beta}^{\left(  1\right)  }\otimes
\mathsf{r}_{\alpha}^{\left(  2\right)  }\otimes e_{B}\right)  \cdot\left(
\mathsf{r}_{\alpha}^{\left(  1\right)  }\otimes\mathsf{r}_{\beta}^{\left(
2\right)  }\otimes e_{B}\right)  =\sum_{\alpha,\beta}\left(  \mathsf{r}%
_{\beta}^{\left(  1\right)  }\cdot\mathsf{r}_{\alpha}^{\left(  1\right)
}\otimes\mathsf{r}_{\alpha}^{\left(  2\right)  }\cdot\mathsf{r}_{\beta
}^{\left(  2\right)  }\otimes e_{B}\right)  $ cannot be expressed in terms of
the extended binary $R$-matrix (\ref{r12}).
\end{remark}

\subsection{Almost co-medial polyadic bialgebras}

The previous considerations showed that co-commutativity and almost
co-commutativity in the polyadic case are not unique and do not describe the
bialgebras to the fullest extent. This happens because mediality is a more
general and consequent property of polyadic algebraic structures, while
commutativity can be treated as a particular case of it (see \textbf{Subsection}
\ref{subsec-med} and (\ref{ti})). Therefore, we propose here to deform
\textsf{co-mediality} (rather than co-commutativity as in \cite{dri0,dri2,dri89}).

Let $\mathrm{B}^{\left(  n^{\prime},n\right)  }=\left\langle B\mid\mu^{\left(
n\right)  },\Delta^{\left(  n^{\prime}\right)  }\right\rangle $, be a polyadic
bialgebra (see \textbf{Definition} \ref{def-bialg}). Now we deform the
co-mediality condition (\ref{com}) in a similar way to the polyadic $R$-matrix
(\ref{mrr}).

\begin{definition}
A polyadic bialgebra $\mathrm{B}^{\left(  n^{\prime},n\right)  }$ is called
\textit{polyadic sequenced almost co-medial}, if there exist $\left(
n^{\prime}-1\right)  $ fixed elements $\mathit{M}_{i}^{\left(  n^{\prime
2}\right)  }\in B^{\otimes n^{\prime2}}$, $i=1,\ldots,n-1$, called a
\textit{polyadic }$M$-\textit{matrix sequence}, such that (see (\ref{com}) and
(\ref{mrr}))%
\begin{align}
&  \mu^{\left(  n\right)  }\left[  \tau_{medial}^{\left(  n^{\prime}%
,n^{\prime}\right)  }\circ\left(  \left(  \Delta^{\left(  n^{\prime}\right)
}\right)  ^{\otimes n^{\prime}}\right)  \circ\Delta^{\left(  n^{\prime
}\right)  }\left(  b\right)  ,\mathit{M}_{1}^{\left(  n^{\prime2}\right)
},\mathit{M}_{2}^{\left(  n^{\prime2}\right)  },\ldots,\mathit{M}%
_{n-1}^{\left(  n^{\prime2}\right)  }\right] \nonumber\\
&  =\mu^{\left(  n\right)  }\left[  \mathit{M}_{1}^{\left(  n^{\prime
2}\right)  },\left(  \left(  \Delta^{\left(  n^{\prime}\right)  }\right)
^{\otimes n^{\prime}}\right)  \circ\Delta^{\left(  n^{\prime}\right)  }\left(
b\right)  ,\mathit{M}_{2}^{\left(  n^{\prime2}\right)  },\ldots,\mathit{M}%
_{n-1}^{\left(  n^{\prime2}\right)  }\right] \nonumber\\
&  =\mu^{\left(  n\right)  }\left[  \mathit{M}_{1}^{\left(  n^{\prime
2}\right)  },\mathit{M}_{2}^{\left(  n^{\prime2}\right)  },\ldots
,\mathit{M}_{n-1}^{\left(  n^{\prime2}\right)  },\left(  \left(
\Delta^{\left(  n^{\prime}\right)  }\right)  ^{\otimes n^{\prime}}\right)
\circ\Delta^{\left(  n^{\prime}\right)  }\left(  b\right)  \right]
,\ \ \ \ \forall b\in B, \label{mnt}%
\end{align}
where $\mathbf{\tau}_{medial}^{\left(  n^{\prime},n^{\prime}\right)  }$ is the
polyadic medial map (\ref{an}).
\end{definition}

\begin{definition}
A polyadic bialgebra $\mathrm{B}^{\left(  n^{\prime},n\right)  }$ is called
\textit{polyadic sequenced almost (semi)co-medial}, if only the first and the
last relations in (\ref{mnt}) hold%
\begin{align}
&  \mu^{\left(  n\right)  }\left[  \tau_{medial}^{\left(  n^{\prime}%
,n^{\prime}\right)  }\circ\left(  \left(  \Delta^{\left(  n^{\prime}\right)
}\right)  ^{\otimes n^{\prime}}\right)  \circ\Delta^{\left(  n^{\prime
}\right)  }\left(  b\right)  ,\mathit{M}_{1}^{\left(  n^{\prime2}\right)
},\ldots,\mathit{M}_{n-1}^{\left(  n^{\prime2}\right)  }\right] \nonumber\\
&  =\mu^{\left(  n\right)  }\left[  \mathit{M}_{1}^{\left(  n^{\prime
2}\right)  },\ldots,\mathit{M}_{n-1}^{\left(  n^{\prime2}\right)  },\left(
\left(  \Delta^{\left(  n^{\prime}\right)  }\right)  ^{\otimes n^{\prime}%
}\right)  \circ\Delta^{\left(  n^{\prime}\right)  }\left(  b\right)  \right]
,\ \ \ \ \forall b\in B,
\end{align}

\end{definition}

If all the elements in the sequence (similar to the neutral sequence for
$n$-ary groups (\ref{me})) are the same $\mathit{M}_{1}^{\left(  n^{\prime
2}\right)  }=\mathit{M}_{2}^{\left(  n^{\prime2}\right)  }=\ldots
=\mathit{M}_{n-1}^{\left(  n^{\prime2}\right)  }\equiv\mathit{M}^{\left(
n^{\prime2}\right)  }$, we have

\begin{definition}
\label{def-medbi}A polyadic bialgebra $\mathrm{B}^{\left(  n^{\prime
},n\right)  }$ is called \textit{polyadic almost (semi)co-medial}, if there
exist \textsf{one} fixed element $\mathit{M}^{\left(  n^{\prime2}\right)  }\in
B^{\otimes n^{\prime2}}$ called a \textit{polyadic }$M$\textit{-matrix}, such
that (see (\ref{com}) and (\ref{mrr}))%
\begin{align}
&  \mu^{\left(  n\right)  }\left[  \tau_{medial}^{\left(  n^{\prime}%
,n^{\prime}\right)  }\circ\left(  \left(  \Delta^{\left(  n^{\prime}\right)
}\right)  ^{\otimes n^{\prime}}\right)  \circ\Delta^{\left(  n^{\prime
}\right)  }\left(  b\right)  ,\overset{n-1}{\overbrace{\mathit{M}^{\left(
n^{\prime2}\right)  },\ldots,\mathit{M}^{\left(  n^{\prime2}\right)  }}%
}\right] \nonumber\\
&  =\mu^{\left(  n\right)  }\left[  \overset{n-1}{\overbrace{\mathit{M}%
^{\left(  n^{\prime2}\right)  },\ldots,\mathit{M}^{\left(  n^{\prime2}\right)
}}},\left(  \left(  \Delta^{\left(  n^{\prime}\right)  }\right)  ^{\otimes
n^{\prime}}\right)  \circ\Delta^{\left(  n^{\prime}\right)  }\left(  b\right)
\right]  ,\ \ \ \ \forall b\in B. \label{mnt1}%
\end{align}

\end{definition}

\begin{remark}
\label{rem-uniq}The main advantage of the polyadic almost co-mediality property over
polyadic almost co-commutativity is the \textsf{uniqueness} of the medial
map $\mathbf{\tau}_{medial}^{\left(  n,n\right)  }$ and \textsf{nonuniqueness}
of the polyadic twist map $\mathbf{\tau}_{op}^{\left(  \ell_{\tau}\right)  }$
(\ref{top}).
\end{remark}

The polyadic $M$-matrix $\mathit{M}^{\left(  n^{\prime}\right)  }$ in
components is given by%
\begin{equation}
\mathit{M}^{\left(  n^{\prime2}\right)  }=\sum_{\alpha}\mathsf{m}_{\alpha
}^{\left(  1\right)  }\otimes\ldots\otimes\mathsf{m}_{\alpha}^{\left(
n^{\prime2}\right)  },\ \ \ \ \mathsf{m}_{\alpha}^{\left(  i\right)  }\in
B,\ \ i=1,\ldots,n^{\prime2}. \label{pmm}%
\end{equation}

\begin{example}
In the binary case for $\mathrm{B}^{\left(  2,2\right)  }=\left\langle
B\mid\mu,\Delta\right\rangle $ we have an almost co-mediality (\ref{mnt1}) as%
\begin{equation}
\mu\left[  \tau_{medial}\circ\left(  \Delta\otimes\Delta\right)  \circ
\Delta\left(  b\right)  ,\mathit{M}^{\left(  4\right)  }\right]  =\mu\left[
\mathit{M}^{\left(  4\right)  },\left(  \Delta\otimes\Delta\right)
\circ\Delta\left(  b\right)  \right]  ,\ \ \ \ \forall b\in B. \label{mnt2}%
\end{equation}
which gives, in components (cf. for $R$-matrix (\ref{mbr}))%
\begin{align}
&  \sum_{\left[  b\right]  _{\left[  b\right]  }}\sum_{\alpha}\mu\left[
b_{\left[  1\right]  _{\left[  1\right]  }},\mathsf{m}_{\alpha}^{\left(
1\right)  }\right]  \otimes\mu\left[  b_{\left[  2\right]  _{\left[  1\right]
}},\mathsf{m}_{\alpha}^{\left(  2\right)  }\right]  \otimes\mu\left[
b_{\left[  1\right]  _{\left[  2\right]  }},\mathsf{m}_{\alpha}^{\left(
3\right)  }\right]  \otimes\mu\left[  b_{\left[  2\right]  _{\left[  2\right]
}},\mathsf{m}_{\alpha}^{\left(  4\right)  }\right] \nonumber\\
&  =\sum_{\left[  b\right]  _{\left[  b\right]  }^{\prime}}\sum_{\alpha
^{\prime}}\mu\left[  \mathsf{m}_{\alpha^{\prime}}^{\left(  1\right)
},b_{\left[  1\right]  _{\left[  1\right]  }^{\prime}}\right]  \otimes
\mu\left[  \mathsf{m}_{\alpha^{\prime}}^{\left(  2\right)  },b_{\left[
1\right]  _{\left[  2\right]  }^{\prime}}\right]  \otimes\mu\left[
\mathsf{m}_{\alpha^{\prime}}^{\left(  3\right)  },b_{\left[  2\right]
_{\left[  1\right]  }^{\prime}}\right]  \otimes\mu\left[  \mathsf{m}%
_{\alpha^{\prime}}^{\left(  4\right)  },b_{\left[  2\right]  _{\left[
2\right]  }^{\prime}}\right]  . \label{mb2}%
\end{align}

\end{example}

Let us clarify the connection between the almost co-commutativity and
almost co-mediality properties.

\begin{theorem}
If $\mathrm{B}^{\left(  n^{\prime},n\right)  }$ is polyadic almost
(semi)co-commutative with the polyadic twist map $\tau_{op}^{\left(
n^{\prime}\right)  }$ \emph{(\ref{top})} and the $n^{\prime}$-ary $R$-matrix
$\mathit{R}^{\left(  n^{\prime}\right)  }$ \emph{(\ref{rn})}, then
\emph{(\ref{mrr})} can be presented in the \textquotedblleft
medial-like\textquotedblright\ form%
\begin{align}
&  \mu^{\left(  n\right)  }\left[  \tau_{R}^{\left(  n^{\prime},n^{\prime
}\right)  }\circ\left(  \left(  \Delta^{\left(  n^{\prime}\right)  }\right)
^{\otimes n^{\prime}}\right)  \circ\Delta^{\left(  n^{\prime}\right)  }\left(
b\right)  ,\overset{n-1}{\overbrace{\mathit{M}_{R}^{\left(  n^{\prime
2}\right)  },\ldots,\mathit{M}_{R}^{\left(  n^{\prime2}\right)  }}}\right]
\nonumber\\
&  =\mu^{\left(  n\right)  }\left[  \overset{n-1}{\overbrace{\mathit{M}%
_{R}^{\left(  n^{\prime2}\right)  },\ldots,\mathit{M}_{R}^{\left(  n^{\prime
2}\right)  }}},\left(  \left(  \Delta^{\left(  n^{\prime}\right)  }\right)
^{\otimes n^{\prime}}\right)  \circ\Delta^{\left(  n^{\prime}\right)  }\left(
b\right)  \right]  ,\ \ \ \ \forall b\in B, \label{mnr}%
\end{align}
where%
\begin{align}
\tau_{R}^{\left(  n^{\prime},n^{\prime}\right)  }  &  =\overset{n^{\prime}%
}{\overbrace{\tau_{op}^{\left(  n^{\prime}\right)  }\otimes\ldots\otimes
\tau_{op}^{\left(  n^{\prime}\right)  }}},\label{trn}\\
\mathit{M}_{R}^{\left(  n^{\prime2}\right)  }  &  =\overset{n^{\prime}%
}{\overbrace{\mathit{R}^{\left(  n^{\prime}\right)  }\otimes\ldots
\otimes\mathit{R}^{\left(  n^{\prime}\right)  }}}. \label{mrn}%
\end{align}

\end{theorem}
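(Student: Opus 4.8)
The plan is to show that the claimed ``medial-like'' identity \eqref{mnr} is nothing but a repackaging of the polyadic almost (semi)co-commutativity relation \eqref{mrr}, obtained by tensoring that relation with itself $n^{\prime}$ times and reorganizing the factors. First I would start from \eqref{mrr}, which reads $\mu^{\left(  n\right)  }\left[  \Delta_{cop}^{\left(  n^{\prime}\right)  }\left(  b\right)  ,\mathit{R}^{\left(  n^{\prime}\right)  }{}^{n-1}\right]  =\mu^{\left(  n\right)  }\left[  \mathit{R}^{\left(  n^{\prime}\right)  }{}^{n-1},\Delta^{\left(  n^{\prime}\right)  }\left(  b\right)  \right]  $ for all $b\in B$, where $\Delta_{cop}^{\left(  n^{\prime}\right)  }=\tau_{op}^{\left(  n^{\prime}\right)  }\circ\Delta^{\left(  n^{\prime}\right)  }$. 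Applying $\Delta^{\left(  n^{\prime}\right)  }$ (an algebra morphism by \textbf{Definition \ref{def-bialg}}, item 2b) to both sides and using the compatibility \eqref{dm} to push the comultiplications through $\mu^{\left(  n\right)  }$, the left side becomes $\mu^{\left(  n\right)  }$ applied to $\Delta^{\left(  n^{\prime}\right)  }\circ\Delta_{cop}^{\left(  n^{\prime}\right)  }\left(  b\right)  $ together with $n-1$ copies of $\left(  \Delta^{\left(  n^{\prime}\right)  }\right)  ^{\otimes n^{\prime}}\circ\Delta^{\left(  n^{\prime}\right)  }$... no — more precisely, $\Delta^{\left(  n^{\prime}\right)  }\left(  \mathit{R}^{\left(  n^{\prime}\right)  }\right)  =\left(  \mu^{\left(  n\right)  }\right)^{\otimes n^{\prime}}\circ\tau_{medial}^{\left(  n,n^{\prime}\right)  }$ evaluated on $\left(  \Delta^{\left(  n^{\prime}\right)  }\right)^{\otimes n}\left(  \mathit{R}^{\left(  n^{\prime}\right)  }\right)  $, so the $R$-matrix side acquires a $\tau_{medial}$ and a tensor power structure. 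This is where the $n^{\prime}$-fold tensor powers $\tau_{R}^{\left(  n^{\prime},n^{\prime}\right)  }$ \eqref{trn} and $\mathit{M}_{R}^{\left(  n^{\prime2}\right)  }$ \eqref{mrn} appear: the iterated comultiplication $\left(  \Delta^{\left(  n^{\prime}\right)  }\right)^{\otimes n^{\prime}}\circ\Delta^{\left(  n^{\prime}\right)  }$ is exactly the object on which $\tau_{medial}^{\left(  n^{\prime},n^{\prime}\right)  }$ acts in the co-mediality relation \eqref{com}, while $n-1$ copies of $\mathit{R}^{\left(  n^{\prime}\right)  }$ become, after $\Delta^{\left(  n^{\prime}\right)  }$ is applied, the $n-1$ copies of $\mathit{M}_{R}^{\left(  n^{\prime2}\right)  }$.

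The key steps, in order, would be: (1) rewrite \eqref{mrr} componentwise (using \eqref{rn}) and record that $\Delta^{\left(  n^{\prime}\right)  }$ commutes with $\mu^{\left(  n\right)  }$ via \eqref{dm}/\eqref{dnm}; (2) apply $\Delta^{\left(  n^{\prime}\right)  }$ to both sides of \eqref{mrr} and distribute it through $\mu^{\left(  n\right)  }$, producing on each side a single $\mu^{\left(  n\right)  }$ whose first argument is $\left(  \left(  \Delta^{\left(  n^{\prime}\right)  }\right)^{\otimes n^{\prime}}\right)  \circ\Delta^{\left(  n^{\prime}\right)  }$ applied to $\Delta_{cop}^{\left(  n^{\prime}\right)  }\left(  b\right)  $ (left) or to $\Delta^{\left(  n^{\prime}\right)  }\left(  b\right)  $ (right), and whose remaining $n-1$ arguments are $\left(  \left(  \Delta^{\left(  n^{\prime}\right)  }\right)^{\otimes n^{\prime}}\right)  \circ\cdots$ applied to $\mathit{R}^{\left(  n^{\prime}\right)  }$, i.e. $\mathit{M}_{R}^{\left(  n^{\prime2}\right)  }$ of \eqref{mrn}; (3) identify $\left(  \left(  \Delta^{\left(  n^{\prime}\right)  }\right)^{\otimes n^{\prime}}\right)  \circ\Delta^{\left(  n^{\prime}\right)  }\circ\tau_{op}^{\left(  n^{\prime}\right)  }$ with $\tau_{R}^{\left(  n^{\prime},n^{\prime}\right)  }\circ\tau_{medial}^{\left(  n^{\prime},n^{\prime}\right)  }\circ\left(  \left(  \Delta^{\left(  n^{\prime}\right)  }\right)^{\otimes n^{\prime}}\right)  \circ\Delta^{\left(  n^{\prime}\right)  }$, using naturality of the medial map \eqref{an1}--\eqref{an} with respect to $\tau_{op}^{\left(  n^{\prime}\right)  }$ (each $\tau_{op}^{\left(  n^{\prime}\right)  }$ acting on the original $n^{\prime}$ tensor slots becomes, after one more comultiplication, $\tau_{op}^{\left(  n^{\prime}\right)  }$ acting blockwise, i.e. $\tau_{R}^{\left(  n^{\prime},n^{\prime}\right)  }$, commuted past $\tau_{medial}$); (4) read off \eqref{mnr} by matching terms, noting the arity constraint $n^{\prime}=\ell\left(  n-1\right)  +1$ is already forced by \eqref{nln}/\eqref{nsy} and is needed for the iterated maps $\left(  \mu^{\left(  n\right)  }\right)^{\circ\ell}$ to be composable.

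The main obstacle will be step (3): carefully verifying that the polyadic twist $\tau_{op}^{\left(  n^{\prime}\right)  }$, which by \textbf{Remark \ref{rem-tau}} is not unique and is only a distinguished fixed point of $\tau_{medial}^{\left(  n^{\prime},n^{\prime}\right)  }$, does indeed ``lift'' compatibly through the comultiplication to give precisely the blockwise twist $\tau_{R}^{\left(  n^{\prime},n^{\prime}\right)  }=\left(  \tau_{op}^{\left(  n^{\prime}\right)  }\right)^{\otimes n^{\prime}}$ sitting to the left of $\tau_{medial}^{\left(  n^{\prime},n^{\prime}\right)  }$. Concretely, one must check that $\left(  \left(  \Delta^{\left(  n^{\prime}\right)  }\right)^{\otimes n^{\prime}}\right)  \circ\Delta^{\left(  n^{\prime}\right)  }\circ\tau_{op}^{\left(  n^{\prime}\right)  }=\tau_{R}^{\left(  n^{\prime},n^{\prime}\right)  }\circ\left(  \left(  \Delta^{\left(  n^{\prime}\right)  }\right)^{\otimes n^{\prime}}\right)  \circ\Delta^{\left(  n^{\prime}\right)  }$ as maps $B\rightarrow B^{\otimes n^{\prime2}}$, which amounts to a permutation-bookkeeping identity in $\mathrm{S}_{n^{\prime2}}$ relating the medial matrix transposition \eqref{an} and the block permutations; I would handle this by the matrix form \eqref{an1} exactly as in the proof of the \textbf{Assertion} following \eqref{top}, tracking indices $a_{k}^{\left(  l\right)  }$ through the two passes of comultiplication. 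Everything else — distributing $\Delta^{\left(  n^{\prime}\right)  }$ over $\mu^{\left(  n\right)  }$, collecting the $n-1$ $R$-factors into $M_R$, and the arity check — is routine given \eqref{dm}, \eqref{com} and the earlier propositions, so I would state those briefly and concentrate the write-up on the twist-lifting identity.
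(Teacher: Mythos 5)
Your route—applying the comultiplication to both sides of (\ref{mrr}) and pushing it through $\mu^{\left(  n\right)  }$ via the bialgebra compatibility (\ref{dm})—does not produce the objects that actually occur in (\ref{mnr}), and this is a genuine gap rather than a bookkeeping issue. When you distribute $\left(  \Delta^{\left(  n^{\prime}\right)  }\right)  ^{\otimes n^{\prime}}$ over the products in (\ref{mrr}), the $n-1$ copies of the $R$-matrix come out as $\left(  \Delta^{\left(  n^{\prime}\right)  }\right)  ^{\otimes n^{\prime}}\left(  \mathit{R}^{\left(  n^{\prime}\right)  }\right)  $, i.e. the comultiplication of $\mathit{R}^{\left(  n^{\prime}\right)  }$ in every slot; this is \emph{not} the element $\mathit{M}_{R}^{\left(  n^{\prime2}\right)  }=\mathit{R}^{\left(  n^{\prime}\right)  }\otimes\ldots\otimes\mathit{R}^{\left(  n^{\prime}\right)  }$ of (\ref{mrn}), and nothing in the hypotheses (no quasipolyangularity-type relations as in (\ref{qp1})--(\ref{qp3})) lets you identify the two. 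Your step (3) is also false as stated (besides the type mismatch in $\Delta^{\left(  n^{\prime}\right)  }\circ\tau_{op}^{\left(  n^{\prime}\right)  }$): pushing $\left(  \Delta^{\left(  n^{\prime}\right)  }\right)  ^{\otimes n^{\prime}}$ past the twist in $\Delta_{cop}^{\left(  n^{\prime}\right)  }=\tau_{op}^{\left(  n^{\prime}\right)  }\circ\Delta^{\left(  n^{\prime}\right)  }$ yields the permutation of the $n^{\prime}$ blocks of length $n^{\prime}$ induced by $\tau_{op}^{\left(  n^{\prime}\right)  }$, not the slot-wise power $\tau_{R}^{\left(  n^{\prime},n^{\prime}\right)  }$ of (\ref{trn}), with or without the inserted $\tau_{medial}^{\left(  n^{\prime},n^{\prime}\right)  }$ (which does not occur in (\ref{mnr}) anyway). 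Already for $n^{\prime}=2$, with $\Delta\left(  b\right)  =b_{\left[  1\right]  }\otimes b_{\left[  2\right]  }$, your left-hand side is $\left(  b_{\left[  2\right]  }\right)  _{\left[  1\right]  }\otimes\left(  b_{\left[  2\right]  }\right)  _{\left[  2\right]  }\otimes\left(  b_{\left[  1\right]  }\right)  _{\left[  1\right]  }\otimes\left(  b_{\left[  1\right]  }\right)  _{\left[  2\right]  }$ while your right-hand side is $\left(  b_{\left[  2\right]  }\right)  _{\left[  1\right]  }\otimes\left(  b_{\left[  1\right]  }\right)  _{\left[  1\right]  }\otimes\left(  b_{\left[  2\right]  }\right)  _{\left[  2\right]  }\otimes\left(  b_{\left[  1\right]  }\right)  _{\left[  2\right]  }$, so the claimed lifting identity fails.

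The paper's proof goes the other way and avoids both problems. Since (\ref{mrr}) holds for \emph{every} element of $B$, one instantiates it at each Sweedler component $b_{\left[  i\right]  }$ of $\Delta^{\left(  n^{\prime}\right)  }\left(  b\right)  $, $i=1,\ldots,n^{\prime}$, obtaining $n^{\prime}$ relations, and then multiplies these relations tensorially. No comultiplication is ever applied to $\mathit{R}^{\left(  n^{\prime}\right)  }$: the $n^{\prime}$ copies of $\mathit{R}^{\left(  n^{\prime}\right)  }$ sitting in the respective slots assemble by construction into $\mathit{M}_{R}^{\left(  n^{\prime2}\right)  }$, the $n^{\prime}$ copies of $\tau_{op}^{\left(  n^{\prime}\right)  }$ assemble into $\tau_{R}^{\left(  n^{\prime},n^{\prime}\right)  }$, and $\Delta^{\left(  n^{\prime}\right)  }\left(  b_{\left[  1\right]  }\right)  \otimes\ldots\otimes\Delta^{\left(  n^{\prime}\right)  }\left(  b_{\left[  n^{\prime}\right]  }\right)  =\left(  \left(  \Delta^{\left(  n^{\prime}\right)  }\right)  ^{\otimes n^{\prime}}\right)  \circ\Delta^{\left(  n^{\prime}\right)  }\left(  b\right)  $, which is (\ref{mnr}) at once. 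To repair your write-up, replace steps (2)--(3) by this instantiate-and-tensor argument; the bialgebra compatibility (\ref{dm}) and the twist-lifting discussion are then not needed.
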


\begin{proof}
Applying (\ref{mrr}) to each Sweedler component $b_{\left[  i\right]  }$ of
$\Delta^{\left(  n^{\prime}\right)  }\left(  b\right)  $, $i=1,\ldots
,n^{\prime}$, we obtain $n^{\prime}$ relations for the polyadic almost
(semi)co-commutativity. Then multiplying them tensorially, we obtain%
\begin{align*}
&  \left(  \overset{n^{\prime}}{\overbrace{\tau_{op}^{\left(  n^{\prime
}\right)  }\otimes\ldots\otimes\tau_{op}^{\left(  n^{\prime}\right)  }}%
}\right)  \circ\left(  \Delta^{\left(  n^{\prime}\right)  }\left(  b_{\left[
1\right]  }\right)  \otimes\ldots\otimes\Delta^{\left(  n^{\prime}\right)
}\left(  b_{\left[  n^{\prime}\right]  }\right)  \right)  \circ\left(
\overset{n^{\prime}}{\overbrace{\mathit{R}^{\left(  n^{\prime}\right)
}\otimes\ldots\otimes\mathit{R}^{\left(  n^{\prime}\right)  }}}\right) \\
&  =\left(  \overset{n^{\prime}}{\overbrace{\mathit{R}^{\left(  n^{\prime
}\right)  }\otimes\ldots\otimes\mathit{R}^{\left(  n^{\prime}\right)  }}%
}\right)  \circ\left(  \Delta^{\left(  n^{\prime}\right)  }\left(  b_{\left[
1\right]  }\right)  \otimes\ldots\otimes\Delta^{\left(  n^{\prime}\right)
}\left(  b_{\left[  n^{\prime}\right]  }\right)  \right)  ,
\end{align*}
which immediately gives (\ref{mnr}). The converse statement is obvious.
\end{proof}

\begin{corollary}
Polyadic almost co-commutativity is a \textsf{particular case} of
polyadic co-mediality with the special \textquotedblleft
medial-like\textquotedblright\ twist map $\mathbf{\tau}_{R}^{\left(
n^{\prime},n^{\prime}\right)  }$ (\ref{trn}) and the composite $M$-matrix
(\ref{mrn}) consisting of $n^{\prime}$ copies of the $R$-matrix (\ref{rn}).
\end{corollary}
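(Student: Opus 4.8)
The plan is to obtain the corollary directly from the theorem just proved, by a purely structural comparison of two defining relations, with no fresh computation. First I would recall that, by \textbf{Definition} \ref{def-medbi}, polyadic almost (semi)co-mediality of $\mathrm{B}^{(n^{\prime},n)}$ means exactly the single identity (\ref{mnt1}): inside $\mu^{(n)}$ the first entry is $\tau_{medial}^{(n^{\prime},n^{\prime})}\circ\left(\left(\Delta^{(n^{\prime})}\right)^{\otimes n^{\prime}}\right)\circ\Delta^{(n^{\prime})}(b)$ and the remaining $(n-1)$ entries are copies of a fixed $M^{(n^{\prime2})}\in B^{\otimes n^{\prime2}}$. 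The theorem shows that the polyadic almost (semi)co-commutativity relation (\ref{mrr}) is equivalent to (\ref{mnr}), which has precisely this shape once one reads off $\tau_{R}^{(n^{\prime},n^{\prime})}$ from (\ref{trn}) in place of the medial map and $\mathit{M}_{R}^{(n^{\prime2})}$ from (\ref{mrn}) in place of $M^{(n^{\prime2})}$. Hence the argument is: take any almost (semi)co-commutative $\mathrm{B}^{(n^{\prime},n)}$ with $R$-matrix $\mathit{R}^{(n^{\prime})}$ and a chosen polyadic twist $\tau_{op}^{(n^{\prime})}$; by the theorem it satisfies (\ref{mnr}); but (\ref{mnr}) is an instance of (\ref{mnt1}) with the indicated data; therefore $\mathrm{B}^{(n^{\prime},n)}$ is polyadic almost (semi)co-medial with these special choices.

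Second, I would carry out the bookkeeping that the comparison implicitly requires. The composite $\mathit{M}_{R}^{(n^{\prime2})}=\mathit{R}^{(n^{\prime})}\otimes\ldots\otimes\mathit{R}^{(n^{\prime})}$ ($n^{\prime}$ factors, each in $B^{\otimes n^{\prime}}$) indeed lies in $B^{\otimes n^{\prime2}}$, as demanded of an $M$-matrix in (\ref{pmm}); and $\tau_{R}^{(n^{\prime},n^{\prime})}=\tau_{op}^{(n^{\prime})}\otimes\ldots\otimes\tau_{op}^{(n^{\prime})}$ ($n^{\prime}$ factors) is a well-defined permutation of $B^{\otimes n^{\prime2}}$ acting block-diagonally on the $n^{\prime}$ groups of $n^{\prime}$ tensor slots produced by $\left(\Delta^{(n^{\prime})}\right)^{\otimes n^{\prime}}\circ\Delta^{(n^{\prime})}$. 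I would also note the arity check: (\ref{mnr}) carries the same $n$-ary $\mu^{(n)}$ with $(n-1)$ copies of the $M$-matrix as (\ref{mnt1}), so the arity data match and no constraint beyond that already present for an almost co-medial bialgebra is imposed.

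Finally, I would stress the one genuinely non-formal point, which is also the content of \textbf{Remark} \ref{rem-uniq}: the genuine medial map $\tau_{medial}^{(n^{\prime},n^{\prime})}$ is unique, whereas $\tau_{op}^{(n^{\prime})}$ — and hence $\tau_{R}^{(n^{\prime},n^{\prime})}$ — is only one of several admissible (medially allowed) choices for $n^{\prime}>2$. So the inclusion ``almost co-commutative $\subseteq$ almost co-medial'' is not canonical but is witnessed, for each fixed admissible $\tau_{op}^{(n^{\prime})}$, by the pair $\left(\tau_{R}^{(n^{\prime},n^{\prime})},\mathit{M}_{R}^{(n^{\prime2})}\right)$ of (\ref{trn})--(\ref{mrn}). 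The main (and mild) obstacle is precisely to confirm that $\tau_{R}^{(n^{\prime},n^{\prime})}$, built from a medially allowed $\tau_{op}^{(n^{\prime})}$, still respects the polyadic tensor-product operations as required of the deforming permutation in \textbf{Definition} \ref{def-medbi}; this is where one invokes the remark following (\ref{top}), that $\tau_{op}^{(\ell_{\tau})}$ is fixed by the medial map and therefore respects those operations, whence so does its tensor power. With that in hand the corollary is immediate.
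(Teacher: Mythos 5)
Your argument is essentially the paper's own: the corollary is read off directly from the theorem, since the relation (\ref{mnr}) has the shape of the almost co-mediality condition (\ref{mnt1}) with the block twist $\mathbf{\tau}_{R}^{\left(n^{\prime},n^{\prime}\right)}$ of (\ref{trn}) standing in for the (unique) medial map and the composite $\mathit{M}_{R}^{\left(n^{\prime 2}\right)}$ of (\ref{mrn}) serving as the $M$-matrix, and your bookkeeping (that $\mathit{M}_{R}^{\left(n^{\prime2}\right)}\in B^{\otimes n^{\prime2}}$, that $\mathbf{\tau}_{R}$ acts block-diagonally on the $n^{\prime2}$ slots, and that the dependence on the non-unique choice of $\mathbf{\tau}_{op}^{\left(n^{\prime}\right)}$ is exactly the content of Remark \ref{rem-uniq}) matches the paper's intent. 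This is correct and takes the same route as the paper, which offers no separate proof beyond the theorem.
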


\begin{example}
In the binary case we compare the medial map (\ref{ti}) with the composed
\textquotedblleft medial-like\textquotedblright\ twist map (\ref{trn}) as%
\begin{align}
\mathbf{\tau}_{medial}  &  =\operatorname*{id}\nolimits_{B}\otimes
\mathbf{\tau}_{op}\otimes\operatorname*{id}\nolimits_{B},\label{tmed}\\
\mathbf{\tau}_{R}  &  =\mathbf{\tau}_{op}\otimes\mathbf{\tau}_{op},
\label{trr}%
\end{align}
or in components%
\begin{align}
&  b_{1}\otimes b_{2}\otimes b_{3}\otimes b_{4}\overset{\tau_{medial}}%
{\mapsto}b_{1}\otimes b_{3}\otimes b_{2}\otimes b_{4},\label{b1}\\
&  b_{1}\otimes b_{2}\otimes b_{3}\otimes b_{4}\overset{\tau_{R}}{\mapsto
}b_{2}\otimes b_{1}\otimes b_{4}\otimes b_{3}. \label{b2}%
\end{align}
This shows manifestly the difference between (polyadic) almost
co-commutativity and (polyadic) almost co-mediality.
\end{example}

\subsection{Equations for the $M$-matrix}

Let us find the equations for the $M$-matrix (\ref{mnt1}) using the medial
analog of the $n^{\prime}$-ary braid equation. Now the morphism of modules
$\mathbf{c}_{V_{1}\ldots V_{n^{\prime2}}}$ becomes (see for the $R$-matrix
(\ref{cvr}))%
\begin{align}
&  \mathbf{c}_{V_{1}\ldots V_{n^{\prime2}}}\circ\left(  v_{1}\otimes
\ldots\otimes v_{n^{\prime2}}\right)  =\tau_{medial,V_{1}\ldots V_{n^{\prime
2}}}^{\left(  n^{\prime},n^{\prime}\right)  }\circ\rho^{\left(  n-1\right)
}\left(  \overset{n-1}{\overbrace{\mathit{M}^{\left(  n^{\prime2}\right)
},\ldots,\mathit{M}^{\left(  n^{\prime2}\right)  }}}\mid\left(  v_{1}%
\otimes\ldots\otimes v_{n^{\prime2}}\right)  \right)  =\nonumber\\
&  \tau_{medial,V_{1}\ldots V_{n^{\prime2}}}^{\left(  n^{\prime},n^{\prime
}\right)  }\circ\left(  \sum_{\alpha_{1},\ldots\alpha_{n-1}}\rho^{\left(
n-1\right)  }\left(  \mathsf{m}_{\alpha_{1}}^{\left(  1\right)  }%
,\ldots\mathsf{m}_{\alpha_{n-1}}^{\left(  1\right)  }\mid v_{1}\right)
\otimes\ldots\otimes\rho^{\left(  n-1\right)  }\left(  \mathsf{m}_{\alpha_{1}%
}^{\left(  n^{\prime2}\right)  },\ldots\mathsf{m}_{\alpha_{n-1}}^{\left(
n^{\prime2}\right)  }\mid v_{n^{\prime2}}\right)  \right)  ,\nonumber\\
&  v_{i}\in V_{i},\ \ \ \ \mathsf{r}_{\alpha}^{\left(  i\right)  }\in
B,\ \ \ \ i=1,\ldots,n^{\prime2},
\end{align}
where $\tau_{medial,V_{1}\ldots V_{n^{\prime2}}}^{\left(  n^{\prime}%
,n^{\prime}\right)  }$ is the medial map (\ref{an}) acting on $n^{\prime2}$
modules $V_{i}$, $\mathit{M}^{\left(  n^{\prime2}\right)  }$ is the polyadic
$M$-matrix (\ref{pmm}), and $\rho^{\left(  n-1\right)  }$ is the $\left(
n-1\right)  $-place action (\ref{r}). Now instead of the $n^{\prime}$-ary
braid equation (\ref{nc}) we can have (see \textit{Remark} \ref{rem-add})

\begin{proposition}
The $n^{\prime}$-ary medial braid equation is%
\begin{align}
&  \left(  \mathbf{c}_{V^{n^{\prime2}}}\otimes\overset{n^{\prime2}%
-1}{\overbrace{\operatorname*{id}\nolimits_{V}\otimes\ldots\otimes
\operatorname*{id}\nolimits_{V}}}\right)  \circ\left(  \operatorname*{id}%
\nolimits_{V}\otimes\mathbf{c}_{V^{n^{\prime2}}}\otimes\overset{n^{\prime2}%
-2}{\overbrace{\operatorname*{id}\nolimits_{V}\otimes\ldots\otimes
\operatorname*{id}\nolimits_{V}}}\right)  \circ\ldots\nonumber\\
&  \circ\left(  \overset{n^{\prime2}-1}{\overbrace{\operatorname*{id}%
\nolimits_{V}\otimes\ldots\otimes\operatorname*{id}\nolimits_{V}}}%
\otimes\mathbf{c}_{V^{n^{\prime2}}}\right)  \circ\left(  \mathbf{c}%
_{V^{n^{\prime2}}}\otimes\overset{n^{\prime2}-1}{\overbrace{\operatorname*{id}%
\nolimits_{V}\otimes\ldots\otimes\operatorname*{id}\nolimits_{V}}}\right)
\nonumber\\
&  =\left(  \overset{n^{\prime2}-1}{\overbrace{\operatorname*{id}%
\nolimits_{V}\otimes\ldots\otimes\operatorname*{id}\nolimits_{V}}}%
\otimes\mathbf{c}_{V^{n^{\prime2}}}\right)  \circ\left(  \mathbf{c}%
_{V^{n^{\prime2}}}\otimes\overset{n^{\prime2}-1}{\overbrace{\operatorname*{id}%
\nolimits_{V}\otimes\ldots\otimes\operatorname*{id}\nolimits_{V}}}\right)
\circ\ldots\nonumber\\
&  \circ\left(  \overset{n^{\prime2}-2}{\overbrace{\operatorname*{id}%
\nolimits_{V}\otimes\ldots\otimes\operatorname*{id}\nolimits_{V}}}%
\otimes\mathbf{c}_{V^{n^{\prime2}}}\otimes\operatorname*{id}\nolimits_{V}%
\right)  \circ\left(  \overset{n^{\prime2}-1}{\overbrace{\operatorname*{id}%
\nolimits_{V}\otimes\ldots\otimes\operatorname*{id}\nolimits_{V}}}%
\otimes\mathbf{c}_{V^{n^{\prime2}}}\right)  , \label{mbe}%
\end{align}
where we use the notation $\mathbf{c}_{V^{n^{\prime2}}}\equiv\mathbf{c}%
_{V_{1}\ldots V_{n^{\prime2}}}$, $\operatorname*{id}_{V}\equiv
\operatorname*{id}_{V_{i}}$, and each side consists of $\left(  n^{\prime
2}+1\right)  $ brackets with $\left(  2n^{\prime2}-1\right)  $ multipliers.
\end{proposition}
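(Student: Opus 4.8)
The plan is to reduce the $n^{\prime}$-ary medial braid equation to the ordinary $n^{\prime}$-ary braid equation of Proposition (\ref{nc}), but applied to $n^{\prime2}$ modules instead of $n^{\prime}$, and then invoke the associative quiver technique from \cite{dup2018a} exactly as in the proof of (\ref{nc}). Concretely, the morphism $\mathbf{c}_{V^{n^{\prime 2}}}$ is built from the $\left(n-1\right)$-place action $\rho^{\left(n-1\right)}$ applied to $\left(n-1\right)$ copies of the $M$-matrix $\mathit{M}^{\left(n^{\prime 2}\right)}$, followed by the medial twist $\mathbf{\tau}_{medial}^{\left(n^{\prime},n^{\prime}\right)}$ on the $n^{\prime 2}$ modules. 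The key observation is that, once we treat $\mathbf{c}_{V^{n^{\prime 2}}}$ as a single morphism on a tensor product of $n^{\prime 2}$ modules (reversing their order via $\mathbf{\tau}_{medial}^{\left(n^{\prime},n^{\prime}\right)}$), the combinatorial structure of how copies of $\mathbf{c}_{V^{n^{\prime 2}}}$ interleave with identity maps is governed purely by the number of tensor slots it occupies, namely $n^{\prime 2}$. Hence the same Post-like quiver that produces (\ref{nc}) with $n^{\prime}$ slots produces (\ref{mbe}) with $n^{\prime 2}$ slots, giving $\left(n^{\prime 2}+1\right)$ brackets and $\left(2n^{\prime 2}-1\right)$ multipliers on each side.

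First I would set up the shorthand $\mathbf{c}\equiv\mathbf{c}_{V^{n^{\prime 2}}}$ and $\operatorname*{id}_{V}\equiv\operatorname*{id}_{V_{i}}$, and verify that $\mathbf{c}$ is indeed a well-defined module morphism on $V_{1}\otimes\ldots\otimes V_{n^{\prime 2}}\rightarrow V_{n^{\prime 2}}\otimes\ldots\otimes V_{1}$: this follows because $\rho^{\left(n-1\right)}$ is a $\left(n-1\right)$-place action, the $n$-ary composition rule (\ref{rbr}) is compatible with the $n$-ary multiplication $\mu^{\left(n\right)}$, and the polyadic almost co-mediality condition (\ref{mnt1}) guarantees the consistency of $\mathit{M}^{\left(n^{\prime 2}\right)}$ with the comultiplication structure. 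Second, I would run the associative quiver construction of \cite{dup2018a} for a morphism occupying $n^{\prime 2}$ tensor slots, reading off the two sides of (\ref{mbe}) as the two commuting paths of the corresponding diagram; the left side corresponds to processing the quiver from one extremity and the right side from the other, and their equality is the commutativity of that diagram. Third, I would record the bracket/multiplier count: each side of the equation is a composition of $\left(n^{\prime 2}+1\right)$ bracketed terms, each of which is a tensor product of $\mathbf{c}$ with $\left(n^{\prime 2}-1\right)$ copies of $\operatorname*{id}_{V}$, so $\left(2n^{\prime 2}-1\right)$ total multipliers.

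The main obstacle will be justifying rigorously that the associative quiver machinery transfers verbatim from the $n^{\prime}$-slot setting of (\ref{nc}) to the $n^{\prime 2}$-slot setting here; in (\ref{nc}) the slot count $n^{\prime}$ was tied to the arity of comultiplication, whereas here $n^{\prime 2}$ arises from the doubled comultiplication $\left(\mathbf{\Delta}^{\left(n^{\prime}\right)}\right)^{\otimes n^{\prime}}\circ\mathbf{\Delta}^{\left(n^{\prime}\right)}$ appearing in (\ref{mnt1}) and the corresponding medial map $\mathbf{\tau}_{medial}^{\left(n^{\prime},n^{\prime}\right)}$ acting on $n^{\prime 2}$ factors. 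The point to check is that the braid-type diagram depends only on the number of slots the braiding morphism touches and not on any arithmetic relation between that number and the underlying comultiplication arity; once this is granted, the proof is a direct application of the Post-like quiver. As with the analogous $R$-matrix statements in this section, I would state the result and indicate the quiver-based derivation rather than writing out the full diagram chase, since the computation, though routine, is lengthy. Finally, I would note (cf. \emph{Remark} \ref{rem-add}) that additional equations with different bracket counts may appear for specific values of $n^{\prime}$, depending on the commutation of the relevant diagrams.
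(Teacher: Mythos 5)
Your proposal matches the paper's own argument: the paper proves this proposition by a one-line appeal to the associative (Post-like) quiver technique of the cited reference, exactly as you do, treating $\mathbf{c}_{V^{n^{\prime2}}}$ as a braiding occupying $n^{\prime2}$ tensor slots so that the quiver construction from the $n^{\prime}$-ary braid equation transfers with $n^{\prime}$ replaced by $n^{\prime2}$. Your additional remarks on well-definedness of $\mathbf{c}_{V^{n^{\prime2}}}$ and the bracket/multiplier count are consistent elaborations of that same route, so there is no substantive difference in approach.
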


\begin{proof}
This follows from the associative quiver technique \cite{dup2018a}.
\end{proof}

We observe that even in the binary case the medial braid equations are
cumbersome and nontrivial.

\begin{example}
In the binary case $n^{\prime}=2$ we have the map $\mathbf{c}_{V^{n^{\prime2}%
}}$ (see (\ref{tmed}), (\ref{b1}))%
\begin{equation}
\mathbf{c}_{V_{1}V_{2}V_{3}V_{4}}:V_{1}\otimes V_{2}\otimes V_{3}\otimes
V_{4}\rightarrow V_{1}\otimes V_{3}\otimes V_{2}\otimes V_{4} \label{c4}%
\end{equation}
which acts on $V_{1}\otimes V_{2}\otimes V_{3}\otimes V_{4}\otimes
V_{5}\otimes V_{6}\otimes V_{7}$. There are two medial braid equations which
correspond to diagrams of different lengths (cf. the standard braid equation
(\ref{cii}))

\textbf{1) }$V_{1}\otimes V_{2}\otimes V_{3}\otimes V_{4}\otimes V_{5}\otimes
V_{6}\otimes V_{7}\longrightarrow V_{1}\otimes V_{4}\otimes V_{5}\otimes
V_{6}\otimes V_{3}\otimes V_{2}\otimes V_{7}$ :%
\begin{align}
&  \left(  \mathbf{c}_{V_{1}V_{5}V_{4}V_{6}}\otimes\operatorname*{id}%
\nolimits_{V_{3}}\otimes\operatorname*{id}\nolimits_{V_{2}}\otimes
\operatorname*{id}\nolimits_{V_{7}}\right)  \circ\left(  \operatorname*{id}%
\nolimits_{V_{1}}\otimes\mathbf{c}_{V_{5}V_{6}V_{4}V_{3}}\otimes
\operatorname*{id}\nolimits_{V_{2}}\otimes\operatorname*{id}\nolimits_{V_{7}%
}\right) \nonumber\\
&  \circ\left(  \operatorname*{id}\nolimits_{V_{1}}\otimes\operatorname*{id}%
\nolimits_{V_{5}}\otimes\mathbf{c}_{V_{6}V_{3}V_{4}V_{2}}\otimes
\operatorname*{id}\nolimits_{V_{7}}\right)  \circ\left(  \operatorname*{id}%
\nolimits_{V_{1}}\otimes\operatorname*{id}\nolimits_{V_{5}}\otimes
\operatorname*{id}\nolimits_{V_{6}}\otimes\mathbf{c}_{V_{3}V_{2}V_{4}V_{7}%
}\right) \nonumber\\
&  \circ\left(  \mathbf{c}_{V_{1}V_{6}V_{5}V_{3}}\otimes\operatorname*{id}%
\nolimits_{V_{2}}\otimes\operatorname*{id}\nolimits_{V_{4}}\otimes
\operatorname*{id}\nolimits_{V_{7}}\right)  \circ\left(  \operatorname*{id}%
\nolimits_{V_{1}}\otimes\mathbf{c}_{V_{6}V_{3}V_{5}V_{2}}\otimes
\operatorname*{id}\nolimits_{V_{4}}\otimes\operatorname*{id}\nolimits_{V_{7}%
}\right) \nonumber\\
&  \circ\left(  \operatorname*{id}\nolimits_{V_{1}}\otimes\operatorname*{id}%
\nolimits_{V_{6}}\otimes\mathbf{c}_{V_{3}V_{2}V_{5}V_{4}}\otimes
\operatorname*{id}\nolimits_{V_{7}}\right)  \circ\left(  \operatorname*{id}%
\nolimits_{V_{1}}\otimes\operatorname*{id}\nolimits_{V_{6}}\otimes
\operatorname*{id}\nolimits_{V_{6}}\otimes\mathbf{c}_{V_{2}V_{4}V_{5}V_{7}%
}\right) \nonumber\\
&  \circ\left(  \mathbf{c}_{V_{1}V_{3}V_{6}V_{2}}\otimes\operatorname*{id}%
\nolimits_{V_{4}}\otimes\operatorname*{id}\nolimits_{V_{5}}\otimes
\operatorname*{id}\nolimits_{V_{7}}\right)  \circ\left(  \operatorname*{id}%
\nolimits_{V_{1}}\otimes\mathbf{c}_{V_{3}V_{2}V_{6}V_{4}}\otimes
\operatorname*{id}\nolimits_{V_{5}}\otimes\operatorname*{id}\nolimits_{V_{7}%
}\right) \nonumber\\
&  \circ\left(  \operatorname*{id}\nolimits_{V_{1}}\otimes\operatorname*{id}%
\nolimits_{V_{3}}\otimes\mathbf{c}_{V_{2}V_{4}V_{6}V_{5}}\otimes
\operatorname*{id}\nolimits_{V_{7}}\right)  \circ\left(  \operatorname*{id}%
\nolimits_{V_{1}}\otimes\operatorname*{id}\nolimits_{V_{3}}\otimes
\operatorname*{id}\nolimits_{V_{2}}\otimes\mathbf{c}_{V_{4}V_{5}V_{6}V_{7}%
}\right) \nonumber\\
&  \circ\left(  \mathbf{c}_{V_{1}V_{2}V_{3}V_{4}}\otimes\operatorname*{id}%
\nolimits_{V_{5}}\otimes\operatorname*{id}\nolimits_{V_{6}}\otimes
\operatorname*{id}\nolimits_{V_{7}}\right)  =\left(  \operatorname*{id}%
\nolimits_{V_{1}}\otimes\operatorname*{id}\nolimits_{V_{4}}\otimes
\operatorname*{id}\nolimits_{V_{5}}\otimes\mathbf{c}_{V_{6}V_{2}V_{3}V_{7}%
}\right) \nonumber\\
&  \circ\left(  \mathbf{c}_{V_{1}V_{5}V_{4}V_{6}}\otimes\operatorname*{id}%
\nolimits_{V_{2}}\otimes\operatorname*{id}\nolimits_{V_{3}}\otimes
\operatorname*{id}\nolimits_{V_{7}}\right)  \circ\left(  \operatorname*{id}%
\nolimits_{V_{1}}\otimes\mathbf{c}_{V_{5}V_{6}V_{4}V_{2}}\otimes
\operatorname*{id}\nolimits_{V_{3}}\otimes\operatorname*{id}\nolimits_{V_{7}%
}\right) \nonumber\\
&  \circ\left(  \operatorname*{id}\nolimits_{V_{1}}\otimes\operatorname*{id}%
\nolimits_{V_{5}}\otimes\mathbf{c}_{V_{6}V_{2}V_{4}V_{3}}\otimes
\operatorname*{id}\nolimits_{V_{7}}\right)  \circ\left(  \operatorname*{id}%
\nolimits_{V_{1}}\otimes\operatorname*{id}\nolimits_{V_{5}}\otimes
\operatorname*{id}\nolimits_{V_{6}}\otimes\mathbf{c}_{V_{2}V_{3}V_{4}V_{7}%
}\right) \nonumber\\
&  \circ\left(  \mathbf{c}_{V_{1}V_{6}V_{5}V_{2}}\otimes\operatorname*{id}%
\nolimits_{V_{3}}\otimes\operatorname*{id}\nolimits_{V_{4}}\otimes
\operatorname*{id}\nolimits_{V_{7}}\right)  \circ\left(  \operatorname*{id}%
\nolimits_{V_{1}}\otimes\mathbf{c}_{V_{6}V_{2}V_{5}V_{3}}\otimes
\operatorname*{id}\nolimits_{V_{4}}\otimes\operatorname*{id}\nolimits_{V_{7}%
}\right) \nonumber\\
&  \circ\left(  \operatorname*{id}\nolimits_{V_{1}}\otimes\operatorname*{id}%
\nolimits_{V_{6}}\otimes\mathbf{c}_{V_{2}V_{3}V_{5}V_{4}}\otimes
\operatorname*{id}\nolimits_{V_{7}}\right)  \circ\left(  \operatorname*{id}%
\nolimits_{V_{1}}\otimes\operatorname*{id}\nolimits_{V_{6}}\otimes
\operatorname*{id}\nolimits_{V_{2}}\otimes\mathbf{c}_{V_{3}V_{4}V_{5}V_{7}%
}\right) \nonumber\\
&  \circ\left(  \mathbf{c}_{V_{1}V_{2}V_{6}V_{3}}\otimes\operatorname*{id}%
\nolimits_{V_{4}}\otimes\operatorname*{id}\nolimits_{V_{5}}\otimes
\operatorname*{id}\nolimits_{V_{7}}\right)  \circ\left(  \operatorname*{id}%
\nolimits_{V_{1}}\otimes\mathbf{c}_{V_{6}V_{3}V_{5}V_{2}}\otimes
\operatorname*{id}\nolimits_{V_{4}}\otimes\operatorname*{id}\nolimits_{V_{7}%
}\right) \nonumber\\
&  \circ\left(  \operatorname*{id}\nolimits_{V_{1}}\otimes\operatorname*{id}%
\nolimits_{V_{2}}\otimes\mathbf{c}_{V_{3}V_{4}V_{6}V_{5}}\otimes
\operatorname*{id}\nolimits_{V_{7}}\right)  \circ\left(  \operatorname*{id}%
\nolimits_{V_{1}}\otimes\operatorname*{id}\nolimits_{V_{2}}\otimes
\operatorname*{id}\nolimits_{V_{3}}\otimes\mathbf{c}_{V_{4}V_{5}V_{6}V_{7}%
}\right)  . \label{ccb}%
\end{align}

\textbf{2) }$V_{1}\otimes V_{2}\otimes V_{3}\otimes V_{4}\otimes V_{5}\otimes
V_{6}\otimes V_{7}\longrightarrow V_{1}\otimes V_{6}\otimes V_{5}\otimes
V_{4}\otimes V_{2}\otimes V_{3}\otimes V_{7}\ $:%
\begin{align}
&  \left(  \operatorname*{id}\nolimits_{V_{1}}\otimes\operatorname*{id}%
\nolimits_{V_{6}}\otimes\mathbf{c}_{V_{5}V_{2}V_{4}V_{3}}\otimes
\operatorname*{id}\nolimits_{V_{7}}\right)  \circ\left(  \operatorname*{id}%
\nolimits_{V_{1}}\otimes\operatorname*{id}\nolimits_{V_{6}}\otimes
\operatorname*{id}\nolimits_{V_{5}}\otimes\mathbf{c}_{V_{2}V_{3}V_{4}V_{7}%
}\right) \nonumber\\
&  \circ\left(  \mathbf{c}_{V_{1}V_{5}V_{6}V_{2}}\otimes\operatorname*{id}%
\nolimits_{V_{3}}\otimes\operatorname*{id}\nolimits_{V_{4}}\otimes
\operatorname*{id}\nolimits_{V_{7}}\right)  \circ\left(  \operatorname*{id}%
\nolimits_{V_{1}}\otimes\mathbf{c}_{V_{5}V_{2}V_{6}V_{3}}\otimes
\operatorname*{id}\nolimits_{V_{4}}\otimes\operatorname*{id}\nolimits_{V_{7}%
}\right) \nonumber\\
&  \circ\left(  \operatorname*{id}\nolimits_{V_{1}}\otimes\operatorname*{id}%
\nolimits_{V_{5}}\otimes\mathbf{c}_{V_{2}V_{3}V_{6}V_{4}}\otimes
\operatorname*{id}\nolimits_{V_{7}}\right)  \circ\left(  \operatorname*{id}%
\nolimits_{V_{1}}\otimes\operatorname*{id}\nolimits_{V_{5}}\otimes
\operatorname*{id}\nolimits_{V_{2}}\otimes\mathbf{c}_{V_{3}V_{4}V_{6}V_{7}%
}\right) \nonumber\\
&  \circ\left(  \mathbf{c}_{V_{1}V_{2}V_{5}V_{3}}\otimes\operatorname*{id}%
\nolimits_{V_{4}}\otimes\operatorname*{id}\nolimits_{V_{6}}\otimes
\operatorname*{id}\nolimits_{V_{7}}\right)  \circ\left(  \operatorname*{id}%
\nolimits_{V_{1}}\otimes\mathbf{c}_{V_{2}V_{3}V_{5}V_{4}}\otimes
\operatorname*{id}\nolimits_{V_{6}}\otimes\operatorname*{id}\nolimits_{V_{7}%
}\right) \nonumber\\
&  \circ\left(  \operatorname*{id}\nolimits_{V_{1}}\otimes\operatorname*{id}%
\nolimits_{V_{2}}\otimes\mathbf{c}_{V_{3}V_{4}V_{5}V_{6}}\otimes
\operatorname*{id}\nolimits_{V_{7}}\right)  =\left(  \operatorname*{id}%
\nolimits_{V_{1}}\otimes\mathbf{c}_{V_{6}V_{4}V_{5}V_{2}}\otimes
\operatorname*{id}\nolimits_{V_{3}}\otimes\operatorname*{id}\nolimits_{V_{7}%
}\right) \nonumber\\
&  \circ\left(  \operatorname*{id}\nolimits_{V_{1}}\otimes\operatorname*{id}%
\nolimits_{V_{6}}\otimes\mathbf{c}_{V_{4}V_{2}V_{5}V_{3}}\otimes
\operatorname*{id}\nolimits_{V_{7}}\right)  \circ\left(  \operatorname*{id}%
\nolimits_{V_{1}}\otimes\operatorname*{id}\nolimits_{V_{6}}\otimes
\operatorname*{id}\nolimits_{V_{4}}\otimes\mathbf{c}_{V_{2}V_{3}V_{5}V_{7}%
}\right) \nonumber\\
&  \circ\left(  \mathbf{c}_{V_{1}V_{4}V_{6}V_{2}}\otimes\operatorname*{id}%
\nolimits_{V_{3}}\otimes\operatorname*{id}\nolimits_{V_{5}}\otimes
\operatorname*{id}\nolimits_{V_{7}}\right)  \circ\left(  \operatorname*{id}%
\nolimits_{V_{1}}\otimes\mathbf{c}_{V_{4}V_{2}V_{6}V_{3}}\otimes
\operatorname*{id}\nolimits_{V_{5}}\otimes\operatorname*{id}\nolimits_{V_{7}%
}\right) \nonumber\\
&  \circ\left(  \operatorname*{id}\nolimits_{V_{1}}\otimes\operatorname*{id}%
\nolimits_{V_{4}}\otimes\mathbf{c}_{V_{2}V_{3}V_{6}V_{5}}\otimes
\operatorname*{id}\nolimits_{V_{7}}\right)  \circ\left(  \operatorname*{id}%
\nolimits_{V_{1}}\otimes\operatorname*{id}\nolimits_{V_{4}}\otimes
\operatorname*{id}\nolimits_{V_{2}}\otimes\mathbf{c}_{V_{3}V_{5}V_{6}V_{7}%
}\right) \nonumber\\
&  \circ\left(  \mathbf{c}_{V_{1}V_{2}V_{4}V_{3}}\otimes\operatorname*{id}%
\nolimits_{V_{5}}\otimes\operatorname*{id}\nolimits_{V_{6}}\otimes
\operatorname*{id}\nolimits_{V_{7}}\right)  \circ\left(  \operatorname*{id}%
\nolimits_{V_{1}}\otimes\mathbf{c}_{V_{2}V_{3}V_{4}V_{5}}\otimes
\operatorname*{id}\nolimits_{V_{6}}\otimes\operatorname*{id}\nolimits_{V_{7}%
}\right)  . \label{ccbb}%
\end{align}

\end{example}

The equations for the $M$-matrix can be obtained by introducing the
\textquotedblleft extended\textquotedblright\ $M$-matrix, as in the case of
the $R$-matrix (see \textit{Remark} \ref{rem-unit}), and this can also be
possible if the $n$-ary algebra $\left\langle B\mid\mu^{\left(  n\right)
}\right\rangle $ has the unit (element) $e_{B}\in B$.

\begin{definition}
The \textquotedblleft extended\textquotedblright\ $M$-matrix is defined by
$\mathcal{M}_{i_{1}\ldots i_{n^{\prime2}}}^{\left(  2n^{\prime2}-1\right)
}\in B^{\otimes\left(  2n^{\prime2}-1\right)  }$, such that%
\begin{equation}
\mathcal{M}_{i_{1}\ldots i_{n^{\prime2}}}^{\left(  2n^{\prime2}-1\right)
}=\sum_{\alpha}e_{B}\otimes\ldots\otimes\mathsf{m}_{\alpha}^{\left(
i_{1}\right)  }\otimes\ldots\otimes\mathsf{m}_{\alpha}^{\left(  i_{n^{\prime
2}}\right)  }\otimes\ldots\otimes e_{B},\ \ \ i_{1},\ldots,i_{n^{\prime2}}%
\in\left\{  1,\ldots,2n^{\prime2}-1\right\}  \label{mni}%
\end{equation}
where $\mathsf{m}_{\alpha}^{\left(  i_{k}\right)  }$ are on the $i_{k}$-place.
\end{definition}

It is difficult to write the general compatibility equations for the
\textquotedblleft extended\textquotedblright\ $M$-matrix (\ref{mni}).

\begin{example}
In the binary case $n=n^{\prime}=2$ we have for the polyadic $M$-matrix
$\mathit{M}^{\left(  4\right)  }$ in components%
\begin{equation}
\mathit{M}^{\left(  4\right)  }=\sum_{\alpha}\mathsf{m}_{\alpha}^{\left(
1\right)  }\otimes\mathsf{m}_{\alpha}^{\left(  2\right)  }\otimes
\mathsf{m}_{\alpha}^{\left(  3\right)  }\otimes\mathsf{m}_{\alpha}^{\left(
4\right)  },\ \ \ \ \mathsf{m}_{\alpha}^{\left(  i\right)  }\in B,
\end{equation}
and $\mathcal{M}_{i_{1}\ldots i_{4}}^{\left(  7\right)  }\in B^{\otimes7}$
with%
\begin{equation}
\mathcal{M}_{i_{1}\ldots i_{4}}^{\left(  7\right)  }=\sum_{\alpha}e_{B}%
\otimes\ldots\otimes\mathsf{m}_{\alpha}^{\left(  i_{1}\right)  }\otimes
\ldots\otimes\mathsf{m}_{\alpha}^{\left(  i_{4}\right)  }\otimes\ldots\otimes
e_{B},\ \ \ i_{1},\ldots,i_{4}\in\left\{  1,\ldots,7\right\}  . \label{m7}%
\end{equation}
The map of modules $\mathbf{c}_{V_{1}\ldots V_{4}}$ (\ref{c4}) in the manifest
form is%
\begin{align}
&  \mathbf{c}_{V_{1}V_{2}V_{3}V_{4}}\circ\left(  v_{1}\otimes v_{2}\otimes
v_{3}\otimes\otimes v_{4}\right)  =\tau_{medial}\circ\rho\left(
\mathit{M}^{\left(  4\right)  }\mid\left(  v_{1}\otimes v_{2}\otimes
v_{3}\otimes v_{4}\right)  \right) \nonumber\\
&  =\tau_{medial}\circ\left(  \sum_{\alpha}\rho\left(  \mathsf{m}_{\alpha
}^{\left(  1\right)  }\mid v_{1}\right)  \otimes\rho\left(  \mathsf{m}%
_{\alpha}^{\left(  2\right)  }\mid v_{2}\right)  \otimes\rho\left(
\mathsf{m}_{\alpha}^{\left(  3\right)  }\mid v_{3}\right)  \otimes\rho\left(
\mathsf{m}_{\alpha}^{\left(  4\right)  }\mid v_{4}\right)  \right)
,\nonumber\\
&  v_{i}\in V_{i},\ \ \ \ \mathsf{m}_{\alpha}^{\left(  i\right)  }\in
B,\ \ \ \ i=1,2,3,4, \label{cv12}%
\end{align}
where $\tau_{medial}$ is the medial map (\ref{tmed}), and $\rho:B\times
V\rightarrow V$ is the ordinary $1$-place action (\ref{r}).

After inserting (\ref{cv12}) into (\ref{ccb}) and (\ref{ccbb}), using
(\ref{m7}) we obtain the equations for $M$-matrix%
\begin{align}
&  \mathcal{M}_{1546}^{\left(  7\right)  }\mathcal{M}_{5643}^{\left(
7\right)  }\mathcal{M}_{6342}^{\left(  7\right)  }\mathcal{M}_{3247}^{\left(
7\right)  }\mathcal{M}_{1653}^{\left(  7\right)  }\mathcal{M}_{6352}^{\left(
7\right)  }\mathcal{M}_{3254}^{\left(  7\right)  }\mathcal{M}_{2457}^{\left(
7\right)  }\mathcal{M}_{1362}^{\left(  7\right)  }\mathcal{M}_{3264}^{\left(
7\right)  }\mathcal{M}_{2465}^{\left(  7\right)  }\mathcal{M}_{4567}^{\left(
7\right)  }\mathcal{M}_{1234}^{\left(  7\right)  }\nonumber\\
&  =\mathcal{M}_{6237}^{\left(  7\right)  }\mathcal{M}_{1546}^{\left(
7\right)  }\mathcal{M}_{5642}^{\left(  7\right)  }\mathcal{M}_{6243}^{\left(
7\right)  }\mathcal{M}_{2347}^{\left(  7\right)  }\mathcal{M}_{1652}^{\left(
7\right)  }\mathcal{M}_{6253}^{\left(  7\right)  }\mathcal{M}_{2354}^{\left(
7\right)  }\mathcal{M}_{3457}^{\left(  7\right)  }\mathcal{M}_{1263}^{\left(
7\right)  }\mathcal{M}_{2364}^{\left(  7\right)  }\mathcal{M}_{3465}^{\left(
7\right)  }\mathcal{M}_{4567}^{\left(  7\right)  } \label{m1}%
\end{align}
and%
\begin{align}
&  \mathcal{M}_{5243}^{\left(  7\right)  }\mathcal{M}_{2347}^{\left(
7\right)  }\mathcal{M}_{1562}^{\left(  7\right)  }\mathcal{M}_{5263}^{\left(
7\right)  }\mathcal{M}_{2364}^{\left(  7\right)  }\mathcal{M}_{3467}^{\left(
7\right)  }\mathcal{M}_{1253}^{\left(  7\right)  }\mathcal{M}_{2354}^{\left(
7\right)  }\mathcal{M}_{3456}^{\left(  7\right)  }\nonumber\\
&  =\mathcal{M}_{6452}^{\left(  7\right)  }\mathcal{M}_{4253}^{\left(
7\right)  }\mathcal{M}_{2357}^{\left(  7\right)  }\mathcal{M}_{1462}^{\left(
7\right)  }\mathcal{M}_{4263}^{\left(  7\right)  }\mathcal{M}_{2365}^{\left(
7\right)  }\mathcal{M}_{3567}^{\left(  7\right)  }\mathcal{M}_{1243}^{\left(
7\right)  }\mathcal{M}_{2345}^{\left(  7\right)  }, \label{m2}%
\end{align}
which respect the braid equations (\ref{ccb}) and (\ref{ccbb}).
\end{example}

\begin{remark}
The unequal number of terms in (\ref{m1}) and (\ref{m2}) is governed by
different commutative diagrams of modules (\ref{ccb}) and (\ref{ccbb}),
respectively (cf. (\ref{rrr}) and (\ref{cv1})--(\ref{cii})).
\end{remark}

\subsection{Medial analog of triangularity}

Now we consider the possible analogs of the quasitriangularity conditions
(similar to (\ref{drr1})--(\ref{drr2}) and quasipolyangularity (\ref{qp1}%
--(\ref{qp3})) for a polyadic almost co-medial bialgebra $\mathrm{B}^{\left(
n^{\prime},n\right)  }$ (see \textbf{Definition} \ref{def-medbi}).

\begin{definition}
A polyadic almost co-medial bialgebra $\mathrm{B}^{\left(  n^{\prime
},n\right)  }=\left\langle B\mid\mu^{\left(  n\right)  },\Delta^{\left(
n^{\prime}\right)  }\right\rangle $ with the polyadic $M$-matrix
$\mathit{M}^{\left(  n^{\prime2}\right)  }=\sum_{\alpha}\mathsf{m}_{\alpha
}^{\left(  1\right)  }\otimes\ldots\otimes\mathsf{m}_{\alpha}^{\left(
n^{\prime2}\right)  },\ \ \ \ \mathsf{m}_{\alpha}^{\left(  i\right)  }\in B$
is called \textit{medial quasipolyangular}, if the following $n^{\prime2}$
relations hold%
\begin{align}
&  \sum_{\alpha}\left(  \Delta^{\left(  n^{\prime}\right)  }\right)  ^{\otimes
n^{\prime}}\circ\Delta^{\left(  n^{\prime}\right)  }\left(  \mathsf{m}%
_{\alpha}^{\left(  1\right)  }\right)  \otimes\mathsf{m}_{\alpha}^{\left(
2\right)  }\otimes\ldots\otimes\mathsf{m}_{\alpha}^{\left(  n^{\prime
2}\right)  }=\sum_{\alpha_{1},\ldots\alpha_{n^{\prime2}}}\mathsf{m}%
_{\alpha_{1}}^{\left(  1\right)  }\otimes\mathsf{m}_{\alpha_{2}}^{\left(
1\right)  }\otimes\ldots\otimes\mathsf{m}_{\alpha_{n^{\prime2}}}^{\left(
1\right)  }\nonumber\\
&  \otimes\left(  \mu^{\left(  n\right)  }\right)  ^{\circ\ell}\left[
\mathsf{m}_{\alpha_{1}}^{\left(  2\right)  }\otimes\mathsf{m}_{\alpha_{2}%
}^{\left(  2\right)  }\otimes\ldots\otimes\mathsf{m}_{\alpha_{n^{\prime2}}%
}^{\left(  2\right)  }\right]  \otimes\ldots\otimes\left(  \mu^{\left(
n\right)  }\right)  ^{\circ\ell}\left[  \mathsf{m}_{\alpha_{1}}^{\left(
n^{\prime2}\right)  }\otimes\mathsf{m}_{\alpha_{2}}^{\left(  n^{\prime
2}\right)  }\otimes\ldots\otimes\mathsf{m}_{\alpha_{n^{\prime2}}}^{\left(
n^{\prime2}\right)  }\right]  , \label{mqp1}%
\end{align}%
\begin{align}
&  \sum_{\alpha}\mathsf{m}_{\alpha}^{\left(  1\right)  }\otimes\left(
\Delta^{\left(  n^{\prime}\right)  }\right)  ^{\otimes n^{\prime}}\circ
\Delta^{\left(  n^{\prime}\right)  }\left(  \mathsf{m}_{\alpha}^{\left(
2\right)  }\right)  \otimes\ldots\otimes\mathsf{m}_{\alpha}^{\left(
n^{\prime2}\right)  }\nonumber\\
&  =\sum_{\alpha_{1},\ldots\alpha_{n^{\prime2}}}\left(  \mu^{\left(  n\right)
}\right)  ^{\circ\ell}\circ\tau_{medial}^{\left(  n^{\prime2},n^{\prime
2}\right)  }\left[  \mathsf{m}_{\alpha_{1}}^{\left(  1\right)  }%
\otimes\mathsf{m}_{\alpha_{2}}^{\left(  1\right)  }\otimes\ldots
\otimes\mathsf{m}_{\alpha_{n^{\prime2}}}^{\left(  1\right)  }\right]
\otimes\mathsf{m}_{\alpha_{1}}^{\left(  2\right)  }\otimes\mathsf{m}%
_{\alpha_{2}}^{\left(  2\right)  }\otimes\ldots\otimes\mathsf{m}%
_{\alpha_{n^{\prime2}}}^{\left(  2\right)  }\otimes\ldots\nonumber\\
&  \otimes\left(  \mu^{\left(  n\right)  }\right)  ^{\circ\ell}\left[
\mathsf{m}_{\alpha_{1}}^{\left(  n^{\prime2}\right)  }\otimes\mathsf{m}%
_{\alpha_{2}}^{\left(  n^{\prime2}\right)  }\otimes\ldots\otimes
\mathsf{m}_{\alpha_{n^{\prime2}}}^{\left(  n^{\prime2}\right)  }\right]  ,
\label{mqp2}%
\end{align}%
\[
\vdots
\]%
\begin{align}
&  \sum_{\alpha}\mathsf{m}_{\alpha}^{\left(  1\right)  }\otimes\ldots
\otimes\mathsf{m}_{\alpha}^{\left(  n^{\prime2}-1\right)  }\otimes\left(
\Delta^{\left(  n^{\prime}\right)  }\left(  \mathsf{m}_{\alpha}^{\left(
n^{\prime2}\right)  }\right)  \right)  ^{\otimes n^{\prime}}\circ
\Delta^{\left(  n^{\prime}\right)  }\left(  \mathsf{m}_{\alpha}^{\left(
n^{\prime2}\right)  }\right) \nonumber\\
&  =\sum_{\alpha_{1},\ldots\alpha_{n^{\prime2}}}\left(  \mu^{\left(  n\right)
}\right)  ^{\circ\ell}\circ\tau_{medial}^{\left(  n^{\prime2},n^{\prime
2}\right)  }\left[  \mathsf{m}_{\alpha_{1}}^{\left(  1\right)  }%
\otimes\mathsf{m}_{\alpha_{2}}^{\left(  1\right)  }\otimes\ldots
\otimes\mathsf{m}_{\alpha_{n^{\prime2}}}^{\left(  1\right)  }\right]
\otimes\ldots\nonumber\\
&  \otimes\left(  \mu^{\left(  n\right)  }\right)  ^{\circ\ell}\circ
\tau_{medial}^{\left(  n^{\prime2},n^{\prime2}\right)  }\left[  \mathsf{m}%
_{\alpha_{1}}^{\left(  n^{\prime2}-1\right)  }\otimes\mathsf{m}_{\alpha_{2}%
}^{\left(  n^{\prime2}-1\right)  }\otimes\ldots\otimes\mathsf{m}%
_{\alpha_{n^{\prime2}}}^{\left(  n^{\prime2}-1\right)  }\right]
\otimes\mathsf{m}_{\alpha_{1}}^{\left(  n^{\prime2}\right)  }\otimes
\mathsf{m}_{\alpha_{2}}^{\left(  n^{\prime2}\right)  }\otimes\ldots
\otimes\mathsf{m}_{\alpha_{n^{\prime2}}}^{\left(  n^{\prime2}\right)  },
\label{mqp3}%
\end{align}
where $\tau_{medial}^{\left(  n^{\prime2},n^{\prime2}\right)  }$ is the
\textsf{unique} medial twist map (\ref{an}). The arity shape of a medial
quasipolyangular bialgebra $\mathrm{B}^{\left(  n^{\prime},n\right)  }$ is
given by (cf. (\ref{nln}))%
\begin{equation}
n^{\prime2}=\ell\left(  n-1\right)  +1,\ \ \ \ell\in\mathbb{N}. \label{n2l}%
\end{equation}

\end{definition}

\begin{remark}
\label{rem-qm} Similar to \textit{Remark} \ref{rem-qr}, the medial
quasipolyangularity equations (\ref{mqp1})--(\ref{mqp3}) can be expressed in
terms of the extended $M$-matrix for the first equation (\ref{mqp1}) and the
last one (\ref{mqp3}) only, because in the intermediate equations the
sequences of $M$-matrix elements are permuted.
\end{remark}

\begin{example}
In the case where $n^{\prime}=n=2$, $\ell=3$, for the bialgebra $\mathrm{B}%
^{\left(  2,2\right)  }=\left\langle B\mid\mu=\left(  \cdot\right)
,\Delta\right\rangle $ with the polyadic $M$-matrix
\begin{equation}
\mathit{M}^{\left(  4\right)  }=\sum_{\alpha}\mathsf{m}_{\alpha}^{\left(
1\right)  }\otimes\mathsf{m}_{\alpha}^{\left(  2\right)  }\otimes
\mathsf{m}_{\alpha}^{\left(  3\right)  }\otimes\mathsf{m}_{\alpha}^{\left(
4\right)  },\ \ \ \mathsf{m}_{\alpha}^{\left(  i\right)  }\in B \label{m4m}%
\end{equation}
we have the binary medial quasipolyangularity equations%
\begin{align}
&  \sum_{\alpha}\left(  \Delta\otimes\Delta\right)  \circ\Delta\left(
\mathsf{m}_{\alpha}^{\left(  1\right)  }\right)  \otimes\mathsf{m}_{\alpha
}^{\left(  2\right)  }\otimes\mathsf{m}_{\alpha}^{\left(  3\right)  }%
\otimes\mathsf{m}_{\alpha}^{\left(  4\right)  }=\sum_{\alpha_{1},\alpha
_{2},\alpha_{3},\alpha_{4}}\mathsf{m}_{\alpha_{1}}^{\left(  1\right)  }%
\otimes\mathsf{m}_{\alpha_{2}}^{\left(  1\right)  }\otimes\mathsf{m}%
_{\alpha_{3}}^{\left(  1\right)  }\otimes\mathsf{m}_{\alpha_{4}}^{\left(
1\right)  }\nonumber\\
&  \otimes\mathsf{m}_{\alpha_{1}}^{\left(  2\right)  }\cdot\mathsf{m}%
_{\alpha_{2}}^{\left(  2\right)  }\cdot\mathsf{m}_{\alpha_{3}}^{\left(
2\right)  }\cdot\mathsf{m}_{\alpha_{4}}^{\left(  2\right)  }\otimes
\mathsf{m}_{\alpha_{1}}^{\left(  3\right)  }\cdot\mathsf{m}_{\alpha_{2}%
}^{\left(  3\right)  }\cdot\mathsf{m}_{\alpha_{3}}^{\left(  3\right)  }%
\cdot\mathsf{m}_{\alpha_{4}}^{\left(  3\right)  }\otimes\mathsf{m}_{\alpha
_{1}}^{\left(  4\right)  }\cdot\mathsf{m}_{\alpha_{2}}^{\left(  4\right)
}\cdot\mathsf{m}_{\alpha_{3}}^{\left(  4\right)  }\cdot\mathsf{m}_{\alpha_{4}%
}^{\left(  4\right)  }, \label{dd1}%
\end{align}%
\begin{align}
&  \sum_{\alpha}\mathsf{m}_{\alpha}^{\left(  1\right)  }\otimes\left(
\Delta\otimes\Delta\right)  \circ\Delta\left(  \mathsf{m}_{\alpha}^{\left(
2\right)  }\right)  \otimes\mathsf{m}_{\alpha}^{\left(  3\right)  }%
\otimes\mathsf{m}_{\alpha}^{\left(  4\right)  }=\sum_{\alpha_{1},\alpha
_{2},\alpha_{3},\alpha_{4}}\mathsf{m}_{\alpha_{1}}^{\left(  1\right)  }%
\cdot\mathsf{m}_{\alpha_{3}}^{\left(  1\right)  }\cdot\mathsf{m}_{\alpha_{2}%
}^{\left(  1\right)  }\cdot\mathsf{m}_{\alpha_{4}}^{\left(  1\right)
}\nonumber\\
&  \otimes\mathsf{m}_{\alpha_{1}}^{\left(  2\right)  }\otimes\mathsf{m}%
_{\alpha_{2}}^{\left(  2\right)  }\otimes\mathsf{m}_{\alpha_{3}}^{\left(
2\right)  }\otimes\mathsf{m}_{\alpha_{4}}^{\left(  2\right)  }\otimes
\mathsf{m}_{\alpha_{1}}^{\left(  3\right)  }\cdot\mathsf{m}_{\alpha_{2}%
}^{\left(  3\right)  }\cdot\mathsf{m}_{\alpha_{3}}^{\left(  3\right)  }%
\cdot\mathsf{m}_{\alpha_{4}}^{\left(  3\right)  }\otimes\mathsf{m}_{\alpha
_{1}}^{\left(  4\right)  }\cdot\mathsf{m}_{\alpha_{2}}^{\left(  4\right)
}\cdot\mathsf{m}_{\alpha_{3}}^{\left(  4\right)  }\cdot\mathsf{m}_{\alpha_{4}%
}^{\left(  4\right)  }, \label{dd2}%
\end{align}%
\begin{align}
&  \sum_{\alpha}\mathsf{m}_{\alpha}^{\left(  1\right)  }\otimes\mathsf{m}%
_{\alpha}^{\left(  2\right)  }\otimes\left(  \Delta\otimes\Delta\right)
\circ\Delta\left(  \mathsf{m}_{\alpha}^{\left(  3\right)  }\right)
\otimes\mathsf{m}_{\alpha}^{\left(  4\right)  }=\sum_{\alpha_{1},\alpha
_{2},\alpha_{3},\alpha_{4}}\mathsf{m}_{\alpha_{1}}^{\left(  1\right)  }%
\cdot\mathsf{m}_{\alpha_{3}}^{\left(  1\right)  }\cdot\mathsf{m}_{\alpha_{2}%
}^{\left(  1\right)  }\cdot\mathsf{m}_{\alpha_{4}}^{\left(  1\right)
}\nonumber\\
&  \otimes\mathsf{m}_{\alpha_{1}}^{\left(  2\right)  }\cdot\mathsf{m}%
_{\alpha_{3}}^{\left(  2\right)  }\cdot\mathsf{m}_{\alpha_{2}}^{\left(
2\right)  }\cdot\mathsf{m}_{\alpha_{4}}^{\left(  2\right)  }\otimes
\mathsf{m}_{\alpha_{1}}^{\left(  3\right)  }\otimes\mathsf{m}_{\alpha_{2}%
}^{\left(  3\right)  }\otimes\mathsf{m}_{\alpha_{3}}^{\left(  3\right)
}\otimes\mathsf{m}_{\alpha_{4}}^{\left(  3\right)  }\otimes\mathsf{m}%
_{\alpha_{1}}^{\left(  4\right)  }\cdot\mathsf{m}_{\alpha_{2}}^{\left(
4\right)  }\cdot\mathsf{m}_{\alpha_{3}}^{\left(  4\right)  }\cdot
\mathsf{m}_{\alpha_{4}}^{\left(  4\right)  }, \label{dd3}%
\end{align}%
\begin{align}
&  \sum_{\alpha}\mathsf{m}_{\alpha}^{\left(  1\right)  }\otimes\mathsf{m}%
_{\alpha}^{\left(  2\right)  }\otimes\mathsf{m}_{\alpha}^{\left(  3\right)
}\otimes\left(  \Delta\otimes\Delta\right)  \circ\Delta\left(  \mathsf{m}%
_{\alpha}^{\left(  4\right)  }\right)  =\sum_{\alpha_{1},\alpha_{2},\alpha
_{3},\alpha_{4}}\mathsf{m}_{\alpha_{1}}^{\left(  1\right)  }\cdot
\mathsf{m}_{\alpha_{3}}^{\left(  1\right)  }\cdot\mathsf{m}_{\alpha_{2}%
}^{\left(  1\right)  }\cdot\mathsf{m}_{\alpha_{4}}^{\left(  1\right)
}\nonumber\\
&  \otimes\mathsf{m}_{\alpha_{1}}^{\left(  2\right)  }\cdot\mathsf{m}%
_{\alpha_{3}}^{\left(  2\right)  }\cdot\mathsf{m}_{\alpha_{2}}^{\left(
2\right)  }\cdot\mathsf{m}_{\alpha_{4}}^{\left(  2\right)  }\otimes
\mathsf{m}_{\alpha_{1}}^{\left(  3\right)  }\cdot\mathsf{m}_{\alpha_{3}%
}^{\left(  3\right)  }\cdot\mathsf{m}_{\alpha_{2}}^{\left(  3\right)  }%
\cdot\mathsf{m}_{\alpha_{4}}^{\left(  3\right)  }\otimes\mathsf{m}_{\alpha
_{1}}^{\left(  4\right)  }\otimes\mathsf{m}_{\alpha_{2}}^{\left(  4\right)
}\otimes\mathsf{m}_{\alpha_{3}}^{\left(  4\right)  }\otimes\mathsf{m}%
_{\alpha_{4}}^{\left(  4\right)  }. \label{dd4}%
\end{align}

According to \textit{Remark} \ref{rem-qm}, we can express through the extended
$M$-matrix (\ref{m7}) the first medial quasipolyangularity equation
(\ref{dd1}) and the last one (\ref{dd4}) only, as follows%
\begin{align}
\left(  \left(  \Delta\otimes\Delta\right)  \circ\Delta\otimes
\operatorname*{id}\nolimits_{B}\otimes\operatorname*{id}\nolimits_{B}%
\otimes\operatorname*{id}\nolimits_{B}\right)  \left(  \mathit{M}^{\left(
4\right)  }\right)   &  =\mathcal{M}_{1567}^{\left(  7\right)  }%
\cdot\mathcal{M}_{2567}^{\left(  7\right)  }\cdot\mathcal{M}_{3567}^{\left(
7\right)  }\cdot\mathcal{M}_{4567}^{\left(  7\right)  },\label{ddd1}\\
\left(  \operatorname*{id}\nolimits_{B}\otimes\operatorname*{id}%
\nolimits_{B}\otimes\operatorname*{id}\nolimits_{B}\otimes\left(
\Delta\otimes\Delta\right)  \circ\Delta\right)  \left(  \mathit{M}^{\left(
4\right)  }\right)   &  =\mathcal{M}_{1234}^{\left(  7\right)  }%
\cdot\mathcal{M}_{1236}^{\left(  7\right)  }\cdot\mathcal{M}_{1235}^{\left(
7\right)  }\cdot\mathcal{M}_{1237}^{\left(  7\right)  }. \label{ddd2}%
\end{align}

\end{example}

The compatibility of (\ref{ddd1})--(\ref{ddd2}) with the (binary) almost
co-mediality (\ref{mnt2}) leads to

\begin{proposition}
An extended binary $M$-matrix \emph{(\ref{m7})} of the binary almost co-medial
bialgebra $\mathrm{B}^{\left(  2,2\right)  }=\left\langle B\mid\mu=\left(
\cdot\right)  ,\Delta\right\rangle $ satisfies the compatibility equations
\emph{(}cf. \emph{(\ref{rrr}))}%
\begin{align}
\mathcal{M}_{1234}^{\left(  7\right)  }\cdot\mathcal{M}_{1567}^{\left(
7\right)  }\cdot\mathcal{M}_{2567}^{\left(  7\right)  }\cdot\mathcal{M}%
_{3567}^{\left(  7\right)  }\cdot\mathcal{M}_{4567}^{\left(  7\right)  }  &
=\mathcal{M}_{1567}^{\left(  7\right)  }\cdot\mathcal{M}_{3567}^{\left(
7\right)  }\cdot\mathcal{M}_{2567}^{\left(  7\right)  }\cdot\mathcal{M}%
_{4567}^{\left(  7\right)  }\cdot\mathcal{M}_{1234}^{\left(  7\right)
},\label{mm1}\\
\mathcal{M}_{4567}^{\left(  7\right)  }\cdot\mathcal{M}_{1234}^{\left(
7\right)  }\cdot\mathcal{M}_{1236}^{\left(  7\right)  }\cdot\mathcal{M}%
_{1235}^{\left(  7\right)  }\cdot\mathcal{M}_{1237}^{\left(  7\right)  }  &
=\mathcal{M}_{1234}^{\left(  7\right)  }\cdot\mathcal{M}_{1235}^{\left(
7\right)  }\cdot\mathcal{M}_{1236}^{\left(  7\right)  }\cdot\mathcal{M}%
_{1237}^{\left(  7\right)  }\cdot\mathcal{M}_{4567}^{\left(  7\right)  }.
\label{mm2}%
\end{align}

\end{proposition}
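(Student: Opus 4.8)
The plan is to derive the two compatibility equations \eqref{mm1}--\eqref{mm2} by combining the two medial quasipolyangularity equations \eqref{ddd1}--\eqref{ddd2} (expressed through the extended $M$-matrix) with the binary almost co-mediality condition \eqref{mnt2} written in the extended form. First I would rewrite \eqref{mnt2} itself in terms of the extended $M$-matrix $\mathcal{M}_{i_{1}\ldots i_{4}}^{\left(7\right)}$: applying $\tau_{medial}\circ\left(\Delta\otimes\Delta\right)\circ\Delta$ to $b$ produces four tensor slots, which, after embedding into $B^{\otimes 7}$ together with the $M$-matrix in the remaining slots, gives an identity of the shape ``$\mathcal{M}$-word acting on $\mathcal{M}^{\left(4\right)}$ equals permuted $\mathcal{M}$-word acting on $\mathcal{M}^{\left(4\right)}$''. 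The point of using the extended $R$-matrix trick here (as in \eqref{r12}--\eqref{r23} and Remark \ref{rem-unit}) is that the presence of the unit $e_{B}$ lets one pad all factors to common length $7$ and replace the Sweedler components $\mathsf{m}_{\alpha,\left[j\right]}^{\left(i\right)}$ by bare $M$-matrix components, so that the iterated comultiplications on the left of \eqref{ddd1} and \eqref{ddd2} become ordered products of extended $M$-matrices.

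The key steps, in order, are: (1) substitute \eqref{cv12} into the first binary medial braid equation \eqref{ccb} and into the second \eqref{ccbb}, and use \eqref{m7} together with the composition rule for $1$-place actions to translate both sides into products of $\mathcal{M}_{i_{1}i_{2}i_{3}i_{4}}^{\left(7\right)}$ — this is exactly the computation that already yields \eqref{m1} and \eqref{m2}, so it may be cited; (2) observe that the left-hand side of \eqref{ddd1} is $\left(\left(\Delta\otimes\Delta\right)\circ\Delta\otimes\operatorname*{id}_{B}^{\otimes 3}\right)\left(\mathit{M}^{\left(4\right)}\right)$, i.e. the coproduct-iterate applied to the \emph{first} slot of the $M$-matrix, and similarly the left-hand side of \eqref{ddd2} acts on the \emph{last} slot; by \eqref{mnt2} these coproduct-iterates can be moved past the remaining $M$-matrix factors at the cost of the medial permutation, which in the extended picture is the reindexing $\mathcal{M}_{1234}\leftrightarrow$ (the permuted block); (3) carefully match the index patterns: the right sides of \eqref{dd1} and \eqref{dd4}, once extended, become the ordered products $\mathcal{M}_{1567}\cdot\mathcal{M}_{2567}\cdot\mathcal{M}_{3567}\cdot\mathcal{M}_{4567}$ and $\mathcal{M}_{1234}\cdot\mathcal{M}_{1236}\cdot\mathcal{M}_{1235}\cdot\mathcal{M}_{1237}$ respectively (Remark \ref{rem-qm} guarantees only these two extreme equations survive extension); (4) impose almost co-mediality \eqref{mnt2} on $\mathit{M}^{\left(4\right)}$ regarded as an element fed into the left slot (resp. right slot), which inserts one extra factor $\mathcal{M}_{1234}^{\left(7\right)}$ (resp. $\mathcal{M}_{4567}^{\left(7\right)}$) on each side and swaps the order of the two middle factors via $\tau_{medial}$ (the ``$b_{2}\leftrightarrow b_{3}$'' swap of \eqref{b1}), producing precisely \eqref{mm1} and \eqref{mm2}; (5) conversely check that the derivation is reversible, so the equations are not merely necessary but characterize compatibility.

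The main obstacle will be step (3)--(4): keeping the bookkeeping of the seven tensor slots straight while simultaneously tracking (a) which slots carry genuine $M$-matrix components versus padding units, (b) the relabeling induced by the medial map $\tau_{medial}=\operatorname*{id}_{B}\otimes\tau_{op}\otimes\operatorname*{id}_{B}$ acting on the four ``doubled'' slots, and (c) the summation indices $\alpha_{1},\ldots,\alpha_{4}$ that get contracted through $\left(\mu^{\left(n\right)}\right)^{\circ\ell}$ with $\ell=3$. The cleanest way to control this is to work entirely at the level of the extended matrices $\mathcal{M}_{\cdots}^{\left(7\right)}$ with their ordered (noncommutative) product in $B^{\otimes 7}$, never unpacking into components, and to verify that the two sides of each claimed identity are images under the same sequence of embeddings and multiplications of the two sides of \eqref{ccb} (resp. \eqref{ccbb}) — so that the braid equations \eqref{ccb}--\eqref{ccbb}, already established by the associative quiver technique, do the real work, and \eqref{mnt2} only supplies the extra boundary factor. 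Everything else is routine diagram-chasing of commutative squares of modules, as noted in the paragraph preceding the Proposition.
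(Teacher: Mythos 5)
Your key steps (2)--(4) are, in substance, the paper's own proof. The paper applies the almost co-mediality condition (\ref{mnt2}) with $b$ running over the Sweedler components occupying the first (resp.\ last) slot of $\mathit{M}^{\left(4\right)}$, pads everything with $\operatorname*{id}\nolimits_{B}^{\otimes3}$ so that both sides live in $B^{\otimes7}$, and then substitutes the quasipolyangularity identities (\ref{ddd1})--(\ref{ddd2}); the $\tau_{medial}$ standing on the left-hand side of (\ref{mnt2}) is exactly what swaps the two middle factors, while the embedded copy $\mathit{M}^{\left(4\right)}\otimes\operatorname*{id}\nolimits_{B}^{\otimes3}=\mathcal{M}_{1234}^{\left(7\right)}$ (resp.\ $\operatorname*{id}\nolimits_{B}^{\otimes3}\otimes\mathit{M}^{\left(4\right)}=\mathcal{M}_{4567}^{\left(7\right)}$) supplies the extra boundary factor, yielding (\ref{mm1}) and (\ref{mm2}).

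However, your step (1) and the claim in your closing paragraph are mistaken: the medial braid equations (\ref{ccb})--(\ref{ccbb}) play no role here and do not ``do the real work''. Substituting (\ref{cv12}) into them produces the different and much longer relations (\ref{m1})--(\ref{m2}) (thirteen and nine factors per side), and the two sides of (\ref{mm1})--(\ref{mm2}) are not images of the two sides of (\ref{ccb}) or (\ref{ccbb}) under any embedding --- if you tried to execute that reduction it would fail. The compatibility equations follow solely from (\ref{mnt2}) applied to the components of $\mathit{M}^{\left(4\right)}$ together with (\ref{ddd1})--(\ref{ddd2}), which is precisely what your steps (2)--(4) describe; drop step (1) and the final attribution to the braid equations, and your argument coincides with the paper's. (Your step (5), the converse, is not asserted by the proposition and is unnecessary.)
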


\begin{proof}
The identities for the $M$-matrix%
\begin{align}
&  \left(  \mathit{M}^{\left(  4\right)  }\otimes\operatorname*{id}%
\nolimits_{B}\otimes\operatorname*{id}\nolimits_{B}\otimes\operatorname*{id}%
\nolimits_{B}\right)  \circ\left(  \left(  \Delta\otimes\Delta\right)
\circ\Delta\otimes\operatorname*{id}\nolimits_{B}\otimes\operatorname*{id}%
\nolimits_{B}\otimes\operatorname*{id}\nolimits_{B}\right)  \left(
\mathit{M}^{\left(  4\right)  }\right) \nonumber\\
&  =\left(  \tau_{medial}\circ\left(  \Delta\otimes\Delta\right)  \circ
\Delta\otimes\operatorname*{id}\nolimits_{B}\otimes\operatorname*{id}%
\nolimits_{B}\otimes\operatorname*{id}\nolimits_{B}\right)  \left(
\mathit{M}^{\left(  4\right)  }\right)  \circ\left(  \mathit{M}^{\left(
4\right)  }\otimes\operatorname*{id}\nolimits_{B}\otimes\operatorname*{id}%
\nolimits_{B}\otimes\operatorname*{id}\nolimits_{B}\right)  ,
\end{align}%
\begin{align}
&  \left(  \operatorname*{id}\nolimits_{B}\otimes\operatorname*{id}%
\nolimits_{B}\otimes\operatorname*{id}\nolimits_{B}\otimes\mathit{M}^{\left(
4\right)  }\right)  \circ\left(  \operatorname*{id}\nolimits_{B}%
\otimes\operatorname*{id}\nolimits_{B}\otimes\operatorname*{id}\nolimits_{B}%
\otimes\left(  \Delta\otimes\Delta\right)  \circ\Delta\right)  \left(
\mathit{M}^{\left(  4\right)  }\right) \nonumber\\
&  =\left(  \operatorname*{id}\nolimits_{B}\otimes\operatorname*{id}%
\nolimits_{B}\otimes\operatorname*{id}\nolimits_{B}\otimes\tau_{medial}%
\circ\left(  \Delta\otimes\Delta\right)  \circ\Delta\right)  \left(
\mathit{M}^{\left(  4\right)  }\right)  \circ\left(  \operatorname*{id}%
\nolimits_{B}\otimes\operatorname*{id}\nolimits_{B}\otimes\operatorname*{id}%
\nolimits_{B}\otimes\mathit{M}^{\left(  4\right)  }\right)  ,
\end{align}
follow from the almost co-mediality condition(\ref{mnt2}), and then we apply
quasipolyangularity (\ref{ddd1})--(\ref{ddd2}).
\end{proof}

\begin{remark}
Two other compatibility equations corresponding to the intermediate
quasipolyangularity equations (\ref{dd2})--(\ref{dd3}) can be written in
component form only (see \textit{Remark} \ref{rem-qm}).
\end{remark}

The solutions to (\ref{mm1})--(\ref{mm2}) can be found in matrix form by
choosing an appropriate basis and using the standard methods (see, e.g.,
\cite{kassel,lam/rad}).

\section{\textsc{Conclusions}}

We have presented the \textquotedblleft polyadization\textquotedblright\ procedure
of the following algebra-like structures: algebras, coalgebras, bialgebras and Hopf
algebras (see \cite{dup2017a,dup2017} for ring-like structures). In our
concrete constructions the initial arities of operations are taken as
arbitrary, and we then try to restrict them only by means of natural relations which
bring to mind the binary case. This leads to many exotic properties and
unexpected connections between arities and a fixing of their values called
\textquotedblleft quantization\textquotedblright. For instance, the unit and
counit (which do not always exist) can be multivalued many place maps,
polyadic algebras can be zeroless, the qeurelements should be considered
instead of inverse elements under addition and multiplication, a polyadic
bialgebra can consist of an algebra and coalgebra of different arities, and a
polyadic analog of Hopf algebras contains (instead of the ordinary antipode)
the querantipode, which has different properties.

The formulas and constructions introduced for concrete algebra-like structures
can have many applications, e.g., in combinatorics, quantum logic,or
representation theory. As an example, we have introduced possible polyadic analogs
of braidings, almost co-commutativity and a version of the $R$-matrix. A new concept of
deformation (using the medial map) is proposed: this is unique and therefore can be
more consequential and suitable in the polyadic case.


\mbox{}
\bigskip

\newpage
\clearpage
\thispagestyle{plain}
\mbox{}
\vskip 1cm

\pagestyle{fancyref}
\listoftables
\end{document}